\def\cS{{\mathcal S}}
\newcommand{\set}[1]{\left\{#1\right\}}
\def\cH{\mathcal{H}}
\def\cP{\mathcal{P}}
\def\ii_(#1,#2){i_{#1}^{#2}}
\def\bx{{\bf x}}
\def\a{\alpha}
\def\d{\delta}
\def\D{\Delta}
\def\z{\zeta}
\def\l{\lambda}
\def\s{\sigma}
\def\cD{\mathcal{D}}
\def\cE{\mathcal{E}}
\def\cS{\mathcal{S}}
\def\cT{\mathcal{T}}
\def\E{{\bf E}}
\renewcommand{\Pr}{\operatorname{\bf Pr}}
\newcommand\bfrac[2]{\left(\frac{#1}{#2}\right)}
\def\bx{{\bf x}}
\def\kG{{\sc k-MatchTINF }}
\newtheorem{theorem}{Theorem}[section]
\newtheorem{lemma}[theorem]{Lemma}
\newtheorem{remthm}[theorem]{Remark}
\newtheorem{claim}[theorem]{Claim}
\newtheorem{notation}[theorem]{Notation}
\title{On a $k$-matching algorithm and finding $k$-factors in random graphs with minimum degree $k+1$ in linear time}
\author{Michael Anastos\footnote{
Institut f\"ur Mathematik, Freie Universit\"at Berlin, 14195 Berlin, Germany.
E-mail: manastos@zedat.fu-berlin.de.
}
}
\begin{document}

\maketitle
\begin{abstract}
We prove that for $k+1\geq 3$ and $c>(k+1)/2$ w.h.p. the random graph on $n$ vertices, $cn$ edges  and minimum degree $k+1$ contains a (near) perfect $k$-matching. As an immediate consequence we get that w.h.p. the $(k+1)$-core of $G_{n,p}$, if non empty,  spans a (near) spanning $k$-regular subgraph. This improves upon a result of Chan and Molloy \cite{CM} and completely resolves a conjecture of  Bollob\'as, Kim and  Verstra\"{e}te  \cite{BKV}. In addition, we show that w.h.p. such a subgraph can be found in linear time.  A substantial element of the proof is the analysis of a randomized algorithm for finding $k$-matchings in random graphs with minimum degree $k+1$. 
\end{abstract}
\section{Introduction}
The main focus of this paper is finding a (near) perfect $k$-matching in  random graphs with minimum degree $k+1$.
For $k,n,m\in \mathbb{N}$ we let $G_{n,m}^{\delta \geq k+1}$ be the random graph chosen uniformly at random from the set of all graphs on $n$ vertices, with $m$ edges and minimum degree $k+1$. Given a graph $G$, a $k$-matching of $G$ is a set of edges $M\subseteq E(G)$ such that each vertex in $V(G)$ is incident to at most $k$ edges in $M$ (as opposed to a matching of size $k$). Such a matching is called (near) perfect if it is of size $\lfloor kn/2\rfloor$. 

When $m=(k+1)n/2$ then $G_{n,m}^{\delta \geq k+1}$ is distributed as a random $(k+1)$-regular graph. In this case, when $n$ is even Robinson and Wormald \cite{RW} showed, via the small cycle conditioning method, that w.h.p.\footnote{We say that a sequence of events $\{\mathcal{E}_n\}_{n\geq 1}$
holds {\em{with high probability}} (w.h.p.\@ in short) if $\lim_{n \to \infty}\Pr(\mathcal{E}_n)=1-o(1)$.} $G_{n,m}^{\delta \geq k+1}$ spans $k+1$ pairwise edge disjoint perfect matchings. However  this result cannot be extended to slightly larger values of $m$ as it fails to be true due to the appearance of vertices whose neighborhood contains $k+2$ vertices of degree $k+1$. 

The problem of finding a large $k$-matching in $G_{n,m}^{\delta \geq k+1}$ has been studied in the case $k=2$ in \cite{AF}, \cite{F}.

A graph that is known to be distributed as $G_{n',m'}^{\delta\geq k+1}$ for suitable $n',m'$ is the $(k+1)$-core of the random graph $G_{n,p}$. We denote by $G_{n,p}$  the random graph on $n$ vertices where each edge appears independently with probability $p$. The $k$-core of a graph $G$, denoted by $G^{(k)}$, is the unique maximal subgraph of $G$ of minimum degree $k$. {\L}uczak \cite{l} showed that w.h.p. the order of $G_{n,p}^{(k+1)}$ is either  zero or linear in $n$. Later, Pittel, Spencer and Wormald \cite{PSW} determined the threshold for the appearance of a non-empty $k$-core of $G_{n,p}$ to be $p=c_k/n$ where $c_k=k+\sqrt{k\log k}+ o(\sqrt{k})$. 

In \cite{BKV}, Bollob\'as, Kim and Verstra\"{e}te studied the existence of regular subgraphs in $G_{n,p}$. Let $\psi_k/n$ be the threshold for appearance of a $k$-regular subgraph of $G_{n,p}$. De facto $\psi_k\geq c_k$.
They proved that $\psi_k \leq \gamma_k$ where $\gamma_k=4k+o(k)$.
In addition, they showed that $\psi_k$ is strictly larger than $c_k$ for $k=3$ and conjectured that this statement is true for all $k\geq 3$. On the other hand Pretti and Weigt \cite{PW},  using techniques from statistical physics, predicted the contrary, that is $c_k=\psi_k$ for $k\geq 4$.
Mitsche,  Molloy and  Pralat \cite{MMP} proved that for sufficiently large $k$, $\psi_k-c_k\leq e^{-k/300}$ while Gao \cite{Gao} proved that if $|p-\psi_k/n|$ is sufficiently small then any $k$-regular subgraph of $G_{n,p}$
must contain all but at most $\epsilon_k n$ vertices of the $k$-core of $G_{n,p}$ for some $\epsilon_k$ that tends to 0 as  $k\to \infty$.

Bollob\'as, Kim and Verstra\"{e}te also conjectured that if
$c>c_{k+1}$  then w.h.p. $G_{n,c/n}^{(k+1)}$ spans a $k$-regular subgraph. Chan and Molloy proved that this conjecture is true for all sufficiently large $k$ by showing that $G_{n,c/n}^{(k+1)}$, minus any vertex when $kn$ is odd, spans a $k$-factor for $c>c_k$ \cite{CM}.  We completely resolve this conjecture by proving that this conjecture is true for all values of $k\geq 2$.

A $k$-factor of $G$ is a $k$-regular spanning subgraph of $G$.  We say that $G$ is $k$-factor critical if $G\setminus \{v\}$ spans a $k$-factor for every $v\in V(G)$. We let $\cP_k$ be the set of graphs $G$ that either have a $k$-factor or, in the case that $k|V(G)|$ is odd, are $k$-factor critical.
  
The main result of this paper is the following Theorem.
\begin{theorem}\label{thm:matchings}
Let $3\leq k+1=O(1)$ and { $(k+1)/2<c=O(1)$}. Then w.h.p. $G_{n,cn}^{\delta \geq k+1} \in \cP_k$.
In addition, w.h.p. we can find the corresponding $k$-factor in $O(n)$ time.
\end{theorem}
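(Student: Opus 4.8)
\emph{Reduction and structure.} My plan is to reduce to a purely algorithmic statement. A $k$-matching of $G:=G_{n,cn}^{\delta\ge k+1}$ of size $\lfloor kn/2\rfloor$ is exactly a $k$-factor of $G$ when $kn$ is even, so in that case it suffices to find such a matching in linear time. When $kn$ is odd, membership in $\cP_k$ asks that $G-v$ have a $k$-factor for every $v$; I would obtain this by first finding a near-perfect $k$-matching $M$ of $G$ (with a single deficient vertex) and then showing, via $O(1)$ alternating-path augmentations together with the expansion of $G$, that the deficiency can be relocated to and the matching emptied at any prescribed $v$, yielding a $k$-factor of $G-v$ --- in $O(n)$ time for a given $v$. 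I would represent $G$ in the usual way as the simple outcome of the configuration model on a random degree sequence $\mathbf d=(d_1,\dots,d_n)$ with $d_i\ge k+1$ from a conditioned, truncated-Poisson-type law, and first establish the w.h.p.\ structural inputs the argument consumes: $\max_i d_i=O(\log n/\log\log n)$; no subgraph on $O(1)$ vertices dense enough to form one of the finitely many local Tutte--Berge obstructions to a $k$-factor; and constant-factor vertex expansion of all sets of size at most $\eta n$. The arithmetic fact I would exploit throughout is that $c>(k+1)/2$ gives a strict capacity surplus $\sum_i(d_i-k)=(2c-k)n\ge(1+\Omega(1))n$ over what a $k$-factor consumes.

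\emph{The greedy $k$-matching algorithm.} The core of the proof will be the analysis of a degree-greedy randomized algorithm. It maintains a partial $k$-matching $M$ with residual capacities $r(v)=k-\deg_M(v)$; a vertex is \emph{active} if $r(v)>0$, and an edge \emph{usable} if it lies in $E(G)\setminus M$ with both endpoints active. Each step: select an active vertex that is currently most constrained (minimum degree in the graph of usable edges, ties broken at random), pick a uniformly random usable edge at it, add it to $M$; halt when no usable edge remains. I would analyze this by the differential equations method, tracking the joint evolution of the numbers of vertices of each residual capacity and each usable-degree, to show that w.h.p.\ the algorithm halts with the set $D=\{v:r(v)>0\}$ of deficient vertices satisfying $|D|=o(n)$ --- the surplus $(2c-k-1)n>0$ being exactly what prevents a positive fraction of the vertices from jamming at once. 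Sharpening the priority rule (urgently finishing near-exhausted vertices), together with a suitable stopping rule, should push this to $|D|\le n^{1-\delta}$ for a fixed $\delta>0$, the bound the cleanup phase needs; with bucketed priority queues the phase runs in $O(n)$ total time.

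\emph{Cleanup by augmentation.} To finish I would drive $|D|$ down to its parity-forced minimum ($0$, or $1$ when $kn$ is odd). While $D$ is still too large, pick $v\in D$ and grow an $M$-alternating breadth-first search from $v$: by expansion, the set of vertices reached along alternating walks of length $\le L$ grows by a constant factor per step, so for $L=O(\log n)$ the search either reaches another vertex of positive residual capacity --- giving an alternating path whose symmetric difference with $M$ lowers $\sum_v r(v)$ by $2$ --- or it would exhibit a dense obstructing set, which w.h.p.\ does not exist. Since there are at most $n^{1-\delta}$ deficient vertices and each augmentation touches few vertices, a batched alternating-path search (reusing search data across augmentations rather than restarting each BFS) makes the cleanup, and hence the whole algorithm, run in $O(n)$ time. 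The output is a $k$-matching of size $\lfloor kn/2\rfloor$ (resp.\ a $k$-factor of $G-v$), certifying $G\in\cP_k$.

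\emph{The main obstacle.} I expect the genuinely hard step to be the middle one: showing the greedy process never strands a linear-sized deficiency, and controlling $|D|$ down to $n^{1-\delta}$. Because the degree-$(k+1)$ vertices form a positive fraction of $V(G)$ and carry no slack, a careless version of greedy can strand clusters of them; a careful differential-equations analysis of the jointly evolving residual-capacity and usable-degree statistics, coupled with the right priority rule, is what is needed to keep the deficiency subpolynomial. The remaining pieces --- ruling out local Tutte--Berge obstructions from the sparsity of $G$, and fitting the cleanup into linear time --- should be comparatively routine once the expansion and sparsity facts are in hand, though the linear-time bookkeeping does require some care in reusing work across augmentations.
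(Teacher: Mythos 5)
Your overall architecture (configuration-model representation, a prioritized greedy phase producing a $k$-matching with $o(n)$ deficiency, then alternating-path cleanup using sparsity/expansion) matches the paper's, but the cleanup step as you describe it has a genuine gap. Your dichotomy --- ``the alternating BFS from a deficient vertex either reaches another vertex of positive residual capacity within $O(\log n)$ levels or exhibits a dense obstructing set'' --- is false. When no augmenting path exists inside the tree, the only conclusion is that every odd-level vertex is saturated; the tree then simply expands (this is exactly the paper's Claim \ref{claim:ineq}, proved from the sparsity conditions), and a tree of size $n^{0.99}$ or even $\Theta(n)$ need not contain any vertex of the deficient set $D$, whose size is only $n^{1-\delta}$; no density violation arises from missing it. To finish one must join two alternating trees (grown from the two deficient endpoints) by an edge between their even levels, and here you cannot appeal to ``the randomness of $G$'': after the greedy phase and the tree constructions, the edges of $G$ are heavily conditioned. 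This is precisely why the paper proves Theorem \ref{thm:kGreedy2}: before running \kG\ it reserves a set $E_p$ of edges that remains uniformly distributed conditioned on everything else revealed, and the augmentation (Lemma \ref{thm:matchings_b}) grows trees from both deficient vertices, certifies $n^{0.5889}$ fresh even-level vertices in each, and uses a reserved edge of $E_p$ to create the augmenting path --- with bookkeeping (the sets $V_t$, $Y_t$, $F_t$ and the per-vertex random edge choices) that preserves conditional uniformity across all $n^{0.401}$ augmentations. Your proposal contains no mechanism of this kind, and without one the cleanup does not go through.

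A secondary problem is the odd-$kn$ case: ``relocating the deficiency to a prescribed $v$'' gives a $k$-matching of $G$ in which $v$ has $M$-degree $k-1$, but such a matching may use edges incident to $v$, which are forbidden in a $k$-factor of $G\setminus\{v\}$. The paper instead first rebuilds the matching locally so that $d_M(z)=0$ while all vertices within distance $3$ of $z$ are saturated, and then augments while avoiding $z$ (via the pruning in Claim \ref{claim:ineq}); some such device is needed in your plan as well. Finally, you correctly flag the greedy-phase analysis as the main obstacle, but note that the paper's resolution is not a routine differential-equations argument: it confines the degree-weighted profile $p^t$ in the polyhedra $B_d$ via martingale/drift estimates whose defining constants $\alpha_r$ are verified by computer-assisted optimization, so this step should not be assumed to follow from standard DE bookkeeping.
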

If non empty, $G_{n,p}^{(k+1)}$ is distributed as $G_{n',m'}^{\delta \geq k+1}$ for some $n',m'$ that satisfy $n'=\Omega(n)$ and $m' \leq |E(G_{n,p})|=\Omega(n)$. Hence as an immediate corollary we have that for $3\leq k=O(1)$ and $0\leq c =O(1)$ w.h.p. either  $G_{n,c/n}^{(k+1)}=\emptyset$ or  $G_{n,c/n}^{(k+1)}\in \cP_k$. In this paper we prove the slightly stronger, hitting time analogue of this corollary.

Denote by $F_0,F_1,....,F_N$, $N=\binom{n}{2}$ the random graph process. That is, $F_0$ is the empty graph on $n$ vertices and $F_i$ is formed by adding to $F_{i-1}$ an edge chosen uniformly at random from the ones that are not present in $F_{i-1}$ for $i\in [n(n-1)/2]$. Let $\sigma_{k+1}=\min\{i:F_i^{(k+1)} \neq \emptyset\}.$ 

\begin{theorem}\label{cor:matchings}
Let $3\leq k+1=O(1)$. Then w.h.p. $F_i^{(k+1)} \in \cP_k$ for $i\geq \sigma_{k+1}$. 
\end{theorem}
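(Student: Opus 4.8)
The goal is to upgrade the "fixed-density" statement of Theorem~\ref{thm:matchings} to a hitting-time statement along the random graph process $F_0,\dots,F_N$. The plan is as follows. First I would establish that it suffices to control $F_i^{(k+1)}$ for $i$ in a suitable range of the form $\sigma_{k+1}\le i\le \sigma_{k+1}+T$ for some $T=O(n)$ (say $T= n\log n$ or even $T=n^{2}/\log n$), together with a separate argument for large $i$. The reason a range near $\sigma_{k+1}$ is the delicate part is that for $i$ much larger than $\sigma_{k+1}$ the $(k+1)$-core is large and dense; in fact once $i\ge c_0 n$ for a suitable constant $c_0>c_{k+1}$, the graph $F_i$ itself has average degree bounded below by a constant exceeding $k+1$, and one can couple $F_i$ with $G_{n,cn}^{\delta\ge k+1}$-type instances (after conditioning on the number of edges) to which Theorem~\ref{thm:matchings} applies directly, using monotonicity of the property "contains a $k$-factor after possibly deleting one vertex" under edge addition --- although one has to be slightly careful because $\cP_k$ is not literally monotone in $G$ (adding an edge can destroy $k$-regularity), so the right object to track is really a near-perfect $k$-matching of the core, which \emph{is} robust, and then invoke that a graph in $G_{n',m'}^{\delta\ge k+1}$ with $m'\ge (k+1)n'/2$ having a near-perfect $k$-matching lies in $\cP_k$.

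Second, and this is the heart of the matter, I would handle $i$ in the window just above $\sigma_{k+1}$. The key structural input is {\L}uczak's result that w.h.p. $|V(F_i^{(k+1)})|$ jumps from $0$ to $\Omega(n)$ at $i=\sigma_{k+1}$, together with the Pittel--Spencer--Wormald analysis pinning $\sigma_{k+1}/n \to c_{k+1}$ w.h.p.; thus for every $i\ge\sigma_{k+1}$ in the window, $F_i^{(k+1)}$ is distributed (conditionally on its vertex set size $n'$ and edge count $m'$) exactly as $G_{n',m'}^{\delta\ge k+1}$, and w.h.p. $n'=\Omega(n)$ while $m'/n' > (k+1)/2 + \epsilon$ for some fixed $\epsilon>0$ --- the last inequality is exactly the statement that the $(k+1)$-core, at and just past its emergence, already has average degree strictly above $k+1$, which is part of the PSW description of the core (the core is "born" with a density bounded away from $(k+1)/2$). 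Here one must be careful that Theorem~\ref{thm:matchings} is stated for constant $c$; since $m'/n'$ is w.h.p. in a compact interval $[(k+1)/2+\epsilon, C]$, I would apply Theorem~\ref{thm:matchings} and note that the $o(1)$ failure probability is uniform over $c$ in that compact interval (the proof of Theorem~\ref{thm:matchings}, being a w.h.p. statement proved by union bounds over $n$-dependent events, carries such uniformity, or one discretizes the interval into finitely many sub-intervals). Then a union bound over the (at most $T=O(n\log n)$, say) values of $i$ in the window costs a factor that is still $o(1)$ provided the per-$i$ failure probability from Theorem~\ref{thm:matchings} is $o(1/T)$; if Theorem~\ref{thm:matchings}'s bound is only $o(1)$ rather than polynomially small, I would instead argue monotonically: it is enough to know $F_{\sigma_{k+1}}^{(k+1)}\in\cP_k$ and then show that adding edges within the window keeps a near-perfect $k$-matching, reducing to a single application at $i=\sigma_{k+1}$ plus the robustness of near-perfect $k$-matchings under edge addition and minor vertex-set growth of the core.

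Third, I would glue the two regimes: for $\sigma_{k+1}\le i\le \sigma_{k+1}+T$ use the second argument, and for $i>\sigma_{k+1}+T$ (equivalently, for all $i$ large enough that $F_i$ is already dense) use the first; since w.h.p. $\sigma_{k+1}=\Theta(n)$, these two ranges together cover all $i\ge\sigma_{k+1}$. The main obstacle I anticipate is the bookkeeping at the emergence point: making rigorous that at $i=\sigma_{k+1}$ the core has, w.h.p., both size $\Omega(n)$ and density strictly exceeding $(k+1)/2$ by a constant, and that the conditional distribution of the core given these parameters is exactly the uniform model $G_{n',m'}^{\delta\ge k+1}$ to which Theorem~\ref{thm:matchings} applies --- this requires invoking the precise PSW/{\L}uczak description of the near-critical $(k+1)$-core rather than just the threshold location. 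A secondary technical point is transferring the "constant $c$" hypothesis of Theorem~\ref{thm:matchings} to the random, but w.h.p. compactly-confined, value $m'/n'$, which I would resolve by the uniform-failure-probability remark above. Once these are in place, the hitting-time statement follows, and the corollary about $G_{n,c/n}^{(k+1)}$ is immediate since $G_{n,c/n}^{(k+1)}$ is distributed as $F_i^{(k+1)}$ for $i=m$ with $m$ concentrated around $c\binom{n}{2}/\binom n2 \cdot$(appropriate value), hence for $i\ge\sigma_{k+1}$ w.h.p. whenever $c/n$ is above the $(k+1)$-core threshold.
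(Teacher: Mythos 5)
Your plan covers the hitting-time window and the very dense end, but it has a genuine hole in the intermediate regime, and it leans on a claim that is not true. Your two tools are (a) per-$i$ applications of Theorem \ref{thm:matchings} (valid only while $m'/n'=O(1)$, i.e. $i=O(n)$, since the theorem and its whole analysis assume $c=O(1)$; your assertion that $m'/n'$ stays in a compact interval over a window of length $n\log n$ or $n^2/\log n$ is false), and (b) monotonicity of a (near-)perfect $k$-matching under edge addition. Tool (b) only becomes available once the vertex set of the core has stabilized, i.e. once $F_i$ itself has minimum degree $k+1$, which happens around $i\approx \tfrac n2\log n$. In between --- say $Kn\le i\le n\log n$ for large constants $K$ --- the core is still acquiring vertices, so a perfect $k$-matching of $F_{i}^{(k+1)}$ is no longer perfect for $F_{i+1}^{(k+1)}$, and your substitute step, ``a graph with minimum degree $\ge k+1$ and a near-perfect $k$-matching lies in $\cP_k$,'' is simply not a deterministic fact: when $kn'$ is odd one needs $k$-factor criticality (a $k$-factor avoiding \emph{every} vertex), and even in the even case a large-but-imperfect matching of the enlarged core does not upgrade itself to a $k$-factor without the augmentation machinery, which needs fresh, conditionally uniform random edges. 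So neither (a) nor (b) covers this regime, and your proposal does not supply anything else there.

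The paper fills exactly this gap with a separate sprinkling argument (its Case 2 for $k^{100}n\le i\le n\log n$): it reveals only $F_{i/10}$ together with the edges at low-degree core vertices, keeping a reservoir $R$ of at least $0.5i$ unrevealed edges of $F_i^{(k+1)}$; for a maximum $k$-matching $M$ of the revealed part it shows, via the alternating-tree growth and the expansion property (iii) of Lemma \ref{lem:density2}, that the set $Q_v$ of endpoints of even $M$-alternating paths has size $\Omega(n)$ inside the old core, producing $\Omega(n^2)$ candidate augmenting pairs, and then sprinkles $R$ to find augmenting edges, with failure probability $o(n^{-2})$ per $i$ so a union bound closes the range; finally, for $i>n\log n$ it uses that w.h.p. $F_{n\log n}$ already has minimum degree $k+1$, so the core equals $[n]$ from then on and membership in $\cP_k$ genuinely is monotone. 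Your treatment of the window $[\sigma_{k+1},Kn]$ (per-$i$ application plus the observation that the failure probabilities must be polynomially small --- which Lemma \ref{thm:matchings_b} and Theorem \ref{thm:kGreedy2} indeed provide, at rate $o(n^{-8})$) matches the paper's Case 1, and your large-$i$ monotonicity idea matches Case 3 once corrected to track $k$-factors (and criticality) on a fixed vertex set rather than ``near-perfect matchings''; but without an argument of the Case 2 type the proof as proposed does not go through.
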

Our proof approach differs from the one used in \cite{CM} which is driven by verifying a generalized Tutte-like condition. It relies on the application of a randomized algorithm for finding a large $k$-matching, which we call \kG\@ ($k$-Match Those In Need First). 
\kG goes as follows: Its input is a graph $G$. As it progresses it grows a $k$-matching $M$ while in parallel it removes edges from $G$. At each step it keeps track of a set $Z$ that consists of vertices that are incident to $\ell$ edges in $M$ but to at most $k-\ell$ edges in $G$. Think of these as the {\em{dangerous}} vertices. If dangerous vertices exist then \kG chooses a random edge $e \in E(G)$ that is incident to a dangerous vertex, otherwise it chooses 
a random edge $e \in E(G)$ incident to a vertex $v$ whose $M$-degree is minimized.
Then, it adds $e$ to  $M$, removes it from $G$ and deletes the edges incident to any vertex in $G$ which is incident to $k$ edges in $M$. 

We will prove that \kG outputs a matching $M$ of size $kn/2-o(n)$. We will then augment $M$ into a (near) perfect $k$-matching using alternating paths in $O(n)$ time.

We split the rest of the paper as follows: In Section \ref{section:model} we describe the model that we will use to analyze \kG. A full description of \kG is given in Section \ref{section:alg} while the main part of  its analysis is given in Section \ref{sec:analysis}.  In Section \ref{sec:reserve} we show  that one may reserve some edges before applying \kG to $G_{n,m}^{\delta \geq k+1}$ and still get a $k$-matching of size $kn/2-o(n)$. We then use those edges to augment the outputted $k$-matching into a k-factor. This last step takes place  in Section \ref{section:matchings}.

\section{Random Sequence Model}\label{section:model}
To analyse $G_{n,m}^{\delta \geq k+1}$ we use a variation of Bollob\'{a}s configuration model \cite{BolCM} which we refer to as the random sequence model. Given $n,m \in \mathbb{N}$ and a sequence of size $2m$, $\bx=(x_1,x_2,...,x_{2m})\in [n]^{2m}$ we define the multigraph $G_\bx$ by $V(G_\bx):=[n]$, $E(G_\bx):=\{\{x_{2j-1},x_{2j}\}:j\in [m]\}$. Thus $G_\bx$ is a graph on $n$ vertices with  $m$ edges. The degree of some vertex $v\in [n]$ with respect to the sequence $\bx$ is equal to the number of times it appears in $\bx$, i.e. $d_\bx(v)=|\{i: x_i=v, 1\leq i\leq 2m\}|$. We let $\cS_{n,2m}^{\delta \geq k+1}$ be the set of  sequences $\bx=(x_1,x_2,...,x_{2m})$ such that $d_{\bx}(i)\geq k+1$ for $i \in [n]$.
If $\bx$ is chosen uniformly at random from $\cS_{n,2m}^{\d \geq k+1}$ then  $G_\bx$ is close in distribution to $G_{n,m}^{\d \geq k+1}$. Indeed, conditioned on $G_\bx$ being simple, the distributions of $G_\bx$ and $G_{n,m}^{\d \geq k+1}$ are identical. Both are uniform over the simple graphs on $n$ vertices with $m$ edges and minimum degree $k$. Each such graph will correspond to $m!2^m$ sequences in $\cS_{n,2m}^{\d \geq k+1}$.

Similarly to \cite{AF}, \cite{F}, the model that we will use to analyze  \kG will be slightly more complicated than the one above.

For $i \in \mathbb{N}$, a list $L$ of size $i$ is a map from $\{0,1,...,i-1\}$ to $\mathbb{Z}_{\geq 0}$, i.e. $L:\{0,1,...,i-1\} \mapsto \mathbb{Z}_{\geq 0}$.

Given $n,m \in \mathbb{N}$, a list  $L^i$ of size $i+2$ for  $0\leq i\leq k$ such that $\sum_{i=0}^k\sum_{j=0}^{i+1}L^{i}(j)=n$ and  a map $y:[n]\mapsto \{0,1,...,k\} \times \{0,1,...,k,k+1\}$ that maps $L^i(j)$ many elements to $(i,j)$ we let $\bigg(\cS_{n,2m}^{L^0,L^1,...,L^k},y(\cdot)\bigg)$  be the set of sequences in $[n]^{2m}$ such that for $i\in [n]$ and $\bx \in \bigg(\cS_{n,2m}^{L^0,L^1,...,L^k},y(\cdot)\bigg)$
\begin{itemize}
\vspace{-3mm} \item[(i)] if $y_2(i)\leq y_1(i)$ then the element $i$ appears exactly $y_2(i)$ times in  \bx,
\vspace{-3mm} \item[(ii)] if $y_2(i) = y_1(i)+1$ then the element $i$ appears at least $y_2(i)$ times in  \bx.
\end{itemize} 

Hence  when $L^i(j) \neq 0$ iff $i=k, j=k+1$  we have,
$$\bigg(\cS_{n,2m}^{L^0,L^1,...,L^k},y(\cdot)\bigg)=\cS_{n,2m}^{\delta \geq k+1}.$$

\kG\ (presented later in Section 3) has as an input a graph $G=G_{n,m}^{\d\geq k+1}$ which we simulate using a sequence in $\cS_{n,2m}^{\delta \geq k+1}$. As the algorithm progresses edges are revealed and vertices are matched. To capture the amount of information revealed, or equivalently the current distribution of the graph, we will have to keep track of quantities like ``exactly $L^{k-i}(k-i+1)$ vertices have been matched exactly $i$ times but still have degree at least $k-i+1$" or 
``exactly $L^{k-i}(j)$ vertices have been matched exactly $i$ times and have degree $j$" for $0\leq j\leq k-i$".

Let 
\begin{equation}\label{eq:fk}
    f_k(\lambda)=e^{\lambda}-\sum_{i=0}^{k-1} \frac{\lambda^i}{i!}.
\end{equation} 
The next two Lemmas describe a typical element of $\bigg(\cS_{n,2m}^{L^0,L^1,...,L^k}, y(\cdot)\bigg)$. Let $\bx$ be an element of $\bigg(\cS_{n,2m}^{L^0,L^1,...,L^k}, y(\cdot)\bigg)$ chosen  uniformly at random. We know that the vertices (integers) that are mapped by $y(\cdot)$ to $(i_1,i_2)$ with $i_2\leq i_1$ appear exactly $i_2$ times in $\bx$. Let $d_1,d_2,...,d_{n'}$ be the degrees of the rest of the vertices in $\bx$. Lemma \ref{lem:equiv} states that the joint distribution of $d_1,d_2,...,d_{n'}$ is the same as the joint distribution of $\cP_1,\cP_2,...,\cP_{n'}$ where $\cP_j$ is a $Poisson(\lambda)$ random variable conditioned on being at least $y_1(j)+1$ for $j\in[n']$, for some universal, well tune, value of $\lambda$. Thereafter in Lemma \ref{lem:degDistributions} we prove concentration of the number of vertices that are mapped by $y(\cdot)$ to $(i,i+1)$ and have degree $j$  for $0\leq i\leq k$ and $i+1\leq j\leq \log^2 n.$

\begin{lemma}\label{lem:equiv}
Let $n,m \in \mathbb{N}$, $L^i$ be a list of size $i+1$ for $0\leq i\leq k$ such that $\sum_{i=0}^k\sum_{j=0}^{i+1}L^{i}(j)=n$. Also let  $y:[n]\mapsto \{0,1,...,k\} \times \{0,1,...,k,k+1\}$ be such that $|y^{-1}(i,j)|=L^i(j)$ for $0\leq i \leq k$ and $0\leq j\leq k+1$. 
Let $\bx$ be chosen uniformly at random from $\bigg(\cS_{n,2m}^{L^0,L^1,...,L^k}, y(\cdot)\bigg)$. Let $R= \sum_{i=0}^k\sum_{j=0}^{i}jL^i(j)$  and assume that 
\begin{equation}\label{cond}
    2m-R \geq \sum_{i=0}^k (i+1) L^i(i+1).
\end{equation}
Let $\lambda$ be the unique positive real number that satisfies 
\begin{equation}\label{eq:lambda}
\sum_{i=0}^{k}\frac{{\l}f_{i-1}({\l})}{f_i({\l})}L^i(i+1)=2m-R.
\end{equation}
For $1\leq i \leq k$ let $\cP_{\geq i}$ denote the {\em truncated at $i$ Poisson($\lambda$}) random variable, i.e
$$\Pr(\cP_{\geq i}=t)= \frac{e^{-\lambda}\l^t/t!}{1-\sum_{j=1}^{k-1}e^{-\l}\l^j/j!}
=\frac{{\l}^t}{t!f_i({\l})},\hspace{1in}\text{ for }t\geq i.$$
Let $\{Z_j:j \in [n]\}$  be a set of independent random variables such that,
\begin{itemize}
\vspace{-3mm} \item[(i)] $Z_j$ is distributed as $\cP_{ \geq y_2(j)}$ if $y_2(j)=y_1(j)+1$,
\vspace{-3mm} \item[(ii)] $Z_j=y_2(j)$ if  $y_2(j)\leq y_1(j)$.
\end{itemize} 
Let $\big(\cS',y(\cdot)\big)$ be the set of sequences in which element $j\in [n]$ appears $Z_j$ times. 
Let $\bx'$ be chosen uniformly at random from $\big(\cS',y(\cdot)\big)$. Then, conditioned on the length of $\bx'$, be equal to $2m$, $\bx$ has the same distribution with $\bx'$.
\end{lemma}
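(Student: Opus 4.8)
The plan is to establish a bijection-with-weights between the two sampling spaces and show the induced distributions agree. First I would observe that a uniformly random element $\bx$ of $\bigl(\cS_{n,2m}^{L^0,\dots,L^k},y(\cdot)\bigr)$ can be generated in two stages: first choose the degree sequence $(d_\bx(j))_{j\in[n]}$ subject to the constraints imposed by $y(\cdot)$ (equality $d_\bx(j)=y_2(j)$ when $y_2(j)\le y_1(j)$, inequality $d_\bx(j)\ge y_2(j)=y_1(j)+1$ otherwise) and to $\sum_j d_\bx(j)=2m$, and then choose the sequence $\bx$ uniformly among all arrangements with that degree sequence. The number of arrangements with a prescribed degree sequence $(d_j)$ is the multinomial coefficient $\binom{2m}{d_1,\dots,d_n}=\frac{(2m)!}{\prod_j d_j!}$, which depends only on the $d_j$'s. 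Hence the conditional distribution of $\bx$ given its degree sequence is identical in both models, and it suffices to show that the degree sequences have the same distribution.

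For the degree-sequence comparison, in the product model the law of $(Z_j)_{j\in[n]}$ conditioned on $\sum_j Z_j=2m$ assigns to a feasible tuple $(d_j)$ a probability proportional to $\prod_{j}\Pr(Z_j=d_j)$. For the coordinates with $y_2(j)\le y_1(j)$ this factor is $1$ (they are deterministic), and for coordinates with $y_2(j)=y_1(j)+1=:i_j$ it equals $\frac{\lambda^{d_j}}{d_j!\,f_{i_j}(\lambda)}$. Multiplying out, the $f_{i_j}(\lambda)$ terms are constants independent of $(d_j)$, and $\prod_j \lambda^{d_j}=\lambda^{\sum_j d_j}=\lambda^{2m}$ is also constant on the conditioning event; so the conditional probability of $(d_j)$ is proportional to $\prod_j \frac{1}{d_j!}$. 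That is exactly proportional to the number of sequences $\bx$ with degree sequence $(d_j)$ — i.e., it is the uniform distribution over feasible degree sequences weighted by $\binom{2m}{d_1,\dots,d_n}$, which is precisely the marginal degree-sequence law in the combinatorial model. Combining the two paragraphs gives the claim.

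The one genuine subtlety — and the main thing to check carefully — is that the conditioning event $\{\sum_j Z_j=2m\}$ has positive probability, i.e., that there exists at least one feasible degree sequence. This is where hypothesis \eqref{cond} enters: the deterministic coordinates contribute exactly $R=\sum_{i}\sum_{j\le i} j L^i(j)$ to the total degree, the coordinates of type $(i,i+1)$ must contribute at least $\sum_i (i+1)L^i(i+1)$, and \eqref{cond} says $2m-R$ is at least this minimum, so the remaining budget $2m-R$ can indeed be distributed among the truncated-Poisson coordinates (each of which can take any value $\ge i+1$); positivity of the probability follows. I would also note that the specific value of $\lambda$ from \eqref{eq:lambda} plays no role in the \emph{equivalence} statement itself — the argument above works for any $\lambda>0$ — since $\lambda$ only affects the weights multiplicatively by the constant $\lambda^{2m}$; the choice \eqref{eq:lambda} is there to make the typical value of $\sum_j Z_j$ concentrate near $2m$, which is what makes the conditioning a mild, high-probability event and is exploited in the subsequent Lemma \ref{lem:degDistributions} rather than here. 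The remaining steps are routine bookkeeping: verifying that $f_{i}(\lambda)=e^\lambda-\sum_{t=0}^{i-1}\lambda^t/t!=\sum_{t\ge i}\lambda^t/t!$ so that $\Pr(\cP_{\ge i}=t)=\frac{\lambda^t}{t!\,f_i(\lambda)}$ is a genuine probability distribution on $\{i,i+1,\dots\}$, and that the feasibility constraints in the two descriptions match up coordinate by coordinate.
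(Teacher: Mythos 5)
Your argument is correct and is essentially the paper's own proof: both condition on the degree sequence (noting the arrangement distribution is uniform, hence identical, in the two models) and then observe that the degree-sequence laws coincide because the $\lambda^{2m}$, $e^{-\lambda}$ and $f_{y_2(v)}(\lambda)$ factors are constant on the conditioning event, leaving weights proportional to $\prod_j 1/d_j!$ in both cases. Your additional remarks — that \eqref{cond} guarantees the conditioning event is nonempty and that the particular $\lambda$ of \eqref{eq:lambda} is irrelevant to the equivalence itself — are accurate refinements consistent with how the paper uses the lemma.
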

\begin{proof}
Denote by $len(\bx')$ the length of $\bx'$. 
Since conditions (i),(ii) are satisfied by both sequences it is enough to check the case
where  $L^i(j)=0$ for $0\leq j \leq i$ and $i\in \{0,1,...,k\}$.

Fix $n,2m \in \mathbb{N}$ and $\bx''\in[n]^{2m}$. 
For $\bar{\bx}\in \{\bx,\bx',\bx''\}$ let $\cD(\bar{\bx})$ be the degree sequence of $\bar{\bx}$. Assume that $d_{\bx''}(v)\geq y_1(v)+1$ for $v\in[n]$ and $\bx''$ has length $2m$ (otherwise $\Pr(\bx=\bx'')=\Pr(\bx'=\bx''|len(\bx')=2m)=0$).

Observe that if we let  
$p_{\cD(\bx')}=\Pr(\bx'=\bx''|\cD(\bx')=\cD(\bx''), len(\bx')=2m)$ and
$p_{\cD(\bx)}=\Pr(\bx=\bx''|\cD(\bx)=\cD(\bx''))$
then,
$$p_{\cD(\bx)}=p_{\cD(\bx')}.$$

Let $\cS_d$ be a maximal subset of $\big(\cS_{n,2m}^{L^0,...,L^k},y(\cdot)\big)$ 
with the property that all elements of $\cS$ have distinct degree sequences (hence every sequence in $\big(\cS_{n,2m}^{L^0,...,L^k},y(\cdot)\big)$ can be obtained via a permutation from a unique sequence in $\cS_d$). Then for $\bx \in \cS_d$ there are exactly $\frac{(2m)!}{\underset{v\in [n]}{\prod}d_{\bx}(v)!}$ elements in $\big(\cS_{n,2m}^{L^0,...,L^k},y(\cdot)\big)$ with the same degree sequence as $\bx$. In addition, conditioned on $len(x')=2m$ 
the set of possible degree sequences for $\bx'$ is exactly the same as the set of degree sequences of elements of $\cS_d$. 
Therefore,
\begin{align*} 
\Pr(\bx=\bx'')&= p_{\cD(\bx)}\cdot  \Pr(\cD(\bx)=\cD(\bx''))
\\&= p_{\cD(\bx)}\cdot  \left( \frac{(2m)!}{\underset{v\in [n]}{\prod}d_{\bx''}(v)!}\right)
\bigg/
\left( \sum_{\bx \in \big(\cS_{n,2m}^{L^0,...,L^k},y(\cdot)\big) } 1\right)
\\&=  p_{\cD(\bx)}\cdot  \left( \frac{(2m)!}{\underset{v\in [n]}{\prod}d_{\bx''}(v)!}\right)
\bigg/
\left( \sum_{\bx \in \cS_d }\frac{(2m)!}{\underset{v\in [n]}{\prod}d_{\bx}(v)!} \right).
\end{align*}
On the other hand, 
\begin{align*}
\Pr \bigg(\bx'= \bx'' \bigg|\; len(\bx') = 2m \bigg)
&= p_{\cD(\bx')} \cdot \Pr(\cD(\bx')=\cD(\bx'')|len(\bx')=2m)
\\&= p_{\cD(\bx')} \cdot \left( \underset{v\in [n]}{\prod}\frac{ 
e^{-\lambda}\lambda^{d_{\bx''}(v)} }{d_{\bx''}(v)! f_{y_2(v)}(\lambda)}  \right) \bigg/ \left( \sum_{\bx' \in \cS_d} \underset{v\in [n]}{\prod}
\frac{ e^{-\lambda}\lambda^{d_{\bx'}(v)}}{d_{\bx'}(v)! f_{y_2(v)}(\lambda)  }\right)
\\& = p_{\cD(\bx')} \left( \lambda^{2m} \underset{v\in [n]}{\prod}
\frac{1}{d_{\bx''}(v)!}  \right)  \bigg/ \left( \lambda^{2m} \sum_{\bx' \in \cS_d} \underset{v\in [n]}{\prod}
\frac{1}{d_{\bx'}(v)!} \right) 
\\&=\Pr(\bx=\bx'').
\end{align*}
\end{proof}
We now use Lemma \ref{lem:equiv} to approximate the distribution of the  degree sequence of a random element of $\big(\cS_{n,2m}^{L^0,...,L^k},y(\cdot)\big)$.
\begin{lemma}\label{lem:degDistributions}
Let $\bx$ be chosen uniformly from $\big(\cS_{n,2m}^{L^0,...,L^k},y(\cdot)\big)$. Let $m\leq 100n\log n$ and assume that \eqref{cond} is satisfied. For $0\leq i\leq k$ and $j\geq i+1$ let $ \nu^{i}_j(\bx)$ be the number of elements $v\in [n]$ such that $y_1(v)=i$ and $d(v)=j$. Then with probability $1-o(n^{-9})$,
$$\bigg|\nu^{i}_j(\bx)- L^i(i+1) \frac{ \lambda^j}{j! f_i(\lambda)} \bigg| \leq n^{1/2}\log^2 n  \text{ for } 0\leq i\leq k \text{ and } i+1 \leq j\leq \log^2 n.$$
\end{lemma}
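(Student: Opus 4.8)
The plan is to transfer the problem to the independent model of Lemma~\ref{lem:equiv}, prove concentration of each $\nu^i_j$ there by an off-the-shelf tail bound, and then pay for the conditioning on the total length by a crude anti-concentration estimate. By Lemma~\ref{lem:equiv}, $\bx$ is distributed as $\bx'$ conditioned on $\{\,len(\bx')=2m\,\}$, where under the unconditional law of $\bx'$ the appearance counts $\{d_{\bx'}(v)\}_{v\in[n]}$ are mutually independent, equal to $y_2(v)$ deterministically when $y_2(v)\le y_1(v)$ and a truncated Poisson when $y_2(v)=y_1(v)+1$. Fix $0\le i\le k$ and $j\ge i+1$. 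Only the $L^i(i+1)$ vertices with $y(v)=(i,i+1)$ can have $d_{\bx'}(v)=j$, since a deterministic vertex with $y_1(v)=i$ has $d_{\bx'}(v)=y_2(v)\le i<j$; and each of those independently has $d_{\bx'}(v)=j$ with the probability prescribed by Lemma~\ref{lem:equiv}. Hence, under the unconditional law, $\nu^i_j(\bx')$ is a sum of $L^i(i+1)\le n$ independent Bernoulli variables whose mean is exactly the target quantity $L^i(i+1)\,\lambda^j/(j!\,f_i(\lambda))$.

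Concentration is then immediate: by Hoeffding's inequality, for each such $(i,j)$ and $t=n^{1/2}\log^2 n$,
\[
\Pr\!\Big(\big|\nu^i_j(\bx')-L^i(i+1)\tfrac{\lambda^j}{j!\,f_i(\lambda)}\big|\ge t\Big)\;\le\;2\exp(-2t^2/n)\;=\;2\exp(-2\log^4 n).
\]
Since there are at most $(k+1)\log^2 n=O(\log^2 n)$ pairs $(i,j)$ with $0\le i\le k$ and $i+1\le j\le\log^2 n$, a union bound shows that, under the unconditional law of $\bx'$, the probability that the inequality in the lemma fails for some such pair is at most $\exp(-\Omega(\log^4 n))$.

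It remains to bound $\Pr(len(\bx')=2m)$ from below, uniformly in the parameters. Write $len(\bx')=R+S$ with $S=\sum_{v\in A}d_{\bx'}(v)$, $A$ the set of ``active'' vertices ($y_2(v)=y_1(v)+1$); by \eqref{eq:lambda} the mean of $S$ is the integer $2m-R$. If $|A|\le n^{1/2}\log^2 n$ --- in particular if $A=\emptyset$, in which case \eqref{cond} forces $2m=R$ --- then both $\nu^i_j(\bx)$ and its target are at most $|A|\le n^{1/2}\log^2 n$, so the lemma holds deterministically; thus I may assume $|A|>n^{1/2}\log^2 n$. Then $S$ is a genuine sum of independent integer random variables, each supported on a block of consecutive integers (so the summands span the full lattice, $\lambda$ being positive), with
\[
\Var(S)=\sum_{v\in A}\Var(d_{\bx'}(v))=O\Big(\sum_{v\in A}(\lambda+1)\Big)=O(\lambda|A|+|A|)=O(m)=O(n\log n),
\]
using $\Var(\cP_{\ge\ell})=O(\lambda+1)$ for constant $\ell$, the bounds $\lambda|A|\le\sum_{v\in A}\E\,d_{\bx'}(v)=2m-R$ and $|A|\le 2m-R$, and $m\le 100n\log n$. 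A local limit theorem for sums of this type --- as already used for configuration-model degree sequences in \cite{AF},\cite{F} --- then gives $\Pr(len(\bx')=2m)=\Pr(S=2m-R)=\Omega\big(\Var(S)^{-1/2}\big)\ge n^{-1}$ for large $n$ (the borderline sub-case $\Var(S)=O(1)$ being handled instead by a Poisson/Le~Cam approximation to $S$, which again yields $\Omega(1)$). Combining with the previous paragraph,
\[
\Pr\big(\text{the lemma's bound fails for some }(i,j)\,\big|\,len(\bx')=2m\big)\;\le\;\frac{\exp(-\Omega(\log^4 n))}{\Pr(len(\bx')=2m)}\;\le\;n\exp(-\Omega(\log^4 n))\;=\;o(n^{-9}),
\]
and Lemma~\ref{lem:equiv} converts this conditional statement for $\bx'$ into the asserted statement for $\bx$.

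The only step that is not routine bookkeeping is this last lower bound on $\Pr(len(\bx')=2m)$: one has to keep track of $\lambda$ --- which may be as small as $o(1)$ when active vertices abound or as large as $\Theta(n\log n)$ when almost every copy is deterministic --- and, uniformly over that range, verify the hypotheses (non-degeneracy of $S$, lattice span $1$ of its summands) of whatever local limit theorem one invokes, dealing with the handful of degenerate regimes separately. What makes this manageable is that the Hoeffding bound above is super-polynomially small, so any polynomial lower bound on $\Pr(len(\bx')=2m)$ suffices; no sharp estimate is needed.
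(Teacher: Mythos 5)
Your proposal is correct and follows essentially the same route as the paper: transfer to the independent truncated-Poisson model of Lemma \ref{lem:equiv}, observe that $\nu^i_j$ is then Binomial with the target mean, apply a Chernoff/Hoeffding bound, pay a polynomial factor for conditioning on $len(\bx')=2m$, and union bound over the $O(\log^2 n)$ pairs $(i,j)$. The only difference is that you spell out the local-limit-theorem lower bound on $\Pr(len(\bx')=2m)$ (and its degenerate cases), which the paper absorbs implicitly as the $O(m^{0.5})$ conditioning factor in its one-line computation.
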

\begin{proof}
$\lambda$ has been chosen such that $\Pr(\cP_{\geq i}=j)= \frac{\lambda^j}{j! f_i(\lambda)}$ Thus, if we let $X$ be 
\\a $Binomial\left(L^i(i+1), \frac{ \lambda^j}{j! f_i(\lambda)}\right)$ random variable, Lemma \ref{lem:equiv} implies, 
\begin{align*}
    \Pr\bigg( \bigg|\nu^{i}_j(G_\bx)- L^i(i+1) \frac{ \lambda^j}{j! f_i(\lambda)} \bigg| > n^{1/2}\log^2 n \bigg)&
= O(m^{-0.5})    
    \Pr\big( |X-\mathbb{E}(X)|> n^{1/2}\log^2 n \big)
\\& \leq  O(m^{-0.5}) \exp\set{-O(\log^4 n)} 
    =o(n^{-9.5}),
\end{align*}
where the inequality follows from the Chernoff bound (see Theorem \ref{thm:chernoff}).
Taking union bound over $i,j$ yields the desired result.
\end{proof}
It can be shown, see for example \cite{BCFF},\cite{McK} that for a chosen uniformly at random element $x$ of  $\in \big(\cS_{n,2m}^{L_0,...,L^k},y(\cdot)\big)$ that if
$m=O(n)$ and $\sum_{i=0}^k\sum_{j=1}^{i}L^i(j)=o(n)$ then,
\begin{equation*}
\Pr(G_\bx\text{ is simple})=\Omega(1).
\end{equation*}
Hence, when $L^0,...,L^{k-1}$,  $L^k(0),L^k(1),...,L^k(k)$ are zero and $y$ maps $[n]$ to $(k,k+1)$, choosing a random element of $\bx\in \big(\cS_{n,2m}^{L_0,...,L^k},y(\cdot)\big)$ and then generating $G_\bx$ is a good model for generating
$G_{n,m}^{\d \geq k+1}$ and for any function $f(\cdot)$ such that $f(n)\to 0$ as $n\to \infty$ any property that hold with probability $1-o(f(n))$ for $G_\bx$ also hold with probability $1-o(f(n))$ for $G_{n,m}^{\d \geq k+1}$.

\subsection{Properties of $G_{n,m}^{\d\geq k+1}$}
Let $\epsilon>0$ and  $(1+\epsilon)kn/2 \leq m =O(n)$. $\lambda$ is defined by \eqref{eq:lambda}. Let $\cE$ be an occupancy event in $G_{n,m}^{\d\geq k+1}$. Denote by $G_{n,m}^{\d\geq k+1,seq}$ the random graph that is generated from the random sequence model (i.e. from first choosing a random element of $\cS_{n,2m}^{\d\geq k+1}$ and then generating the corresponding graph) and $G_{n,m}^{\d\geq k+1,Po(\l)}$ the random graph that is generated by first generating $n$ independent $ \cP_{k+1}(\l)$ random variables $P_1,P_2,...,P_n$, then choosing a random sequence in $[n]^{\sum_{i\in [n]} P_i}$ with degree sequence $P_1,P_2,...,P_n$ and finally generating he corresponding graph if $\sum_{i\in [n]} P_i$ is even. Then,
\begin{align}\label{eq:models}
    \Pr\big(G_{n,m}^{\d\geq k+1} \in \cE\big) &\leq O(1) \Pr\big(G_{n,m}^{\d\geq k+1,seq} \in \cE\big)= \Pr\bigg(G_{n,m}^{\d\geq k+1,Po(\l)} \in \cE\bigg| \sum_{i\in [n]}{P_i}=2m\bigg) \nonumber
 \\&\leq O(n^{0.5}) \Pr\big(G_{n,m}^{\d\geq k+1,Po(\l)} \in \cE\big).
\end{align} 

In the Lemmas that follow we let  $G\sim G_{n,m}^{\delta \geq k+1,seq}$.

\begin{lemma}\label{lem:degrees}
$$\Pr\bigg(\Delta(G)> \frac{10\log n}{\log\log n}\bigg) =o( n^{-9}).$$
\end{lemma}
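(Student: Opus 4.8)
The plan is to bound the maximum degree $\Delta(G)$ using the Poisson model comparison from \eqref{eq:models} together with a standard tail estimate for the truncated Poisson distribution. Since we are in the setting where $L^0,\dots,L^{k-1}$ and $L^k(0),\dots,L^k(k)$ all vanish and $y$ maps every vertex to $(k,k+1)$, the degree of a vertex in $G_{n,m}^{\delta\geq k+1,Po(\lambda)}$ before conditioning is an i.i.d.\ truncated Poisson $\cP_{\geq k+1}(\lambda)$. The key point is that $\lambda = O(1)$ here: from \eqref{eq:lambda} we have $\sum_i \frac{\lambda f_{i-1}(\lambda)}{f_i(\lambda)} L^i(i+1) = 2m - R$, which in our case reads (roughly) $n \cdot \frac{\lambda f_k(\lambda)}{f_{k+1}(\lambda)} = 2m = O(n)$, so $\lambda$ is bounded above by an absolute constant depending only on $k$ and the implied constant in $m = O(n)$.

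First I would record the tail bound for a single vertex: for $d$ large, $\Pr(\cP_{\geq k+1}(\lambda) = d) = \frac{\lambda^d}{d!\, f_{k+1}(\lambda)} \leq \frac{\lambda^d}{d!\, f_{k+1}(\lambda)}$, and since $\lambda = O(1)$ while $d!$ grows super-exponentially, we get $\Pr(\cP_{\geq k+1}(\lambda) \geq D) = o(n^{-11})$ once $D \geq \frac{10\log n}{\log\log n}$; concretely $D! \geq (D/e)^D$ and $\log(D!) \geq D(\log D - 1) = (1+o(1)) \frac{10 \log n}{\log\log n} \cdot \log\log n = (10+o(1))\log n$, which dominates $D\log\lambda = O(\log n/\log\log n)$ and the $o(1)$-fraction of the constant, so the per-vertex probability is $o(n^{-10})$ with room to spare. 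Then I would take a union bound over the $n$ vertices to conclude $\Pr(\Delta(G_{n,m}^{\delta\geq k+1,Po(\lambda)}) > D) = o(n^{-9.5})$, say.

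Second, I would transfer this to $G \sim G_{n,m}^{\delta\geq k+1,seq}$ via \eqref{eq:models}: the event $\{\Delta > D\}$ is an occupancy event, so $\Pr(G_{n,m}^{\delta\geq k+1,seq} \in \cE) \leq O(n^{0.5}) \Pr(G_{n,m}^{\delta\geq k+1,Po(\lambda)} \in \cE) = O(n^{0.5}) \cdot o(n^{-9.5}) = o(n^{-9})$, which is exactly the claimed bound. (Note $m = O(n) \leq 100 n \log n$, so any ambient hypotheses on $m$ are comfortably met.)

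I do not expect a serious obstacle here; the only thing that needs a little care is making the constant in the exponent come out right, i.e.\ checking that the threshold $\frac{10\log n}{\log\log n}$ really does force the per-vertex probability below $n^{-10}$ rather than merely below $n^{-9}$, so that after the union bound and the $O(n^{0.5})$ loss from \eqref{eq:models} we still land at $o(n^{-9})$. Tracking this amounts to the elementary estimate $\log(D!) = D\log D - D + O(\log D)$ with $D = \frac{10\log n}{\log\log n}$, giving $\log(D!) = (10+o(1))\log n$, which leaves a factor $n^{o(1)}$ of slack — more than enough to absorb $\lambda^D = n^{o(1)}$, the $1/f_{k+1}(\lambda) = O(1)$ factor, the union bound factor $n$, and the $O(n^{0.5})$ from the model switch.
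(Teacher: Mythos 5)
Your argument is essentially the paper's own proof: it passes to the truncated-Poisson model via \eqref{eq:models}, bounds the per-vertex tail $\Pr\big(\cP_{\geq k+1}(\lambda)\geq D\big)$ with $D=\tfrac{10\log n}{\log\log n}$ by $\lambda^{D}/\big(f_{k+1}(\lambda)D!\big)$ together with Stirling, and finishes with a union bound over the $n$ vertices, exactly as the paper does (the paper merely performs the model switch before, rather than after, the union bound). The only caveat is your accounting of the slack: since $\log D!=D\log D-D+O(\log D)=\big(10-\Theta\big(\tfrac{\log\log\log n}{\log\log n}\big)\big)\log n$, the per-vertex tail is $n^{-10+o(1)}$ with the $o(1)$ term positive rather than $o(n^{-10})$ ``with room to spare''; this is the very same final leap the paper takes when it writes $O(n^{1.5})\big(\tfrac{e\lambda\log\log n}{10\log n}\big)^{10\log n/\log\log n}=o(n^{-9})$, so it does not distinguish your argument from the paper's.
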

\begin{proof}
\eqref{eq:models} implies,
\begin{align*}
 \Pr\bigg(\Delta(G)> \frac{10\log n}{\log\log n}\bigg) &
 \leq O(n^{0.5}) \cdot n  \Pr\bigg(\cP_k(\l) > \frac{10\log n}{\log\log n}\bigg)
\\& \leq O(n^{1.5}) \frac{\lambda^{\frac{10\log n}{\log\log n}}}{f_{k+1}(\lambda)(\frac{10\log n}{\log\log n})!} \leq O(n^{1.5})
\bfrac{e\lambda \log\log n}{10\log n}^{\frac{10\log n}{\log\log n}} =o(n^{-9}). \end{align*}
\end{proof}
For the next Lemma we let $\beta\in (0,1)$ be such that $\eqref{eq:beta}$ is satisfied.
\begin{lemma}\label{lem:density}
With probability $1-O(n^{-0.5})$,
\begin{itemize}
    \item[(i)] there does not exists a set $S\subset V(G)$ of size $|S|\leq k^{400}$ that spans $|S|+1$ edges in $G$.
    \item[(ii)]  there does not exists a set $S\subset V(G)$ of size $k^{200}\leq |S| \leq \beta n$ that spans $(1+10^{-10})|S|$ edges in $G$.
\end{itemize}
\end{lemma}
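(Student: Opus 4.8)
I would run a first moment computation in the Poisson version of the model. By \eqref{eq:models} it is enough to bound the probabilities of the bad events in (i) and (ii) when $G\sim G_{n,m}^{\d\ge k+1,Po(\l)}$, since the factor $O(n^{0.5})$ lost in passing to the sequence model turns an $O(n^{-1})$ bound there into the required $O(n^{-0.5})$. Note $\l=O(1)$, as the per-vertex degree has mean $2m/n=O(1)$. In the Poisson model the degrees $P_1,\dots,P_n$ are i.i.d.\ copies of $\cP_{\ge k+1}(\l)$ and, conditioned on them, $G$ is a uniformly random configuration on the $2M:=\sum_{v}P_v$ half-edges. A Chernoff bound (Theorem \ref{thm:chernoff}) together with the exponential tails of the $P_v$ gives $\Pr\big(|2M-2m|>n^{0.6}\big)=o(n^{-1})$, and I work on the complementary event, on which $2M-2t\ge m$ whenever $t\le 2(1+10^{-10})\beta n$, provided $\beta$ is small enough.

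\textbf{First moment.} Fix $s$ and set $t=t(s):=s+1$ in the range of (i) and $t=t(s):=\lceil(1+\eta)s\rceil$ with $\eta:=10^{-10}$ in the range of (ii); let $N_s$ be the number of $s$-element sets $S\subseteq V(G)$ with $e(S)\ge t(s)$, where $e(S)$ is the number of edges of $G$ with both ends in $S$. Writing $W_S:=\sum_{v\in S}P_v$ for the number of half-edges inside $S$, the usual configuration-model estimate gives, conditionally on the degrees and on $|2M-2m|\le n^{0.6}$,
\[
\Pr\big(e(S)\ge t \mid (P_v)_v\big)\ \le\ \frac{1}{t!}\left(\frac{W_S^2}{2(2M-2t)}\right)^{t}\ \le\ \frac{1}{t!}\left(\frac{W_S^2}{2m}\right)^{t},
\]
since at most $W_S^{2t}/(2^t t!)$ unordered families of $t$ pairwise disjoint half-edge pairs lie inside $S$, each realised with probability $\prod_{i=0}^{t-1}(2M-1-2i)^{-1}\le (2M-2t)^{-t}$. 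As $W_S$ is a sum of $s$ i.i.d.\ variables of $O(1)$ mean and exponential upper tail and $t=O(s)$, standard moment bounds give $\E[W_S^{2t}]\le(C_1 s)^{2t}$ for a constant $C_1=C_1(k,c)$. Hence, using $\binom{n}{s}\le(en/s)^s$ and $t!\ge(t/e)^t$,
\[
\E[N_s]\ \le\ \binom{n}{s}\,\frac{\E[W_S^{2t}]}{t!\,(2m)^t}\ \le\ \left(\frac{en}{s}\right)^{s}\left(\frac{e\,C_1^2\,s^2}{2m\,t}\right)^{t}.
\]

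\textbf{Summing over $s$.} For (i), $t=s+1$ and $s\le k^{400}=O(1)$: the powers of $n$ contribute $n^s/(2m)^{s+1}=\Theta(n^{-1})$, all remaining factors are bounded by a constant depending only on $k$ and $c$, and there are only $O(1)$ values of $s$, so (i) fails with probability $O(n^{-1})$. For (ii), $t\ge(1+\eta)s$, and once $\beta<2c/C_1^2$ the quantity $(eC_1^2 s^2/(2mt))^t$ is decreasing in $t$ on $t\ge(1+\eta)s$; substituting $t=(1+\eta)s$, $2m=2cn$ and $x:=s/n$ then gives $\E[N_s]\le(C_2\,x^{\eta})^s$ for a constant $C_2=C_2(k,c)$. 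For $\beta$ small enough --- this is what \eqref{eq:beta} encodes --- the map $s\mapsto(C_2 x^{\eta})^s$ is decreasing on $[k^{200},\beta n]$, so the sum over that range is at most $\beta n$ times its value at $s=k^{200}$, namely $\le n\,\big(C_2(k^{200}/n)^{\eta}\big)^{k^{200}}=O\big(n^{\,1-\eta k^{200}}\big)=o(n^{-1})$ since $\eta k^{200}\ge 10^{-10}\cdot 2^{200}\gg 2$. Combining the two parts with the discarded event $|2M-2m|>n^{0.6}$, the bad events have total probability $O(n^{-1})$ in the Poisson model, and \eqref{eq:models} completes the proof.

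\textbf{Main obstacle.} The one step that must not be done crudely is the moment bound $\E[W_S^{2t}]\le(C_1 s)^{2t}$: using instead the deterministic bound $W_S\le s\,\Delta(G)=O(s\log n/\log\log n)$ of Lemma \ref{lem:degrees} introduces $\log n$ factors that overwhelm the $n^{-\eta k^{200}}$ gain exactly in the window $s\approx\log n$, so one really needs exponential control of the upper tail of $W_S$ rather than a worst-case degree bound. The remaining work is bookkeeping: verifying that the single smallness hypothesis on $\beta$ coming from \eqref{eq:beta} simultaneously guarantees $2M-2t\ge m$ on the good event, the monotonicity of the summands in (ii), and that $s\mapsto(C_2 x^{\eta})^s$ is decreasing on the relevant range.
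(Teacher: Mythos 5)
Your proposal is correct and follows essentially the same route as the paper: reduce to the Poisson-degree configuration model via \eqref{eq:models} (absorbing the $O(n^{0.5})$ loss), then run a first-moment/union bound over vertex sets $S$ and over the $t$ internal half-edge pairings, and finally sum over $|S|$ separately in the two ranges, with $\beta$ chosen small enough to make the geometric-type sum collapse. The only difference is bookkeeping: where the paper sums explicitly over degree sequences $(d_i)$ and internal degrees $(z_i)$ and collapses them with a multinomial identity at fixed total $2m$, you aggregate the degree randomness into $W_S=\sum_{v\in S}P_v$, control it by the moment bound $\E[W_S^{2t}]\le (C_1 s)^{2t}$ (valid here since $t\ge s$ and the truncated Poissons have finite exponential moments), and handle the random total $2M$ by concentration — a legitimate and, on this point, slightly more careful variant of the same computation.
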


\begin{proof}

{\em{(i):}} For $\ell\geq 0$ let $\Phi(2\ell)$ be the number of ways to partition  a $2\ell$ element set into pairs. Then $\Phi(2\ell)=\frac{(2\ell)!}{\ell!2^\ell}$ and
\begin{align*}
\frac{\Phi(2m-2\ell)\Phi(2\ell)}{\Phi(2\ell)}
=\frac{(2\ell)!}{\ell!} \frac{m!(2m-2\ell)!}{(m-\ell)!(2m)!}
\leq (2\ell)^\ell \bfrac{1}{2(2m-2\ell)}^\ell = \bfrac{\ell}{2\ell-2\ell}^\ell.
\end{align*}
For fixed $s \leq 10^{100}$ let $r=r(s)=s+1$. Let $C=k^{400}$.  
\eqref{eq:models} implies,
\begin{align}
\Pr&( \exists S\subset V(G):  |S| \leq C  \text{ and } S \text{ spans  $|S|+1$ edges in $G$} )\nonumber
    \\&\leq O(n^{0.5}) \sum_{s= 4}^{C}  \binom{n}{s}  \sum_{\substack{d_1,d_2,...,d_s\geq k+1\\ z_1\leq d_1,...,z_s\leq d_s     \\ z_1+...+z_s=2r}} 
    \prod_{i=1}^s \frac{\lambda^{d_i}}{ d_i! f_{k+1}(\lambda)} \binom{d_i}{z_i} 
    \frac{\Phi(2m-2r)\Phi(2r)}{\Phi(2m)} \label{exp1}
    \\&\leq O(n^{0.5})  \sum_{s= 4}^{C} \binom{n}{s}   \frac{\lambda^{2r}}{f_{k+1}^s(\lambda)}  \bfrac{r}{2m-2r}^r \sum_{\substack{d_1,d_2,...,d_s\geq k+1\\ z_1\leq d_1,...,z_s\leq d_s\\ z_1+...+z_s=2r}} 
    \prod_{i=1}^s \frac{\lambda^{d_i-z_i}}{(d_i-z_i)!} 
    \nonumber
    \\&\leq  O(n^{0.5})  \sum_{s=4}^C \binom{n}{s}  \frac{\lambda^{2r}}{f_{k+1}^s(\lambda)}  
     \bfrac{r}{2m-2r}^r \sum_{D\geq 2r} \sum_{\substack{z_1,z_2,...,z_s\geq 0\\z_1+...+z_s=2r}}
      \frac{\lambda^{D-2r}s^{D-2r}}{(D-2r)!} \label{exp2}
     \\&= O(n^{0.5})  \sum_{s= 4}^C \binom{n}{s}  \frac{\lambda^{2r}}{f_{k+1}^s(\lambda)}  \bfrac{r}{2m-2r}^r \sum_{D\geq 2r} \binom{2r+s-1}{s-1}  \frac{(\lambda s)^{D-2r}}{(D-2r)!} \nonumber 
    \\&\leq   O(n^{0.5})   \sum_{s= 4}^{C}  \bfrac{en}{s}^s \frac{\lambda^{2r}}{f_{k+1}^s(\lambda)} \bfrac{r}{2m-2r}^r
    \bfrac{e(2r+s)}{s}^se^{s \lambda } \nonumber
    \\&\leq   O(n^{0.5})   \sum_{s= 4}^{C}  \bfrac{en}{s}^s \frac{\lambda^{2r}}{f_{k+1}^s(\lambda)} \bfrac{r}{2m-2r}^r
    10^se^{s \lambda } \nonumber
    \\& \leq   O(n^{0.5}) \sum_{s=4}^{C} 
    \bfrac{10e^{1+\lambda} \lambda^{2} rn }{ f_{k+1}(\lambda)s(2m-2r )}^s\bfrac{4\lambda^2r}{2m-2r }^{r -s} \label{rep}
    \\& \leq   O(n^{0.5}) \sum_{s=4}^{C} 
      \bfrac{4\lambda^2r}{2m-2r }^{r -s} =O(n^{-0.5}). \nonumber
\end{align}

{\textbf{Explanation of \eqref{exp1}}} We first choose $s$ vertices $v_1,v_2,...,v_s$ in $\binom{n}{s}$ ways. Those vertices will span a subgraph $S$ with $r$ edges. 
The degree of $v_i$ in $G$ will be $d_i$, this occurs with probability $\prod_{i=1}^s\frac{\lambda^{d_i}}{d_i! f_k(\lambda)}$, and its degree in $S$ will be $z_i$. Then, for each vertex $v_i$ we choose a set of $z_i$ out of the $d_i$ copies of $v_i$. The last term is the probability that those copies induce $\sum_{i=1}^sz_i/2$ edges when we pass form the sequence in $[n]^{\sum_{i\in [n]}d(i) }$ to the corresponding graph.

To derive \eqref{exp2} we used the following identity. For fixed $z_1,z_2,...,z_s$ if $\sum_{i=1}^s z_i=2r$ and $\sum_{i=1}^s d_i-z_i=D-2r$ then  $\sum_{\substack{z_1\leq d_1,...,z_s\leq d_s\\ d_1+...+d_s=D \\ z_1+z_2+...+z_s=2r}}\frac{(D-2r)!}{\prod_{i=1}^s(d_i-z_i)!}= s^{D-2r}$.

Now for  $k^{200}\leq s \leq \beta n$ let $r=r(s)=(1+10^{-10})s$.
Similarly to \eqref{rep} we have
\begin{align}
\Pr&( \exists S\subset V(G):  k^{200}\leq |S| \leq \beta n \text{ and } S \text{ spans $(1+10^{-10})|S|$ edges in $G$} ) \nonumber
    \\&  \leq  O(n^{0.5}) \sum_{s=  k^{200}}^{\beta n} 
    \bfrac{20e^{1+\lambda} \lambda^{2} rn }{ f_{k+1}(\lambda)s(2m-2r )}^s\bfrac{4\lambda^2r}{2m-2r }^{r -s}  =o(n^{-3}). \label{eq:beta}
\end{align}
\end{proof}

\section{The \kG Algorithm}\label{section:alg}
In this section we will describe and begin to analyze \kG. Its performance when applied to $G\sim G_{n,m}^{\delta \geq k+1,seq}$ is given by the following Theorem.
\begin{theorem}\label{thm:kGreedy}
Let $k\geq 2$,  $(k+1)/2< c=O(1)$  and $G\sim G_{n,cn}^{\d \geq k+1,seq}$. Then, with probability $1-o(n^{-9})$, \kG applied to $G$ outputs a $k$-matching $M$ of size at least $kn/2-n^{0.401}$ in $O(n)$ time.
\end{theorem}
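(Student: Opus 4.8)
The plan is to analyze \kG\ via a fluid-limit / differential-equations-style argument on the random sequence model, tracking a bounded number of state variables. First I would set up the configuration-model bookkeeping: at each step $t$ the exposed graph on the unmatched ``stubs'' is again an instance of $\big(\cS_{n,2m'}^{L^0,\dots,L^k},y(\cdot)\big)$ for the current lists $L^i_t$, where $L^i_t(i+1)$ counts vertices matched exactly $i$ times that still have at least $i+1$ free stubs, and $L^i_t(j)$ for $j\le i$ counts vertices matched $i$ times with exactly $j$ free stubs (these are ``frozen'' and can never again be touched by the dangerous-vertex rule in a harmful way). Lemmas \ref{lem:equiv} and \ref{lem:degDistributions} guarantee that, conditioned on the lists, the residual degree sequence of the live vertices is (essentially) independent truncated Poissons with parameter $\lambda_t$ determined by \eqref{eq:lambda}, so the next revealed edge endpoint is, to leading order, a size-biased pick governed by $\lambda_t$. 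I would then write down the one-step expected changes $\Delta L^i_t(j)$: picking an edge at a dangerous vertex (when $Z\ne\emptyset$) moves that vertex from $L^\ell(k-\ell)$-type status up by one in $M$-degree, while the other endpoint is a fresh size-biased vertex; plus the ``cleanup'' step removes all remaining stubs at any vertex that just reached $M$-degree $k$, which feeds the $L^i(j)$, $j\le i$, buckets and can cascade. The key nonnegative potential is $\Phi_t = kn/2 - |M_t|$, the deficiency, together with $|Z_t|$, the number of dangerous vertices.

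Next I would argue that the dangerous-vertex priority rule keeps $|Z_t|$ small throughout. The heuristic is that a new dangerous vertex is created only when cleanup strips stubs from a vertex that was matched $\ell<k$ times and had few free stubs; with minimum degree $k+1$ and $c>(k+1)/2$ there is ``slack'' of order $(2c-(k+1))n$ in the total stub count, so typically vertices have comfortably more than $k$ free stubs and the creation rate of dangerous vertices is $o(1)$ per step except near the very end. I would make this precise by a supermartingale / Wormald differential-equation-method argument (or the concentration engine already implicit in Lemma \ref{lem:degDistributions}): show that as long as the number of remaining free stubs $2m'_t$ is at least, say, $n^{0.8}$, the trajectory of the scaled variables $L^i_t(j)/n$ follows a deterministic ODE system whose solution keeps $|Z_t| = o(n)$ and drives $2m'_t$ down to $o(n)$ while $|M_t|$ grows to $kn/2 - o(n)$; the sparsity/expansion facts in Lemma \ref{lem:density} (no small dense sets, no dense sets of size up to $\beta n$) are what rule out the bad event that cleanup cascades blow up $Z$, and Lemma \ref{lem:degrees} caps the maximum degree so individual steps change the state by at most $O(\log n/\log\log n)$, giving the Azuma/Hoeffding concentration with the claimed $1-o(n^{-9})$ probability and the $n^{0.401}$-type error term.

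The endgame, when only $O(n^{0.8})$ stubs remain, needs a separate and more careful treatment, since the Poisson approximation degrades and the cascades become comparatively large. Here I would bound the final deficiency directly: once the live graph is small, either there are no dangerous vertices and \kG\ matches a min-$M$-degree vertex, strictly decreasing the potential $\sum_v (k - d_M(v))$ over live vertices, or there are dangerous vertices and we handle them first; using Lemma \ref{lem:density}(i) the live remainder on $\le k^{400}$ vertices spans at most $|S|+1$ edges, so it is almost a forest and the number of edges we can fail to add is bounded by the number of ``surplus'' stubs plus $O(1)$ per tiny component. Combining the $o(n^{\,0.8})$ bound from the endgame with the $o(n^{\,0.8})$ error accumulated in the main phase yields a $k$-matching of size $kn/2 - O(n^{0.401})$ (the exponent $0.401$ being slack). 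The running time is $O(n)$ because each step does a bounded amount of work — sampling one stub, one $M$-update, and cleanup at one vertex whose degree is $O(\log n/\log\log n)$ — over $O(n)$ steps, and maintaining $Z$ and the min-$M$-degree bucket costs amortized $O(1)$ with an array of buckets indexed by $M$-degree.

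The main obstacle I expect is controlling the cleanup cascades near the end of the process: showing that purging all free stubs at newly saturated vertices does not, through chains of vertices that are simultaneously near $M$-degree $k$ and near degree $k$, create a macroscopic crop of dangerous vertices that the algorithm then cannot service. This is exactly where the ``those in need first'' priority and the density lemma have to be combined tightly, and where the $o(n)$ (indeed $n^{0.401}$) bound on the final deficiency is won or lost; the bulk DE-method part is comparatively routine given Lemmas \ref{lem:equiv}--\ref{lem:density}.
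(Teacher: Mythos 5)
Your high-level framing (random sequence model, size-biased endpoints governed by a truncated Poisson parameter, a potential built from the deficiency and the dangerous set, Azuma/Hoeffding with maximum degree $O(\log n/\log\log n)$ for the $1-o(n^{-9})$ probability) matches the paper's setup, but the proposal has a genuine gap exactly at the point the theorem is hard. You assert, as a heuristic, that because $c>(k+1)/2$ there is ``slack'' in the stub count and hence the creation rate of dangerous vertices is $o(1)$ per step ``except near the very end.'' That is precisely the statement that needs proof, and it is not about slack in the total number of stubs: new dangerous vertices are created when the matched endpoint $w_t$ has label $1$, so everything hinges on showing that the probability $p_1^t$ of hitting a label-$1$ vertex stays small \emph{throughout} the run, including the late stages where most vertices are nearly saturated. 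The paper's entire Section 5 and Appendix exist to establish this: one shows the label-distribution vector $p^t$ stays confined in nested polyhedra $B_d=\{x: x_i\geq \alpha_i x_{i-1}\}$ with $\alpha_i\in[1.55,1.552]$ (so the $p_i$ grow geometrically and $p_1$ is forced small), with drift computations near each face, computer-assisted verification of the resulting optimization inequalities, and a delicate separate treatment of the intervals $[\tau_6,\tau')$ where the cruder bound fails. Your DE-method sketch does not explain why the ODE trajectory keeps the dangerous set small, and in any case the conclusion you aim for, $|Z_t|=o(n)$, is far too weak: to reach deficiency $n^{0.401}$ one needs $\zeta_t\leq \log^6 n$ with probability $1-o(n^{-9})$, which the paper gets from a strictly negative drift $\mathbb{E}(\zeta_{t+1}-\zeta_t\mid\mathcal{H}_t)\leq -10^{-5}$ whenever $\zeta_t>0$ (Lemma \ref{lem:changeZ_t}) plus a martingale argument, not from a fluid limit.

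Two further quantitative problems. In your endgame you invoke Lemma \ref{lem:density}(i), but that lemma only controls sets of size at most $k^{400}$, while your endgame starts when $O(n^{0.8})$ stubs remain, so the live remainder can have polynomially many vertices; the paper instead stops the main analysis at $m_t\leq n^{0.4+10^{-5}}$ and cites the analysis of \cite{AF} to bound the residual losses by $O(n^{0.4+10^{-4}})$. Finally, your error accounting does not close: combining an $o(n^{0.8})$ endgame loss with an $o(n^{0.8})$ main-phase error cannot yield a matching of size $kn/2-O(n^{0.401})$, so even granting your intermediate claims the stated bound would not follow.
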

\subsection{\kG}
\kG will be applied to the (multi)-graph $G\sim G_{n,cn}^{\d \geq k+1,seq}$. As the algorithm progresses, it makes changes to $G$ and generates a graph sequence $G_0=G \supset G_1 \supset ...\supset G_{\tau}=\emptyset$. In parallel, it grows a $k$-matching $M$. We let $M_0=\emptyset, M_1,M_2,..., M_{\tau}=M$ be the sequence of matchings that are generated.  For $v\in [n]$ we let the label of $v$ at time $t$, $l_t(v)\in \{0,...,k\}$, to be equal to $k$ minus the number of edges incident to $v$ in $M_t$ i.e. the number of edges incident to $v$ that we would like to add to $M_t$. For $0\leq t\leq \tau$ we also define, 
\begin{itemize}
\item $V=V(G_0)=\{v_1,v_2,...,v_n\}$,
\item $m_t:=|E(G_t)|$,
\item $d_t(v):=d_{G_t}(v)$, for $v\in V$,
\item $d_{M_t}(v):=|\set{ e \in M_t:v \in e} |$, for $v\in V$,
\item $Y_{\ell,j}^t:=\set{v \in [n]: l_t(v)=\ell,\,d_{t}(v)=j}$, $0\leq \ell \leq  k, 0\leq j$, i.e. the set of vertices of degree $j$ that are incident to $k-\ell$ edges in $M_t$,
\item $Y_{\ell}^t:=\cup_{j\geq \ell+1} Y_{\ell,j}^t$, for $0\leq \ell \leq k$,
\item $Z_t=\underset{{\ell\in [k], 0\leq j\leq  \ell}}{\bigcup} Y_{j,\ell}^t$, the set of ``dangerous" vertices,
\item $\zeta_t = \sum_{\ell=1}^{k} \sum_{j=1}^{\ell} j|Y_{\ell,j}^t| $,
\item $index(t)=max_{v\in V} l_t(v)$,
\item $DF_t =\set{v\in V: index(t)=l_t(v)}$.
\end{itemize}
\noindent

$DF_t$ is the set of {\emph{$M_t$-deficient vertices}} at time $t$, that is the set of vertices whose $M_t$ degree $d_{M_t}(\cdot )$ is minimized.  

Observe that if $v\in G_t$ satisfies $v\in Y^t_{\ell,j} \subseteq Z_t$ for some $j\leq \ell$ then there are $k-\ell$ edges incident to $v$ in $M_t$ and $j\leq \ell $ edges incident  to $v$ in $G_t$. Thus in the final matching $M$, $v$ can be incident to at most $k-\ell+j\leq k$ edges. In addition, if in the future an edge incident to $v$ in $G_t$ is removed but not added to the matching then,  $v$ will be incident to at most $k-1$ edges in $M$. Therefore $Z_t$ consists of {\emph dangerous vertices} whose edges we would like to add to the matching as soon as possible in order to avoid ``accidentally" deleting them. $\zeta_t$ bounds the number of those edges. \kG tries to match the vertices in $Z_t$ first.  If $Z_t$ is empty then \kG matches a vertex in $DF_t$

\begin{algorithm}[H]
\caption{{\kG}}
\begin{algorithmic}[1]
\\ Input: $G_0=G$.
\\ $t = 0$.
\While{$E(G_t)\neq \emptyset$}
{
\If{$Z_t \neq \emptyset$}
\\\hspace{10mm} Choose a  vertex $v_t \in Z_t$  proportional to its degree.
\\\hspace{10mm} Choose a random neighbor of $v_t$ in $G_t$, $w_t$. 
\Else
\\\hspace{10mm} Choose a  vertex  $v_t\in DF_t$ proportional  to its degree. 
\\\hspace{10mm} Choose a random neighbor of $v_t$ in $G_t$, $w_t$. 
\EndIf 
\\\hspace{5mm} Update: $M_{t+1} \gets M_t \cup\{\{v_t,w_t\}\}$,
\\\hspace{5mm} $l_{t+1}(v_t)\gets l_t(v_t)-1$, $l_{t+1}(w_t) \gets l_t(w_t)-1$.
\\\hspace{5mm} $l_{t+1}(u)\gets l_t(u)$ for $u\in V\setminus \{v_t,w_t\}$
\\\hspace{5mm} Delete $\{v_t,w_t\}$ from $G_t$.
\\\hspace{5mm} Delete all the edges incident to any vertex  $v$ in $G_t$ that satisfies
$l_{t+1}(v)=0$.
\\\hspace{5mm} Let $Y_{\ell,j}^t=\set{v \in [n]: l_t(v)=\ell,\,d_{G_t}(v)=j}$, $0\leq \ell \leq  k, 0\leq j\leq \ell$.
\\\hspace{5mm} Let $Y_{\ell}^t=\set{v \in [n]: l_t(v)=\ell,\,d_{G_t}(v)\geq \ell+1}$, $0\leq \ell \leq  k$.
\\\hspace{5mm} Let $G_{t+1}$ be the resultant graph.
\\\hspace{5mm} $t = t+1$. }
\EndWhile
\\ $\tau = t$
\end{algorithmic}
Remove any loops and multiple edges from $M$.
\end{algorithm}

\begin{notation}
We  let $e_t=\{v_t, w_t\}$ and $R_t$ be the set of edges deleted at step $t$. We also denote by $\cH_t$ the actions taken by \kG during the first $t$ iteration of the while loop at line 3.
\end{notation}

For $0\leq t <\tau$ and $0\leq \ell \leq k$ we let the list $L_\ell^t$ of size $\ell+2$  be defined by
 $$L^t_{\ell}(j)
   = 
  \begin{cases} 
  |Y^t_{\ell,j}| & \text{ for } 0 \leq j\leq \ell, \\ 
  |Y_\ell^t| & \text{ for }   j=\ell+1. \\ 
  \end{cases} 
$$

We let $\lambda^t$ be the value of $\lambda$ defined by \eqref{eq:lambda} with $m=m_t$ and lists $L^i=L_i^t$ for $0\leq i\leq k$.
\subsection{Uniformity}
We start by showing that $G_t$ exhibits a Markovian behaviour.

We let  $y_t(\cdot):[n]\mapsto \{0,1,...,k\} \times\{0,1,2,...,k+1\}$ be defined as follows. 
$$y_t(v):=\begin{cases}
(\ell,j) \text{ if } v\in Y^t_{\ell,j} \text{ with } 0\leq \ell\leq k \text{ and } 0\leq j\leq \ell 
\\
(\ell,\ell+1) \text{ if }  v\in Y^t_{\ell},  0\leq \ell\leq k.
\end{cases}
$$

\begin{lemma}\label{lem:uniformity}
For $t\geq 0$, suppose that $G_{t}$ is distributed as  $G_{x_{t}}$, where $x_{t}$ is a random member of $\bigg(\cS_{n,2m_t}^{L_0^t,L_1^t,...,L_k^t},y_t\bigg)$. Then given $L_0^t,L_1^t,...,L_k^t,e_{t},R_t,y_t$ and $m_t$, $G_{t+1}$ is distributed as  $G_{x_{t+1}}$, where $x_{t+1}$ is a random member of $\bigg(\cS_{n,2m_{t+1}}^{L_0^{t+1},L_1^{t+1},...,L_k^{t+1}},y_{t+1}\bigg)$.
\end{lemma}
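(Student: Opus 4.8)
The plan is to prove this by a ``principle of deferred decisions'' argument, exploiting the fact that the random sequence model is exchangeable under permutations of positions within each vertex's block of copies. First I would set up the following viewpoint: a uniformly random element $x_t$ of $\big(\cS_{n,2m_t}^{L_0^t,\ldots,L_k^t},y_t\big)$ can be thought of as: (a) choosing the degree of each vertex according to the (truncated) Poisson law of Lemma~\ref{lem:equiv} conditioned on the total being $2m_t$, and (b) placing the resulting $2m_t$ half-edges (stubs) into a uniformly random perfect matching of stubs. Equivalently, conditioned on all degrees, the configuration is a uniformly random pairing of stubs. The key structural observation is that when \kG performs step $t$, it only ever looks at \emph{which vertex} $v_t$ a given stub belongs to and at the \emph{label/degree class} $y_t(\cdot)$ of vertices — it never needs to distinguish among the individual stubs of a fixed vertex, nor among the pairings not yet touched. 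Hence, conditioned on $\cH_t$ (equivalently on $L_0^t,\ldots,L_k^t,e_t,R_t,y_t,m_t$), the pairing on the \emph{surviving} stubs — i.e. the edges of $G_{t+1}$ — is still uniform over all pairings consistent with the revealed degree classes.

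The main steps, in order, would be: (1) From the inductive hypothesis, write $G_t = G_{x_t}$ with $x_t$ uniform; condition on the degree sequence $\cD(x_t)$, so that $G_t$ is a uniform pairing of the corresponding stubs. (2) Observe that the edge $e_t = \{v_t,w_t\}$ chosen by \kG (whether $v_t$ comes from $Z_t$ or from $DF_t$): the vertex $v_t$ is chosen by a rule depending only on the classes $Y^t_{\ell,j}$, and then $w_t$ is a uniformly random $G_t$-neighbor of $v_t$; since the pairing is uniform, revealing $w_t$ reveals only \emph{one} pairing, and conditioned on this the remaining pairing (on the remaining stubs) is still uniform. (3) The deletion rule (delete $e_t$, delete all edges at vertices that reach label $0$, i.e.\ those in $R_t$) removes a further set of stubs; again, since we only act on vertex-identity and class information, the residual pairing $G_{t+1}$ restricted to the surviving stubs is uniform conditioned on the new degree sequence $\cD(x_{t+1})$. (4) Read off that the surviving stubs define exactly the new lists $L_\ell^{t+1}$, the new map $y_{t+1}$ (by the update rules $l_{t+1}(v_t) = l_t(v_t)-1$, $l_{t+1}(w_t)=l_t(w_t)-1$, others unchanged, and the recomputation of $Y^{t+1}_{\ell,j}$, $Y^{t+1}_\ell$ from the new degrees), and the new edge count $m_{t+1} = m_t - 1 - |R_t|$. (5) Conclude that $G_{t+1}$ is distributed as $G_{x_{t+1}}$ for $x_{t+1}$ uniform in $\big(\cS_{n,2m_{t+1}}^{L_0^{t+1},\ldots,L_k^{t+1}},y_{t+1}\big)$, since by Lemma~\ref{lem:equiv} (applied at time $t+1$) the conditional-on-degree-sequence uniform pairing is exactly that model after integrating out the degree randomness; the degrees of the vertices not fully determined by $y_{t+1}$ are still distributed as the appropriate truncated Poissons conditioned on the realized total, because truncating/shifting a Poisson conditioned on a partial sum stays in the same family.

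The subtle point — and what I expect to be the main obstacle — is step (2)–(3): making precise that the \emph{selection} of $v_t \in Z_t$ or $v_t\in DF_t$ ``proportional to its degree'' plus the random neighbor choice does not leak more information about the unrevealed part of the pairing than the identity of the single edge $e_t$. One must argue that the $\sigma$-algebra generated by ``$v_t$ has degree $d$ and class $(\ell,j)$'' together with ``the partner of the chosen stub of $v_t$ is a particular stub of $w_t$'' is, after conditioning, still compatible with a uniform pairing on what remains; this is the standard switching/symmetry argument for configuration-type models but needs care because the degrees here are not fixed in advance (they were themselves random, truncated Poissons), so one has to commute the conditioning on $\cD(x_t)$ with the algorithm's queries. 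The cleanest way to handle this is to do the whole argument \emph{conditioned on the full degree sequence} $\cD(x_t)$ first — where it is the classical configuration-model deferred-decisions fact — and only at the very end re-integrate over the degree sequence, invoking Lemma~\ref{lem:equiv} to identify the resulting law with the claimed $\big(\cS_{n,2m_{t+1}}^{L_0^{t+1},\ldots},y_{t+1}\big)$ model, using that the degrees of the ``$y_2 = y_1+1$'' vertices at time $t+1$ are still truncated Poissons (with threshold raised by one exactly for $v_t$ and $w_t$ when they cross a label boundary, and degree reduced for vertices in $R_t$), which holds because a truncated Poisson conditioned on being at least one larger is again a truncated Poisson with the larger threshold.

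A final bookkeeping check I would include: verify that the definitions of $L_\ell^{t+1}$, $y_{t+1}$ and $m_{t+1}$ produced by the algorithm's update lines match exactly the degree-class statistics of the surviving stubs — in particular that a vertex leaving $Y^t_\ell$ for $Y^{t+1}_{\ell'}$ or $Y^{t+1}_{\ell',j'}$ is accounted for correctly, and that vertices killed in $R_t$ (label $0$) simply disappear from all lists — so that the hypothesis of Lemma~\ref{lem:equiv} at time $t+1$ (namely $\sum_i\sum_j L_i^{t+1}(j) = n$ minus those with label $0$, and condition~\eqref{cond}) is met. Granting condition~\eqref{cond} holds at time $t+1$ (which is part of the running invariant maintained elsewhere in the analysis), the induction closes.
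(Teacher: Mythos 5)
Your plan — condition on the degree sequence, view $G_t$ as a uniform stub-pairing, defer decisions, then re-integrate the degree randomness via Lemma~\ref{lem:equiv} — is a genuinely different route from the paper's: the paper never decomposes into degrees plus pairing, but argues in one step at the level of the sequence sets, observing that, given the lists, $y_t$, $e_t$ and $R_t$, removal of $R_t$ sets up a bijection between elements of $\big(\cS_{n,2m_{t+1}}^{L_0^{t+1},\ldots,L_k^{t+1}},y_{t+1}\big)$ and the compatible elements of $\big(\cS_{n,2m_t}^{L_0^{t},\ldots,L_k^{t}},y_{t}\big)$, so uniformity propagates without any degree-law bookkeeping. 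Your route could in principle be completed, but as written it has two genuine gaps, both located exactly at the step you yourself single out as the crux.

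First, the assertion in your step (2) that $v_t$ ``is chosen by a rule depending only on the classes $Y^t_{\ell,j}$'' is false on the $DF_t$ branch: there $v_t$ is chosen proportional to its degree among the maximum-label vertices, and those vertices lie in classes $Y_\ell^t$ whose degrees are precisely what the model leaves unrevealed. Hence the probability of the observed choice (its normalization $\sum_{u\in DF_t}d_t(u)$, and likewise the multiplicity of $e_t$) is not a function of the conditioning data, and the statement that conditioning on $\cH_t,e_t,R_t$ does not tilt the unrevealed part is exactly what must be proved, not a consequence of ``the algorithm only uses vertex identity and class information.'' Second, the re-integration is justified by the wrong fact. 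The thresholds of $v_t$ and $w_t$ go \emph{down} by one (their labels decrease), not up, and the identity you need is not re-truncation but size-biasing: exposing an edge at a vertex biases its degree by $\Pr(d=j)\propto j\lambda^j/j!=\lambda^j/(j-1)!$, and a size-biased truncated-at-$(\ell+1)$ Poisson decreased by one has law $\propto\lambda^r/r!$ for $r\ge\ell$, i.e.\ is exactly a truncated-at-$\ell$ Poisson; without the size-biasing the decremented law ($\propto\lambda^{r+1}/(r+1)!$) is not in the family at all, so the lemma you invoke (``conditioned on being at least one larger'') would give the wrong conditional degree law for $v_t$, $w_t$, and for the neighbours of a vertex killed in $R_t$ (which additionally need the ``adjacent to $w_t$'' conditioning and the re-imposed constraint on the total $2m_{t+1}$ handled). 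These computations are the actual content of ``commuting the conditioning on $\cD(x_t)$ with the algorithm's queries,'' which your proposal flags as the obstacle but then waves through; the paper avoids them entirely by staying at the sequence level.
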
 
\begin{proof}
For $i=t,t+1$, if the graph $G_{i}$ is distributed as  $G_{x_{i}}$, where $x_{i}$ is a random member of $\bigg(\cS_{n,2m_i}^{L_0^i,L_1^i,...,L_k^i},y_i\bigg)$ then its distribution is uniform among the the ones that can be generated from $\bigg(\cS_{n,2m_i}^{L_0^i,L_1^i,...,L_k^i},y_i\bigg)$.
The statement of the lemma follows from the fact that the pair $e_t,R_t$ defines a bijection between elements of $\bigg(\cS_{n,2m_{t+1}}^{L_0^{t+1},L_1^{t+1},...,L_k^{t+1}},y_{t+1}\bigg)$ and elements of 
$\bigg(\cS_{n,2m_t}^{L_0^t,L_1^t,...,L_k^t},y_t\bigg)$ that may result to an element of $\bigg(\cS_{n,2m_{t+1}}^{L_0^{t+1},L_1^{t+1},...,L_k^{t+1}},y_{t+1}\bigg)$ through the removal of $R_t$ and the addition of $e_t$ to $M_t$.
\end{proof}

\subsection{A first analysis}
Let $E_{mult}$ be the set of loops and multiple edges deleted from $M$ at the end of \kG.
Once \kG terminates the sets $Y_{\ell,j}^{\tau}$ for $0\leq \ell \leq k$, $1\leq j$ are empty while the set $Y_{\ell,0}^{\tau}$ consists of the vertices that are incident to exactly $k-\ell$ edges in $M\cup E_{mult}$, for $0\leq \ell\leq k$. Thus $|M|+|E_{mult}|=(nk-\sum_{\ell=1}^k(k-\ell)|Y_{\ell,0}^{\tau}|)/2$. A vertex $z$ belongs to some $Y_{\ell,0}^{\tau}$ only if it ``moves" $k-\ell$ times from some set  $Y_{\ell',j}^t$, $j\leq \ell'$ to $Y_{\ell',j-1}^t$ (i.e. at time $t$ it belongs to $Y_{\ell',j}^t$ but at time $t+1$ it belongs to $Y_{\ell',j-1}^{t+1}$) 
or if it is connected to $w_t$ via a multi-edge in $G_t$. 

Let $S_t=\bigcup_{\ell'\in [k]}\bigcup_{1\leq j\leq \ell'} \left( Y_{\ell',j}^t \cap Y_{\ell',j-1}^{t+1} \right)$ and $s_t=|S_t|$. Also let $mult_t$ be the number of multi-edges in $G_t$ incident to $w_t$ counted with multiplicity and $h_t=\mathbb{I}(v_t=w_t)$ i.e. the indicator function of the event $\{e_t$ is a loop$\}$. 
Then,
\begin{equation}\label{eq:matching}
M\geq \frac{nk-\sum_{\ell=1}^k(k-\ell)|Y_{\ell,0}^{\tau}|-\sum_{0\leq i < \tau} mult_t -\sum_{0\leq i < \tau} 2h_t}{2}  = \frac{kn-\sum_{i=0}^{\tau-1}(s_t+ mult_t+2h_t)}{2}.
\end{equation}

If $index(t) \geq 3$ and $v_t=w_t$ or $v_t\neq w_t$ and $l_t(w_t)\geq 2$ then $R_t=\{e_t\}$ and $S_t=\emptyset$. Otherwise, if $index(t) \geq 3$, $v_t\neq w_t$ and $l_t(w_t)=1$ then $l_{t+1}(w_t)$ is set to zero and the edges incident to $w_t$ are deleted. In such a case $S_t \subseteq Z_t\cap \big(N_{G_t}(w_t) \setminus \{v_t\}\big)$.Therefore if $index(t) \geq 3$ we have,
\begin{align}\label{eq:m1}
\mathbb{E}(s_t+mult_t+2h_t)
&\leq \Pr(l_t(w_t)=1)\mathbb{E}\bigg(\Big|Z_t\cap\Big( N(w_t)\setminus\{v_t\}\Big)\Big|\bigg|l_t(w_t)=1\bigg) +\mathbb{E}( mult_t)+\mathbb{E}( 2h_t) \nonumber
\\&\leq \frac{\zeta_t\log n}{m_t}+\frac{\log^4 n}{m_t}.
\end{align}
At the last inequality we used Lemma \ref{lem:degrees}.

Let 
$$\tau'=\min\bigg\{t: m_i\leq n^{0.4+10^{-5}} \text{ or } \zeta_t>\log^6 n \text{ or } Y_{\ell}^t=\emptyset \text{ for }\ell\geq 3 \bigg\}.$$
To prove Theorem \ref{thm:kGreedy} it suffices to prove the following Lemma. Its proof  is given in Subsection \ref{subsection:change}.
\begin{lemma}\label{lem:changeZ_t}
With probability $1-o(n^{-9})$, for $t < \tau'$
\begin{equation}\label{eq:changeZ_t}
\text{  if $\z_t>0$ then }    \mathbb{E}(\z_{t+1}-\z_t|\cH_t)\leq -10^{-5}.
\end{equation}
\end{lemma}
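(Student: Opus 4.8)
The plan is to track $\zeta_t = \sum_{\ell=1}^{k}\sum_{j=1}^{\ell} j|Y_{\ell,j}^t|$ through one step of \kG\ and show it has a strictly negative drift whenever it is positive, as long as $t<\tau'$. The key structural observation is that at step $t$ the algorithm picks $v_t$ proportional to degree from $Z_t$ (when $Z_t\neq\emptyset$, which is exactly when $\zeta_t>0$), and then a uniformly random neighbor $w_t$; the resulting change to $\zeta_{t+1}-\zeta_t$ decomposes into three contributions: (a) the decrement from $v_t$ itself moving out of its set $Y_{\ell,j}^t$ into $Y_{\ell,j-1}^{t+1}$ (which strictly decreases $\zeta$ by at least $1$ since $v_t\in Z_t$ forces $j\geq 1$, and in fact by $j$ units when we account for the weighting), (b) the change caused by $w_t$, and (c) the cascade change caused by deleting the edges of any vertex whose label drops to $0$, and by all other vertices losing $G_t$-edges when those deletions happen and when $e_t$ and $R_t$ are removed.

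First I would condition on $\cH_t$ and on the event $Z_t\neq\emptyset$, write $v_t$ as chosen with probability $d_t(v)/\sum_{u\in Z_t}d_t(u)$. Since $v_t\in Y_{\ell,j}^t$ with $j\le \ell$, after adding $e_t$ to $M$ its label drops to $\ell-1$; either it moves to $Y_{\ell-1,j-1}^{t+1}$ (still dangerous, contributing $-1$ to the $j$-weighted sum from the loss of that unit of degree, i.e. the $j\to j-1$ move) — and here I'd use that the expected value of $j$ over this degree-proportional choice is not too small — or it exits $Z$ entirely. The clean lower bound on this ``good'' decrement is roughly $\mathbb{E}(-1)$, i.e. a guaranteed negative term of order a constant, coming from the fact that $\sum_{u\in Z_t} d_t(u)$ is dominated by vertices with $j\ge 1$ and the chosen vertex contributes its degree-unit. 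Next, I'd bound the ``bad'' positive contributions. A positive contribution to $\zeta$ happens when some vertex $u$ with $l_t(u)=\ell'\ge 1$ and $d_t(u)=j'\ge \ell'+1$ (so $u\in Y_{\ell'}^t$, not yet counted in $\zeta_t$) loses enough $G$-degree to drop to $d(u)\le \ell'$, entering $Z$ and contributing up to $\ell'\le k$ to $\zeta_{t+1}$; alternatively a vertex already in $Z_t$ can move to a higher-$j$... no, degrees only decrease, so the only positive source is vertices entering $Z$ from $Y_\ell^t$, plus $w_t$'s own label drop. The number of edges deleted at step $t$ is $|R_t| = O(\text{(degree of the unique vertex, if any, whose label hits }0))$, which by Lemma~\ref{lem:degrees} is $O(\log n/\log\log n)$; each such deleted edge has one endpoint in that dying vertex and another endpoint $u$, which loses one unit of $G$-degree. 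For $u$ to enter $Z$ it must have been within one of the $\le k$ of its $G$-degree being $\le \ell'$, but since $t<\tau'$ we have $\zeta_t \le \log^6 n$, and more importantly the edge being deleted is a uniformly random edge (by the uniformity Lemma~\ref{lem:uniformity}), so the probability that its other endpoint $u$ currently lies in $\bigcup_{\ell'}Y_{\ell',\ell'+1}^t$ (the ``about to become dangerous'' vertices) is $O(\zeta_t\log n/m_t)$-type small — this is exactly the kind of estimate appearing in \eqref{eq:m1}. Combining: the positive drift is $O\big(\tfrac{\zeta_t \log^{C} n}{m_t}\big) = o(1)$ for $t<\tau'$ (using $m_t > n^{0.4+10^{-5}}$ and $\zeta_t\le\log^6 n$), which is dominated by the guaranteed negative constant from part (a), giving the bound $-10^{-5}$.

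The main obstacle, and where I'd spend the most care, is the lower bound on the guaranteed decrement from $v_t$ — specifically showing that the degree-proportional choice of $v_t$ from $Z_t$ yields in expectation a drop of at least a fixed constant in $\zeta$, rather than something that degrades as, say, the dangerous vertices become dominated by ones with $d_t(v)=1$ versus $d_t(v)=\ell$ with large $j$. One has to argue that $\mathbb{E}[j \mid v_t\in Y_{\ell,j}^t] \cdot (\text{something}) \gtrsim 1$; concretely, if $v_t\in Y_{\ell,j}^t$ then removing $e_t$ decreases the $j$-weighted contribution of $v_t$ by exactly $j$ (it had weight $j$ inside $\zeta_t$ and now has weight $j-1$ in the same $\ell-1$ row, plus possibly a further structural change), so the decrement is $\ge 1$ always, and taking expectations over the (degree-biased, hence favoring larger $j$) choice only helps. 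So the bound $\mathbb{E}(\zeta_{t+1}-\zeta_t\mid\cH_t, v_t\in Z_t) \le -1 + o(1) \le -10^{-5}$ should follow once the positive-drift bookkeeping — keeping track of $w_t$'s contribution, the at-most-one dying vertex, and the cascade — is done carefully with \eqref{eq:m1}-style estimates and Lemma~\ref{lem:degrees}. A secondary subtlety is handling the case $v_t=w_t$ (a loop) or $e_t$ being a multi-edge, but these occur with probability $O(\log^4 n/m_t)=o(1)$ and contribute boundedly, so they are absorbed into the error term.
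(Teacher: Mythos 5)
There is a genuine gap, and it is exactly where the main difficulty of the paper lies. Your bookkeeping of the negative term is fine: when $\z_t>0$ the chosen vertex $v_t\in Y_{\ell,j}^t\subseteq Z_t$ moves to $Y_{\ell-1,j-1}^{t+1}$ and its contribution to $\z$ drops by $1$, which is the paper's $-\mathbb{I}(\z_t>0)$ term. But your bound on the positive contribution is wrong. New dangerous vertices are created when $l_t(w_t)=1$: then all edges at $w_t$ are deleted, and every neighbor $u$ of $w_t$ lying in some $Y_{\ell,\ell+1}^t$ (label $\ell$, degree exactly $\ell+1$) drops to degree $\ell$, enters $Z_{t+1}$, and adds $\ell$ to $\z$. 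You claim the probability that such a neighbor lies in $\bigcup_{\ell}Y_{\ell,\ell+1}^t$ is ``$O(\z_t\log n/m_t)$-type small, as in \eqref{eq:m1}''. That estimate applies to $Z_t$ itself, which has at most $\z_t\le\log^6 n$ vertices; the sets $Y_{\ell,\ell+1}^t$ are entirely different — they are typically of size $\Theta(n)$ (e.g.\ $Y_{k,k+1}^0$ is the set of degree-$(k+1)$ vertices), so a degree-biased neighbor lands in them with constant probability. Likewise $\Pr(l_t(w_t)=1)\approx p_1^t$ is not automatically small. The correct expression is the paper's \eqref{eq:echange_zt}: the positive drift equals $p_1^t\lambda_1^t\sum_{\ell}\ell\,p_\ell^t q_{\ell+1,\ell+1}^t+o(1)$, a bona fide constant, and the lemma only holds because one can show this constant stays below $1-10^{-5}$.

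Establishing that bound is what occupies Section 5 and the appendices: one must show $p_1^t$ remains small throughout the run, which the paper does by confining the vector $p^t$ to the polyhedra $B_d=\{x: x_i\ge\alpha_i x_{i-1}\}$ with $\alpha_i\in[1.55,1.552]$ (so that $p_1\le 0.55/(1.55^{d-1}-1)$ by Lemma \ref{lem:p1}), proving drift inequalities near each face (Lemmas \ref{lem:lowerconstrains}, \ref{lem:upperconstrain}), handling the initial phase separately (Lemma \ref{lem:initial}), treating the late phases $d\le 6$ by a finer case analysis in the appendix, and verifying several of the resulting optimization inequalities computationally. None of this is replaceable by an $o(1)$ error-term argument: without controlling $p_1^t$ the positive drift can exceed $1$ and the conclusion fails. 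So your proposal, as written, proves only the trivial part of the lemma and omits the core of its proof.
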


{\textbf{Proof of Theorem \ref{thm:kGreedy}:}}
It follows from  \cite{AF} that $\sum_{i=\tau}^{\tau'}s_t=O( n^{0.4+ 10^{-4}})$ with probability $1-o(n^{-9})$ (In \cite{AF} a version of \kG is analyzed in which Lines 5 and 8 of \kG are substituted by ``Choose a random vertex $v_t\in Z_t$" and 
``Choose a random vertex $v_t\in Y_2$" respectively. This change does not affect the analysis in \cite{AF} which aims to show that if $\z_t>0$ 
then, the  expected change in $\zeta_t$ is not positive -it can be zero-. In addition the statements in \cite{AF} are proven to hold w.h.p. instead with probability $1-o(n^{-9})$. One can boost the probability in \cite{AF} to $1-o(n^{-9})$ by substituting  Lemma 3.3 of \cite{AF} with Lemma \ref{lem:degDistributions}.)

Lemma \ref{lem:degrees} implies that with probability $1-o(n^{-9})$,
$$|\z_{t+1}-\z_t|\leq k\Delta(G_t)\leq k\Delta(G)\leq \log n.$$ 
Therefore,
Azuma-Hoeffding inequality (see Theorem \ref{thm:AH}) gives, 
\begin{align*}
    \Pr(\exists t: &0\leq t\leq \tau' \leq m \text{ such that } \z_t\geq \log^6n)
\\    &\leq \sum_{0\leq t\leq m} \sum_{0\leq t'\leq t}\Pr(\z_t\geq \log^6n, \z_{t''}\geq 1 \text{ for } t'<t''\leq t \text{ and } 0<\z_{t'}\leq \log n)+o(n^{-9})
\\    &\leq m\sum_{0\leq t\leq m}
    \exp\bigg\{-\frac{(-\log n + \log^6n+10^{-5}t)^2}{2t\log^2 n} \bigg\}+o(n^{-9})=o(n^{-9}). 
\end{align*}
Thus \eqref{eq:matching}  \& \eqref{eq:m1} imply,
\begin{align*}
    \Pr(M<kn/2-n^{0.401})&\leq \Pr\bigg(\sum_{i=1}^{\tau'} s_t+mult_t+2h_t\geq 0.9 n^{0.401}\bigg)+o(n^{-9})
\\& \leq\sum_{0\leq t\leq m}    \exp\bigg\{-\frac{(0.9n^{0.401}-\sum_{m_i=n^{0.4+10^{-5}}}^{m}\frac{\log^7 n+\log^4 n}{m_i} )^2}{2t\log^2 n} \bigg\}+o(n^{-9})
\\&=o(n^{-9}).
\end{align*}
At the application of Azuma-Hoeffding Inequality above we used once more Lemma \ref{lem:degrees} to bound $|(s_{t+1}+mult_{t+1}+2l_{t+1})-(s_t+mult_t+2h_t)|$ by $\Delta(G) \leq \log n$. 
\qed

A vertex $u$ enters $Z_{t+1}$ at time $t\leq \tau'$ (i.e $d_{t+1}(u)>0$ and $v\in Z_{t+1}\setminus Z_t$)  only if $l_t(w_t)=1$,  $w_t\in Y^t_{1}$ and either $u\in  \bigg( \bigcup_{\ell\in[k]} Y_{\ell,\ell+1}^t\bigg)\cap\big( N_t(w_t) \setminus\{v_t\}\big)$ or $u$ is connected to $w_t$ via a multiple edge.
Hence, for $t<\tau'$ if $\zeta_t\leq \log^6 n$ then, 
\begin{align}
    \mathbb{E}(\z_{t+1}-\z_t|\cH_t)
&\leq -\mathbb{I}(\z_t>0) \nonumber
\\&+\Pr(l_t(w_t)=1)\mathbb{E} \left(  \sum_{\ell=1}^k\ell \Big|  Y_{\ell,\ell+1}^t \cap \Big( N(w_t) \setminus\{v_t\} \Big)\Big|\bigg|l_t(w_t)=1\right)+o(1). \label{eq:change_zt}
\end{align}

We will control $\z_t$ by showing that $\Pr(l_t(w_t)=1)$ stays sufficiently small throughout the execution of \kG. 

\section{Notation-Preliminaries}
\begin{notation}
We say that a sequence of events $\{\mathcal{E}_n\}_{n\geq 1}$
holds {\em{with sufficiently high probability}} (w.s.h.p.\@ in short) if $\lim_{n \to \infty}\Pr(\mathcal{E}_n)=1-o(n^{-9})$. 
\end{notation}

For a function $f(\cdot):[\tau]\mapsto \mathbb{R}$ we let $D_t(f)=f(t+1)-f(t)$.

For $\lambda \geq 0$ and $0\leq \ell \leq k$ we let $Po_{\geq \ell}(\lambda)$ be the truncated at $\ell$ $Poisson(\lambda)$ random variable. Thus
$$\Pr(Po_{\geq \ell}(\lambda)=r)=\frac{e^{-\lambda}\lambda^r}{r!\sum_{i\geq \ell}\frac{e^{-\lambda}\lambda^i}{i!}}=\frac{\lambda^r}{r!f_\ell(\lambda)} \hspace{20mm} \text{ for }\ell, \ell+1, \cdots .$$
We denote the expected value of $Po_{\geq \ell}(x)$ by $\lambda_\ell(x)$. We may write $\lambda_\ell$ in place of $\lambda_\ell(x)$ when the value of $x$ is clear from the context. 
We let 
$$q_{\ell,r}=q_{\ell,r}(\lambda)=\begin{cases} \frac{r\Pr(Po_{\geq \ell}(\lambda)=r)}{\mathbb{E}(Po_{\geq \ell}(\lambda))}, &r= \ell, \ell+1, \cdots ,
\\0& \text{otherwise}. \end{cases} $$ 
and
$$q_{\ell,r}^t=q_{\ell,r}(\lambda^t).$$
At time $t$ Lemmas \ref{lem:degDistributions} \& \ref{lem:degrees} imply that $\sum_{v\in {Y_{\ell}^t}} d_t(v)=
|Y_\ell^t|\mathbb{E}(Po_{\geq \ell+1}(\lambda^t))+o(m_t)$ and
that $\sum_{v\in {Y_{\ell,r}^t}} d_t(v)=r|Y_\ell^t|\Pr(Po_{\geq \ell+1}(\lambda^t)=r)+o(m_t)$,
for $\ell\in[k]$ and $r>\ell$.
Thus at time $t$, if $Y_{\ell}^t=\Omega(m_t)$ then, $q_{\ell,r}^t$ equals up to an $o(1)$ additive factor to  the probability a vertex chosen from $Y_\ell^t$ proportional to its degree has degree $r$.

For $t\geq 0$, $\ell\in [k+1]$, and $j>0$ we let 
$$p^t_{\ell,j} = \frac{j Y_{\ell,j}^t}{2m_t}$$
be the probability that a vertex chosen at proportional to its degree belongs to $Y^t_{\ell,j}$.
We let
$$p^t_\ell =\sum_{j\geq \ell+1} \frac{j Y_{\ell,j}^t}{2m_t},$$
be the probability that a vertex chosen at proportional to its degree belongs to $Y^t_{\ell}$. We also let $p^t\in \mathbb{R}^{k+1}$ be defined by $(p^t)_\ell=p_\ell^t$ for $1\leq \ell \leq k+1$.
Observe that while $m\geq n^{0.4+10^{-5}}$ and $\zeta_t\leq \log^6 n$ we have that
$$\sum_{i=1}^{k}p_k^t=1-o(1).$$
For $2\leq \ell \leq k$ we define the stopping time $\tau_\ell$ by,
$$\tau_\ell=\min\{t: Y_i^t=\emptyset \text{ for } i\geq \ell \text{ or } m_t\leq n^{0.4+10^{-5}} \text{ or } \zeta_t \geq \log^6 n \}.$$
Recall, $\tau'=\min\{t: m_t\leq n^{0.4+10^{-5}} \text{ or } Y_\ell^t=\emptyset \text{ for } 3\leq \ell \leq k+1\}$ and hence $\tau_3\leq \tau'$. 
Finally we let $\cH_t$ be the $t$-tuple $(h_0,h_1,...,h_{t-1})$ where $h_i=(e_i,R_i,Z^i)$. Thus $\cH_t$ encodes the actions taken by the \kG in  the first $t$ steps.
\begin{lemma}\label{lem:lk}
Let $\lambda\geq 0$ and $\ell\geq 1$. At the $t^{th}$ iteration of \kG we have
 $$\mathbb{E}\big[(|N(w_t)|-1) \mathbb{I}( w_t \in Y_\ell^t) \big]= p_\ell^t \lambda_{\ell}^t+o(1).$$
\end{lemma}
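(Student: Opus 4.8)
The plan is to compute the expectation by conditioning on the action of \kG at step $t$. At step $t$, \kG first picks a vertex $v_t$ (either from $Z_t$ or from $DF_t$) proportional to its degree, then picks $w_t$ a uniformly random neighbor of $v_t$ in $G_t$. Since $G_t$ is distributed as $G_{x_t}$ for $x_t$ a uniformly random element of $\bigl(\cS_{n,2m_t}^{L_0^t,\dots,L_k^t},y_t\bigr)$ (Lemma \ref{lem:uniformity}), the pair $(v_t,w_t)$ is obtained, up to the conditioning on the choice of $v_t$, by picking a uniformly random ordered pair of copies forming an edge of $G_t$; in particular $w_t$ itself (ignoring $v_t$) is a vertex chosen proportional to its degree $d_t(\cdot)$. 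By the definition of $p_\ell^t$, this means $\Pr(w_t\in Y_\ell^t)=p_\ell^t+o(1)$, where the $o(1)$ accounts for the conditioning on $v_t$ and the removal of the one copy of $w_t$ used by $e_t$; these corrections are of order $O(\Delta(G_t)/m_t)=o(1)$ on the event $m_t\geq n^{0.4+10^{-5}}$ by Lemma \ref{lem:degrees}.

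Next I would condition further on the event $\{w_t\in Y_\ell^t\}$ and on the value $d_t(w_t)=r$. On this event, $|N(w_t)|-1 = r-1$ (with the convention that $N(w_t)$ is counted with multiplicity, or alternatively absorbing multi-edge corrections into the $o(1)$ via Lemma \ref{lem:density}(i), which bounds the number of multi-edges in small subgraphs, and Lemma \ref{lem:degrees}). The conditional probability that $w_t$ has degree $r$ given $w_t\in Y_\ell^t$ is, up to an $o(1)$ additive error, exactly $q_{\ell,r}^t$ — this is precisely the content of the paragraph preceding the lemma, which uses Lemmas \ref{lem:degDistributions} and \ref{lem:degrees} to show that $\sum_{v\in Y_{\ell,r}^t}d_t(v) = r|Y_\ell^t|\Pr(Po_{\geq\ell+1}(\lambda^t)=r)+o(m_t)$, so that a degree-proportional sample from $Y_\ell^t$ has degree $r$ with probability $q_{\ell,r}^t+o(1)$. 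Therefore
\begin{align*}
\mathbb{E}\bigl[(|N(w_t)|-1)\,\mathbb{I}(w_t\in Y_\ell^t)\bigr]
&= \Pr(w_t\in Y_\ell^t)\sum_{r\geq \ell+1}(r-1)\,\Pr\bigl(d_t(w_t)=r \mid w_t\in Y_\ell^t\bigr)\\
&= \bigl(p_\ell^t+o(1)\bigr)\sum_{r\geq\ell+1}(r-1)\bigl(q_{\ell,r}^t+o(1)\bigr).
\end{align*}

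Finally I would identify the sum $\sum_{r\geq\ell+1}(r-1)q_{\ell,r}^t$ with $\lambda_\ell^t$. By definition $q_{\ell,r}^t = r\Pr(Po_{\geq\ell+1}(\lambda^t)=r)/\mathbb{E}(Po_{\geq\ell+1}(\lambda^t))$, so $\sum_r q_{\ell,r}^t = 1$ and $\sum_r r\,q_{\ell,r}^t = \mathbb{E}\bigl[(Po_{\geq\ell+1}(\lambda^t))^2\bigr]/\mathbb{E}(Po_{\geq\ell+1}(\lambda^t))$; the difference $\sum_r(r-1)q_{\ell,r}^t$ is then this size-biased mean minus $1$, which by a standard identity for (truncated) Poisson random variables equals $\lambda_\ell^t := \mathbb{E}(Po_{\geq\ell}(\lambda^t))$ — indeed, for a Poisson$(\lambda)$ variable $P$ one has $\mathbb{E}[P(P-1)] = \lambda\,\mathbb{E}[P]$ restricted appropriately, and truncating at $\ell+1$ and size-biasing shifts the truncation level down by one, giving $\mathbb{E}[P^2_{\geq\ell+1}]/\mathbb{E}[P_{\geq\ell+1}] - 1 = \mathbb{E}[P_{\geq\ell}]$. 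Collecting the error terms (all $o(1)$, using that $\lambda^t = O(\log n / \log\log n)$-bounded, e.g. via Lemma \ref{lem:degrees}, so that the truncated moments are bounded by $\mathrm{polylog}(n)$ and multiply against $o(1)$ to give $o(1)$) yields $\mathbb{E}\bigl[(|N(w_t)|-1)\mathbb{I}(w_t\in Y_\ell^t)\bigr] = p_\ell^t\lambda_\ell^t + o(1)$, as claimed.

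The main obstacle I anticipate is the bookkeeping around the $o(1)$ error terms — in particular, making rigorous that the conditioning on the choice of $v_t$ (which is \emph{not} degree-proportional over all of $V(G_t)$, but only over $Z_t$ or $DF_t$) perturbs the law of $w_t$ by only $o(1)$ in the relevant functionals. This should follow because $v_t$ occupies at most $\Delta(G_t)=o(m_t)$ of the $2m_t$ copies, so deleting those copies changes any degree-proportional statistic by $O(\Delta(G_t)^2/m_t)=o(1)$ while $m_t\geq n^{0.4+10^{-5}}$; but one must be careful that this crude bound genuinely suffices at all times $t<\tau_\ell$, which is where the stopping-time definitions and Lemma \ref{lem:degrees} do the work. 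The algebraic identity relating the size-biased truncated-Poisson mean to $\lambda_\ell$ is routine but worth stating cleanly, since it is the reason the truncation index drops from $\ell+1$ to $\ell$.
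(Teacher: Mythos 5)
Your proposal is correct and follows essentially the same route as the paper: the paper likewise treats $w_t$ as a (near) degree-proportional vertex, writes the expectation as $\sum_{v\in Y_\ell^t}(d_t(v)-1)\,d_t(v)/2m_t$, replaces the degree counts via Lemmas \ref{lem:degDistributions} and \ref{lem:degrees}, and uses the same index-shift identity showing that the size-biased mean of $Po_{\geq \ell+1}(\lambda^t)$ minus one equals $\lambda_\ell^t$. The only differences are cosmetic — you condition on $w_t\in Y_\ell^t$ and on its degree rather than summing over vertices directly, and your $q_{\ell,r}^t$ is, under the paper's formal definition, $q_{\ell+1,r}^t$ — neither of which changes the argument.
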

\begin{proof}
$ p_\ell^t= \frac{\sum_{v\in Y_\ell^t} d_t(v)}{2m_t}$. Therefore, with $\lambda=\lambda^t$
\begin{align*}
    \mathbb{E}\big[(|N(w_t)|-1) \mathbb{I}( w_t \in Y_\ell^t) \big]&
    =  \sum_{v\in Y_\ell^t} (d_t(v)-1) \frac{ d_t(v) }{ 2m_t}=\sum_{j\geq \ell+1} (j-1) \frac{ j|Y_\ell^t| \frac{  \lambda^j /j!}{ f_{\ell+1}(\lambda )}  }{2m_t}+o(1) 
\\  &=  \frac{|Y_\ell^t| \sum_{i \geq \ell+1} \frac{ i  \lambda^i /i!}{f_{\ell+1}(\lambda )}   }{2m_t}\frac{ \sum_{j\geq \ell+1} (j-1) j\frac{  \lambda^j/j!}{f_{\ell+1}(\lambda)}  }{\sum_{i \geq \ell+1} \frac{ i  \lambda^i /i!}{ f_{\ell+1}(\lambda )}}
+o(1) 
\\&
=  p_\ell^t  \frac{ \sum_{j\geq \ell} j\frac{\lambda^j }{j!}}{\sum_{i \geq \ell}\frac{\lambda^i }{i!}}   +O(1) = p_\ell^t  \lambda_{\ell}^t +o(1).
\end{align*}
The $o(1)$ error terms are due the applications of 
Lemmas \ref{lem:degDistributions} \& \ref{lem:degrees} at the second and fourth equalities.
\end{proof}

\begin{lemma}\label{lem:lambdamonotone}
Let $r\geq 1$ and $x\geq 0$. Then,
$$\lambda_{r-1}(x)\leq \lambda_r(x).$$
\end{lemma}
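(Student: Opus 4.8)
The plan is to reduce the inequality to a log-concavity property of the functions $f_\ell$ from \eqref{eq:fk}. First I would record that
$$\lambda_\ell(x)=\frac{x\,f_{\ell-1}(x)}{f_\ell(x)}\quad(\ell\ge 1),\qquad \lambda_0(x)=x,$$
which follows from the same one-line computation already used in the proof of Lemma~\ref{lem:lk}: $\lambda_\ell(x)=\mathbb{E}(Po_{\ge\ell}(x))=\sum_{r\ge\ell} r\,\frac{x^r}{r!\,f_\ell(x)}=\frac{x}{f_\ell(x)}\sum_{s\ge\ell-1}\frac{x^s}{s!}$. The cases $x=0$ (where $\lambda_\ell(0)=\ell$) and $r=1$ (where $\lambda_0(x)=x\le x e^x/(e^x-1)=\lambda_1(x)$) are immediate, so assume $x>0$ and $r\ge 2$. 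Since every $f_\ell(x)$ is positive for $x>0$, multiplying the desired inequality $\lambda_{r-1}(x)\le\lambda_r(x)$ through by $f_{r-1}(x)f_r(x)/x>0$ turns it into
$$f_{r-2}(x)\,f_r(x)\ \le\ f_{r-1}(x)^2,$$
i.e. the statement that the sequence $(f_\ell(x))_{\ell\ge 0}$ is log-concave.

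To prove $f_{\ell-1}(x)f_{\ell+1}(x)\le f_\ell(x)^2$ for $\ell\ge 1$, $x>0$ (then apply with $\ell=r-1$), write $a_i:=x^i/i!$, so that $f_{\ell-1}=f_\ell+a_{\ell-1}$ and $f_{\ell+1}=f_\ell-a_\ell$. Expanding and using $a_\ell-a_{\ell-1}=x^{\ell-1}(x-\ell)/\ell!$,
$$f_\ell^2-f_{\ell-1}f_{\ell+1}=(a_\ell-a_{\ell-1})f_\ell+a_{\ell-1}a_\ell=\frac{x^{\ell-1}}{\ell!}\left((x-\ell)f_\ell(x)+\frac{x^\ell}{(\ell-1)!}\right),$$
so it suffices to show $(x-\ell)f_\ell(x)+x^\ell/(\ell-1)!\ge 0$. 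If $x\ge\ell$ this is obvious. If $0<x<\ell$, I would bound $f_\ell(x)=\sum_{i\ge\ell}a_i$ by the geometric series coming from $a_{i+1}/a_i=x/(i+1)\le x/(\ell+1)$, obtaining $f_\ell(x)\le a_\ell\cdot\frac{\ell+1}{\ell+1-x}$; then $(\ell-x)f_\ell(x)\le(\ell-x)\frac{\ell+1}{\ell+1-x}a_\ell\le\ell a_\ell=x^\ell/(\ell-1)!$, the last step being equivalent to $(\ell-x)(\ell+1)\le\ell(\ell+1-x)$, i.e. to $-x\le 0$.

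I do not expect a genuine obstacle: the only slightly delicate point is the range $0<x<\ell$, handled by the geometric-series bound above, together with the degenerate instances $x=0$ and $r=1$ which are cleanest to dispatch directly from the closed forms for $\lambda_0,\lambda_1$. As a byproduct the argument shows that $\ell\mapsto f_\ell(x)$ is log-concave for every $x>0$, which might be worth recording as a remark if that fact is convenient elsewhere in the paper.
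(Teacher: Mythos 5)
Your proof is correct, but it follows a genuinely different route from the paper. You pass through the identity $\lambda_\ell(x)=x f_{\ell-1}(x)/f_\ell(x)$ and reduce the monotonicity to log-concavity of $\ell\mapsto f_\ell(x)$, i.e. $f_{r-2}(x)f_r(x)\leq f_{r-1}(x)^2$, which you then verify by the algebraic expansion $f_\ell^2-f_{\ell-1}f_{\ell+1}=(a_\ell-a_{\ell-1})f_\ell+a_{\ell-1}a_\ell$ together with a geometric-series bound $f_\ell(x)\leq \frac{(\ell+1)a_\ell}{\ell+1-x}$ in the regime $0<x<\ell$; all steps check out, including the boundary cases $r=1$ and $x=0$. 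The paper instead argues directly on the expectation: it rewrites $\lambda_{r-1}(x)=x+\frac{x^{r-1}/(r-2)!}{\sum_{i\geq r-1}x^i/i!}=x+\Big(\sum_{i\geq r-1}x^{i-r+1}(r-2)!/i!\Big)^{-1}$ and compares this sum term by term with the corresponding one for $\lambda_r$, using $(r-2)!/i!\geq (r-1)!/(i+1)!$, which settles the claim in two lines without any case analysis. The trade-off is clear: the paper's computation is shorter and self-contained, while your argument isolates two reusable structural facts --- the closed form $\lambda_\ell=x f_{\ell-1}/f_\ell$ (which the paper indeed uses implicitly elsewhere, e.g. in \eqref{eq:lambda} and Lemma \ref{lem:lk}) and the log-concavity of $(f_\ell(x))_{\ell\geq 0}$, which is equivalent to the lemma via that identity and could be worth recording if needed again.
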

\begin{proof}
\begin{align*}
    \lambda_{r-1}(x)
&=\frac{\sum_{i\geq r-1}ix^i/i! f_{r-1}(x) }{\sum_{i\geq r-1} x^i/i! f_{r-1}(x)}
=\frac{x\sum_{i\geq r-2}x^i/i! }{\sum_{i\geq r-1} x^i/i!}
=x+\frac{x^{r-1}/(r-2)! }{\sum_{i\geq r-1}x^i/i!}
\\& =x+\frac{1 }{\sum_{i\geq r-1} x^{i-r+1}(r-2)!/i!} \leq x+\frac{1 }{\sum_{i\geq r} x^{i-r}(r-1)!/i!}=\lambda_r. 
\end{align*}
\end{proof}

\begin{lemma}\label{lem:lambdabound}
Let $r\geq 1$ and $x\geq 0$. Then,
$$\max\set{x,r}\leq \lambda_{r}(x) \leq x+r.$$
Hence,
$$\lambda_{r}(x)\leq \lambda_{r-1}(x)+r.$$
\end{lemma}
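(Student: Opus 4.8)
The plan is to reduce everything to the closed form for $\lambda_r(x)$ that already appears inside the proof of Lemma~\ref{lem:lambdamonotone}. Starting from $\lambda_r(x)=\frac{\sum_{i\geq r} i x^i/i!}{\sum_{i\geq r} x^i/i!}$ and shifting the index in the numerator via $i x^i/i! = x\cdot x^{i-1}/(i-1)!$, one obtains
$$\lambda_r(x)=x\cdot\frac{\sum_{i\geq r-1} x^i/i!}{\sum_{i\geq r} x^i/i!}=x+\frac{x^r/(r-1)!}{\sum_{i\geq r} x^i/i!}.$$
This identity holds for every $r\geq 1$ and $x\geq 0$ (reading it as the limiting value $\lambda_r(0)=r$ when $x=0$), and it isolates the ``excess over $x$'' as a single ratio, which both bounds will control.

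For the two lower bounds: the excess term $\frac{x^r/(r-1)!}{\sum_{i\geq r} x^i/i!}$ is nonnegative, so $\lambda_r(x)\geq x$ is immediate; and since $Po_{\geq r}(x)$ takes values in $\{r,r+1,\dots\}$ we trivially have $\lambda_r(x)=\mathbb{E}(Po_{\geq r}(x))\geq r$. Together these give $\lambda_r(x)\geq\max\{x,r\}$.

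For the upper bound I would bound the denominator from below by its leading term, $\sum_{i\geq r} x^i/i!\geq x^r/r!$, so that the excess term is at most $\frac{x^r/(r-1)!}{x^r/r!}=r$, hence $\lambda_r(x)\leq x+r$. The displayed consequence then follows by chaining this with the lower bound applied at index $r-1$, namely $\lambda_r(x)\leq x+r\leq \lambda_{r-1}(x)+r$, where for $r=1$ one uses $\lambda_0(x)=x$.

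There is essentially no obstacle here; the only points that need a line of care are the degenerate cases $x=0$ and $r=1$, which is exactly why I would work with the form $\frac{x^r/(r-1)!}{\sum_{i\geq r} x^i/i!}$ rather than the $(r-2)!$-version used in Lemma~\ref{lem:lambdamonotone}.
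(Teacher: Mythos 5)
Your proof is correct and follows essentially the same route as the paper: the one-line argument in the paper uses exactly the identity $\lambda_r(x)=x+\frac{x^{r}/(r-1)!}{\sum_{i\geq r}x^i/i!}$ and bounds the excess by $r$ via the term $x^r/r!$ in the denominator, which is precisely your computation. Your handling of the lower bound, the chaining $x+r\leq \lambda_{r-1}(x)+r$, and the degenerate cases $x=0$, $r=1$ just makes explicit what the paper leaves implicit.
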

\begin{proof}
Indeed,
\begin{align*}
    \max\set{x,r}\leq \lambda_{r}(x)
=x+\frac{x^{r}/(r-1)! }{\sum_{i\geq r}x^i/i!}=x+r\cdot \frac{x^{r}/r! }{\sum_{i\geq r}x^i/i!}
\leq x+r.
\end{align*}
\end{proof}
\subsection{Large deviation bounds}
In this subsection we collect some standard results which are used throughout the paper, (see e.g. Chapter 22 of \cite{abook}).
\begin{theorem}[Chernoff Bound]\label{thm:chernoff}
$$\Pr\bigg(Binomial(n,p)-np\geq (1+\epsilon)np\bigg) \leq \exp \set{-\frac{\epsilon^2(np)}{3}}.$$
\end{theorem}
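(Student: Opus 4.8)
The plan is to use the standard exponential moment (Bernstein--Chernoff) method. Write $X = \sum_{i=1}^n X_i$ where the $X_i$ are i.i.d. Bernoulli$(p)$, so $X \sim Binomial(n,p)$ and $\mathbb{E}X = np$. For any $t>0$, Markov's inequality applied to the nonnegative random variable $e^{tX}$ gives
\begin{equation*}
\Pr\big(X \geq (1+\epsilon)np\big) \leq e^{-t(1+\epsilon)np}\,\mathbb{E}\big[e^{tX}\big].
\end{equation*}
First I would compute the moment generating function exactly: by independence, $\mathbb{E}[e^{tX}] = \prod_{i=1}^n \mathbb{E}[e^{tX_i}] = (1-p+pe^t)^n$, and then use the elementary bound $1+x \leq e^x$ with $x = p(e^t-1)$ to get $\mathbb{E}[e^{tX}] \leq \exp\big(np(e^t-1)\big)$. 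Substituting back yields $\Pr(X \geq (1+\epsilon)np) \leq \exp\big(np\,(e^t - 1 - t(1+\epsilon))\big)$ for every $t>0$.

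Next I would optimize the exponent over $t$. The function $t \mapsto e^t - 1 - t(1+\epsilon)$ is minimized at $t = \ln(1+\epsilon)$ (which is positive since $\epsilon>0$), and plugging this in collapses the bound to
\begin{equation*}
\Pr\big(X \geq (1+\epsilon)np\big) \leq \exp\!\Big(-np\big((1+\epsilon)\ln(1+\epsilon) - \epsilon\big)\Big).
\end{equation*}
It then remains to reduce this to the stated form, which is an entirely analytic step independent of the probability: one shows $(1+\epsilon)\ln(1+\epsilon) - \epsilon \geq \epsilon^2/3$ for the relevant range of $\epsilon$. I would prove this by setting $g(\epsilon) = (1+\epsilon)\ln(1+\epsilon) - \epsilon - \epsilon^2/3$, noting $g(0)=0$, $g'(0)=0$, and $g''(\epsilon) = \tfrac{1}{1+\epsilon} - \tfrac23$, so $g$ is convex on $[0,1/2]$ and one checks directly (or via the sharper intermediate bound $(1+\epsilon)\ln(1+\epsilon)-\epsilon \geq \tfrac{\epsilon^2}{2+\epsilon}$, obtained the same way) that $g\geq 0$ on the interval of interest; for $\epsilon \in [0,1]$ the bound $\tfrac{\epsilon^2}{2+\epsilon} \geq \tfrac{\epsilon^2}{3}$ finishes it. (If the displayed statement is read literally as $\Pr(X-np \geq (1+\epsilon)np)$, i.e. $\Pr(X \geq (2+\epsilon)np)$, the same chain applies a fortiori since the event is smaller.)

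The only genuine content is the final elementary inequality $(1+\epsilon)\ln(1+\epsilon)-\epsilon \geq \epsilon^2/3$; the probabilistic part — Markov on $e^{tX}$, the exact binomial MGF, the $1+x\leq e^x$ slack, and the optimization $t=\ln(1+\epsilon)$ — is completely routine. Since the paper only ever invokes this bound with $\epsilon$ bounded (in fact the applications use $\epsilon$ of order $\mathrm{polylog}(n)$, in which case one should instead keep the cleaner consequence $\Pr(X \geq (1+\epsilon)np) \leq \big(\tfrac{e^\epsilon}{(1+\epsilon)^{1+\epsilon}}\big)^{np} \leq \big(\tfrac{e}{1+\epsilon}\big)^{(1+\epsilon)np}$, valid for all $\epsilon>0$), I would also remark that the exponent can be taken as $-(1+\epsilon)np\ln\!\big(\tfrac{1+\epsilon}{e}\big)$ without the $\epsilon^2/3$ approximation, which is the form actually used in Lemmas~\ref{lem:degrees} and \ref{lem:degDistributions}.
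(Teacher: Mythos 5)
The paper never proves Theorem \ref{thm:chernoff}: it is listed under ``Large deviation bounds'' as a standard result with a pointer to Chapter 22 of \cite{abook}, so there is no internal proof to compare yours against. Your proposal is the classical exponential-moment argument (Markov applied to $e^{tX}$, the exact MGF $(1-p+pe^t)^n$, the slack $1+x\leq e^x$, optimization at $t=\ln(1+\epsilon)$, then the elementary inequality $(1+\epsilon)\ln(1+\epsilon)-\epsilon\geq \epsilon^2/3$), and it is correct as written. Two points are worth keeping. First, the final calculus step genuinely needs $\epsilon$ bounded (say $\epsilon\leq 1$, via $(1+\epsilon)\ln(1+\epsilon)-\epsilon\geq \epsilon^2/(2+\epsilon)$): the inequality as displayed is actually false for unbounded $\epsilon$ (with $np=1$ and $\epsilon=10$ the tail probability is of order $10^{-8}$ while $e^{-\epsilon^2/3}$ is of order $10^{-15}$), so your restriction to a bounded range is necessary rather than cosmetic, and it is harmless for the paper since every invocation has $\epsilon$ either $o(1)$ or $O(1)$, with the genuinely large-deviation estimates handled by multiplicative/Poisson-tail bounds exactly as you observe. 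Second, your treatment of the literal reading $\Pr(X-np\geq(1+\epsilon)np)$ (a smaller event, so the bound follows a fortiori once the $\epsilon\leq1$ version is in hand) is fine; the statement is evidently a typo for the usual $\Pr\big(X\geq(1+\epsilon)np\big)$. One small correction to your closing remark: Lemma \ref{lem:degDistributions} invokes the theorem with a tiny $\epsilon$ (deviation $n^{1/2}\log^2 n$ around a mean of order up to $n$), where the $\epsilon^2/3$ exponent is precisely what is used; it is Lemma \ref{lem:degrees} (and spots such as the proof of Lemma \ref{lem:initial}) that bypass the theorem and bound the relevant tails directly in the multiplicative $(e\lambda/x)^x$ form you describe.
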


\begin{theorem}[McDiarmid's Inequality]\label{McD}
Let $Z=Z(W_1,W_2,...,W_n)$ be a random variable that depends on $n$ independent random variables $W_1,W_2,...,W_n$. Suppose that there exists constants $c_1,c_2,...,c_n$ such that
$$|Z(W_1,W_2,...,W_i,...,W_n)-Z(W_1,W_2,...,W_i',...,W_n)|\leq c_i$$

for all $i\in[n]$ and $W_1,W_2,...,W_n,W_i'$. Then,
$$\Pr(Z-\mathbb{E}(Z)\geq t)\leq 2\exp\set{-\frac{t^2}{2\sum_{i=1}^n c_i}} .$$
\end{theorem}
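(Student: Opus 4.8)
The plan is to prove this via the classical Doob-martingale (bounded differences) argument. First I would introduce the filtration $\mathcal{F}_i=\sigma(W_1,\dots,W_i)$ for $0\le i\le n$ and the associated Doob martingale $Z_i:=\mathbb{E}(Z\mid \mathcal{F}_i)$, so that $Z_0=\mathbb{E}(Z)$, $Z_n=Z$, and $Z-\mathbb{E}(Z)=\sum_{i=1}^n D_i$ with $D_i:=Z_i-Z_{i-1}$; the tower property gives $\mathbb{E}(D_i\mid\mathcal{F}_{i-1})=0$. Note that the bounded-differences hypothesis already forces $Z$ to be bounded (telescoping over coordinates), so all these conditional expectations are well defined.

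The key structural step is to show that, conditionally on $\mathcal{F}_{i-1}$, the increment $D_i$ lies in a (random) interval of length at most $c_i$. Fix a value $(w_1,\dots,w_{i-1})$ of $(W_1,\dots,W_{i-1})$ and use independence of the $W_j$ to integrate out the coordinates $W_{i+1},\dots,W_n$: on that event $Z_i=g(W_i)$ and $Z_{i-1}=\mathbb{E}(g(W_i))$, where $g(w):=\mathbb{E}\big(Z(w_1,\dots,w_{i-1},w,W_{i+1},\dots,W_n)\big)$. Applying the bounded-differences hypothesis to the integrand \emph{before} taking the expectation yields $|g(w)-g(w')|\le c_i$ for all $w,w'$, so the range of $g$, and hence the conditional support of $D_i=g(W_i)-\mathbb{E}(g(W_i))$ given $\mathcal{F}_{i-1}$, has length at most $c_i$.

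Next I would bound the conditional moment generating function: since $D_i$ has conditional mean $0$ and conditionally lies in an interval of length at most $c_i$, Hoeffding's lemma gives $\mathbb{E}(e^{sD_i}\mid\mathcal{F}_{i-1})\le\exp(s^2c_i^2/8)$ for every $s>0$. Peeling off one factor at a time, $\mathbb{E}\big(e^{s(Z-\mathbb{E}Z)}\big)=\mathbb{E}\big(e^{s\sum_{i<n}D_i}\,\mathbb{E}(e^{sD_n}\mid\mathcal{F}_{n-1})\big)\le e^{s^2c_n^2/8}\,\mathbb{E}\big(e^{s\sum_{i<n}D_i}\big)$, and iterating down to $i=1$ gives $\mathbb{E}(e^{s(Z-\mathbb{E}Z)})\le\exp(\tfrac{s^2}{8}\sum_{i=1}^n c_i^2)$. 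Markov's inequality then gives $\Pr(Z-\mathbb{E}(Z)\ge t)\le \exp(-st)\exp(\tfrac{s^2}{8}\sum_i c_i^2)$, and choosing $s=4t/\sum_i c_i^2$ yields $\Pr(Z-\mathbb{E}(Z)\ge t)\le\exp(-2t^2/\sum_{i=1}^n c_i^2)$, which is a fortiori the claimed bound.

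The step I expect to be the real obstacle is the interval-length claim, since that is precisely where independence of the $W_j$ enters: it lets one rewrite $\mathbb{E}(Z\mid W_1,\dots,W_i)$ as a deterministic function of $W_i$ with the remaining coordinates averaged out, and one must take care to invoke the bound $c_i$ pointwise on the integrand, not after integrating. Everything afterwards (the martingale peeling and the optimisation in $s$) is routine; alternatively, once the interval-length claim is in hand, one may simply quote the interval form of the Azuma--Hoeffding inequality. I also note that the denominator of the exponent should read $\sum_i c_i^2$ rather than $\sum_i c_i$ --- this is what the argument delivers and is the form used in the applications.
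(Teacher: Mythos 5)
Your proposal is correct and is exactly the standard Doob-martingale/Hoeffding-lemma argument; the paper itself gives no proof of this statement but simply quotes it as a standard result (citing Chapter 22 of \cite{abook}), where this same bounded-differences argument is carried out. Your side remark about the exponent is also right: the denominator in the paper's statement (and likewise in its Theorem \ref{thm:AH}) should read $\sum_{i=1}^n c_i^2$ rather than $\sum_{i=1}^n c_i$, and the paper's own applications (e.g.\ denominators of the form $2t\log^2 n$ with all $c_i\leq\log n$) are consistent with the $c_i^2$ version that your argument delivers.
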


\begin{theorem}[Azume-Hoeffding Inequality]\label{thm:AH}
Let $\{X_i\}_{i=0}^n$ be a martingale and assume that there exists a sequence $\{c_i\}_{i=1}^n$ such that $|X_i-X_{i-1}|\leq c_i$ for $i \in [n]$. In addition assume that $X_0$ is constant. Then, for $t>0$,
$$\Pr(|X_n-X_0|\geq  t) \leq 2\exp\set{-\frac{t^2}{2\sum_{i=1}^n c_i}}.$$
\end{theorem}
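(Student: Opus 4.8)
The plan is to prove this by the standard exponential‑moment (Chernoff‑type) martingale argument. Since $X_0$ is constant, replacing $X_i$ by $X_i-X_0$ lets us assume $X_0=0$. Write $Y_i=X_i-X_{i-1}$ for the martingale increments and $\cF_{i-1}=\sigma(X_0,\ldots,X_{i-1})$, so that $\mathbb{E}[Y_i\mid\cF_{i-1}]=0$ and $|Y_i|\leq c_i$ almost surely. By Markov's inequality, for every $\theta>0$,
$$\Pr(X_n\geq t)\leq e^{-\theta t}\,\mathbb{E}\big[e^{\theta X_n}\big],$$
so it suffices to bound the moment generating function $\mathbb{E}[e^{\theta X_n}]$ and then optimize over $\theta$.

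The key ingredient is Hoeffding's lemma: if $Z$ satisfies $\mathbb{E}[Z]=0$ and $|Z|\leq c$, then $\mathbb{E}[e^{\theta Z}]\leq e^{\theta^2c^2/2}$ for every $\theta\in\mathbb{R}$. I would derive this from convexity of $z\mapsto e^{\theta z}$ on $[-c,c]$: from $z=\tfrac{c-z}{2c}(-c)+\tfrac{c+z}{2c}(c)$ one gets $e^{\theta z}\leq\tfrac{c-z}{2c}e^{-\theta c}+\tfrac{c+z}{2c}e^{\theta c}$, and taking expectations (using $\mathbb{E}[Z]=0$) gives $\mathbb{E}[e^{\theta Z}]\leq\cosh(\theta c)\leq e^{\theta^2c^2/2}$, the last inequality being the termwise bound $\cosh u=\sum_k u^{2k}/(2k)!\leq\sum_k u^{2k}/(2^kk!)=e^{u^2/2}$. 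Applying this conditionally on $\cF_{i-1}$ with $Z=Y_i$ (legitimate since $\mathbb{E}[Y_i\mid\cF_{i-1}]=0$ and $|Y_i|\leq c_i$) yields $\mathbb{E}[e^{\theta Y_i}\mid\cF_{i-1}]\leq e^{\theta^2c_i^2/2}$.

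Iterating with the tower property and the $\cF_{i-1}$‑measurability of $X_{i-1}$,
\begin{align*}
\mathbb{E}\big[e^{\theta X_n}\big]=\mathbb{E}\Big[e^{\theta X_{n-1}}\,\mathbb{E}\big[e^{\theta Y_n}\mid\cF_{n-1}\big]\Big]\leq e^{\theta^2c_n^2/2}\,\mathbb{E}\big[e^{\theta X_{n-1}}\big]\leq\cdots\leq\exp\Big\{\tfrac{\theta^2}{2}\sum_{i=1}^{n}c_i^2\Big\}.
\end{align*}
Hence $\Pr(X_n\geq t)\leq\exp\{-\theta t+\tfrac{\theta^2}{2}\sum_i c_i^2\}$, and the choice $\theta=t/\sum_i c_i^2$ gives $\Pr(X_n\geq t)\leq\exp\{-t^2/(2\sum_i c_i^2)\}$. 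The same argument applied to the martingale $(-X_i)_{i=0}^n$ bounds $\Pr(X_n\leq-t)$ by the same quantity, and a union bound over the two one‑sided tails produces the factor $2$. (As written the denominator should read $\sum_i c_i^2$ rather than $\sum_i c_i$; it is this corrected form — for instance with $c_i=\log n$ and denominator $t\log^2 n$ — that is actually invoked in the applications, and it is what the argument above establishes.)

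The only step that is not pure bookkeeping is Hoeffding's lemma, i.e.\ upgrading ``$Z$ bounded and mean zero'' to the sub‑Gaussian bound $\mathbb{E}[e^{\theta Z}]\leq e^{\theta^2c^2/2}$; everything else is conditioning, the tower property, Markov, and a one‑variable optimization. So I would concentrate the write‑up on the convexity estimate and the $\cosh u\leq e^{u^2/2}$ comparison and keep the martingale telescoping terse — in practice one simply cites the inequality (e.g.\ \cite{abook}) rather than reproving it.
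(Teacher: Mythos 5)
Your proof is correct and is exactly the standard exponential-moment argument (conditional Hoeffding lemma via convexity and $\cosh u\leq e^{u^2/2}$, tower property, Markov, optimize $\theta$, union bound for the two tails); the paper itself does not prove Theorem \ref{thm:AH} at all but simply records it among standard large-deviation bounds with a citation to \cite{abook}, where this same proof appears, so there is nothing to compare beyond that. Your parenthetical correction is also right: as printed the denominator $\sum_{i=1}^n c_i$ is a typo for $\sum_{i=1}^n c_i^2$ (the stated form is even false when the $c_i$ exceed $1$, by a scaling argument), and the applications in the paper, e.g.\ the bounds with increments at most $\log n$ and denominator $2t\log^2 n$, indeed use the correct $\sum_i c_i^2$ version that your argument establishes; the same typo occurs in the statement of McDiarmid's inequality (Theorem \ref{McD}).
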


\section{Analysis of \kG}\label{sec:analysis}
The presented analysis of \kG  aims to prove Lemma \ref{lem:changeZ_t} and is driven by \eqref{eq:change_zt}. Given the notation introduced in the previous section \eqref{eq:change_zt}  can be rewritten as,
\begin{align}\label{eq:echange_zt}
    \mathbb{E}(\z_{t+1}-\z_t|\cH_t)&=-\mathbb{I}(\z_t>0)+p_1^t \lambda_1^t \sum_{\ell=1}^k \ell p_\ell^tq_{\ell+1,\ell+1}^t+o(1).
\end{align}
Now let $$t^*=\bfrac{1}{40ck}^{4} n.$$

To bound the RHS of \eqref{eq:echange_zt} we will show that for $d\in\{k+1,k,...,7\}$ we can confine the process $p^{t^*},p^{t^*+1},...,p^{\tau_{d-1}-1}$ inside a polyhedron $B_d \subset \mathbb{R}^{k+1}$ of the form $$B_d=\{x\in \mathbb{R}^{k+1}:x_{i}\geq \alpha_ix_{i-1} \text{ for } 2\leq i\leq d-1\}$$ such that the following properties hold:
\\ \textbf{Property I:}  $p^{t^*}\in B_{k+1}$, $p^{t^*}_i\geq \alpha_ip^{t^*}_{i-1}+\epsilon$ for $i\in\{2,3,...,k+1\}$ and some $\epsilon>0$,
\\ \textbf{Property II:} there exists $\epsilon'>0$ such that for $d\in\{k+1,k,...,7\}$, $t^*\leq t\leq  \tau_{d-1}-1$ and $i\in\{2,3,...,d-1\}$, if $p^t_i-a_{i}p_{i-1}^t\leq \epsilon '$ and $p^t\in B_d$ then 
$$\mathbb{E}[(p^{t+1}_i-a_{i}p_{i-1}^{t+1})-(p^t_i-a_{i}p_{i-1}^t)|\cH_t] >0,$$ 
\\ \textbf{Property III:} for $d\in\{k+1,k,...,3\}$ and  $\tau_{d} \leq t < \tau_{d-1}$, if  $p^t\in B_d$ then \eqref{eq:changeZ_t} holds.

Property I implies that the process at time $t^*$ lies inside $B_d$ and away from its boundary. Property II will imply that if the process starts inside $B_d$ and away from its boundary then it stays inside $B_d$ until time $\tau_{d-1}$. Thus Properties I \& II will imply that $p^t \in B_d$ for $d=k+1,k,...,7$ and $t^*\leq t\leq \tau_{d-1}$ . Thereafter Property III will imply that \eqref{eq:changeZ_t} is satisfied for $\tau^*\leq t\leq \tau_6$. For $0\leq t\leq \tau^*$ we will argue that $p_1$ is tiny and this will be all that is needed to show that \eqref{eq:changeZ_t} is satisfied. For $\tau_6\leq t \leq \tau'$ we will confine the process in a more careful and calculated way. This part of the proof takes place  in the Appendix.

Henceforward to easy the presentation we  drop the time index $t$ when it is clear from the context.

\subsection{ Defining $B_d$ }
$B_d$ will be defined by a set of inequalities of the form $x_{i}\geq \alpha_{i}x_{i-1}$ for $2\leq i\leq d-1$. The constants $\alpha_i$ will be independent of $d$, are defined by \eqref{alphas} stated later and we will prove that they lie in the interval $[1.55,1.552]$. Hence if $p^t\in B_d$ then the sequence $p_1^t$, $p_2^t$, $p_3^t$, $\cdots$, $p_{d-1}^t$ increases in a geometric fashion. This will imply that $p_1^t$ will be sufficiently small so that if $\z_t>0$ then the RHS of \eqref{eq:echange_zt} is strictly negative. The equations defining $a_i$, $2\leq i\leq d-1$, originate from the  calculations related to Property II
.  

For $r\in \mathbb{N}^+$ and $\lambda>0$ we let
\begin{align}\label{grl}
g(r,\lambda):=a_r\lambda_{r+1}-(a_r+1)\lambda_{r}+\lambda_{r-1}.
\end{align}
We let $$ \alpha_2:=1.55.$$
and
$$a_2':=\underset{\lambda \geq 0}{sup}\bigg\{
a_{2}-\frac{g(2,\lambda)}{\lambda_{3}}
+\frac{0.55\lambda_1}{(1.55^{3}-1)\lambda_{3}}  \bigg[ 3 q_{3,3} - 2 q_{2,2}\bigg]
 \bigg\}.$$
For $r\geq 3$ let 
$$a_r':=\underset{\lambda \geq 0}{sup}\bigg\{
a_{r}-\frac{g(r,\lambda)}{\lambda_{r+1}}
+\frac{0.55\lambda_1}{(1.55^{r}-1)\lambda_{r+1}}  \bigg[ (r+1) q_{r+1,r+1} - rq_{r,r}\bigg]
 \bigg\}.$$
We define $a_{r+1}$ by
\begin{equation}\label{alphas}
a_{r+1}:=\begin{cases}
\max\{a_{r},a_r'\}+10^{-5} \hspace{10mm} \text{ for } 2\leq r \leq 24,
\\ \max\{a_{r},a_r'\}+2^{-r} \hspace{10mm} \text{ for } 25\leq r\leq k.
\end{cases}
\end{equation} 
For $d\in [k+1]$ we let
$$B_d:=\{ x\in \mathbb{R}^{k+1}: x_r \geq \alpha_r x_{r-1} \text{ for } 2\leq r \leq d-1\}.  $$

We now proceed to bound $a_r$ from above for $3\leq r\leq k$. As it can be seen by the equalities defining $a_r$, the whole problem boils down to an optimization problem. For small values of $r$, treated at Lemma \ref{lem:boundsalphasmall}, we resolve this problem computationally. Its proof is presented in Appendix \ref{appendix:boundsalphasmall}. The proof of Lemmas \ref{lem:boundsalphasmall} and \ref{lem:boundsalphalarge} make use of Lemma \ref{lem:lambdaconvex} stated below. Its proof is presented in Appendix \ref{appendix:lambdaconvex}.
\begin{lemma}\label{lem:lambdaconvex}
For $r \geq 2$ and $\lambda\geq 0$ we have that $g(r,\lambda)\geq 0$.
\end{lemma}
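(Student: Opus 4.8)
\textbf{Proof proposal for Lemma \ref{lem:lambdaconvex}.}

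The plan is to rewrite $g(r,\lambda)=a_r\lambda_{r+1}-(a_r+1)\lambda_r+\lambda_{r-1}$ as a convex combination statement about the sequence $(\lambda_j)_{j\ge 0}$ (all evaluated at the same $x=\lambda$), and to exploit convexity of $j\mapsto\lambda_j(\lambda)$. Indeed, since $a_r\ge 1$ we can write $a_r\lambda_{r+1}-(a_r+1)\lambda_r+\lambda_{r-1}=a_r(\lambda_{r+1}-\lambda_r)-(\lambda_r-\lambda_{r-1})$. So it suffices to show two things: first that $\lambda_{r+1}-\lambda_r\ge 0$ (this is exactly Lemma \ref{lem:lambdamonotone}), and second that the successive differences are themselves nonincreasing-ish, i.e. $\lambda_{r+1}-\lambda_r\ge \lambda_r-\lambda_{r-1}$ is \emph{not} what we want (wrong direction); instead, using $a_r\ge 1$, it is enough to prove the cleaner inequality $\lambda_{r+1}-\lambda_r\ge \lambda_r-\lambda_{r-1}$ would give $g\ge 0$ only if that held, so I should instead check whether $j\mapsto \lambda_j$ is convex, which would yield $\lambda_{r+1}-\lambda_r\ge\lambda_r-\lambda_{r-1}$, hence $g(r,\lambda)\ge a_r(\lambda_r-\lambda_{r-1})-(\lambda_r-\lambda_{r-1})=(a_r-1)(\lambda_r-\lambda_{r-1})\ge 0$ since $a_r\ge 1$ (from \eqref{alphas}, $a_r\in[1.55,1.552]$) and $\lambda_r-\lambda_{r-1}\ge 0$ by Lemma \ref{lem:lambdamonotone}.

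So the heart of the matter is: \emph{for fixed $\lambda\ge 0$, the map $r\mapsto \lambda_r(\lambda)$ has nondecreasing successive differences.} I would start from the closed form derived in the proof of Lemma \ref{lem:lambdamonotone}: $\lambda_r(\lambda)=\lambda+\dfrac{\lambda^{r}/(r-1)!}{\sum_{i\ge r}\lambda^i/i!}=\lambda+\dfrac{1}{\sum_{i\ge r}\lambda^{i-r}(r-1)!/i!}=\lambda+\dfrac{1}{\sum_{j\ge 0}\lambda^{j}(r-1)!/(r+j)!}$. Writing $\rho_r(\lambda):=\lambda_r(\lambda)-\lambda=\big(\sum_{j\ge 0}\lambda^j(r-1)!/(r+j)!\big)^{-1}$, the claim reduces to showing $\rho_{r+1}-\rho_r\ge \rho_r-\rho_{r-1}$, i.e. $\rho$ is convex in $r$. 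A convenient route: observe $\rho_r(\lambda)=\mathbb{E}[Po_{\ge r}(\lambda)]-\lambda = \lambda\cdot\Pr(Po_{\ge r-1}(\lambda)=r-1)\cdot(\text{something})$; more directly, $\rho_r = r\cdot \dfrac{\lambda^r/r!}{\sum_{i\ge r}\lambda^i/i!} = r\cdot \Pr(\cP_{\ge r}=r)$ where $\cP_{\ge r}$ is the truncated Poisson, and there should be a tidy recursion $\dfrac{1}{\rho_{r+1}} = \dfrac{r+1}{\lambda}\big(\dfrac{1}{\rho_r}-\dfrac{1}{r}\big)$ or similar relating $1/\rho_r$ linearly, from which one reads off that $1/\rho_r$ is a specific sequence whose reciprocal is concave. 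Alternatively, and perhaps most robustly, I would prove convexity directly from the series: with $S_r(\lambda):=\sum_{j\ge 0}\lambda^j (r-1)!/(r+j)! = 1/\rho_r$, one has $S_{r-1}S_{r+1}\ge S_r^2$ would give log-convexity of $S_r$ hence... again the wrong direction for $\rho_r=1/S_r$; I actually want $S_r$ to be log-\emph{concave-ish} so $1/S_r$ is convex. Concretely, convexity of $\rho_r$ is equivalent to $\rho_{r-1}\rho_{r+1}\le$ nothing clean, so the cleanest is to verify $\rho_{r+1}+\rho_{r-1}\ge 2\rho_r$ by clearing denominators into a single polynomial inequality in $\lambda$ with nonnegative coefficients — a finite, if slightly tedious, computation that I expect to go through termwise.

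The main obstacle, then, is establishing the discrete convexity $\rho_{r+1}(\lambda)+\rho_{r-1}(\lambda)\ge 2\rho_r(\lambda)$ for all $\lambda\ge 0$ and $r\ge 2$ (with the edge case $r=2$ needing $\rho_1(\lambda)=\lambda_1(\lambda)-\lambda$ handled, where $\lambda_1(\lambda)=\lambda/(1-e^{-\lambda})$). I would handle this by reducing to showing that the probability generating sequence $a_j:=\lambda^j/(r+j)!$ — or rather the ratios $\rho_{r+1}/\rho_r$, which equal $(r)\big/\big(r+1\big)\cdot S_r/S_{r+1}$ — are monotone in $r$, and using an Abel summation / interchange-of-sums argument to compare $S_r S_{r+1}$-type cross terms. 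If the fully general termwise argument is messy, a fallback is to use the probabilistic interpretation $\lambda_r(\lambda)=\mathbb{E}[\cP_{\ge r}]$ together with a coupling showing that the increments $\mathbb{E}[\cP_{\ge r+1}]-\mathbb{E}[\cP_{\ge r}]$ are nondecreasing in $r$ because conditioning a Poisson on being larger ``pushes mass up'' at an accelerating rate near the truncation point; this can be made rigorous via the identity $\lambda_r-\lambda_{r-1}=\rho_r-\rho_{r-1}$ and monotonicity of hazard-type ratios of the Poisson pmf. Either way, once $r\mapsto\lambda_r(\lambda)$ is shown to have nondecreasing differences, the lemma follows immediately from $g(r,\lambda)\ge(a_r-1)(\lambda_r-\lambda_{r-1})\ge 0$, using $a_r\ge 1$ and Lemma \ref{lem:lambdamonotone}.
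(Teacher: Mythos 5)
Your reduction is fine as far as it goes: since $a_r\ge 1.55>1$ and $\lambda_{r+1}\ge\lambda_r\ge\lambda_{r-1}$ (Lemma \ref{lem:lambdamonotone}), the inequality $g(r,\lambda)\ge 0$ would indeed follow from the discrete convexity $\lambda_{r+1}(\lambda)+\lambda_{r-1}(\lambda)\ge 2\lambda_r(\lambda)$, via $g(r,\lambda)\ge (a_r-1)(\lambda_r-\lambda_{r-1})\ge 0$. But that convexity statement is precisely the entire content of the lemma in your approach, and you never prove it: you list several candidate routes (a recursion for $1/(\lambda_r-\lambda)$, log-concavity of the tail sums, Abel summation, a coupling/hazard-rate heuristic) and state that you ``expect'' one of them to go through, while visibly wavering about which direction of inequality each route gives. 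That is a genuine gap, not a routine verification. Note also that the recursion you quote is off: writing $\rho_r=\lambda_r-\lambda$, the correct identity is $1/\rho_{r+1}=\tfrac{r}{\lambda}\bigl(1/\rho_r-1/r\bigr)$, not with the factor $\tfrac{r+1}{\lambda}$.

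The gap matters because the inequality you need is strictly stronger than what the paper proves, and even the weaker version is not cheap. The paper exploits the slack $a_r\ge 1.5$ and only establishes $1.5\lambda_{r+1}-2.5\lambda_r+\lambda_{r-1}\ge 0$; doing so already requires rewriting everything in terms of the point masses $q_{r-1},q_r$ of the truncated Poisson, a four-case analysis over the size of $q_{r-1}$, and a Mathematica verification for $2\le r\le 8$, $0.2\le\lambda\le 5r$. Your claim replaces the coefficient $1.5$ by $1$, i.e.\ asks for genuine convexity of $r\mapsto\lambda_r(\lambda)$, where the margin is tiny in places (for small $\lambda$ the excess $\lambda_{r+1}+\lambda_{r-1}-2\lambda_r$ is only of order $\tfrac{2\lambda}{r(r+1)(r+2)}$), so a ``termwise'' or purely structural argument is by no means obviously available. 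Until you actually prove the convexity claim (or retreat to the weaker $1.5$-version and supply the case analysis), the proposal does not constitute a proof of the lemma.
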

\begin{proof}
See Appendix \ref{appendix:lambdaconvex}
\end{proof}

\begin{lemma}\label{lem:boundsalphasmall}
For $2\leq r\leq 24$ we have that $1.55\leq a_{r+1}\leq 1.551$.
\end{lemma}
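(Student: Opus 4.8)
\textbf{Proof proposal for Lemma \ref{lem:boundsalphasmall}.}

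The plan is to run the recursion \eqref{alphas} explicitly for $r$ from $2$ up to $24$, at each step bounding the supremum that defines $a_r'$ over $\lambda \ge 0$. First I would record the base case $a_2 = 1.55$, so the claimed bounds hold trivially at $r=2$. For the inductive step, suppose inductively that $1.55 \le a_r \le 1.551$ (and, say, that all earlier $a_j$ lie in $[1.55,1.551]$). I would then show that the supremum defining $a_r'$ is at most something like $1.5505$, so that $\max\{a_r,a_r'\} + 10^{-5} \le 1.551$, which gives the upper bound; the lower bound $a_{r+1} \ge 1.55$ is immediate since $a_{r+1} = \max\{a_r,a_r'\}+10^{-5} \ge a_r \ge 1.55$. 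The whole content is therefore the uniform-in-$\lambda$ estimate
\[
\sup_{\lambda \ge 0}\left\{ a_r - \frac{g(r,\lambda)}{\lambda_{r+1}} + \frac{0.55\,\lambda_1}{(1.55^{r}-1)\lambda_{r+1}}\bigl[(r+1)q_{r+1,r+1} - r q_{r,r}\bigr] \right\} \le a_r + 10^{-5}\quad(\text{roughly}),
\]
for each fixed $r \le 24$.

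The key tools are the auxiliary lemmas already proved: Lemma \ref{lem:lambdaconvex} gives $g(r,\lambda) \ge 0$, so the term $-g(r,\lambda)/\lambda_{r+1}$ is always $\le 0$ and can only help; Lemma \ref{lem:lambdabound} gives $\max\{\lambda,r\} \le \lambda_r(\lambda) \le \lambda + r$, so $\lambda_1 \le \lambda+1$ and $\lambda_{r+1} \ge \max\{\lambda,r+1\}$; and the explicit formula $q_{\ell,\ell}(\lambda) = \ell\Pr(Po_{\ge \ell}(\lambda)=\ell)/\lambda_\ell(\lambda) = \ell\lambda^\ell/(\ell!\,f_\ell(\lambda)\,\lambda_\ell(\lambda))$ lets one bound the bracket. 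The strategy for the bracket term is a two-regime split on $\lambda$. For $\lambda$ bounded away from $0$ and $\infty$ (say $\lambda$ in a compact interval determined by $r$), one checks the inequality by a finite computation — this is where the phrase "we resolve this problem computationally" in the statement comes in, and it is deferred to Appendix \ref{appendix:boundsalphasmall}; concretely one would evaluate the function on a fine grid together with an explicit modulus-of-continuity (Lipschitz) bound to cover the gaps between grid points, using that all the ingredients $\lambda_\ell, q_{\ell,\ell}$ are smooth with controlled derivatives. For $\lambda \to 0$: then $Po_{\ge \ell}(\lambda) \to \ell$ deterministically, so $q_{\ell,\ell} \to 1$, $\lambda_\ell \to \ell$, $g(r,\lambda) \to a_r(r+1) - (a_r+1)r + (r-1) = (r-1) - r + (a_r)(r+1-r) = a_r - 1 \ge 0.55$, hence the bracket $(r+1)q_{r+1,r+1}-rq_{r,r}\to 1$ and $\lambda_1 \to 1$, and the whole expression tends to $a_r - (a_r-1)/(r+1) + 0.55/( (1.55^r-1)(r+1) )$, which is strictly less than $a_r$ for $r \ge 2$; a continuity argument extends this to a neighbourhood of $0$. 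For $\lambda \to \infty$: $\lambda_\ell(\lambda) = \lambda + O(1)$ and $q_{\ell,\ell}(\lambda) = \ell\lambda^{\ell-1}e^{-\lambda}/((\ell-1)!\,(1+o(1))) \to 0$ exponentially, while $\lambda_1/\lambda_{r+1} \to 1$ and $g(r,\lambda)/\lambda_{r+1}$ stays nonnegative, so the expression is $\le a_r + o(1) < a_r + 10^{-5}$ for $\lambda$ large; again made rigorous by an explicit threshold.

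The main obstacle is the compact-range computational step: one must certify, for each of the $23$ values $r = 2,\dots,24$, that the explicit one-variable function of $\lambda$ stays below the target, and do so with a rigorous (not merely numerical) bound. This is handled by combining a grid evaluation with a Lipschitz/derivative bound on the function — the smooth ingredients $f_\ell, \lambda_\ell, q_{\ell,\ell}$ have derivatives one can bound crudely but explicitly on the relevant compact $\lambda$-interval (one first shows the supremum over all $\lambda\ge 0$ is attained, or $\varepsilon$-attained, in such an interval, using the $\lambda\to 0$ and $\lambda\to\infty$ asymptotics above). The rest — the base case, the two asymptotic regimes, and propagating $1.55 \le a_{r+1}$ — is routine given Lemmas \ref{lem:lambdaconvex}, \ref{lem:lambdamonotone} and \ref{lem:lambdabound}. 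I would therefore structure the writeup as: (1) base case; (2) reduce to the uniform-in-$\lambda$ bound on the $a_r'$-expression; (3) dispose of $\lambda$ near $0$ and near $\infty$ analytically; (4) cite Appendix \ref{appendix:boundsalphasmall} for the compact range; (5) conclude the induction and hence both inequalities $1.55 \le a_{r+1} \le 1.551$.
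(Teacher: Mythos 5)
Your proposal follows essentially the same route as the paper's Appendix B argument: reduce to a uniform-in-$\lambda$ bound on the expression defining $a_r'$, handle the small-$\lambda$ and large-$\lambda$ regimes analytically (using $g(r,\lambda)\ge 0$ and the bounds on $\lambda_\ell$ and $q_{\ell,\ell}$), verify the remaining compact range of $\lambda$ computationally for each $2\le r\le 24$, and propagate the trivial lower bound $a_{r+1}\ge a_2=1.55$ through the recursion. The only differences are cosmetic: the paper fixes explicit cutoffs ($\lambda<0.12r$ and $\lambda>16+r$) with direct estimates rather than limit-plus-continuity arguments, and it simply cites Mathematica for the middle interval where you sketch a grid-plus-Lipschitz certification.
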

\begin{proof}
See Appendix \ref{appendix:boundsalphasmall}
\end{proof}

\begin{lemma}\label{lem:boundsalphalarge}
For $r\geq 25$ we have that $1.55\leq a_{r+1}\leq 1.552$.
\end{lemma}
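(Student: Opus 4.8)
\textbf{Proof plan for Lemma \ref{lem:boundsalphalarge}.}

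The plan is to continue the inductive bound from Lemma \ref{lem:boundsalphasmall}, now for $r\geq 25$, where the recursion \eqref{alphas} adds $2^{-r}$ rather than $10^{-5}$ at each step. Since the increments $2^{-r}$ for $r\geq 25$ sum to at most $2^{-24}<10^{-7}$, the bound $a_{r+1}\leq 1.552$ will follow as long as I can show that the ``correction terms'' $a_r'-a_r$ are non-positive (or at least summably small) once $r\geq 25$; the lower bound $a_{r+1}\geq 1.55$ is immediate since each step only increases $a_{r+1}$ and $\alpha_2=1.55$. So the real content is to show: for $r\geq 25$ and all $\lambda\geq 0$,
\begin{equation*}
-\frac{g(r,\lambda)}{\lambda_{r+1}}+\frac{0.55\,\lambda_1}{(1.55^{r}-1)\lambda_{r+1}}\Big[(r+1)q_{r+1,r+1}-r\,q_{r,r}\Big]\leq 2^{-r},
\end{equation*}
so that $a_r'\leq a_r+2^{-r}$ and hence $a_{r+1}=\max\{a_r,a_r'\}+2^{-r}\leq a_r+2\cdot 2^{-r}$, after which summing the geometric tail from $r=25$ gives the claimed upper bound $1.552$, using $a_{25}\leq 1.551$ from Lemma \ref{lem:boundsalphasmall}.

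The key estimates I would assemble are the following. First, by Lemma \ref{lem:lambdaconvex}, $g(r,\lambda)\geq 0$, so the first term is $\leq 0$ and can be discarded — we only need to control the second term. Second, by Lemma \ref{lem:lambdabound}, $\lambda_{r+1}(\lambda)\geq\max\{\lambda,r+1\}\geq r+1$, which gives a clean $1/\lambda_{r+1}\leq 1/(r+1)$ bound on the prefactor; and $(1.55^r-1)^{-1}$ decays exponentially in $r$. Third, I need upper bounds on $\lambda_1(\lambda)$, $q_{r+1,r+1}(\lambda)$, and $q_{r,r}(\lambda)$. Note $q_{\ell,\ell}=\ell\Pr(Po_{\geq\ell}(\lambda)=\ell)/\lambda_\ell=\ell\cdot\frac{\lambda^\ell/\ell!}{f_\ell(\lambda)\lambda_\ell}$; since $f_\ell(\lambda)\geq\lambda^\ell/\ell!$ we get $q_{\ell,\ell}\leq \ell/\lambda_\ell\leq 1$ (using $\lambda_\ell\geq\ell$), so $(r+1)q_{r+1,r+1}-rq_{r,r}\leq r+1$ crudely, and hence the bracket is $O(r)$. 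The only remaining danger is the factor $\lambda_1(\lambda)$, which by Lemma \ref{lem:lambdabound} is $\leq\lambda+1$ and is \emph{unbounded} in $\lambda$. The point is that when $\lambda$ is large, $q_{r,r}(\lambda)$ and $q_{r+1,r+1}(\lambda)$ are doubly-exponentially small in $\lambda$ (being the point-mass ratio of a Poisson at a value far below its mean), so the product $\lambda_1\cdot[(r+1)q_{r+1,r+1}-rq_{r,r}]$ stays bounded; and when $\lambda$ is bounded (say $\lambda\leq 2r$), we use $\lambda_1\leq 2r+1$ and the exponential decay of $(1.55^r-1)^{-1}$ to kill the whole $O(r^2)$ expression. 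I would split the supremum over $\lambda$ at a threshold like $\lambda = Kr$ for a suitable constant $K$ and handle the two ranges separately, exactly as I expect the proof of Lemma \ref{lem:boundsalphasmall} does but now exploiting that $r$ is large enough for the asymptotics to be clean rather than needing a computer check.

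The main obstacle I anticipate is making the large-$\lambda$ estimate genuinely uniform in both $r\geq 25$ and $\lambda$: one has to show that $\sup_{\lambda\geq 0}\lambda_1(\lambda)\,q_{r,r}(\lambda)$ (and the $r+1$ analogue) is bounded by something like $C$ independent of $r$, or better, decays in $r$. This requires a careful Poisson tail/point-mass estimate — roughly, for $\lambda\geq r$ one writes $\frac{\lambda^r/r!}{f_r(\lambda)}\leq\frac{\lambda^r/r!}{e^\lambda-\text{(low order terms)}}\leq e^{-\lambda}\lambda^r/r!\cdot(1+o(1))$ and then Stirling gives $e^{-\lambda}\lambda^r/r!\leq (e\lambda/r)^r e^{-\lambda}$, which for $\lambda\geq r$ is maximized near $\lambda\approx r$ and is at most $e^{-\Omega(\lambda)}$ times a polynomial; multiplying by $\lambda_1\leq\lambda+1$ leaves something exponentially small in $\lambda$, hence bounded. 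For $\lambda\leq r$ one instead uses $\lambda_\ell\geq\lambda$ and $\lambda_\ell\geq\ell$ together with the geometric factor $(1.55^r-1)^{-1}$. Once these two regimes are stitched together the induction closes mechanically. I would present the regime split and the Poisson point-mass bound as the two displayed computations, cite Lemmas \ref{lem:lambdamonotone}, \ref{lem:lambdabound}, \ref{lem:lambdaconvex} for the structural inequalities, and defer any remaining numerical verification to the Appendix in parallel with Lemma \ref{lem:boundsalphasmall}.
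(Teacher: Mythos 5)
Your overall architecture is the same as the paper's (Claim \ref{claim:alphas1}): discard $-g(r,\lambda)/\lambda_{r+1}\leq 0$ via Lemma \ref{lem:lambdaconvex}, bound the bracket by $r+1$ using $q_{\ell,\ell}\leq 1$, exploit the $(1.55^{r}-1)^{-1}$ decay, and sum a tail on top of $a_{25}\leq 1.551$ from Lemma \ref{lem:boundsalphasmall}. The gap is in the quantitative execution. The intermediate inequality you set as your goal, $a_r'-a_r\leq 2^{-r}$, cannot follow from the estimates you outline: once $g$ is dropped and the bracket is bounded by $r+1$, the best you can get is of order $r\,1.55^{-r}$ (your own small-$\lambda$ bound $0.55(2r+1)/(1.55^{r}-1)$ is about $5\cdot 10^{-4}$ at $r=25$, versus $2^{-25}\approx 3\cdot 10^{-8}$), and since $1.55<2$ no bound of the form $Cr\,1.55^{-r}$ is ever $\leq 2^{-r}$ for large $r$. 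The $2^{-r}$ target is also unnecessary: the recursion \eqref{alphas} already adds $2^{-r}$ on top of $\max\{a_r,a_r'\}$, so all that is needed is that $\sum_{r\geq 25}\max\{a_r'-a_r,0\}$ stays below roughly $10^{-3}$; the paper proves $a_r'-a_r\leq 2.5\cdot 10^{-4}\cdot 1.45^{-(r-25)}$ and sums exactly that.

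Second, your handling of the unbounded factor $\lambda_1$ --- a regime split at $\lambda=Kr$ plus Stirling/Poisson point-mass estimates for large $\lambda$ --- is unnecessary and, with the constants you propose, too lossy. The one-line fix, and what the paper actually does, is $\lambda_1\leq\lambda_{r+1}$ (Lemma \ref{lem:lambdamonotone}), which gives uniformly in $\lambda$
\begin{equation*}
a_r'-a_r\;\leq\;\sup_{\lambda\geq 0}\frac{0.55\,\lambda_1}{(1.55^{r}-1)\lambda_{r+1}}\,(r+1)\;\leq\;\frac{0.55(r+1)}{1.55^{r}-1},
\end{equation*}
with no case analysis at all (alternatively, Lemma \ref{lem:lambdabound} already yields $\lambda_1/\lambda_{r+1}\leq 1+1/(r+1)$ for every $\lambda$, so even your large-$\lambda$ regime needs no Poisson tail estimate). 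This sharper constant matters: with your $\lambda_1\leq 2r+1$, $\lambda_{r+1}\geq r+1$ bound the tail sums to about $1.5\cdot 10^{-3}$, and $1.551+1.5\cdot 10^{-3}$ already exceeds $1.552$, so the arithmetic does not close; with the cancellation $\lambda_1/\lambda_{r+1}\leq 1$ the tail is about $8\cdot 10^{-4}$ and the bound $1.552$ follows as in the paper. Your lower bound $a_{r+1}\geq 1.55$ is fine, since the recursion is nondecreasing.
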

\begin{proof}
Lemma \ref{lem:boundsalphalarge} will follows from the following claim and Lemma \ref{lem:boundsalphasmall}.

\begin{claim}\label{claim:alphas1}
For $r\geq 25$ we have that $a_r'-a_r\leq 2.5\cdot10^{-4}\cdot 1.45^{-r+25}$.
\end{claim}

Claim \ref{claim:alphas1} and Lemma \ref{lem:boundsalphasmall} imply that for $r\geq 25$,
$$1.55=a_2\leq a_{r+1}\leq a_{25}+\sum_{r\geq 25} (2.5\cdot10^{-4}\cdot 1.45^{-r+25}+2^{-r}) \leq 1.551+\frac{2.5\cdot10^{-4}}{1-1/1.45}+2^{-24}\leq 1.552.$$

{\textbf{Proof of Claim \ref{claim:alphas1}:}}
Lemma \ref{lem:lambdaconvex} and $q_{\ell,\ell}\leq 1$ imply that for $r\geq 25$
\begin{align*}
    a_r'-a_r&\leq 
    \underset{\lambda\geq 0}{\sup}\bigg\{ \frac{0.55\lambda_1}{(1.55^{r}-1)\lambda_{r+1}} (r+1)  
 \bigg\}
\leq \frac{0.55(r+1)}{1.55^{r}-1}  \\& \leq \frac{0.55\cdot 26}{1.55^{25}-1}\prod_{i=25}^{r-1}\frac{\frac{i+2}{1.55^{i+1}-1}}{\frac{i+1}{1.55^{i}-1}} 
\leq 2.5\cdot10^{-4}\cdot 1.45^{-r+25}.
\end{align*}
At the second inequality we used that $\lambda_1\leq \l_{r+1}$, implied by Lemma \ref{lem:lambdamonotone}.
\end{proof}

\subsection{Property III}
In this section we show that for $\tau_d\leq t <\tau_{d-1}$, if $p^t\in B_d$ then \eqref{eq:changeZ_t} holds. We split this statement into a statement concerning large values of $d$ and a statement concerning small values of $d$. We prove the second one in the Appendix. Before proceeding to the proof of the first one we first proof two auxiliary Lemmas.
\begin{lemma}\label{lem:p1}
 If $p_t \in B_d$ then, 
 \begin{equation}\label{eq:p2bound}
 p_1\leq \frac{0.55}{1.55^{d-1}-1} \end{equation}
\end{lemma}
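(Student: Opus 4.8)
The statement to prove is: if $p^t \in B_d$ then $p_1^t \leq \dfrac{0.55}{1.55^{d-1}-1}$. My plan is to exploit the defining inequalities of $B_d$, namely $p_r \geq \alpha_r p_{r-1}$ for $2 \leq r \leq d-1$, together with the geometric lower bound $\alpha_r \geq 1.55$ established in Lemmas \ref{lem:boundsalphasmall} and \ref{lem:boundsalphalarge}, and the normalization that the coordinates $p_1, p_2, \dots, p_{d-1}$ essentially sum to at most $1$.

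First I would iterate the $B_d$ inequalities: for $1 \leq r \leq d-1$ we get $p_r \geq \left(\prod_{j=2}^{r} \alpha_j\right) p_1 \geq 1.55^{\,r-1} p_1$, using $\alpha_j \geq 1.55$ for all $j$. Summing over $r$ from $1$ to $d-1$ gives
\[
\sum_{r=1}^{d-1} p_r \;\geq\; p_1 \sum_{r=1}^{d-1} 1.55^{\,r-1} \;=\; p_1 \cdot \frac{1.55^{\,d-1}-1}{1.55-1} \;=\; p_1 \cdot \frac{1.55^{\,d-1}-1}{0.55}.
\]
Next I would invoke the normalization. Recall from the discussion preceding the definition of $\tau_\ell$ that while $m_t \geq n^{0.4+10^{-5}}$ and $\zeta_t \leq \log^6 n$ one has $\sum_{i=1}^{k+1} p_i^t = 1 - o(1)$ (indeed the $p^t_{\ell,j}$ over dangerous vertices contribute only $o(1)$, since $\zeta_t$ is small). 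Since the $p_r^t$ are nonnegative, $\sum_{r=1}^{d-1} p_r^t \leq \sum_{i=1}^{k+1} p_i^t \leq 1$ (or $1-o(1) \leq 1$; one should be slightly careful here — see below). Combining the two displays yields $p_1 \cdot \dfrac{1.55^{\,d-1}-1}{0.55} \leq 1$, i.e. $p_1 \leq \dfrac{0.55}{1.55^{\,d-1}-1}$, which is exactly \eqref{eq:p2bound}.

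The only delicate point — and the step I expect to require the most care — is the precise meaning of the normalization: strictly speaking $\sum_{i=1}^{k+1} p_i^t = 1 - o(1)$, not exactly $1$, because a vanishing mass sits on vertices in $Z_t$ whose degree is at most their label. Since the lemma is being applied only for $t < \tau'$ (equivalently within the regime where the stopping-time conditions have not fired), the $o(1)$ slack is harmless: it can be absorbed either by noting that $\sum_{r=1}^{d-1} p_r \le 1$ holds outright once one accounts that the total probability mass is exactly $1$ and the $Z_t$-mass is nonnegative, or by carrying the $1-o(1)$ through and observing the conclusion is used only up to additive $o(1)$ error everywhere downstream (as in \eqref{eq:echange_zt}). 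I would state the cleaner version: the $k+1$ coordinates $p_1^t,\dots,p_{k+1}^t$ together with the dangerous-vertex mass partition the total degree-proportional probability mass $1$, so in particular $\sum_{r=1}^{d-1} p_r^t \le 1$, and the geometric bound above finishes the proof.
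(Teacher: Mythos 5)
Your proof is correct and is essentially the paper's own argument: both use $p^t\in B_d$ together with $\alpha_r\geq 1.55$ to get $p_r\geq 1.55^{r-1}p_1$, and then the fact that the degree-proportional masses $p_1,\dots,p_{d-1}$ (plus nonnegative mass elsewhere) sum to at most $1$, so that $1\geq \sum_{r=1}^{d-1}1.55^{r-1}p_1=\frac{1.55^{d-1}-1}{0.55}\,p_1$. Your remark that the exact bound $\sum_r p_r\leq 1$ holds outright (no $o(1)$ needed) is the same implicit normalization the paper uses.
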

\begin{proof}
$p_t \in B_d$ implies  that $p_i\geq  1.55^{i-1}p_1$ for $1\leq i\leq d-1$. Therefore,
\begin{align*}
p_1& \leq \frac{p_1}{\sum_{i=1}^d p_i } \leq  \frac{p_1}{\sum_{i=1}^{d-1} 1.55^{i-1} p_1} = \frac{0.55}{1.55^{d-1}-1}.
\end{align*}
\end{proof}

\begin{lemma}\label{lem:calcipq}
For $d\in[k+1]$ and $\tau_{d} \leq t \leq \tau_{d-1}-1$, if $p_t\in B_d$ then,
\begin{equation}\label{eq:ipq}
    p_1\lambda_1\sum_{i=1}^{d} ip_i q_{i+1,i+1}  \leq \frac{0.55d(d+1)}{1.55^{d-1}-1}.
\end{equation}
\end{lemma}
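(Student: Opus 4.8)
\textbf{Proof plan for Lemma \ref{lem:calcipq}.}
The plan is to reduce the left-hand side of \eqref{eq:ipq} to something controlled purely by Lemma \ref{lem:p1}, using only elementary bounds on $\lambda_1$, on the $q_{i+1,i+1}$, and on the geometric growth of the $p_i$ guaranteed by $p_t\in B_d$. First I would observe that $q_{i+1,i+1}=q_{i+1,i+1}(\lambda^t)$ is a probability (it equals $(i+1)\Pr(Po_{\geq i+1}(\lambda)=i+1)/\mathbb E(Po_{\geq i+1}(\lambda))$, a ratio of a nonnegative quantity to the mean, and one checks directly it is at most $1$: indeed $q_{\ell,r}$ is a probability mass function in $r$, being the size-biased version of $Po_{\geq\ell}$, so $q_{i+1,i+1}\le 1$). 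Hence $\sum_{i=1}^d i p_i q_{i+1,i+1}\le \sum_{i=1}^d i p_i$.

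Next I would bound $\lambda_1$. Since $p_t\in B_d$ and $\tau_d\le t\le\tau_{d-1}-1$ we are in the regime where $m_t> n^{0.4+10^{-5}}$ and $\zeta_t\le\log^6 n$, so $\lambda^t$ is bounded; more cleanly, Lemma \ref{lem:lambdabound} gives $\lambda_1(x)\le x+1$, and in this regime $x=\lambda^t=O(1)$ — but to keep the statement clean I expect the intended bound is simply that $\lambda_1\le d$ (or an absolute constant), which follows because $\lambda_1(x)\le x+1$ and in the range $t\le\tau_{d-1}-1$ the relevant $\lambda^t$ is at most of order $d$. Actually the slickest route: $\lambda_1 p_i \le \lambda_i p_i$ is not needed; rather I would just use $\lambda_1\le\lambda_d\le\lambda^t+d$ and note $\lambda^t$ is $O(1)$. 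For the write-up I would state the bound $\lambda_1\le d$ and justify it from Lemma \ref{lem:lambdabound} together with the a-priori bound on $\lambda^t$ valid before $\tau_{d-1}$.

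Then I would handle $\sum_{i=1}^d i p_i$. From $p_t\in B_d$ we have $p_i\ge 1.55^{i-1}p_1$ for $1\le i\le d-1$, hence (using also $\sum_{i=1}^{k+1}p_i^t\le 1$) the bound $\sum_{i=1}^d i p_i\le d\sum_{i=1}^d p_i\le d$. Combining, $p_1\lambda_1\sum_{i=1}^d i p_i q_{i+1,i+1}\le p_1\cdot d\cdot d$, and then Lemma \ref{lem:p1} gives $p_1\le 0.55/(1.55^{d-1}-1)$, yielding $\le 0.55 d^2/(1.55^{d-1}-1)$, which is within the claimed $0.55 d(d+1)/(1.55^{d-1}-1)$. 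A slightly more careful accounting (keeping $q_{i+1,i+1}\le 1$ but bounding $\lambda_1\sum i p_i$ more tightly, e.g. noticing the geometric series makes $\sum i p_i$ essentially $O(d)\cdot p_1^{?}$ contributions near the top index) recovers the exact constant $d(d+1)$ rather than $d^2$; I would do that bookkeeping at the end.

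The main obstacle I anticipate is getting the constant exactly right — i.e. showing the product of the three factors ($p_1$ from Lemma \ref{lem:p1}, $\lambda_1$, and $\sum i p_i q_{i+1,i+1}$) multiplies out to at most $0.55 d(d+1)/(1.55^{d-1}-1)$ rather than merely $O(d^2/1.55^d)$. This likely requires not bounding the three factors independently but exploiting that $\sum_{i=1}^d i p_i q_{i+1,i+1}$ together with $\lambda_1\le d$ and $p_1\le 0.55/(1.55^{d-1}-1)$ can only simultaneously be large in incompatible regimes; concretely I would use $\sum_{i=1}^d p_i\le 1$ to get $\sum_{i=1}^d i p_i\le d$ and $\lambda_1\le d+1$ (or $\le d$), so the product is $\le p_1 d(d+1)$, and then apply Lemma \ref{lem:p1}. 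Everything else is routine substitution.
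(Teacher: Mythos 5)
There is a genuine gap at the step where you bound $\lambda_1$. You assert $\lambda_1\le d$ (or $\lambda^t=O(1)$ "in this regime"), but nothing before $\tau_{d-1}$ gives such a bound: by Lemma \ref{lem:lambdabound} we have $\lambda_1(\lambda^t)\ge\lambda^t$, and $\lambda^t$ is only controlled via the maximum degree, i.e.\ $O(\log n/\log\log n)$, not by $d$ or an absolute constant. In fact the claim is simply false for large $c$: already near $t^*$ one has $\lambda^t\approx 2c-O(k)$, and $c$ is allowed to be any $O(1)$ constant exceeding $(k+1)/2$, so $\lambda_1$ can exceed $d+1$ by an arbitrary constant factor. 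Consequently the decomposition "$q_{i+1,i+1}\le 1$, $\sum_i ip_i\le d$, $\lambda_1\le d+1$" cannot close the argument; with $q$ bounded only by $1$, the factor $\lambda_1$ is unbounded in $\lambda$ and no bound independent of $\lambda$ follows.

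The paper's proof never bounds $\lambda^t$ at all: it takes a supremum over all $\lambda\ge 0$ and exploits the cancellation between $\lambda_1\le\lambda+1$ and the decay of $q_{i+1,i+1}$ in $\lambda$. Concretely, $q_{i+1,i+1}=\frac{\lambda^i/i!}{\sum_{j\ge i}\lambda^j/j!}$, and since $\lambda\cdot\lambda^i/i!=(i+1)\lambda^{i+1}/(i+1)!$ one gets
$(\lambda+1)\,q_{i+1,i+1}=\frac{\lambda^i/i!+(i+1)\lambda^{i+1}/(i+1)!}{\sum_{j\ge i}\lambda^j/j!}\le i+1$
uniformly in $\lambda\ge 0$, because both numerator terms are terms of the tail series in the denominator. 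This yields $\lambda_1\sum_{i=1}^d ip_iq_{i+1,i+1}\le\sum_{i=1}^d i(i+1)p_i\le d(d+1)$ (using $\sum_i p_i\le 1$), and then Lemma \ref{lem:p1} supplies $p_1\le 0.55/(1.55^{d-1}-1)$, giving exactly \eqref{eq:ipq}. Your use of Lemma \ref{lem:p1} and of $\sum_i p_i\le 1$ is fine; the missing idea is that the bound must be uniform in $\lambda$, so $\lambda_1$ must be paired with the explicit form of $q_{i+1,i+1}$ rather than estimated separately.
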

\begin{proof}
\begin{align*}
p_1\lambda_1\sum_{i=1}^{d} ip_i q_{i+1,i+1}& \leq \frac{0.55}{1.55^{d-1}-1} \cdot\sup_{\lambda\geq 0} \bigg\{ \lambda_1 \cdot    \sum_{i=1}^{d} i p_i \frac{(i+1)\frac{e^{-\lambda}\lambda^{i+1}}{(i+1)!}}{\lambda-\sum_{j=0}^i \frac{j e^{-\lambda}\lambda^{j}}{j!}}\bigg\}  
\\& \leq \frac{0.55}{1.55^{d-1}-1} \cdot \sup_{\lambda\geq 0} \bigg\{ (\lambda+1) \cdot
    \sum_{i=1}^{d} i p_i \frac{\lambda^{i}/i!}{e^{\lambda}-\sum_{j=0}^{i-1}  \lambda^{j}/j! } \bigg\} 
    \\& \leq \frac{0.55}{1.55^{d-1}-1}
    \cdot \sup_{\lambda\geq 0} \bigg\{ 
    \sum_{i=1}^{d} i p_i \frac{\lambda^{i}/i!+(i+1)\lambda^{i+1}/(i+1)!}{\sum_{j\geq i}  \lambda^{j}/j! } \bigg\}  \nonumber
        \\& \leq \frac{0.55}{1.55^{d-1}-1}
    \cdot  
    \sum_{i=1}^{d} i(i+1) p_i  \leq \frac{0.55d(d+1)}{1.55^{d-1}-1} .
\end{align*}
At the first inequality we used \eqref{eq:p2bound} and at the second one Lemma \ref{lem:lambdamonotone} to deduce that $\lambda_1(x) \leq x+1$.
\end{proof}

\begin{lemma}\label{03}
For $11\leq d \leq k+1$ and $\tau_{d} \leq t \leq \tau_{d-1}-1$ if $p^t \in B_d$ then \eqref{eq:changeZ_t} holds.
\end{lemma}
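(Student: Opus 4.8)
The plan is to start from the rewritten expression \eqref{eq:echange_zt} for $\mathbb{E}(\z_{t+1}-\z_t|\cH_t)$, namely $-\mathbb{I}(\z_t>0)+p_1^t\lambda_1^t\sum_{\ell=1}^k\ell p_\ell^t q_{\ell+1,\ell+1}^t+o(1)$, and show that on the event $p^t\in B_d$ with $11\le d\le k+1$, the positive term is strictly smaller than $1$ (by a constant margin), so that if $\z_t>0$ the whole expression is at most $-10^{-5}$ as required by \eqref{eq:changeZ_t}. First I would note that since $t\ge\tau_d$ means $Y_\ell^t=\emptyset$ for all $\ell\ge d$, the sum $\sum_{\ell=1}^k\ell p_\ell^t q_{\ell+1,\ell+1}^t$ effectively only runs up to $\ell=d-1$ (the terms with $\ell\ge d$ vanish because $p_\ell^t=0$), and also while $m_t\ge n^{0.4+10^{-5}}$ and $\zeta_t\le\log^6 n$ we have $\sum_{i=1}^k p_i^t=1-o(1)$, so the $q$-weights are genuinely probabilities up to $o(1)$.

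Next I would invoke Lemma \ref{lem:calcipq}, which is tailored precisely for this: for $\tau_d\le t\le\tau_{d-1}-1$ with $p^t\in B_d$ it gives
\[
p_1^t\lambda_1^t\sum_{i=1}^{d} i p_i^t q_{i+1,i+1}^t \le \frac{0.55\,d(d+1)}{1.55^{d-1}-1}.
\]
So the entire argument reduces to checking that $\frac{0.55\,d(d+1)}{1.55^{d-1}-1}<1-10^{-5}-o(1)$ for all $d\ge 11$. This is a routine numerical check: at $d=11$ the numerator is $0.55\cdot 132=72.6$ and the denominator is $1.55^{10}-1$; since $1.55^{10}$ is roughly $80$–$81$, the ratio is already below $1$, and because $1.55^{d-1}$ grows geometrically while $d(d+1)$ grows only polynomially, the bound is decreasing in $d$ for $d\ge 11$ and stays bounded away from $1$. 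I would make this rigorous by exhibiting a constant $\eta>0$ (e.g. $\eta=10^{-3}$ suffices) with $\frac{0.55\,d(d+1)}{1.55^{d-1}-1}\le 1-\eta$ for all $d\ge 11$, using monotonicity of the ratio past $d=11$ (which follows from $\frac{(d+1)(d+2)}{d(d+1)}=\frac{d+2}{d}\le 1.55$ for $d\ge 11$, hence each successive term shrinks by a factor at least $\frac{d+2}{d}/1.55<1$).

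Putting it together: on $p^t\in B_d$ we get $\mathbb{E}(\z_{t+1}-\z_t|\cH_t)\le -\mathbb{I}(\z_t>0)+(1-\eta)+o(1)$, so if $\z_t>0$ this is at most $-\eta+o(1)\le -10^{-5}$ for $n$ large, which is exactly \eqref{eq:changeZ_t}. The main obstacle here is not conceptual — it is making sure the $o(1)$ error terms (coming from Lemmas \ref{lem:degDistributions} and \ref{lem:degrees} feeding into \eqref{eq:change_zt} and into the identification of $q^t_{\ell,r}$ with degree-proportional selection probabilities) are genuinely uniform over $t<\tau'$ and hence can be absorbed; but since $t<\tau_{d-1}\le\tau'$ guarantees $m_t\ge n^{0.4+10^{-5}}$ and $\zeta_t\le\log^6 n$, these lemmas apply with the stated error bounds, so the absorption is legitimate. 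The slightly delicate point worth stating explicitly is that $d\ge 11$ is exactly the threshold at which the crude bound $\frac{0.55\,d(d+1)}{1.55^{d-1}-1}$ dips below $1$; for smaller $d$ this estimate is too lossy and one needs the finer confinement argument deferred to the Appendix, which is why the lemma is stated only for $11\le d\le k+1$.
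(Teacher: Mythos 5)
Your proposal is correct and follows essentially the same route as the paper: rewrite the drift via \eqref{eq:echange_zt}, apply Lemma \ref{lem:calcipq} to bound $p_1\lambda_1\sum_i ip_iq_{i+1,i+1}$ by $\frac{0.55d(d+1)}{1.55^{d-1}-1}$, and check numerically that this is bounded away from $1$ for $d\geq 11$ (the paper concludes $\leq -0.001$ in one line). Your explicit monotonicity argument and the remark that $p_\ell^t=0$ for $\ell\geq d$ when $t\geq\tau_d$ merely spell out details the paper leaves implicit.
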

\begin{proof}
Let $\tau_{d} \leq t \leq \tau_{d-1}-1$. Equations \eqref{eq:echange_zt} and \eqref{eq:ipq} imply that if  $p^t\in B_d$ then,
\begin{align*}
    \mathbb{E}(D_t(\z)|\cH_t)&\leq -1+\frac{0.55d(d+1)}{1.55^{d-1}-1}  +o(1) \leq -0.001. 
\end{align*}
\end{proof}

\begin{lemma}\label{02}
For $3\leq d \leq 10$ and $\tau_{d} \leq t \leq \tau_{d-1}-1$ if $p^t \in B_d$ then \eqref{eq:changeZ_t} holds.
\end{lemma}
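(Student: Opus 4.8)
The plan is to mirror the proof of Lemma~\ref{03} but without the luxury of the clean geometric bound, since for $3 \le d \le 10$ the quantity $\tfrac{0.55 d(d+1)}{1.55^{d-1}-1}$ need not be below $1$, so \eqref{eq:ipq} alone is too weak. Starting from \eqref{eq:echange_zt}, for $\tau_d \le t \le \tau_{d-1}-1$ we have $\z_t > 0$ (otherwise the claim is trivial) and $Y_i^t = \emptyset$ for all $i \ge d$, hence $p_\ell^t = 0$ for $\ell \ge d$ and $q_{\ell+1,\ell+1}^t$ only matters for $\ell \le d-1$. So the task reduces to showing
\begin{equation*}
p_1^t \lambda_1^t \sum_{\ell=1}^{d-1} \ell\, p_\ell^t q_{\ell+1,\ell+1}^t \le 1 - \Omega(1),
\end{equation*}
using the structural constraints $p^t \in B_d$ (i.e.\ $p_r^t \ge \alpha_r p_{r-1}^t$ with $\alpha_r \in [1.55, 1.552]$ by Lemmas~\ref{lem:boundsalphasmall}--\ref{lem:boundsalphalarge}), $\sum_{\ell=1}^{d-1} p_\ell^t = 1 - o(1)$, and the fact that $q_{\ell+1,\ell+1}^t$, $p_\ell^t$ and $\lambda_1^t$ are all tied to the single parameter $\lambda = \lambda^t$ through the truncated-Poisson formulas.

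First I would reduce everything to a finite-dimensional optimization: bound $\lambda_1(\lambda) \le \lambda + 1$ (Lemma~\ref{lem:lambdamonotone}), write $q_{\ell+1,\ell+1}(\lambda) = \tfrac{(\ell+1)\lambda^{\ell+1}/(\ell+1)!}{\lambda_{\ell+1}(\lambda)\,f_{\ell+1}(\lambda)} \cdot (\text{stuff})$ and simplify as in the display inside Lemma~\ref{lem:calcipq} to get $\lambda_1 q_{\ell+1,\ell+1} \le \tfrac{\lambda^\ell/\ell! + \lambda^{\ell+1}/(\ell+1)!}{\sum_{j \ge \ell}\lambda^j/j!}$, a quantity that is at most $1$ for every $\ell$ and $\lambda$, and decays rapidly in $\ell$ once $\lambda$ is moderate. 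Then I would treat the sum $\sum_\ell \ell p_\ell q_{\ell+1,\ell+1}$ as a weighted average: the weights $p_\ell$ grow at rate at least $1.55$, so they are concentrated near $\ell = d-1$, while $p_1 \le \tfrac{0.55}{1.55^{d-1}-1}$ by Lemma~\ref{lem:p1} is exponentially small in $d$. The crude estimate in Lemma~\ref{lem:calcipq} throws away the $\lambda$-dependence of the $q$'s; the refinement needed here is to keep it, noticing that when $d$ is small the relevant $\lambda$ cannot be large (otherwise the truncated-Poisson mass that $p^t \in B_d$ forces into coordinates $\ge d$ would be nonzero, contradicting $Y_i^t = \emptyset$), so $\lambda \le \lambda_0(d)$ for some explicit constant, which in turn makes $q_{\ell+1,\ell+1}$ small for the top values of $\ell$. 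Case-splitting on $d \in \{3, 4, \dots, 10\}$, each case becomes a one-variable $\sup$ over $\lambda \ge 0$ of an explicit rational-in-$\lambda$ expression times the $p_\ell$'s, which is then bounded by a finite linear program in $(p_1, \dots, p_{d-1})$ subject to the geometric growth constraints and $\sum p_\ell = 1$; I would solve these numerically/by hand exactly as Lemma~\ref{lem:boundsalphasmall} is resolved computationally in the appendix, concluding $\mathbb{E}(D_t(\z)\mid\cH_t) \le -1 + (1-\Omega(1)) + o(1) \le -\Omega(1)$, which gives \eqref{eq:changeZ_t}.

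The main obstacle is that, unlike the $d \ge 11$ regime, the purely combinatorial bound does not close by itself: for $d = 3$ the factor $\tfrac{0.55 \cdot 3 \cdot 4}{1.55^2 - 1} \approx 4.5$ is far above $1$, so the proof genuinely needs to exploit that the $q_{\ell+1,\ell+1}^t$ are jointly small with the $p_\ell^t$ through a shared $\lambda^t$ — one cannot pessimize each factor independently. Concretely, the subtle point is establishing the a~priori bound $\lambda^t \le \lambda_0(d)$ for $t < \tau_{d-1}$ (some coordinate $\ge d$ being empty forces the Poisson parameter to be bounded), and then verifying, for each of the eight small values of $d$, that the resulting constrained optimum of $p_1 \lambda_1 \sum \ell p_\ell q_{\ell+1,\ell+1}$ stays below $1$ by a fixed margin. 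This is the same flavour of finite computation as Lemma~\ref{lem:boundsalphasmall} and I expect it to be carried out in the Appendix; the geometric-growth structure of $B_d$ (with ratio $\ge 1.55 > 1$, which is exactly why the $\alpha_i$ were tuned to that value) is what makes every one of these finite checks succeed.
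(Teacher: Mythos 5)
Your overall route --- reduce \eqref{eq:echange_zt}, for each fixed $d\in\{3,\dots,10\}$, to a finite optimization over the single parameter $\lambda$ and the vector $(p_1,\dots,p_{d-1})$ constrained by the geometric inequalities of $B_d$ and $\sum_\ell p_\ell=1-o(1)$, and then verify it computationally in the style of Lemma~\ref{lem:boundsalphasmall} --- is exactly what the paper does in Appendix~\ref{appendix:02}: there one uses that $q_{i+1,i+1}$ is increasing in $i$, plugs in the extremal choice $p_i=1.55^{i-1}p_1$ (keeping, pessimistically, a residual mass $p_d^*$ at coordinate $d$; your observation that $p_\ell^t=0$ for $\ell\ge d$ on $[\tau_d,\tau_{d-1})$ is legitimate and only sharpens this), and checks the resulting $\sup_{x\ge0}$ expression with Mathematica, cf.\ \eqref{qq1}. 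However, two of your supporting claims are wrong, and one of them is where you place the weight of the argument. First, the algebra: since $\lambda_{\ell+1}f_{\ell+1}=\lambda f_{\ell}$ one has $q_{\ell+1,\ell+1}=\lambda^{\ell}/(\ell!\,f_{\ell}(\lambda))$, so $\lambda_1 q_{\ell+1,\ell+1}\le(\lambda+1)\lambda^{\ell}/(\ell!\,f_{\ell}(\lambda))=\bigl(\lambda^{\ell}/\ell!+(\ell+1)\lambda^{\ell+1}/(\ell+1)!\bigr)/\sum_{j\ge\ell}\lambda^{j}/j!$; you dropped the factor $(\ell+1)$, and the resulting claim that $\lambda_1 q_{\ell+1,\ell+1}\le1$ for all $\lambda$ is false (for $\ell=3$, $\lambda=2$ the product is about $1.29$). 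Had it been true, the lemma would follow trivially from $p_1\sum_\ell\ell p_\ell\le(d-1)p_1<1$, which is a sign the simplification is too strong.

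Second, and more seriously, the a priori bound $\lambda^t\le\lambda_0(d)$ that you single out as the crucial subtle point is neither justified by your reasoning nor needed. The coordinates of $p^t$ index the label classes $Y_\ell$ (how many further matching edges a vertex wants), while $\lambda^t$ parametrizes the degree distribution inside each class; $Y_i^t=\emptyset$ for $i\ge d$ constrains labels, not degrees, so a large $\lambda^t$ (think of $c$ large, with many high-degree but nearly saturated vertices remaining) is perfectly consistent with $\tau_d\le t<\tau_{d-1}$ --- ``truncated-Poisson mass forced into coordinates $\ge d$'' conflates the two indexings. The paper needs no such bound: in \eqref{qq1} it simply takes $\sup_{x\ge0}$, and the optimum is small because $q_{d+1,d+1}(\lambda)=\lambda^{d}/(d!\,f_{d}(\lambda))$ decays in $\lambda$ (eventually exponentially) while $\lambda_1(\lambda)\le\lambda+1$ grows only linearly, so $\sup_{\lambda\ge0}\lambda_1 q_{d+1,d+1}$ is a modest constant which, multiplied by the $p$-part maximized over the polytope, stays below $1$ by a fixed margin for every $3\le d\le10$. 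As written, your plan would stall at proving the false $\lambda$-bound; dropping it and optimizing over all $\lambda\ge0$ restores the argument and makes it coincide with the paper's proof.
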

\begin{proof}
See Appendix \ref{appendix:02}
\end{proof}

\subsection{Confining the process $\{p^t\}_{\tau_d\leq t< \tau_{d-1}}$ inside $B_d$}\label{sub:confine}
In this subsection we show that $p^t \in B_d$ for $t^*\leq t \leq \tau_{d-1}-1$.  We start by showing that $p^{t^*}\in B_d$
Thereafter we show that for $t^* \leq t<\tau_{d-1}$ if $p^t\in B_d$ and $p^t$ is close to some hyperplane defining $B_d$ then in expectation $p^{t+1}$ will move away from that hyperplane. We then use a standard martingale argument to argue that the process will never come too close to such a hyperplane and therefore it will never cross it, thus staying inside $B_d$. 

Part (i) of Lemma \ref{lem:initial} implies that equation \eqref{eq:changeZ_t} is satisfied for $t\leq t^*$. Its second part implies that $p^{t^*} \in B_d$ for $3\leq d \leq k+1$ while its third part states that starting from $t=t^*$ until the process $\{p^t\}_{t=t^*}^{\tau}$ exits $B_3$ we have that $p_1^t$ is bounded below by a constant.   
Recall, $t^*=\bfrac{1}{40ck}^{4} n$.

\begin{lemma}\label{lem:initial}
Let $C_1=p_1^{t^*}$. W.s.h.p.\@,
\begin{itemize}
\item[(i)]  $p_1^t \lambda_1^t \sum_{\ell=1}^k \ell p_\ell^tq_{\ell+1,\ell+1}^t\leq 0.5$ for $t\leq t^*$,
\item[(ii)] $C_1=\Omega(1)$ and $p^{t^*}_j-a_j  p^{t^*}_{j-1}\geq C_1$ for $2\leq j\leq k$,
\item[(iii)] let $\sigma^*=\min\{\tau_3, \min\{t\geq t^*: p^t \notin B_3\}\}$ then $p_1^t\geq \min\{0.04,C_1/2\}$ for $t^*\leq t\leq \sigma^*$.
\end{itemize}
\end{lemma}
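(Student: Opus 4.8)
\textbf{Proof proposal for Lemma \ref{lem:initial}.}

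The plan is to analyse the first $t^*=(40ck)^{-4}n$ steps of \kG, during which the graph has barely changed. Set $m_0=cn$. For $t\le t^*$ we have $m_t\ge m_0-t^*\ge cn-(40ck)^{-4}n$, so only an $O\!\big((40ck)^{-4}\big)$ fraction of edges has been removed. I would first show, using Lemma \ref{lem:degDistributions} applied to the initial configuration together with Lemma \ref{lem:degrees}, that $p^0=(p_1^0,\dots,p_{k+1}^0)$ is very close to the vector one gets from a truncated Poisson degree distribution with parameter $\lambda^0$: all mass sits in $Y_{k+1}^0$ initially (every vertex has label $k$ and degree $\ge k+1$), so $p^0_{k+1}=1-o(1)$ and $p^0_i=o(1)$ for $i\le k$. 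Wait --- that is the opposite of what part (ii) wants; the point is that after the algorithm has run for a short but linear time $t^*$, a positive fraction of vertices has been matched, so the labels $0,1,\dots,k$ have each acquired $\Omega(n)$ vertices, and moreover the degree-weighted masses $p_i^{t^*}$ become comparable. Concretely, in each of the first $t^*$ steps the algorithm (since $\zeta_t$ stays tiny, see below, so $Z_t=\emptyset$ almost always) picks a degree-proportional vertex $v_t$ in $DF_t=\{$vertices of current minimum $M$-degree$\}$ and a random neighbour $w_t$; this matches two vertices whose labels decrease by one. I would track the $k+1$ quantities $|Y_\ell^t|$ (equivalently the lists $L^t_\ell$) via the differential-equation / supermartingale method: their expected one-step increments are explicit rational functions of the $p^t_\ell$ and $\lambda^t$ (using Lemmas \ref{lem:lk}, \ref{lem:lambdamonotone}, \ref{lem:lambdabound} for the neighbour's label distribution), and since $t^*$ is a small constant times $n$, Azuma--Hoeffding (Theorem \ref{thm:AH}) with increments bounded by $\Delta(G)\le\log n$ (Lemma \ref{lem:degrees}) gives concentration to within $n^{2/3}$, say, w.s.h.p. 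Solving (or just estimating) these ODEs on $[0,t^*]$ shows $|Y_\ell^{t^*}|=\Theta(n)$ for every $0\le\ell\le k$ and that the resulting $p^{t^*}$ satisfies $p^{t^*}_j-a_jp^{t^*}_{j-1}\ge C_1=\Omega(1)$ for all $2\le j\le k$; since $a_j\in[1.55,1.552]$ by Lemmas \ref{lem:boundsalphasmall}--\ref{lem:boundsalphalarge}, this is the statement that the geometric-growth gaps are all bounded below by a constant at time $t^*$. This is part (ii), and it also gives $p^{t^*}\in B_d$ for all $3\le d\le k+1$.

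For part (i) I would use Lemma \ref{lem:calcipq}-type reasoning but at a cruder level: for $t\le t^*$ the graph is still nearly $(k+1)$-regular with high minimum degree, so very few vertices can be ``dangerous'' --- a vertex lands in $Z_t$ only after being matched and simultaneously losing almost all its edges, which needs $\Omega(\log n)$ deletions incident to it, an event that for a fixed vertex in $o(n)$ steps has probability $o(1)$ in aggregate. Hence $\zeta_t=0$ w.s.h.p. throughout $t\le t^*$, so $\mathbb{I}(\zeta_t>0)$ aside, the relevant quantity $p_1^t\lambda_1^t\sum_\ell \ell p_\ell^t q^t_{\ell+1,\ell+1}$ is small because $p_1^t$ itself is small: at time $0$ we have $p_1^0=0$, and in $t\le t^*\le(40ck)^{-4}n$ steps the mass of $Y_1^t$ (vertices matched $k-1$ times) cannot exceed $O(t^*/n)=O((40ck)^{-4})$ of the total degree, since entering $Y_1$ requires being the chosen endpoint $k-1$ times. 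Combined with $\lambda_1^t\le\lambda^t+1=O(1)$ (Lemma \ref{lem:lambdabound}, noting $\lambda^t=O(1)$ since $m_t=O(n)$ and $|Y^t_k|=\Omega(n)$) and $\sum_\ell\ell p_\ell^t q^t_{\ell+1,\ell+1}\le\sum_\ell\ell p_\ell^t\cdot 1\le k$, this product is at most $C\cdot k\cdot(40ck)^{-4}\cdot O(1)<0.5$ for $n$ large. Part (iii) is a martingale stopping argument: on $[t^*,\sigma^*]$ the process stays in $B_3$ by definition of $\sigma^*$, and I would show $p_1^t$ has non-negative-ish drift away from $0$ while in $B_3$ (or more simply that it cannot drop by more than $o(1)$ per $o(n)$ steps from its value $C_1=\Omega(1)$ at $t^*$), so $p_1^t\ge\min\{0.04,C_1/2\}$ --- the constant $0.04$ being a floor that the drift analysis guarantees even if $C_1$ were large, and $C_1/2$ covering the case $C_1$ small; one tracks $p_1^t$ as a supermartingale-plus-drift and applies Azuma--Hoeffding again with the $\log n$ increment bound.

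The main obstacle I anticipate is part (ii): one needs not just $|Y_\ell^{t^*}|=\Theta(n)$ but the \emph{quantitative} inequalities $p^{t^*}_j\ge a_jp^{t^*}_{j-1}+C_1$ with the specific constants $a_j\in[1.55,1.552]$, and this requires actually solving --- or carefully bounding --- the system of ODEs governing $(|Y_0^t|,\dots,|Y_k^t|)$ on the interval $[0,t^*]$ and checking that the solution enters the interior of every $B_d$. The delicate point is that $t^*$ must be chosen small enough that the graph is still essentially the initial random graph (so that Lemma \ref{lem:degDistributions} applies cleanly and $\lambda^t$ hasn't drifted), yet large enough (linear in $n$) that all $k+1$ label-classes have become macroscopic and the $p_i^{t^*}$ have spread into the interior of $B_{k+1}$; the value $(40ck)^{-4}n$ is presumably tuned precisely for this. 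I would handle it by computing the leading-order behaviour of the $Y_\ell$ trajectories for small $t/n$ (they start at $|Y_k|=n$, $|Y_\ell|=0$ for $\ell<k$, with $|Y_{k-1}|$ growing linearly, then $|Y_{k-2}|$ quadratically, etc.) and showing that after time $(40ck)^{-4}n$ the ratios $p_j/p_{j-1}$ have all comfortably exceeded $1.552$, with slack $\Omega(1)$, before invoking concentration to transfer this to the actual process w.s.h.p.
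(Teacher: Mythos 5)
Your plan follows the same architecture as the paper's proof: part (i) via smallness of $p_1^t$ for $t\le t^*$ (the paper proves $p_1^t\le (100k^2)^{-1}$ and then bounds the sum exactly as in Lemma \ref{lem:calcipq}), part (iii) via a positive-drift martingale argument while $p^t\in B_3$ and $p_1^t$ is small, and part (ii) via the observation that after a short linear time each label class is macroscopic and successive classes differ by a huge factor $\Theta(n/t^*)$, so the ratios comfortably exceed $a_j\in[1.55,1.552]$. The genuine difference is in how (ii) is implemented. You propose the differential-equation method (set up and estimate the ODE system for the $|Y_\ell^t|$), which would work but is heavier and requires tracking $\lambda^t$ and the $q$-terms. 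The paper avoids solving anything: it introduces the cumulative sets $X_j^t=\bigcup_{t'\le t}Y_j^{t'}$ and leakage sets $Z_j^t$, and proves three crude one-sided drift estimates (each by Azuma with increments $\le\log n$): $|X_j^{t^*}|\le 0.01|X_{j+1}^{t^*}|+o(n)$, $|Z_j^{t^*}|\le 0.01|X_1^{t^*}|+o(n)$, and $|X_j^{t^*}\cup Z_j^{t^*}|=\Omega(n)$. These give $p_1^{t^*}=\Omega(1)$ and $p_\ell^{t^*}\ge 2.6\,p_{\ell-1}^{t^*}$, and since $a_j\le 1.552$ the gap satisfies $p_j^{t^*}-a_jp_{j-1}^{t^*}\ge p_{j-1}^{t^*}\ge p_1^{t^*}=C_1$; no sharper trajectory information is needed, which is what the cruder bookkeeping buys. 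Note also that in (i) the reduction from ``few vertices in $Y_1^t$'' to ``$p_1^t$ small'' needs control of the degrees of those vertices (the paper uses a tail event on the number of vertices of each degree together with Lemma \ref{lem:degDistributions}); with $\Delta(G)\le\log n$ alone the degree-weighted mass is not $O(t^*/n)$.

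Two side claims in your write-up are wrong, though neither is load-bearing for the statement. First, ``$\zeta_t=0$ w.s.h.p.\ throughout $t\le t^*$'' is false, and your justification (a vertex needs $\Omega(\log n)$ incident deletions to become dangerous) is incorrect: an unmatched vertex of degree exactly $k+1$ becomes dangerous after losing a single edge, which happens whenever a neighbour reaches label $0$, and in fact a (small) linear number of vertices pass through $Z_t$ during $[0,t^*]$; what (i) asserts, and all that is needed downstream, is the bound on $p_1^t\lambda_1^t\sum_\ell \ell p_\ell^t q_{\ell+1,\ell+1}^t$, which your $p_1$-argument gives. (Similarly, $Y_0^{t}=\emptyset$ always, since edges incident to label-$0$ vertices are deleted.) Second, in (iii) the fallback ``$p_1$ cannot drop by more than $o(1)$ per $o(n)$ steps'' is not available: $\sigma^*-t^*$ can be $\Theta(n)$, so you must use your primary suggestion, namely that $|Y_1^t|$ has drift $\ge 0.02\,p_1^t$ whenever $p^t\in B_3$ and $p_1^t\le 0.045$, which is exactly the computation the paper performs before applying the martingale argument to get $p_1^t\ge\min\{0.04,C_1/2\}$.
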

\begin{proof}
See Appendix \ref{appendix:initial}.
\end{proof}

\begin{lemma}\label{lem:lowerconstrains}
Let $3\leq d \leq k+1$ and $2\leq r\leq d-2$. Let $t$ be such that $t\leq \tau_{d-2}-1$. If $p^t \in B_d$, $p_1^t\geq C_1/2$ and 
\begin{align}\label{eq:r-}
p^t_{r}-\alpha_r p^t_{r-1} \leq n^{-0.1}
\end{align}
then,
\begin{align}\label{eq:lowerconstrains}
    E( D_t(2mp_{r}-\alpha_r 2mp_{r-1})  |\cH_t)>10^{-5}2^{-r}C_1.
\end{align}
\end{lemma}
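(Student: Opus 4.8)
The plan is to compute $\E(D_t(2mp_r)\mid\cH_t)$ and $\E(D_t(2mp_{r-1})\mid\cH_t)$ separately and then combine. Recall $2mp_r = r|Y_{r,r+1}^t|\cdot(\text{stuff})$; more precisely $2m_tp_r^t=\sum_{v\in Y_r^t} d_t(v)$, which is the number of half-edges currently sitting at vertices of label $r$ and positive slack. The quantity $2m_tp_r^t$ changes at step $t$ for three reasons: (a) the matched vertex $v_t$ and its partner $w_t$ may belong to $Y_r^t$ and get their labels decremented (moving their half-edges out of the $Y_r$-count and the two consumed half-edges vanish); (b) $w_t$'s neighbours whose label drops to $0$ (this only happens to label-$1$ neighbours, so it affects $Y_r$ only when $r=1$, and by the hypothesis $r\ge 2$ we may ignore it, modulo the $o(1)$ error from multi-edges captured via Lemma \ref{lem:degrees}); (c) when a neighbour $u$ of $w_t$ with $l_t(u)=1$ is deleted, its half-edges leave the graph, but since $u$ had label $1$ this again does not touch $Y_r$ for $r\ge 2$. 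So for $r\ge 2$ the dominant contribution is from (a). The first move I would make is therefore to write, up to $o(1)$,
$$
\E(D_t(2mp_r)\mid\cH_t) = -\,r\cdot\Pr(v_t\in Y_r^t) \;-\; \E\big[(|N(w_t)|-1)\,\mathbb{I}(w_t\in Y_r^t)\big] \;+\; r\cdot\Pr(v_t\in Y_{r+1}^t\text{-side effect}) \;-\;2\cdot(\ldots),
$$
and here Lemma \ref{lem:lk} is exactly the tool that evaluates the $w_t$-term: $\E[(|N(w_t)|-1)\mathbb{I}(w_t\in Y_r^t)] = p_r^t\lambda_r^t + o(1)$. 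The $v_t$-term splits according to whether \kG is in the ``dangerous'' branch or the ``deficient'' branch; since $t\le\tau_{d-2}-1$ we have $\zeta_t\le\log^6 n$ and all the dangerous vertices contribute negligibly to the degree-proportional choice, so up to $o(1)$, $v_t$ is chosen proportional to degree among $DF_t$, and one checks $DF_t$ is (essentially) $Y_{\text{index}}^t$; the careful bookkeeping of which $Y_\ell$ the decremented half-edges land in is where $q_{\ell,\ell}$ enters — a vertex of label $r$ and degree exactly $r$ that gets matched drops to label $r-1$ and degree $r-1<(r-1)+1$, so it \emph{leaves} $Y_{r-1}$ too, which is precisely the $q_{r,r}$ and $q_{r+1,r+1}$ correction terms.

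Second, I would assemble the combination $\E(D_t(2mp_r - \alpha_r 2mp_{r-1})\mid\cH_t)$. After substituting Lemma \ref{lem:lk} everywhere and collecting, the leading part is (schematically)
$$
-\,p_r\lambda_r \;+\;\alpha_r\,p_{r-1}\lambda_{r-1}\;+\;(\text{terms with }\lambda_{r+1}, \lambda_r\text{ from }v_t)\;+\;p_1\lambda_1\big[\text{terms in }q_{r,r},q_{r+1,r+1}\big] \;+\;o(1).
$$
The point of the definitions \eqref{grl} and \eqref{alphas} is that $a_r'$ was defined as a supremum over $\lambda$ of exactly the expression that, after dividing by $\lambda_{r+1}$ and using $p^t\in B_d$ to bound $p_1$ via Lemma \ref{lem:p1} and to bound the ratios $p_{r}/p_{r-1}$, controls this leading part. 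Concretely, on the hypersurface $p_r = \alpha_r p_{r-1}$ (which is where \eqref{eq:r-} essentially puts us, up to the $n^{-0.1}$ slack which feeds into the $o(1)$), the leading term becomes $p_{r-1}\lambda_{r+1}\big(a_r - a_r'(\lambda) + (\text{extra }q\text{-slack})\big)$ or similar, and since $a_{r+1} \ge a_r' + 2^{-r}$ by \eqref{alphas} but we are working with the constraint indexed by $a_r$ (not $a_{r+1}$)... here I need to be careful: the constraint being tested is $p_r\ge \alpha_r p_{r-1}$, and the drift must push it \emph{up}; the slack $2^{-r}$ built into the \emph{next} constant $\alpha_{r+1}$ vs. the optimum $a_r'$ is what guarantees, via $p^t\in B_d$ (so $p_r/p_{r-1}\ge\alpha_r$ but also the chain forces $p_{r+1}/p_r\ge\alpha_{r+1}$), a strictly positive lower bound. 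Finally I would quantify it: since $p_1^t\ge C_1/2$ by hypothesis and $C_1=\Omega(1)$ by Lemma \ref{lem:initial}(ii), and since all the $\lambda_j$ satisfy $\max\{x,j\}\le\lambda_j(x)\le x+j$ by Lemma \ref{lem:lambdabound}, the gap is at least a constant multiple of $2^{-r}C_1$, giving the claimed $10^{-5}2^{-r}C_1$ after absorbing the $o(1)$ (legitimate since $n^{-0.1}\to 0$ and $t\le\tau_{d-2}-1\le m = O(n\log n)$ keeps $m_t$ large, $m_t\ge n^{0.4+10^{-5}}$).

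The main obstacle, I expect, is step two: correctly tracking \emph{every} way a half-edge can enter or leave the counts $2mp_r$ and $2mp_{r-1}$ in a single step of \kG — in particular the subtle interplay between the label decrement of $v_t$ and $w_t$ (which can move a vertex simultaneously out of $Y_r$ and into $Y_{r-1}$, or out of both if its degree was exactly $r$), the degree-proportional (not uniform) choice of $v_t$ which is why $q_{\ell,r}^t$ rather than bare probabilities appear, and the fact that we must reduce the $DF_t$/dangerous-branch dichotomy to ``$v_t$ chosen proportional to degree from the top label class'' using $\zeta_t\le\log^6 n$ and $m_t\ge n^{0.4+10^{-5}}$. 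Once the drift expression is written out faithfully, matching it to the definition of $a_r'$ in \eqref{alphas} and extracting the $2^{-r}C_1$ lower bound is essentially bookkeeping plus Lemmas \ref{lem:lk}, \ref{lem:lambdamonotone}, \ref{lem:lambdabound}, \ref{lem:lambdaconvex}, and \ref{lem:p1}. I would also double-check that the error terms from Lemmas \ref{lem:degDistributions} and \ref{lem:degrees} (the $o(1)$ and $o(m_t)$ approximations to degree-class sizes) are uniformly $o(2^{-r}C_1)$ over the relevant range of $r\le d-2\le k-1=O(1)$, which they are since $k=O(1)$.
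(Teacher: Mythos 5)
Your overall plan is the same as the paper's: write the one-step drift of $2m(p_r-\alpha_r p_{r-1})$, evaluate the $w_t$-contributions with Lemma \ref{lem:lk}, use $p^t\in B_d$ (which, because $r\le d-2$, supplies $p_{r+1}\ge\alpha_{r+1}p_r$) together with the near-tightness \eqref{eq:r-} to reduce the drift to $\{(a_{r+1}-a_r)\lambda_{r+1}+g(r,\lambda)-p_1\lambda_1[(r+1)q_{r+1,r+1}-rq_{r,r}]\}p_r+o(1)$, and then conclude from the definition \eqref{alphas} of $\alpha_{r+1}$ via $a_r'$, $\lambda_{r+1}\ge r+1$, $p_r\ge 1.55^{r-1}p_1$ and $p_1\ge C_1/2$. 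However, your accounting of how $2mp_r$ can change in one step has a genuine error, and it concerns exactly the term the lemma is built around. In \kG the only vertices whose label can drop to $0$ at step $t$ are $v_t$ and $w_t$; when $l_t(w_t)=1$ (probability $p_1^t+o(1)$) all edges at $w_t$ are deleted, and every neighbour $u$ of $w_t$ with $l_t(u)=\ell\ge 2$ and $d_t(u)=\ell+1$ drops to degree $\ell$ and therefore \emph{exits} $Y_\ell^t$ (entering the dangerous class $Y_{\ell,\ell}$), removing $\ell+1$ half-edges from the count $2mp_\ell$. This cascade is precisely the source of the terms $-p_1\lambda_1(r+1)p_rq_{r+1,r+1}$ and $+\alpha_r p_1\lambda_1 r p_{r-1}q_{r,r}$ in the true drift, and it is the sole reason the supremum defining $a_r'$ contains the bracket $[(r+1)q_{r+1,r+1}-rq_{r,r}]$. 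Your items (b) and (c) explicitly dismiss this effect for $r\ge 2$ (``does not touch $Y_r$''), and you instead attribute the $q_{r,r},q_{r+1,r+1}$ corrections to matched vertices of label $r$ and degree exactly $r$; such vertices lie in $Y_{r,r}\subseteq Z_t$, not in $Y_r$ (membership in $Y_r$ requires degree at least $r+1$), and since $\zeta_t\le\log^6 n$ and $m_t\ge n^{0.4+10^{-5}}$ their contribution is $o(1)$. So, as written, your drift either omits the dominant negative term or derives it from a negligible mechanism, and the subsequent matching with $a_r'$, which you defer as ``bookkeeping'', would not go through as described.

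Two smaller points to tighten if you redo the computation faithfully. First, when $w_t\in Y_r$ the loss to $2mp_r$ is its full expected degree $(\lambda_r+1)p_r$ (one half-edge is consumed by $e_t$ and the remaining $\lambda_r$ move with $w_t$ into $Y_{r-1}$, which is also where the $-\alpha_r\lambda_r p_r$ term comes from), not $(|N(w_t)|-1)$ as in your first display. Second, to absorb $-p_1\lambda_1[(r+1)q_{r+1,r+1}-rq_{r,r}]$ into the supremum defining $a_r'$ you need both $p_1\le 0.55/(1.55^{r}-1)$ (Lemma \ref{lem:p1} applied with $d\ge r+2$) and the monotonicity $q_{r+1,r+1}\ge q_{r,r}$, which makes the bracket nonnegative; this should be said explicitly. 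The rest of your outline (the role of $index(t)$ being large so that matching $v_t$ contributes non-negatively, the $n^{-0.1}$ slack feeding into the $o(1)$, and the final numeric bound via $C_1=\Omega(1)$) is consistent with the paper's argument.
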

\begin{proof}
$p^t\in B_d$ implies that,
\begin{equation}\label{eq:r+1}
p_{r+1}^t \geq \a_{r+1} p_r^t
\end{equation}
Hence,
\begin{align}
&\mathbb{E}[D_t(2mp_{r}-2\a_{r} mp_{r-1})| \cH_t ] \geq  \lambda_{r+1} p_{r+1}-(\lambda_{r}+1)p_r-a_r \lambda_{r}p_r+\a_r( \lambda_{r-1}+1)p_{r-1}\nonumber
\\ &\hspace{4mm}+ p_1 \lambda_1 \bigg[- (r+1) p_r q_{r+1,r+1} + \a_r r p_{r-1} q_{r,r}\bigg]  +o(1) \nonumber
\\ &\geq    \lambda_{r+1} a_{r+1} p_{r}
-(\lambda_{r}+1)p_r-a_r \lambda_{r}p_r+( \lambda_{r-1}+1)(p_{r}- o(1)) \label{eq0}
\\ &\hspace{4mm} + p_1 \lambda_1 \bigg[- (r+1) p_r q_{r+1,r+1} + r (p_{r}-o(1)) q_{r,r}\bigg] +o(1) \nonumber
\\&  \geq \bigg\{(a_{r+1}-a_r) \lambda_{r+1} +g(r,\lambda) 
-  p_1 \lambda_1 \bigg[ (r+1) q_{r+1,r+1} - r q_{r,r}\bigg]\bigg\}  p_r+o(1) \label{eq1}
\\&  \geq \min\{10^{-5},2^{-r}\} \lambda_{r+1} p_r 
+o(1)  \geq 10^{-5}2^{-r}(r+1)\cdot 1.5^{r-1}p_1 +o(1) \geq 10^{-5}2^{-k}C_1.\label{eq:new01}
\end{align}
At \eqref{eq0} we used \eqref{eq:r-} and \eqref{eq:r+1}. At \eqref{eq:new01} we used   the definition of $\alpha_{r+1}$  (see \eqref{alphas}) and $p^t\in B_d$. At the very last inequality  we used part(iii) of Lemma \ref{lem:initial}.

{\textbf{Explanation of \eqref{eq0}}}: Let $\ell=index(t)$. $t< \tau_{d-2}$ implies that $\ell\geq d-1$. $v_t\in Z_t \cup Y_{\ell}^t$, thus if $\ell=d-1$ matching $v$ contributes by a non-negative positive amount to the quantity in question else it does not affect it at all.
$w_t \in Y_i^t$ with probability $p_i^t$. Also by  Lemma \ref{lem:lk} we have  $\mathbb{E}(|N(w_t)| p_i^t \mathbb{I}( w_t \in Y_i^t))= \lambda_{i}+1+o(1)$. Hence for $i\in \{r-1,r,r+1\}$, $w_t\in Y_{i}$ with probability $p_{i}^t$. In such an event it is incident, in expectation, to $\lambda_{i}+1$ edges. Removing $e_t$ removes an edge incident to $w_t$ and moves $w_t$ from $Y_{i}^t$ to $Y_{i-1}^t$. This increases the number of half edges incident to $Y_{i-1}^t$ by $\lambda_{i}$ in expectation (an edge spanned by $Y_{i-1}^t$ counts as two half edges while an edge with only one endpoint in $Y_{i-1}^t$ counts as a single half edge). This explains the first line of \eqref{eq0}. For second line of equality \eqref{eq0} note that $w_t$ belongs to $Y_1^t$ with probability $p_1$. Removing $w_t$ removes an edge from the $\lambda_1 p_i^tq_{i+1,i+1}$ in expectation neighbors of $w_t$ that belong to $Y_{i,i+1}^t$ for  $i\in\{r,r+1\}$. Those vertices are now incident to $i$ edges and are transferred to $Y_{i,i}^t$, hence outside $Y_i^t$. As a result the number of half edges incident to $Y_i^t$ is decrease in expectation by $(i+1)p_i^tq_{i+1,i+1}$.
\end{proof}

A  statement similar to the one of Lemma \ref{lem:lowerconstrains} but for $r=d-1$ is given by the Lemma \ref{lem:upperconstrain}. For its proof we need the following lemma which we use to approximate the deterministic quantity $\mathbb{I}(\z_t=0)$ appearing in the calculation of $\E(D_t(2mp_{d-1}-\alpha_r 2mp_{d-2})|\cH_t).$

\begin{lemma}\label{eq:sumzetas}
W.s.h.p.\@ for $4\leq d\leq k+1$ and $ \tau_d \leq t \leq \tau_{d-1}-\log^8n$,
\begin{align}
    \sum_{i=t}^{t+\log^{8}n-1}\mathbb{I}(\z_i=0) \geq \sum_{i=t}^{t+\log^{8}n-1} \left(1-p_1^i\lambda_1^i\sum_{j=1}^d j p_j^iq_{j+1,j+1}^i -10^{-10} \right).
\end{align}
\end{lemma}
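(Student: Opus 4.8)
The plan is to prove Lemma~\ref{eq:sumzetas} by summing the one-step drift identity \eqref{eq:echange_zt} over a window of $\log^8 n$ steps and using a concentration argument to show that the partial sums of $\mathbb{I}(\z_i=0)$ cannot fall too far below their conditional expectations. First I would fix $4\leq d\leq k+1$ and a time $t$ with $\tau_d\leq t\leq \tau_{d-1}-\log^8 n$, and work on the event (which holds w.s.h.p.\@ by Lemmas~\ref{lem:degrees} and~\ref{lem:degDistributions}) that on the whole window the maximum degree is at most $\log n$, so that $|\z_{i+1}-\z_i|\leq k\Delta(G_i)\leq \log n$ and the $o(1)$ error terms in \eqref{eq:echange_zt} are genuinely $o(1)$, say at most $10^{-10}/2$, uniformly over the window. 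Note also that for $i$ in this range we have $i<\tau_{d-1}$, so $\z_i\leq \log^6 n$ and $m_i\geq n^{0.4+10^{-5}}$, which is exactly what is needed for \eqref{eq:echange_zt} to apply with the $q^i$ and $\lambda^i$ as defined.

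Next, rearrange \eqref{eq:echange_zt}: for each $i$ in the window,
\begin{align*}
\mathbb{E}[\mathbb{I}(\z_i=0)\mid \cH_i]
&= 1 - \Big(1-\mathbb{E}[\mathbb{I}(\z_i>0)\mid\cH_i]\Big)
= 1 - \Big(p_1^i\lambda_1^i\sum_{j=1}^k j p_j^i q_{j+1,j+1}^i - \mathbb{E}(\z_{i+1}-\z_i\mid\cH_i) + o(1)\Big),
\end{align*}
so that, using $\z_{i+1}-\z_i\geq$ its conditional mean up to a martingale increment and discarding the (nonnegative, since $i<\tau_{d-1}$ forces $p_j^i=0$ for $j\geq d$ when $p^i\in B_d$, or more simply since the extra terms $j>d$ only make the subtracted quantity larger) tail $j>d$ of the sum, one gets
\begin{align*}
\mathbb{E}[\mathbb{I}(\z_i=0)\mid\cH_i] \geq 1 - p_1^i\lambda_1^i\sum_{j=1}^{d} j p_j^i q_{j+1,j+1}^i - \tfrac{1}{2}10^{-10} + \mathbb{E}(\z_{i+1}-\z_i\mid\cH_i).
\end{align*}
Now sum over $i=t,\dots,t+\log^8 n-1$. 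The left side becomes $\sum_i\mathbb{E}[\mathbb{I}(\z_i=0)\mid\cH_i]$; on the right the telescoping martingale part $\sum_i\mathbb{E}(\z_{i+1}-\z_i\mid\cH_i)$ equals $(\z_{t+\log^8 n}-\z_t)$ minus a martingale difference sum, and since $0\leq \z_i\leq \log^6 n$ on the window the telescoped term is bounded in absolute value by $\log^6 n$, which is $o(\log^8 n)$ and hence negligible against the $10^{-10}\log^8 n$ slack we are allotting. So $\sum_{i=t}^{t+\log^8 n-1}\mathbb{E}[\mathbb{I}(\z_i=0)\mid\cH_i] \geq \sum_{i=t}^{t+\log^8 n-1}\big(1-p_1^i\lambda_1^i\sum_{j=1}^d j p_j^i q_{j+1,j+1}^i\big) - 0.6\cdot 10^{-10}\log^8 n$.

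Finally I would pass from the conditional-expectation sum to the actual sum $\sum_i\mathbb{I}(\z_i=0)$ by a Azuma--Hoeffding (Theorem~\ref{thm:AH}) argument applied to the martingale $X_j = \sum_{i=t}^{t+j-1}\big(\mathbb{I}(\z_i=0)-\mathbb{E}[\mathbb{I}(\z_i=0)\mid\cH_i]\big)$, whose increments are bounded by $1$; this gives $\Pr(|X_{\log^8 n}|\geq \tfrac{1}{2}10^{-10}\log^8 n)\leq 2\exp\{-\Omega(\log^8 n)\}=o(n^{-9})$, and a union bound over the $O(n)$ choices of $t$ and $O(1)$ choices of $d$ keeps this $o(n^{-9})$, i.e.\@ w.s.h.p. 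Combining, $\sum_{i=t}^{t+\log^8 n-1}\mathbb{I}(\z_i=0) \geq \sum_{i=t}^{t+\log^8 n-1}\big(1-p_1^i\lambda_1^i\sum_{j=1}^d j p_j^i q_{j+1,j+1}^i\big) - 10^{-10}\log^8 n$ as claimed. The main obstacle, and the part deserving the most care, is handling the $o(1)$ error terms in \eqref{eq:echange_zt} rigorously and uniformly over the whole window of length $\log^8 n$ simultaneously: one must invoke Lemma~\ref{lem:degDistributions} with its $1-o(n^{-9})$ guarantee at every step (so that the per-step error really is $o(1)$, not just an in-expectation statement), and then check that the accumulated error over $\log^8 n$ steps, together with the telescoped $\z$-term, stays comfortably below the $10^{-10}\log^8 n$ budget; the Azuma bound itself and the bookkeeping of which terms are nonnegative are routine by comparison.
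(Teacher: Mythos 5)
Your overall route is the same as the paper's: rewrite the one-step drift \eqref{eq:echange_zt} as a statement about $\mathbb{I}(\z_i=0)$, sum over the window of length $\log^8 n$, use $0\leq \z_i\leq\log^6 n$ to dismiss the telescoped term, and finish with Azuma--Hoeffding plus a union bound over $t$ and $d$. But the concentration step is aimed at the wrong object, and this is a genuine gap. The martingale you invoke at the end, $X_j=\sum_{i}\big(\mathbb{I}(\z_i=0)-\mathbb{E}[\mathbb{I}(\z_i=0)\mid\cH_i]\big)$, is identically zero: $\z_i$ is determined by $\cH_i$ (the paper explicitly calls $\mathbb{I}(\z_t=0)$ a deterministic quantity), so this Azuma application is vacuous and proves nothing. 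The term that actually needs a probabilistic bound is the one you name but never control: after summing, $\sum_i\mathbb{E}(\z_{i+1}-\z_i\mid\cH_i)=(\z_{t+\log^8 n}-\z_t)-\sum_i\big[(\z_{i+1}-\z_i)-\mathbb{E}(\z_{i+1}-\z_i\mid\cH_i)\big]$, and without bounding that martingale difference sum your displayed inequality for $\sum_i\mathbb{E}[\mathbb{I}(\z_i=0)\mid\cH_i]$ does not follow. The repair is exactly the paper's final step: the increments of this martingale are $O(\log n)$ (via $\Delta(G)\leq\log n$; in the paper's formulation $F(\z_i)\leq (k+1)\Delta(G)$), so Azuma--Hoeffding over $\log^8 n$ steps gives a deviation of order $\log^{5.5+o(1)}n=o(\log^8 n)$ with probability $1-o(n^{-9})$, which fits inside the $10^{-10}\log^8 n$ slack after the union bound. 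The paper packages this by introducing $F(\z_i)$ (the potential increase of $\z$ at step $i$), writing the exact identity $\z_{t+\log^8 n}=\z_t+\sum_i[F(\z_i)-1+\mathbb{I}(\z_i=0)]$, and applying Azuma to $\sum_i[\mathbb{E}(F(\z_i)\mid\cH_i)-F(\z_i)]$; your version needs the same bound, just phrased for the $\z$-increments.

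Two smaller points. First, your justification for truncating the sum at $j=d$ is off: it is not that $p^i\in B_d$ forces $p_j^i=0$ for $j\geq d$, nor that ``the extra terms only make the subtracted quantity larger'' (that inequality cuts the wrong way for a lower bound); the correct reason is that on a nonempty window we have $i\geq\tau_d$ with $Y_j^{\tau_d}=\emptyset$ for $j\geq d$, and these sets stay empty since labels and degrees only decrease, so $p_j^i=0$ for $j\geq d$. Second, your handling of the $o(1)$ per-step errors and of the telescoped term $|\z_{t+\log^8 n}-\z_t|\leq\log^6 n$ is fine and matches the paper.
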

\begin{proof}
Let $F(\z_t)$ be the increase in $\z_t$ that may occur after removing $e_t$. Then, $F(\z_t)=\z_{t+1}=\z_{t+1}-\z_t$ if $\z_t=0$ and $F(\z_t)=\z_{t+1}-\z_t+1$ if $\z_t>0$. In addition,  
$$\z_{t+\log^{8}n} = \z_{t}+ \sum_{i=t}^{t+\log^{8}n -1}[F(\z_{i })-1+\mathbb{I}(\z_{i}=0)] \text{ for } \tau_d \leq t \leq \tau_{d-1}-\log^8n.$$
Hence, 
\begin{align*}
  \sum_{i=t}^{t+\log^{8}n-1} \mathbb{I}(\z_{i}=0)
 &= (\z_{t+\log^{8}n}- \z_{t})
 + \sum_{i=t}^{t+\log^{8}n-1} 1-F(\z_{i}). 
 \\& \geq - \log^6n+
 \sum_{i=t}^{t+\log^{8}n-1} 1-F(\z_{i}). 
\end{align*}
At the last inequality we used that $\zeta_t\leq \log^6 n$ for $t< \tau_{3}$. Now observe that $F(\z_t)\neq 0$ iff $w_t\in Y_1^t$. In such a case $F(\z_t)=\sum_{v\in N(w_t)\setminus \{v_t\}} \sum_{j=1}^{k+1} j \mathbb{I}( v\in Y_{j,j+1}^t).$ Therefore, 
$$F(\z_t)\leq (k+1)\D(G) \leq \log n$$
and 
$$\mathbb{E}(F(\z_t)|\cH_t)= p_1^t\lambda_1^t\sum_{j=1}^d j p_j^tq_{j+1,j+1}^t+o(1).$$
Thus,
\begin{align*}
    &\Pr\left(   \sum_{i=t}^{t+\log^{8}n-1}\mathbb{I}(\z_t=0) \leq  \sum_{i=t}^{t+\log^{8}n-1} \left(1-p_1^i\lambda_1^i\sum_{j=1}^d j p_j^iq_{j+1,j+1}^i -10^{-10}\right)\right)
    \\ &\leq  \Pr\left(     \sum_{i=t}^{t+\log^{8}n-1}1-F(\z_i) \leq \sum_{i=t}^{t+\log^{8}n-1} \left(1-\mathbb{E}(F(\z_i)|\cH_i\right))- 10^{-11}\log^8 n\right)
    \\ &= \Pr\left(      \sum_{i=t}^{t+\log^{8}n-1}\mathbb{E}(F(\z_i)|\cH_i )-F(\z_i) \leq - 10^{-11}\log^8 n\right)
    \leq \exp \left\{- \frac{(10^{-11}\log^8 n)^2}{2\log^2 n\cdot \log^8 n}\right\}=o(n^{-9}).
\end{align*}
The last inequality follows from Azuma-Hoeffding inequality (see Theorem \ref{thm:AH}).
\end{proof}

\begin{lemma}\label{lem:upperconstrain}
Let $7\leq d \leq k+1$. Let $t$ be such that $\tau_d\leq t  <  \tau_{d-1}-\log^8 n$. If $p^{t+i} \in B_d$, $p_1^{t+i}\geq C_1/2$ and 
\begin{align}\label{eq:r-2}
p^{t+i}_{d-1}-\alpha_{d-1} p^{t+i}_{d-2} \leq  n^{-0.1}
\end{align}
for $i=0,1,...,\log^8n$ then,
$$ \sum_{i=0}^{\log^{8}n-1}\E\big[D_{t+i}\big(2mp^{}_{d-1}-\alpha_{d-1}2m p^{}_{d-2} \big)  \big|\cH_{t+i}\big]\geq 10^{-5}\log^{8} n.$$
\end{lemma}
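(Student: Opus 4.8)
The plan is to imitate the argument of Lemma~\ref{lem:lowerconstrains} but carried out over a window of length $\log^8 n$, using Lemma~\ref{eq:sumzetas} to handle the deterministic term $\mathbb{I}(\z_t=0)$ that appears in the one-step drift of $2mp_{d-1}-\alpha_{d-1}2mp_{d-2}$. First I would write down the one-step conditional expectation $\E[D_{t+i}(2mp_{d-1}-\alpha_{d-1}2mp_{d-2})\mid\cH_{t+i}]$. As in the explanation following \eqref{eq0}, removing $e_{t+i}$ and the associated deletions changes the number of half-edges incident to $Y_{d-1}$ and $Y_{d-2}$; the new feature compared with Lemma~\ref{lem:lowerconstrains} is that $v_{t+i}$ may lie in $Z_{t+i}$ when $\z_{t+i}>0$, i.e. the matched deficient vertex contributes a term proportional to $\mathbb{I}(\z_{t+i}=0)$ (when $\z_{t+i}=0$ the algorithm matches a vertex in $Y_{d-1}^{t+i}=Y_{index}^{t+i}$, decreasing $p_{d-1}$, whereas when $\z_{t+i}>0$ it does not). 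So the one-step drift has the shape $\{(\alpha_{d}-\alpha_{d-1})\lambda_{d}+g(d-1,\lambda)-p_1\lambda_1[dq_{d,d}-(d-1)q_{d-1,d-1}]\}p_{d-1}-\mathbb{I}(\z_{t+i}=0)+o(1)$, using \eqref{eq:r-2} and $p^{t+i}\in B_d$ (which gives $p_{d}^{t+i}\geq\alpha_d p_{d-1}^{t+i}$) exactly as in \eqref{eq0}--\eqref{eq1}.

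Next I would bound the bracketed coefficient from below. By the definition of $\alpha_{d}$ in \eqref{alphas} we have $(\alpha_{d}-\alpha_{d-1})\lambda_{d}+g(d-1,\lambda)-p_1\lambda_1[dq_{d,d}-(d-1)q_{d-1,d-1}]\geq \min\{10^{-5},2^{-(d-1)}\}\lambda_{d}\geq \min\{10^{-5},2^{-(d-1)}\}\,d$ by Lemma~\ref{lem:lambdabound}, and since $d\geq 7$ this is at least, say, $7\cdot 10^{-5}$ after also invoking $p_{d-1}^{t+i}\geq 1.55^{d-2}p_1^{t+i}\geq 1.55^{5}\cdot(C_1/2)$ from $p^{t+i}\in B_d$ and the hypothesis $p_1^{t+i}\geq C_1/2$. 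Thus $\E[D_{t+i}(\cdots)\mid\cH_{t+i}]\geq c_0-\mathbb{I}(\z_{t+i}=0)+o(1)$ for an absolute constant $c_0$ bounded below by, e.g., $2\cdot 10^{-4}$, provided $C_1$ is at least an absolute constant (which is Lemma~\ref{lem:initial}(ii)). Summing over $i=0,\dots,\log^8n-1$ and telescoping,
\begin{align*}
\E\!\Big[\textstyle\sum_{i=0}^{\log^8 n-1} D_{t+i}(2mp_{d-1}-\alpha_{d-1}2mp_{d-2})\,\Big|\,\cH_t\Big]
&\geq c_0\log^8 n-\E\!\Big[\textstyle\sum_{i=0}^{\log^8 n-1}\mathbb{I}(\z_{t+i}=0)\,\Big|\,\cH_t\Big]+o(\log^8 n).
\end{align*}
Here I would invoke Lemma~\ref{eq:sumzetas} to replace $\sum\mathbb{I}(\z_{t+i}=0)$ (w.s.h.p., hence also in the relevant conditional expectation up to a negligible additive error) by $\sum_i(1-p_1^{t+i}\lambda_1^{t+i}\sum_{j=1}^d jp_j^{t+i}q_{j+1,j+1}^{t+i}-10^{-10})$, and then bound that sum above using Lemma~\ref{lem:calcipq}: each summand is at most $1-(1-\frac{0.55 d(d+1)}{1.55^{d-1}-1})-10^{-10}=\frac{0.55 d(d+1)}{1.55^{d-1}-1}+10^{-10}$, which for $d\geq 7$ is strictly less than $c_0-2\cdot10^{-5}$. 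This leaves a net lower bound of at least $10^{-5}\log^8 n$ on the expectation, and then a direct Azuma--Hoeffding argument over the $\log^8 n$-step window (using $|D_{t+i}(2mp_{d-1}-\alpha_{d-1}2mp_{d-2})|\leq k\Delta(G)\leq\log n$ w.s.h.p. by Lemma~\ref{lem:degrees}, exactly as in the proof of Theorem~\ref{thm:kGreedy}) upgrades the bound on the expectation to the claimed high-probability statement.

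The main obstacle is making the deterministic $\mathbb{I}(\z_{t+i}=0)$ term quantitatively harmless: it is not enough that it appears with a benign sign, one needs that \emph{on average over the window} $\z_{t+i}=0$ happens rarely enough, and this is precisely what Lemma~\ref{eq:sumzetas} combined with Lemma~\ref{lem:calcipq} delivers, but only when $d$ is large enough that $\frac{0.55 d(d+1)}{1.55^{d-1}-1}$ is comfortably below the drift constant $c_0$ — this is the reason for the hypothesis $d\geq 7$ and explains why the small-$d$ cases are deferred to the Appendix. A secondary technical point is bookkeeping the $o(1)$ per-step errors (from Lemmas~\ref{lem:degDistributions} and \ref{lem:degrees}) so that they sum to $o(\log^8 n)$ and do not swamp the $10^{-5}\log^8 n$ margin; since there are only $\log^8 n$ steps and each error is $n^{-\Omega(1)}$, this is routine. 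I would also need to be careful that all the conditioning on $\{p^{t+i}\in B_d,\ p_1^{t+i}\geq C_1/2,\ \eqref{eq:r-2}\text{ holds}\}$ is exactly the hypothesis of the lemma, so no extra stopping-time truncation is required beyond $\tau_{d-1}$.
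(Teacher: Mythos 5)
Your proposal does not reproduce the paper's argument, and more importantly it does not close on its own terms. The decisive problem is your treatment of the $\mathbb{I}(\z_{t+i}=0)$ term. You give it a negative coefficient and then plan to neutralize it by showing that the event $\{\z_{t+i}=0\}$ is \emph{rare} over the window, citing Lemma \ref{eq:sumzetas} and Lemma \ref{lem:calcipq}. But those lemmas say the opposite: Lemma \ref{eq:sumzetas} is a one-sided \emph{lower} bound, $\sum_i\mathbb{I}(\z_i=0)\geq \sum_i\big(1-p_1^i\lambda_1^i\sum_j jp_j^iq_{j+1,j+1}^i-10^{-10}\big)$, and Lemma \ref{lem:calcipq} bounds $p_1\lambda_1\sum_j jp_jq_{j+1,j+1}$ \emph{above} by $\frac{0.55d(d+1)}{1.55^{d-1}-1}$; together they show that $\z_i=0$ on at least a $1-\frac{0.55d(d+1)}{1.55^{d-1}-1}-10^{-10}$ fraction of the steps (close to $1$ for large $d$), not on at most a small fraction. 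Your step ``each summand is at most $\frac{0.55d(d+1)}{1.55^{d-1}-1}+10^{-10}$'' reverses both inequalities. Since in your own accounting a loss is incurred at every step with $\z=0$, i.e.\ on essentially the whole window, while your claimed positive per-step drift is at most $\{(\a_d-\a_{d-1})\lambda_d+\dots\}p_{d-1}$ — which by \eqref{alphas} is only of order $2^{-d}\lambda_d p_{d-1}$ for $d\geq 25$, so your uniform constant ``$c_0\geq 2\cdot 10^{-4}$'' is also false for large $d$ — the budget is negative and the argument collapses. A further quantitative slip: matching a vertex of $Y_{d-1}$ moves $\Theta(\lambda)$ half-edges (it removes about $\lambda_{d-1}+1$ from $Y_{d-1}$ and deposits about $\lambda_{d-1}$ in $Y_{d-2}$), so the $\z=0$ contribution to $D_t(2mp_{d-1}-\a_{d-1}2mp_{d-2})$ cannot be booked as $-\mathbb{I}(\z=0)$ without the $\lambda$ factors.

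The paper's proof is organized in the opposite way. In \eqref{eq12} the indicator enters with the \emph{positive} coefficient $\lambda_d$ and is the engine of the estimate: the drift is bounded below by $[\mathbb{I}(\z=0)+p_d-a_dp_{d-1}]\lambda_d$ plus a bracket that is nonnegative by the definition of $a_d$ and is simply discarded. Lemma \ref{eq:sumzetas} is then used precisely to certify that the favourable steps $\z_i=0$ occur on almost every step of the window, and the decisive comparison is $1-p_1\lambda_1\sum_j jp_jq_{j+1,j+1}-a_dp_{d-1}>0$ using $p_{d-1}\leq \frac{1.552}{1+1.552}$ near the face; Lemma \ref{lem:calcipq} makes this clean only for $d\geq 21$ (Claim \ref{claim:stoplarge}), which is why the range $7\leq d\leq 20$ is handled separately by the optimization in Claim \ref{claim:stopsmall} (Appendix \ref{appendix:stopsmall}) rather than by a single uniform constant as you propose. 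Finally, note that the lemma asserts a bound on a sum of one-step conditional expectations under the stated hypotheses; no Azuma--Hoeffding step is needed inside this lemma (that martingale argument is carried out later, in Lemma \ref{lem:confine}).
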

\begin{proof}
Similarly to the derivation of \eqref{eq1} we have,
\begin{align}
\E[D_t&(2mp_{d-1}-2\a_{d-1} mp_{d-2})| \cH_i ] =\lambda_{d} \mathbb{I}(\zeta=0)+\lambda_{d}p_{d} -(\lambda_{d-1}+1)p_{d-1}-a_{d-1} \lambda_{d-1}p_{d-1} \nonumber\\&\hspace{4mm}+   \a_{d-1} (\lambda_{d-2}+1)p_{d-2}    + p_1 \lambda_1 \bigg[- d p_{d-1} q_{d,d}+ \a_{d-1} (d-1) p_{d-2} q_{d-1,d-1}\bigg] +o(1)  \nonumber
\\ &\geq  \lambda_{d}  \mathbb{I}(\zeta=0)+   \lambda_{d} p_{d}-(\lambda_{d-1}+1)p_{d-1}-a_{d-1} \lambda_{d-1}p_{d-1}+(\lambda_{d-2}+1)(p_{d-1}-o(1))   \nonumber
\\ &\hspace{4mm} + p_1 p_{d-1} \lambda_1 \bigg[- d  q_{d,d}+(d-1) q_{d-1,d-1}+o(1)\bigg]  +o(1) \nonumber
\\&  \geq [\mathbb{I}(\zeta=0)+p_{d}-a_dp_{d-1}]\lambda_d
\nonumber \\& \hspace{4mm}
+  \bigg\{(a_{d}-a_{d-1}) \lambda_{d} +g(d-1,\lambda) 
 -  p_1 \lambda_1 \bigg[ d q_{d,d} - (d-1) q_{d-1,d-1}\bigg]\bigg\} p_{d-1}+o(1) .\label{eq12}
\end{align}

We split the rest of the proof of Lemma \ref{lem:upperconstrain} into the following 2 claims.

\begin{claim}\label{claim:stopsmall}
Let $7\leq d \leq 20$. Let $t$ be such that $\tau_d\leq t < \tau_{d-1}-\log^8 n$. If $p^{t+i} \in B_d$, $p_1^{t+i}\geq C_1/2$ and \eqref{eq:r-2} holds for $i=0,1,...,\log^8n-1$ then,
$$ \sum_{i=0}^{\log^{8}n-1}\E\big[D_{t+i}\big(2mp^{}_{d-1}-\alpha_{d-1}2m p^{}_{d-2} \big)  \big|\cH_{t+i}\big]\geq 10^{-5} \log^8 n.$$
\end{claim}
{\textbf{Proof:}} See Appendix \ref{appendix:stopsmall}.
\qed

\begin{claim}\label{claim:stoplarge}
Let $21\leq d \leq k+1$. Let $t$ be such that $\tau_d\leq t < \tau_{d-1}-\log^8n$. If $p^{t+i} \in B_d$, $p_1^{t+i}\geq C_1/2$ and \eqref{eq:r-2} holds for $i=0,1,...,\log^8n$ then,
$$ \sum_{i=0}^{\log^{8}n-1}\E\big[D_{t+i}\big(2mp^{}_{d-1}-\alpha_{d-1}2m p^{}_{d-2} \big)  \big|\cH_{t+i}\big]\geq 10^{-5}\log^8 n.$$
\end{claim}
{\textbf{Proof:}} 
$p^{t}\in B_d$  implies that $p_{d-2}^{t_1}< 1.552/(1+1.552)$ and $p_1^{t_1}\leq \frac{1}{\sum_{i=1}^{d-1}1.55^i} \leq \frac{0.55}{1.55^{d-1}-1}$. 

Thereafter \eqref{eq12}, the definition of $a_d$ (see \eqref{alphas}) and Lemma \ref{eq:sumzetas} give,
\begin{align}
 \sum_{i=0}^{\log^{8}n-1}&\E\big[D_{t+i}\big(2mp^{}_{d-1}-\alpha_{d-1} 2mp^{}_{d-2} \big)  \big|\cH_{t+i}\big]\geq 
\sum_{i=t}^{t+\log^{8}n-1}[\mathbb{I}(\z_{i}=0)+p_{d-1}^{i}- a_{d} p_{d-2}^{i}]\lambda_{d}^{i} -o(1) \nonumber
\\& \geq \sum_{i=t}^{t+\log^{8}n-1}\bigg[1-p_1^i\lambda_1^i\sum_{j=1}^d jp_j^i q_{j+1,j+1}^i-10^{-10} +o(1) -1.552 \cdot \frac{1.552}{1+{1.552}} \bigg]\lambda_{d}^i \nonumber
\\&\geq \log^8n \bigg[1-\frac{0.55d(d+1)}{1.55^{d-1}-1}-0.95\bigg] \lambda_{d} \geq 10^{-5}\log^8 n.
\label{basic1}
\end{align}
For the first inequality of the last line we used Lemma  \ref{lem:calcipq}. 
\end{proof}

We are now ready to prove the main Lemma of this subsection.

\begin{lemma}\label{lem:confine}
For $i\in\{2,3,...,k\}$ let $\sigma_i=\min\{t\geq t^*: p_i^t<\alpha_ip_{i-1}^t \text{ or }\zeta_t\geq \log^6 n \text{ or }m_t\leq n^{0.4+10^{-5}} \}$. Then w.s.h.p.\@, \begin{itemize}
    \item[(a)] $\sigma_i\leq \sigma_{i-1}$ for $i\in\{3,...,k\}$ 
    \item[(b)] $\tau_{d} \leq \sigma_d$ for $d\in\{6,...,k\}$.
\end{itemize} 
\end{lemma}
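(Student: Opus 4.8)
\textbf{Overall strategy.} The plan is to prove parts (a) and (b) simultaneously by a ``bootstrapping'' induction on $d$ going downward from $d=k$ (or $d=k+1$), which is the natural direction given the logical dependencies already set up in the paper: the membership $p^t\in B_d$ depends only on the inequalities $p_r\ge\alpha_rp_{r-1}$ for $2\le r\le d-1$, so knowing that the process stays in $B_{d'}$ for $d'>d$ gives us ``room above'' --- in particular the key inequality $p_{r+1}^t\ge\alpha_{r+1}p_r^t$ used in \eqref{eq:r+1} and \eqref{eq0} --- when we try to control the lowest active coordinate $r=d-1$. Concretely I would show: for each $d$, if $p^{t}\in B_{d}$ for all $t$ in the relevant range (which for the top value of $d$ follows from Property I via Lemma \ref{lem:initial}(ii), and inductively from the previous step), then $\sigma_{d-1}\ge\tau_{d-1}$ and $\sigma_d\le\sigma_{d-1}$, i.e. the process does not leave $B_d$ before $\tau_{d-1}$.

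\textbf{Key steps in order.}
First, establish the base of the induction: by Lemma \ref{lem:initial}(ii), w.s.h.p.\ $p^{t^*}\in B_{k+1}$ and moreover $p_j^{t^*}-\alpha_jp_{j-1}^{t^*}\ge C_1=\Omega(1)$, so the process starts strictly inside every $B_d$; combined with Lemma \ref{lem:initial}(iii) we also get the lower bound $p_1^t\ge\min\{0.04,C_1/2\}$ on $[t^*,\sigma^*]$, which feeds the hypotheses ``$p_1^{t+i}\ge C_1/2$'' of Lemmas \ref{lem:lowerconstrains} and \ref{lem:upperconstrain}. Second, fix $d$ and suppose inductively that $p^t\in B_{d}$ has been shown for $t^*\le t<\tau_{d-1}$ (for the top $d$ this reduces to the base case). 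Third, to push this down to $B_{d-1}$ I need to show the coordinate constraint for $r=d-1$ is never violated before $\tau_{d-2}$. For the interior constraints $2\le r\le d-2$ this is Lemma \ref{lem:lowerconstrains}: whenever $p_r^t-\alpha_rp_{r-1}^t\le n^{-0.1}$ the drift of $2m\,p_r-\alpha_r2m\,p_{r-1}$ is at least $10^{-5}2^{-r}C_1>0$; for the top active constraint $r=d-1$ the analogous (one-sided, summed over $\log^8 n$ steps) positive-drift statement is Claims \ref{claim:stopsmall}/\ref{claim:stoplarge} packaged in Lemma \ref{lem:upperconstrain}, valid for $d\ge 7$ (the range in part (b)). Fourth, combine these drift bounds with a standard supermartingale / Azuma--Hoeffding argument: the one-step increments of $2m\,p_r-\alpha_r2m\,p_{r-1}$ are bounded by $O(\Delta(G))=O(\log n)$ w.s.h.p.\ (Lemma \ref{lem:degrees}), the relevant quantity is $\Omega(n)=\Omega(\sigma^*)$-large at time $t^*$ by Lemma \ref{lem:initial}(ii), and the consistently positive drift whenever the barrier at $n^{-0.1}$ is approached means that w.s.h.p.\ the process never reaches the barrier, hence never crosses to a negative value; this yields $\sigma_r\ge\tau_{d-2}$ for $2\le r\le d-1$, i.e.\ $p^t\in B_{d-1}$ up to $\tau_{d-2}$, which is exactly both the inductive step and the statements $\sigma_d\le\sigma_{d-1}$ (part (a)) and $\tau_d\le\sigma_d$ (part (b)). One must be slightly careful with the $\log^8 n$-block structure in Lemma \ref{lem:upperconstrain}: between the discrete times at which we can invoke it, the process can drift down by at most $O(\log^9 n)\ll n^{-0.1}\cdot 2m$, so it is enough to run the block-martingale argument on the partial sums at multiples of $\log^8 n$ and absorb the in-block fluctuation.

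\textbf{Main obstacle.} The delicate point is the interplay between the two kinds of constraints. The interior-coordinate bound (Lemma \ref{lem:lowerconstrains}) gives a clean one-step positive drift, so those barriers are handled by the textbook ``reflected random walk stays away from a barrier'' argument. The top coordinate $r=d-1$ is harder: its drift \eqref{eq12} contains the indicator term $\lambda_d\,\mathbb{I}(\zeta=0)$, which is not $\cH_t$-measurable in a usable way and is only controlled on average over a window via Lemma \ref{eq:sumzetas}; that is precisely why Lemma \ref{lem:upperconstrain} is a \emph{summed} statement over $\log^8 n$ steps rather than a one-step drift. So the real work is to set up a \emph{block} supermartingale whose increments are the $\log^8n$-step sums, verify the bounded-difference condition for these block increments (each is $O(\log^9 n)$ w.s.h.p.), and check that the hypotheses ``$p^{t+i}\in B_d$ and $p_1^{t+i}\ge C_1/2$ and \eqref{eq:r-2}'' required by Lemma \ref{lem:upperconstrain} propagate correctly --- i.e.\ that we only need them on a window where we have \emph{already} established $p^\cdot\in B_d$ by the inductive hypothesis, so there is no circularity. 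Once the block-martingale concentration is in place, the union bound over $d\in\{6,\dots,k\}$, over the $O(n)$ possible start times of a block, and over the $O(n)$ time steps costs only polynomial factors and is crushed by the $e^{-\Omega(\log^2 n)}$ tail, giving the w.s.h.p.\ conclusion.
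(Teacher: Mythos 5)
Your plan is correct and is essentially the paper's own argument: start strictly inside $B_{k+1}$ with slack $C_1=\Omega(1)$ via Lemma \ref{lem:initial}, use the one-step drift of Lemma \ref{lem:lowerconstrains} for the interior constraints and the $\log^{8}n$-block drift of Lemma \ref{lem:upperconstrain} for the top constraint, and combine these with the $O(\log n)$ bounded differences and Azuma--Hoeffding over short windows ending at a hypothetical first violation, with a union bound over $d$ and over starting times. The only point the paper treats explicitly that you gloss over is the boundary case in part (b) where the violation of the constraint $p_d\geq\alpha_d p_{d-1}$ occurs within $\log^{8}n$ steps of $\tau_{d+1}$, so no full block fits after $\tau_{d+1}$; there one reaches back a $\log^{10}n$ window before $\tau_{d+1}$, applies Lemma \ref{lem:lowerconstrains} (where this constraint is still interior), and absorbs the remaining $O(\log^{9}n)$ fluctuation --- precisely the kind of in-block absorption you describe, so your framework accommodates it.
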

Lemma \ref{lem:confine} implies that w.s.h.p.\@, $p^t\in B_d$ for $d\in\{7,...,k+1\}$ and $t^*\leq t\leq \tau_{d-1}$. In particular, $p^{\tau_6}\in B_7$.
\begin{proof}
Recall that for $t<t'\leq \tau'$, as all the edges removed at any iteration of \kG are incident to some vertex, we have 
$$2m_{t'}-2m_{t} \leq 2(t'-t)\Delta(G)\leq 2(t'-t)\log n.$$
In addition, for any $t\leq \tau'$ and $i\in [k]$, $p^t_i-p^{t+1}_i\leq 2(i+1)\Delta(G)/2m_{t+1}\leq 2\log n/2m_{t+1}$. Thus, 
    \begin{align}\label{eq:1001}
   \Big|[2m_{t+1}& (p^{t+1}_{i}-\alpha_j p^{t+1}_{i-1})]-
    [2m_{t} (p^{t}_{i}-\alpha_j p^{t}_{i-1})]\Big|\nonumber \\&\leq
        \Big|2m_{t+1} [(p^{t+1}_{i}-\alpha_j p^{t+1}_{i-1})-(p^{t}_{i}-\alpha_j p^{t}_{i-1})]\Big| +    \Big| 2(m_{t}-m_{t+1}) (p^{t}_{i}-\alpha_j p^{t}_{i-1})]\Big| \nonumber
\\& \leq 2m_{t+1} \cdot \frac{6\log n}{2m_{t+1}} +2\log n \cdot 3 \leq 12\log n.
\end{align}
Now let $i\in\{3,...,k\}$ and assume that $\sigma_{i-1}\leq \min\{\sigma_{i-2},\sigma_{i-3},...,\sigma_2\}$. Let $t_1=\sigma_{i-1}$ and  $t_2=\max\{t^*,t_1-\log^8 n\}$.
\eqref{eq:1001} implies that
\begin{align}\label{eq:t1t2}
    [2m_{t_1} (p^{t_1}_{i-1}-\alpha_{i-1} p^{t_1}_{i-2})]-[2m_{t} (p^{t}_{i-1}-\alpha_j p^{t}_{i-2})]
    \leq 12(t-t_1)\log n 
\end{align} 
for any $t<t_1$.
Hence,
\begin{equation*}\label{eq:cond1}
    p^{t}_{i-1}-\alpha_{i-1} p^{t}_{i-2}\leq 12\log^9 n/2m_t +p^{t_1}_{i-1}-\alpha_{i-1} p^{t_1}_{i-2}\leq n^{-0.1}+0\leq n^{-0.1}<p_1^{t^*}
\end{equation*}
for $t \in[t_2,t_1]$ and by Lemma \ref{lem:initial} we have  that $t_2>t^*$. Thereafter, if $t_1=\s_{i-1} \leq \sigma_i$ then $t\in[t^*,t_1)$ implies that $p^t\in B_{i+1}\subseteq B_3$  and $p_j^t\geq C_1/2$ for $j\in [i]$ and $t\in[t_2,t_1)$ (see Lemma \ref{lem:initial}). 
Thus in the event $t_1=\s_{i-1} \leq \sigma_i$ then Lemma  \ref{lem:lowerconstrains} applies giving,
\begin{align}\label{eq:1003}
\sum_{j=0}^{\log^8n-1}\mathbb{E}[D_{t_2+j}(2m(p_{i-1}-a_{i-1}p_{i-2}))|\cH_{t_2+j}] \geq 10^{-5}2^{-k}C_1\log^8 n .  
\end{align}
In addition, 
\begin{align}\label{eq:1004}
2m_{t_1}(p_{i-1}^{t_1}-a_{i-1}p_{i-2}^{t_1})- 2m_{t_2}(p_{i-1}^{t_2}-a_{i-1}p_{i-2}^{t_2})\leq 0.  
\end{align}
Thus \eqref{eq:t1t2}, \eqref{eq:1003}, \eqref{eq:1004} and Azuma-Hoeffding inequality give,   
\begin{align*}
  \Pr(\exists i\in \{3,2,...,k\}: \sigma_{i-1}<\sigma_i) \leq \sum_{i=3}^k \sum_{t_1\leq 2m_0}\exp\set{ - \frac{(10^{-5}2^{-k}C_1\log^8 n)^2}{ 2\log^8n (12\log n)^2 }} =o(n^{-9}). 
\end{align*}
This completes the proof of part (a).

Now assume that there exists $d\in\{6,...,k\}$ such that  $\tau_{d} > \sigma_d$ and $\tau_{d'}\leq \sigma_{d'}$ for $d<d'\leq k$. Let $t_1=\sigma_d,$  $t_2=\min\{t^*,t_1-\log^8 n\}$ and $t_3=\min\{t^*,t_2-\log^{10} n\}$.
As in the proof of part (a) we have that $t_2>t_3>t^*$, $p_1^t\geq C_1/2$ and $p^{t}_{d}-\alpha_d p^{t}_{d-1}\leq 12\log^{11} n/2m_t\leq n^{-0.1}$ for $t\in [t_3,t_1)$. 
We now consider 2 cases.
\vspace{3mm}
\\\textbf{Case a:} $\tau_{d+1}\leq t_2$. Part (a) implies that $p^t\in B_d$ for $t\in[t_2,t_1)$.  Thus Lemma \ref{lem:upperconstrain} applies giving
\begin{align}\label{eq:1002}
\sum_{i=0}^{\log^{8}n-1}\E\big[D_{t_2+i}\big(2mp^{}_{d-1}-\alpha_{d-1} 2mp^{}_{d-2} \big)  \big|\cH_{t_2+i}\big]
 \geq 10^{-5}\log^{8}n.
\end{align}
In addition
\begin{align}\label{eq:10004}
2m_{t_1}(p_{i-1}^{t_1}-a_{i-1}p_{i-2}^{t_1})- 2m_{t_2}(p_{i-1}^{t_2}-a_{i-1}p_{i-2}^{t_2})\leq 0.  
\end{align}
 \eqref{eq:1001}, \eqref{eq:1002}, \eqref{eq:10004} together with Azuma-Hoeffding inequality give,
\begin{align*}
    \Pr&\bigg(\exists 6\leq  d \leq k:  \tau_d > \sigma_d \text{ and } \tau_{d+1}\leq \sigma_d-\log^8n\bigg) \nonumber
 \\&\leq \sum_{d=6}^k  \sum_{t_1\leq 2m_0}\exp\set{ - \frac{ ( 10^{-5}\log^{8}n)^2 }{2 \times \log^{8}n \times (12\log n)^2  }}  =o(n^{-9}). 
\end{align*}
\vspace{3mm}
\\\textbf{Case b:} $t_2< \tau_{d+1}$.  
\eqref{eq:t1t2} implies that $    [2m_{t_1} (p^{t_1}_{i-1}-\alpha_{i-1} p^{t_1}_{i-2})]- [2m_{t_2} (p^{t_2}_{i-1}-\alpha_j p^{t_2}_{i-2})]  \geq - 12\log^{9}n$
hence  as $ p^{t_3}_{i-1}-\alpha_j p^{t_3}_{i-2} \geq 0$,
\begin{align}\label{eq:t3}
    [2m_{t_2} (p^{t_2}_{i-1}-\alpha_{i-1} p^{t_2}_{i-2})]&- [2m_{t_3} (p^{t_3}_{i-1}-\alpha_j p^{t_3}_{i-2})]  \leq 12\log^{9}n.
\end{align}
Part (a) and $\sigma_{d+1}\geq \tau_{d+1}$ imply that $p^t\in B_{d+1}$ for $t\in[t_3,t_2]$. Thus Lemma \ref{lem:lowerconstrains} applies giving
\begin{align}\label{eq:10002}
\sum_{i=0}^{\log^{10}n-1}\E\big[D_{t_3+i}\big(2mp^{}_{d-1}-\alpha_{d-1} 2mp^{}_{d-2} \big)  \big|\cH_{t_2+i}\big]
 \geq 10^{-5}2^{-k}C_1\log^{10}n.
\end{align}

Thus \eqref{eq:1001}, \eqref{eq:10002}, \eqref{eq:t3} and Azuma-Hoeffding inequality give,   
\begin{align*}
  \Pr&(\exists d\in \{6,2,...,k\}: \tau_d> \sigma_d \text{ and } \tau_{d+1} > \sigma_d-\log^8n\bigg)) \\&\leq \sum_{i=6}^k \sum_{t_1\leq 2m_0}\exp\set{ -\Omega \bfrac{(\log^{10} n)^2}{ 2\log^{10}n (12\log n)^2 }} =o(n^{-9}). \end{align*}
\end{proof}

\subsection{Proof of Lemma \ref{lem:changeZ_t}}\label{subsection:change}
\begin{lemma}\label{01}
Conditioned on $p^{\tau_6}\in B_7$ w.s.h.p.\@ for $\tau_6\leq t<\tau'$, 
$$\text{ if $\zeta_t>0$ then } \mathbb{E}(\z_{t+1}-\z_t|\cH_t) \leq -10^{-5}.$$
\end{lemma}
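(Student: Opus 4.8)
The plan is to reduce the statement, via \eqref{eq:echange_zt}, to a confinement claim and then invoke Property~III. On the w.s.h.p.\ event (established in the proof of Theorem~\ref{thm:kGreedy}) that $\zeta_s<\log^6n$ for all $s<\tau'$ we have $\tau_3=\tau'$, so by \eqref{eq:echange_zt}, whenever $\tau_6\le t<\tau'$ and $\zeta_t>0$,
$$\mathbb{E}(\zeta_{t+1}-\zeta_t\mid\cH_t)=-1+p_1^t\lambda_1^t\sum_{\ell=1}^{k}\ell\,p_\ell^t q_{\ell+1,\ell+1}^t+o(1),$$
and it suffices to bound the last sum by $1-2\cdot10^{-5}$. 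Using $\tau_6\le\tau_5\le\tau_4\le\tau_3=\tau'$ I would split $[\tau_6,\tau')=[\tau_6,\tau_5)\cup[\tau_5,\tau_4)\cup[\tau_4,\tau_3)$ and on each block $[\tau_d,\tau_{d-1})$ with $d\in\{4,5,6\}$ apply Lemma~\ref{02} (Property~III for small $d$, which is already available and outputs exactly \eqref{eq:changeZ_t}); but this requires knowing that $p^t\in B_d$ on that block. So everything comes down to showing: w.s.h.p. $p^t\in B_d$ for every $d\in\{4,5,6\}$ and every $t\in[\tau_d,\tau_{d-1})$, given the hypothesis $p^{\tau_6}\in B_7$.

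To prove this confinement I would extend the argument of Lemma~\ref{lem:confine} down to the small indices, i.e.\ show w.s.h.p.\ that $\tau_d\le\sigma_d$ for $d\in\{3,4,5\}$ as well; combined with Lemma~\ref{lem:confine}(a) (that $\sigma_k\le\dots\le\sigma_2$) this yields $p^t\in B_d$ for $t^*\le t\le\tau_{d-1}$ for each $d\in\{4,5,6\}$, which covers the three blocks, the base of the descent being $p^{\tau_6}\in B_7\subseteq B_6$. The descent runs in decreasing $d$: assuming inductively that $p^t\in B_{d+1}$ for $t^*\le t\le\tau_d$ (in particular $p^{\tau_d}\in B_{d+1}\subseteq B_d$), I would control the drift of the boundary functionals $2m_t(p_r^t-\alpha_r p_{r-1}^t)$, $2\le r\le d-1$, and conclude by Azuma--Hoeffding over $\log^8n$- and $\log^{10}n$-windows, exactly as in the proof of Lemma~\ref{lem:confine}, that the process never leaves $B_d$ before $\tau_{d-1}$. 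For the inner faces $2\le r\le d-2$ the required positive drift is supplied verbatim by Lemma~\ref{lem:lowerconstrains}. For the outer face $r=d-1$ I would prove small-$d$ analogues of Lemma~\ref{lem:upperconstrain}: starting from the identity \eqref{eq12}, the coefficient of $p_{d-1}$ there is $\ge\min\{10^{-5},2^{-(d-1)}\}\lambda_d>0$ by the definition \eqref{alphas} of $\alpha_d$ together with Lemma~\ref{lem:lambdaconvex}, while the remaining term $[\mathbb{I}(\zeta_t=0)+p_d^t-\alpha_dp_{d-1}^t]\lambda_d$ must be bounded from below after summing over a window, which is precisely the role of Lemma~\ref{eq:sumzetas}. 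A union bound over the $O(1)$ values of $d$ and the $O(n\log n)$ windows keeps the total failure probability at $o(n^{-9})$. Feeding the confinement into Lemma~\ref{02} then gives $\mathbb{E}(\zeta_{t+1}-\zeta_t\mid\cH_t)\le-10^{-5}$ whenever $\zeta_t>0$ on all of $[\tau_6,\tau')$, as required.

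The main obstacle is the outer-face confinement at the small indices $d\in\{4,5,6\}$, and underneath it Lemma~\ref{02} itself. In this range the crude geometric bound $p_1\le0.55/(1.55^{d-1}-1)$ and the estimate $p_1\lambda_1\sum_i i\,p_iq_{i+1,i+1}\le0.55\,d(d+1)/(1.55^{d-1}-1)$ of Lemma~\ref{lem:calcipq} exceed $1$, so they neither force negative drift of $\zeta$ nor positive drift on the face; one is forced to keep track of the $\mathbb{I}(\zeta_t=0)$ contribution (via Lemma~\ref{eq:sumzetas}) and to carry out a tighter, essentially computational, optimization over $\lambda$ of the quantities $g(r,\lambda)$, $q_{r,r}(\lambda)$ and $\lambda_r(\lambda)$, using the convexity input of Lemma~\ref{lem:lambdaconvex}, for the finitely many relevant $d$. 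Performing this optimization, and carefully propagating membership in $B_d$ across the non-continuous stopping times $\tau_d$, is the delicate part of the argument.
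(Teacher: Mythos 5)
Your reduction via \eqref{eq:echange_zt} and the splitting of $[\tau_6,\tau')$ at $\tau_5,\tau_4$ is indeed how the argument starts, but the core of your plan --- confine $p^t$ in $B_d$ for $d\in\{4,5,6\}$ by rerunning Lemma \ref{lem:confine} with Lemma \ref{lem:lowerconstrains} for the inner faces and ``small-$d$ analogues of Lemma \ref{lem:upperconstrain}'' for the outer face, then quote Lemma \ref{02} --- has a genuine gap at the outer face, and no ``tighter optimization over $\lambda$'' repairs it. Near the face $p_{d-1}=\a_{d-1}p_{d-2}$, membership in $B_d$ gives no lower bound on $p_d$ (for $t\ge\tau_d$ one may well have $p_d=0$), so in \eqref{eq12} the governing term $[\mathbb{I}(\z=0)+p_d-a_dp_{d-1}]\lambda_d$, even after averaging $\mathbb{I}(\z=0)$ over a window via Lemma \ref{eq:sumzetas}, is only bounded below by roughly $\big(1-a_dp_{d-1}-p_1\lambda_1\sum_j jp_jq_{j+1,j+1}\big)\lambda_d$; with $p_{d-1}$ near $1.552/2.552$ this leaves a margin of about $0.056$, while for $d\le 6$ the subtracted sum can be much larger because $p_1$ may be as big as $0.55/(1.55^{d-1}-1)$ (about $0.07$, $0.12$, $0.21$ for $d=6,5,4$). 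This is exactly why Claims \ref{claim:stopsmall} and \ref{claim:stoplarge} are stated only for $d\ge 7$: for $d\le 6$ the windowed outer-face drift is simply not nonnegative on all of $B_d$, so the confinement statement you propose to prove is not available by that route.

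The paper's Appendix \ref{appendix:01} therefore proceeds differently. For $d=6,5$ it augments the boundary functional by a multiple of $p_d$ (e.g. $p_5-a_5p_4+0.1p_6$), and runs a two-case analysis: either the process leaves the auxiliary region $C_d^-$, in which case it sits strictly inside a shifted face and a Lemma \ref{lem:confine}-type argument applies to that modified face, or it stays in $C_d^-$, in which case the fact that $p^{\tau_d}\in B_{d+1}$ forces $p_d^{\tau_d}\ge 0.38$ (resp.\ $0.39$) and the decay of $p_d$ through explicit numerical thresholds pumps a constant margin into $p_{d-1}-\a_{d-1}p_{d-2}$ (Lemmas \ref{0116}, \ref{0115}). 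For $d=4$ the paper does not confine the process in $B_4$ at all once $p_3$ exceeds $0.36$; instead it verifies \eqref{eq:app} directly in the regime $p_3\ge 0.35$, $p_2\ge a_2p_1$, and additionally proves $p_2^{\tau_3}\ge a_2p_1^{\tau_3}$ (Lemma \ref{0114}), which is then used in Lemma \ref{0113} to cover the further interval $[\tau_3,\tau_2)\supseteq[\tau_3,\tau')$. Your alternative, invoking the proof of Theorem \ref{thm:kGreedy} to assert $\tau_3=\tau'$ and drop that last interval, is circular: that proof rests on Lemma \ref{lem:changeZ_t}, which in turn requires the present lemma. So both the confinement step for $d\in\{4,5,6\}$ and the treatment of $[\tau_3,\tau')$ need the additional ideas above, which your proposal does not supply.
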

\begin{proof}
See Appendix \ref{appendix:01}.
\end{proof}
{\emph{Proof of Lemma \ref{lem:changeZ_t}.}
Recall that Lemma \ref{lem:changeZ_t} asks to prove that for 
$t < \tau'$,
\begin{equation}\label{98}
    \text{  w.s.h.p.\@ if $\z_t>0$ then,
$\mathbb{E}(\z_{t+1}-\z_t|\cH_t)\leq -10^{-5}$.}
\end{equation}
We partition $[0,\tau')$ to the subintervals $I_0=[0,\tau^*]$, $I_{k+1}=(\tau^*,\tau_k]$, $I_d=(\tau_{d},\tau_{d-1}]$, $\ell=k,k-1,...,7$ 
and $I_6=[\tau_6,\tau')$. Lemmas \ref{lem:initial} and \ref{01}  imply  that \eqref{98} is satisfied for $t\in I_0\cup I_6$. Thereafter Lemma \ref{lem:confine} implies that for $d\in\{k+1,k,...,7\}$ and $t\in I_d$ we have that $p^t\in B_d$. Hence Lemmas \ref{03}, \ref{02} apply giving that \eqref{98} is satisfied $t\in \cup_{\ell=k+1}^{7} I_d$ .
\qed

\section{Reserving a set of random edges}\label{sec:reserve}
To prove Theorem \ref{thm:matchings} we use Theorem \ref{thm:kGreedy2}, a variant of Theorem \ref{thm:kGreedy}. Theorem \ref{thm:kGreedy2} states that from $G_{n,cn}^{\delta \geq k+1}$ we can remove a set of fairly random edges such that the residual graph still spans a large $k$-matching. We will later use the removed set of random edges to augment the $k$-matching into a perfect one.

\begin{theorem}\label{thm:kGreedy2}
Let $k\geq 2$,  $(k+1)/2< c=O(1)$, $n^{-0.49}\leq p =o(1)$ and $G\sim G_{n,cn}^{\d \geq k+1}$. Then, w.s.h.p.\@, there exists $V_0\subset V(G)$ of size at most $3cnp$ and $E_p\subset E(G)$ of size at least $\frac{(2cn-(k+1)n)p}{4}$ such that
\begin{itemize}
    \item[(i)] Given the set $E(G)\setminus E_p$ the edge set $E_p$ is distributed uniformly at random among all sets of size $|E_p| $ that are disjoint from $E(G)\setminus E_p$ and not incident to $V_0$ and
    \item[(ii)] $E(G)\setminus E_p$ spans a $k$-matching $M$ of size at least $kn/2-n^{0.401}$.
\end{itemize} 
In addition w.s.h.p.\@ the sets $V_0,E_p$ and $M$  can be generated in $O(n)$ time.
\end{theorem}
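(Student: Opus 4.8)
\emph{Proof proposal.} The plan is to produce $E_p$ by a two‑round exposure of the random sequence model of Section~\ref{section:model}. In the first round I reveal the degree sequence $\bd=(d(v))_v$ of $G$ (equivalently the lists $L^i$); by Lemmas~\ref{lem:degrees} and~\ref{lem:degDistributions} it is w.s.h.p.\@ typical, with $\Delta(G)=O(\log n)$ and the class sizes $\nu^i_j$ concentrated. Put $Z(v):=d(v)-(k+1)$, so $R:=\sum_v Z(v)=2cn-(k+1)n=\Omega(n)$ because $c>(k+1)/2$. For each $v$ I earmark $k+1$ of its copies in $\bx$ as \emph{core} and the remaining $Z(v)$ as \emph{excess}, and then mark copies as \emph{reserved} by an independent random rule tuned so that (a) every vertex has at most $Z(v)$ reserved copies, and (b) the expected number of reserved copies at $v$ is $\Theta(\sqrt p\,)$ times $Z(v)$. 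In the second round I reveal the pairing $\bx$ and set $E_p$ to be the edges both of whose copies are reserved, and $G':=E(G)\setminus E_p$. By (a), passing from $G$ to $G'$ deletes at most $Z(v)$ edges at each $v$, so $\delta(G')\ge k+1$.

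For the size of $E_p$: conditioned on the reservation, in the uniform pairing of the $2cn$ copies each unordered pair of reserved copies becomes an edge with probability $(1+o(1))/(2cn)$, so $\mathbb{E}|E_p|=(1+o(1))\binom{\rho}{2}/(2cn)$ where $\rho$ is the number of reserved copies; by (b), $\rho=\Theta(\sqrt p\,)\cdot\Theta(R)=\Theta(n\sqrt p\,)$, hence $\mathbb{E}|E_p|=\Theta(np)$, and with the constants in (b) chosen appropriately (using $c>(k+1)/2$) one gets $\mathbb{E}|E_p|\ge (2cn-(k+1)n)p/2$; Chernoff and McDiarmid concentration (Theorems~\ref{thm:chernoff},~\ref{McD}) then give $|E_p|\ge (2cn-(k+1)n)p/4$ w.s.h.p.\@. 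The set $V_0$ will absorb the (few) vertices at which the random rule had to be truncated because of (a), the $O(\log^2 n)$ vertices incident to a loop or multiple edge inside $E_p$ or to an $E_p$‑edge parallel to an edge of $G'$, and — crucially — whichever further vertices are needed to make statement (i) hold exactly; removing the corresponding edges from $E_p$ costs $o(np)$ edges. A direct first/second moment count, again using $c>(k+1)/2$, should give $|V_0|\le 3cnp$ w.s.h.p.\@.

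Property (ii) follows from Theorem~\ref{thm:kGreedy}. Conditioned on everything revealed, $G'=G_{\bx'}$ with $\bx'$ a uniformly random member of a set $\bigl(\cS_{n,2m'}^{L^0,\dots,L^k},y(\cdot)\bigr)$ as in Lemma~\ref{lem:uniformity}, where $m'=cn-|E_p|=(c-o(1))n$, the minimum degree is $\ge k+1$, and the degree sequence agrees with the typical one of $G_{n,cn}^{\delta\ge k+1,seq}$ outside $O(np)=o(n)$ vertices; in particular $c-o(1)>(k+1)/2$ and the initial label vector is $e_k$ exactly. All that the proof of Theorem~\ref{thm:kGreedy} uses about its input is this typicality — it enters only through Lemmas~\ref{lem:degDistributions} and~\ref{lem:degrees} and through the derivation of Lemma~\ref{lem:changeZ_t} — so that proof carries over to $G'$ and, in $O(n)$ time, outputs a $k$‑matching $M$ of $G'$ with $|M|\ge kn/2-n^{0.401}$. (One should check that the $o(n)$ discrepancy in the degree sequence does not spoil Lemma~\ref{lem:initial}: coupling the first $t^*=\Theta(n)$ steps of \kG on $G'$ with those on $G_{n,cn}^{\delta\ge k+1,seq}$ so that they agree outside $o(n)$ steps keeps $p^{t^*}$ within $o(1)$ of nominal, hence inside $B_{k+1}$ with $\Omega(1)$ slack by Property~I.) The reservation step is itself $O(n)$ time.

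The main obstacle is (i). Given $G'$ (and $V_0$ and $|E_p|$), the edges of $E_p$ form a uniformly random pairing of the reserved copies, and the standard configuration‑model identity gives $\Pr(E_p=H\mid G')\propto q^{2|H|}\cdot\#\{\text{configurations of }G'\cup H\}$, which for simple $H$ disjoint from $G'$ and avoiding $V_0$ depends on $H$ only through $|H|$ — \emph{provided} that, after conditioning on $G'$ alone, the reserved degree sequence of $E_p$ is not pinned down but is distributed as an i.i.d.\@ Poisson‑type vector on $V\setminus V_0$. Designing the first round so that this holds — equivalently, choosing the random rule for marking reserved copies, and the set $V_0$ of vertices where it must be truncated, so that the induced reserved degrees are genuinely memoryless — is the crux, and is exactly what dictates the particular form of $V_0$ (and hence the bound $3cnp$). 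Once this is in place, conditioning additionally on $|E_p|$ turns ``uniform among simple edge‑sets with that reserved degree sequence'' into ``uniform among all sets of size $|E_p|$ disjoint from $E(G)\setminus E_p$ and not incident to $V_0$'', which is (i).
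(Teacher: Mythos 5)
Your construction does not actually establish property (i), and you say as much: the claim that, conditionally on $E(G)\setminus E_p$ (and $V_0$, $|E_p|$), the set $E_p$ is uniform over \emph{all} edge sets of that size avoiding $E(G)\setminus E_p$ and $V_0$ is exactly the content of the theorem, and in your scheme it is left as "the crux". Concretely, because you reveal the full degree sequence in round one and then mark reserved copies vertex by vertex, the conditional law of $E_p$ given $G'=E(G)\setminus E_p$ is that of a uniform pairing of the reserved copies, i.e.\ a configuration-model graph whose (reserved) degree sequence is essentially pinned down by the first round; this is \emph{not} the uniform distribution over edge sets of a given size, whose induced degree sequence has a different (binomial/Poisson mixture) law. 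Making these two laws coincide would require a carefully designed Poissonized reservation rule together with a proof that the reserved degrees are exchangeable given $G'$ — none of which is carried out. Relatedly, your $V_0$ is defined circularly ("whichever further vertices are needed to make statement (i) hold exactly"), so the bound $|V_0|\le 3cnp$ cannot be verified from the proposal. This is a genuine gap, not a routine detail.

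For comparison, the paper avoids the configuration model entirely for this step and gets (i) by a one-line exchangeability argument in the graph model: mark each edge of $G$ independently with probability $p$ (call the marked set $E_0$), reveal $E_1=E(G)\setminus E_0$, let $V_0$ be the vertices with at most $k$ edges in $E_1$, and set $E_p=E_0$ minus its edges incident to $V_0$. Then for fixed $E(G)\setminus E_p$, $V_0$ and $|E_p|$, every admissible placement of $E_p$ completes to a graph with $cn$ edges and minimum degree at least $k+1$ and is marked with the same probability, so all placements are equally likely; the size bounds follow from Chernoff/McDiarmid. Your part (ii) is also more delicate than you allow: the paper itself cannot simply rerun Theorem \ref{thm:kGreedy} on $E(G)\setminus E_p$, because the conditioning near $V_0$ distorts the distribution — it modifies \kG to match the $V_0$ vertices at a trickle of prescribed steps and only then argues the remaining graph is uniform given the lists. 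In your setup the analogous issue is that the entire degree sequence is revealed, so the uniformity framework of Lemmas \ref{lem:equiv}--\ref{lem:uniformity} (which tracks vertices whose degree is only known to be at least $\ell+1$) would need to be reproved for fully revealed degrees; plausible, but not the "typicality only" transfer you assert.
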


\begin{proof}To generate $V_0$ and $E_p$ we implement the following algorithm:

\begin{algorithm}[H]
\caption{{Generate $V_0,E_p$}}
\begin{algorithmic}[1]
\\ Let $E_0$ be a random subset of $E(G)$ where each edge belongs to $E_0$ independently with probability $p$.
\\ Reveal $E_1:=E(G)\setminus E_0$ and set $V_0 :=\{v\in V: v \text{ is incident to at most $k$ edges in }E_1\}.$
\\ Let $E^R_0$ be the set of edges in $E_0$ that are incident to $V_0$. Reveal $E_0^R$ and set $E_p:=E_0\setminus E_0^R$.
\end{algorithmic}
\end{algorithm}
\textbf{Proof of (i):} Clearly, given $V_0$, $E(G)\setminus E_p$ and $|E_p|$, $E_p$ is uniformly distributed among all the edge-sets of size $|E_p|$ that are disjoint from $E(G)\setminus E_p$ and not incident to $V_0$.
Thus it remains to show that 
 w.s.h.p.\@ $|V_0|\leq 3cnp$ and $|E_p|\geq \frac{(2cn-(k+1)n)p}{4}$.

Now,
$$\Pr(|V_0| \geq 3cnp) \leq \Pr(|E_p| \leq 3cnp/2)=\Pr(Bin(cn,p)\geq 1.5cnp)=o(n^{-9}).$$
At the line above the last equality follows from the Chernoff bound and  $p\geq n^{-0.49}$. 

To show that $|E_p|\geq \frac{(2cn-(k+1)n)p}{4}$, let $E_L$ be the subset of edges in $E(G)$ that are not incident to vertices of degree $k+1$. Then, $E_L \geq (2cn-(k+1)n)/2$.

An edge $e$ in $E_L$ belongs to $E_0\setminus E_p$ if one of its endpoints has degree $d\geq k+2$ and it is incident to at least $i\geq d-k$ edges in $E_0$ ($e$ is one of those $i$ edges). 
Thus,   
\begin{align*}
    \mathbb{E}(|E_L\cap (E_0 \setminus E_p|))&\leq n \sum_{d\geq k+2}\sum_{i=d-k}^d i \frac{e^{-\lambda}\lambda^d}{d!f_{k+1}(\lambda)} \binom{d}{i}p^{i} \leq n \sum_{d\geq k+2} \frac{e^{-\lambda}\lambda^d}{d!f_{k+1}(\lambda)} (k+2)^2p^{2} \\&\leq (k+2)^2np^{2}.
\end{align*}
In addition by changing the outcome of whether an edge belongs to $E_0$ we may decrease or increase the size of $E_L\cap (E_0 \setminus E_p)$ by at most $2\Delta(G)\leq \log n$. Hence, with $n(c,k)= 2cn-(k+1)n$, McDiarmid's inequality implies, 
$$\Pr\bigg(|E_L\cap (E_0 \setminus E_p)| \geq n(c,k)p/12 \bigg) \leq \exp\set{-\frac{\Theta(np)^2
)}{4cn\log^2n}} =o(n^{-9}). $$
 Thus, 
\begin{align*}
    \Pr\big(|E_p| \leq n(c,k)p/4\big) 
    &\leq \Pr\big(|E_L \cap E_0| \leq n(c,k)p/3 \big)+\Pr\big(E_L\cap (E_0 \setminus E_p) \geq n(c,k)p/12\big )   
    \\    & \leq \Pr\big(Bin\big(n(c,k)/2,p\big) \leq n(c,k)p/3\big)+o(n^{-9}) =o(n^{-9}).
\end{align*}

\textbf{Proof of (ii):}
We only give a brief sketch of the proof of this part as it is almost identical to the proof of Theorem \ref{thm:kGreedy}. 

Let $n_0=|V_0| \leq 3cnp=o(n).$ 
To construct the $k$-matching we implement \kG  
with the twist that every $k^{-1}(n/n_0)^{0.5}$
other steps we choose $v_t$ out of the vertices in $V_0$ that have positive degree in $G_t$ if such vertices exist.
Thus it suffices to check that in this version of the algorithm \eqref{eq:changeZ_t} is satisfied for $t$ not being a multiple of $k^{-1}(n/n_0)^{0.5}$.

For $t\leq k^{-1} n_0 (n/n_0)^{0.5}$ the set $Y_1^t$ has size at most $2t$ and therefore $p_1^t=o(1).$ \eqref{eq:change_zt} implies \eqref{eq:changeZ_t} is satisfied.

At time $t=k^{-1} n_0 (n/n_0)^{0.5}$ every vertex in $V_0$ is incident to 0 edges in $G_t$ and given $m_t$, $Y_\ell^t$, $0 \leq \ell \leq k$ and $Y_{\ell,j}^t, 0\leq \ell \leq k, 1\leq j\leq k+1$ the degree sequence of $G_t$ is uniform over the proper set of degree sequences. Thus 
we can use the exact same techniques to prove that \eqref{eq:changeZ_t} is satisfied for $k^{-1} n_0 (n/n_0)^{0.5} <t<\tau'$. 
\end{proof}

\section{Finding a $k$-factor of $G^{\d \geq k+1}_{n,m}$}\label{section:matchings}

We start with the large $k$-matching $M$ and the sets $V_0$, $E_p$ promised by Theorem \ref{thm:kGreedy2}. We then use the edges in $E_p$  to augment $M$ into a perfect $k$-matching.

To augment a given $k$-matching $M'$ we use augmenting paths. Given a $k$-matching $M'$, we say that the path $P=v_0,e_1,v_1,.....,e_s,v_s$ is  $M'$-alternating if its odd indexed edges do not belong to $M'$ whereas its even indexed edges do (here we slightly abuse the traditional definition of alternating paths where 
$E(P)\cap M$ consists either of the odd or of the even indexed edges of $P$). We say that $P$ is $M'$-augmenting if it is an $M'$-alternating path of odd length. Hence if $P$ is $M'$-augmenting then $M'\triangle E(P)$ is a $k$-matching of size $|M'|+1$.

Given a graph $G$, a vertex $r\in V(G)$, a $k$ matching $M$ of $G$ and an integer $\ell \in \mathbb{N}$ we let $T=T(r,M,\ell)$ be a random ``alternating tree'' w.r.t $M$, rooted at $r$, of height $2\ell$, that is generated as follows:

We let $L_0=\{r\}$, $E_1$ be a set of 2 random edges incident to $L_0$ that do not lie in $M$ and $L_1$ the endpoints of the edges in $E_1$ that are distinct from $r$. For $2\leq i\leq 2\ell$, given the sets of vertices (levels) $L_0=\{v\},L_1,L_2,...L_{i-1}$, if $i$ is even then we let $L_i$ be the set of vertices in $V(G) \setminus (  {\bigcup}_{j<i} L_{j})$ that are incident to $L_{i-1}$ via an edge in $M$. We let $E_i$ be the corresponding  set of edges in $M$. If $i\geq 2$ is odd then every vertex $v\in L_{i-1}$ chooses a random edge $e_v$ incident to it but not in $M$.
We let $E_i=\{e_v: e_v \not\subset \cup_{j=1}^{i-1} L_{j}\}$
and $L_{i}= \{w: \{w,v\}\in E_i \text{ for some } v\in L_{i-1}\}$.
That is $E_i$ consists of the random edges incident to $L_{i-1}$ whose other endpoint does not appear in a smaller  indexed level and $L_i$ is the set of those other endpoints. 

We also let, $L(T)=\bigcup_{0\leq i\leq \ell} L_i$, $L_{even}(T)=\bigcup_{0\leq i\leq 2\ell} L_{2i}$ and $L_{odd}(T)=L(T) \setminus L_{even}(T)$. Similarly we let $E(T)=\bigcup_{0\leq i\leq \ell} E_i$, $E_{even}(T)=\bigcup_{0\leq i\leq 2\ell} E_{2i}$ and $E_{odd}(T)=E \setminus E_{even}$.

Finally we remove $|E(T)|-(|V(T)|-1)$ edges from $E(T)$ (and their copies in $E_{even}(T)$ and $E_{odd}(T)$) with the restriction that the resultant subgraph is a tree.

Thus, at the alternating tree $T=T(r,M,\ell)$ every vertex of $V(T)$  can be reached from the root $r$ via an alternating path of length at most $2\ell$.

Given $T=T(r,M,\ell)$  we let $ext(T,T(r,M,\ell+\ell'))$ be the tree rooted at $r$ of height $2(\ell+\ell')$ whose first $2\ell$ levels are identical with the corresponding ones of $T$ and for $1\leq i\leq 2\ell'$ its $(2\ell+i)th$ level  is generated according to the same rules as the $(2\ell+i)th$ level of $T(r,M,\ell+\ell')$. Thus $ext(T,T(r,M,\ell+\ell'))$ consists of a possible extension of $T$ to an alternating tree w.r.t. $M$, rooted at $r$ and of height $2\ell+2\ell'$.   

Let $M$ be a $k$-matching and  $v,w\in V(G)$ be two vertices that are incident to at most $k-1$ edges in $M$. Let $\ell \in \mathbb{N}$, $T_v=T(v,M,\ell)$ and  $T_u=T(u,M,\ell)$. If there exists $w\in L_{odd}(T_v)$ that is incident to at most $(k-1)$ edges in $M$ then the unique $v$ to $w$ path $P_{v,w}$ defined by $T_v$ is an augmenting path. Otherwise, in the subgraph of $G$  spanned by $V(T_v)$  every vertex in an odd level of $T_v$ has degree at least $k+1\geq 3$ and every vertex at an even level has degree at least 2. In our case, this will imply that the size of $ L_{odd}(T_v) $ is either linear in $n$ or exponential in $\ell$. Finally for $v'\in L_{odd}(T_v)$ and $u'\in L_{odd}(T_u)$ if the paths $P_{v-v'}$, $P_{u'-u}$ are vertex disjoint then the path $P_{v-v'},\{v',u'\},P_{u'-u}$ is $M$-augmenting.

\begin{lemma}\label{thm:matchings_b}
Let $k\geq 2$, $G$ be a graph on $n$ vertices, $V_0\subset V(G)$, $E_p\subset \binom{V(G)}{2}$ such that,
\begin{itemize}
    \item[(i)] $G$ has minimum degree $k+1$,
    \item[(ii)] $|V_0|=O(n^{0.85})$,
    \item[(iii)] $E_p$ is an edge set of size $\Theta(n^{0.85})$ that is uniformly distributed over all subsets of $\binom{V}{2}$ of size $|E_p|$ that contain no edge incident to $V_0$ and are disjoint from $E(G)$,
    \item[(iv)] $G$ spans a $k$-matching of size at least $kn/2- n^{0.401}$,
    \item[(v)] every set $S\subset V(G)$ of size $|S|\leq k^{400}$ spans at most $|S|$ edges in $G$,
    \item[(vi)]  every set $S\subset V(G)$ of size $k^{200}\leq |S| \leq 6n^{0.999}$ spans at most $(1+10^{-10})|S|-1$ edges in $G$.
\end{itemize}
Then $(V(G),E(G)\cup E_p)\in \cP_k$ with probability $1-o(n^{-8})$. In addition, with the same probability, we can find the corresponding  $k$-factor in $O(n)$ time. 
\end{lemma}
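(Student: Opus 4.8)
We begin with the $k$-matching $M$ of size at least $kn/2-n^{0.401}$ furnished by hypothesis (iv) and enlarge it, one edge at a time, by repeatedly flipping along augmenting paths, each of which will use at most one edge of $E_p$ as a ``connector''. Since the total deficiency $kn-2|M|$ is at most $2n^{0.401}$, and as long as $|M|<\lfloor kn/2\rfloor$ there are at least two vertices $v,u$ with $d_M(v),d_M(u)\le k-1$, and since each augmenting path we use joins two such vertices and raises $|M|$ by one, at most $n^{0.401}$ augmentations are needed to reach $|M|=\lfloor kn/2\rfloor$. The work is to produce an augmenting path at each step and to keep the running time $O(n)$.

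\textbf{The augmenting step.}
Fix the current matching $M$ and two vertices $v,u$ with $d_M(v),d_M(u)\le k-1$, and grow the alternating tree $T_v=T(v,M,\ell)$ level by level. \emph{Case A}: if some vertex $w$ with $d_M(w)\le k-1$ appears at an odd level of $T_v$, then the unique $v$--$w$ path in $T_v$ has odd length, is $M$-alternating, and flipping along it raises $|M|$ by one; do so and stop. \emph{Case B}: otherwise every vertex at an odd level has $d_M=k$, so the $k$ matching edges at each odd-level vertex make the tree fan out by a factor essentially $k\ge 2$ at every odd$\to$even step. Here the ``almost forest'' bounds (v)--(vi) enter: since $V(T_v)$ has size between $k^{400}$ and $6n^{0.999}$ throughout the relevant range, $G[V(T_v)]$ spans at most $(1+10^{-10})|V(T_v)|$ edges, hence carries at most $10^{-10}|V(T_v)|$ edges beyond a spanning tree, so the number of ``back edges'' lost at each level is negligible against the factor-$k$ growth. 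Thus after $\ell=O(\log n)$ levels $|V(T_v)|=\Theta(n^{0.59})$, a positive fraction of it lying in $L_{\mathrm{even}}(T_v)$; and, again by (v)--(vi) applied to $V_0\cap V(T_v)$, a positive fraction of $L_{\mathrm{even}}(T_v)$ lies outside $V_0$ (those density bounds force $V_0$ to be too sparse to absorb the whole fan-out tree). Now grow $T_u$ the same way but forbidding all of $V(T_v)$; applying (vi) to $V(T_u)\cup V(T_v)$ shows at most $O(10^{-10}n^{0.59})$ vertices are lost to this restriction, so $T_u$ again reaches size $\Theta(n^{0.59})$ with $\Theta(n^{0.59})$ vertices in $L_{\mathrm{even}}(T_u)\setminus V_0$.

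\textbf{Supplying the connector.}
Finally reveal the part of $E_p$ not yet used: at most $n^{0.401}$ of its $\Theta(n^{0.85})$ edges have been consumed by earlier augmentations, and conditioned on everything revealed so far (which determines $G$, the current $M$, and both trees, and which does not touch the rest of $E_p$) it remains uniformly distributed over sets of non-edges of $G$ avoiding $V_0$ of the appropriate size. Hence the expected number of its edges joining $L_{\mathrm{even}}(T_v)\setminus V_0$ to $L_{\mathrm{even}}(T_u)\setminus V_0$ is $\Omega\!\bigl(n^{0.85}\cdot (n^{0.59})^2/n^2\bigr)=n^{\Omega(1)}$, so with probability $1-\exp\{-n^{\Omega(1)}\}$ there is such an edge $\{v',u'\}$; since $T_v$ and $T_u$ are vertex disjoint, $P_{v\to v'},\,\{v',u'\},\,P_{u'\to u}$ is an $M$-augmenting path (its turning edges at $v'$ and $u'$ are matching edges, so flipping respects all degree constraints there). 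Flipping along it raises $|M|$ by one. Taking a union bound over the at most $n^{0.401}$ augmentations, with probability $1-o(n^{-9})$ we reach $|M|=\lfloor kn/2\rfloor$. If $kn$ is even this $M$ is a $k$-factor of $(V(G),E(G)\cup E_p)$. If $kn$ is odd then $|M|=(kn-1)/2$, exactly one vertex $z$ has $d_M(z)=k-1$, and to see $(V(G),E(G)\cup E_p)\in\cP_k$ we must check $k$-factor criticality: for each $w\ne z$ we relocate the deficiency from $z$ to $w$ by the same tree/connector argument, now pairing the usual tree at $z$ with a tree rooted at $w$ whose first edges are the $k$ matching edges at $w$, producing a $w$--$z$ path whose symmetric difference with $M$ keeps $|M|$ but makes $w$ the unique deficient vertex. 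Each $w$ succeeds with probability $1-o(n^{-9})$, so a union bound over the $n$ vertices gives $\cP_k$-membership with probability $1-o(n^{-8})$.

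\textbf{Running time, and the main obstacle.}
Each of the $\le n^{0.401}$ augmentations grows two trees of size $O(n^{0.59})$; each tree vertex costs $O(k)=O(1)$ (inspect its $\le k$ matching edges and draw one random non-matching edge by rejection sampling, all with $O(1)$-time membership tests into the visited/forbidden sets stored as bit-arrays), scanning the $E_p$-neighbourhoods of the trees costs $O(n^{0.59})$ (with probability $1-o(n^{-9})$ the total $E_p$-degree of any $n^{0.59}$ vertices is $O(n^{0.59})$, as $E_p$ is sparse), and flipping along a path of length $O(\log n)$ is negligible; hence the total is $O(n^{0.401}\cdot n^{0.59})=O(n^{0.991})=O(n)$, and the odd-$kn$ case adds one further such round. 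I expect the main obstacle to be the quantitative bookkeeping of the augmenting step: forcing the factor-$k$ fan-out through every pair of levels using only the weak density conditions (v)--(vi), certifying that enough even-level vertices of each tree escape $V_0$ (and that the second tree still expands after deleting the first), and pinning the tree size in the narrow window where it is simultaneously small enough ($O(n^{0.599})$) for the $O(n)$ time budget and large enough ($\omega(n^{0.575})$) that the random reservoir $E_p$ almost certainly contains a connector — together with the parallel argument for the $k$-factor-critical case.
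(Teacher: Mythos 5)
Your overall architecture (grow two alternating trees from deficient vertices, use conditions (v)--(vi) to force geometric fan-out, then close the augmenting path with a reserved edge of $E_p$, repeat at most $n^{0.401}$ times) is the paper's architecture, but there is a genuine gap at the step you yourself flag as the main obstacle: you claim that ``by (v)--(vi) applied to $V_0\cap V(T_v)$, a positive fraction of $L_{even}(T_v)$ lies outside $V_0$.'' Conditions (v)--(vi) only bound edge counts of subsets of $V(G)$; they say nothing about which vertices lie in $V_0$. Since your tree has size $\Theta(n^{0.59})$ while $|V_0|=O(n^{0.85})$ (and the set of vertices already ``used up'' by earlier rounds grows to order $n^{0.99}$ in the paper's bookkeeping), nothing prevents $V(T_v)\subseteq V_0$, in which case $E_p$ has no edge touching the tree and no connector exists. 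This is exactly why the paper does not grow a single tree of size $n^{0.59}$: {\sc GenerateTree} grows a tree of depth $\ell_1$, extends it, re-roots at a \emph{random} leaf $w'$ at depth $2\ell_1$, and compares against a virtual extension of total size $\approx n^{0.999}$, which \emph{exceeds} $|V_t|\le n^{0.99}$; only then can one conclude that a constant$^{-}$ fraction ($\approx n^{-2\cdot 10^{-9}}$, whence the $n^{0.001}$ retries) of the candidate subtrees have $n^{0.5889}$ even-level vertices outside the forbidden set. Relatedly, your claim that the unused part of $E_p$ ``remains uniformly distributed over sets of non-edges avoiding $V_0$'' after earlier rounds is false as stated: scanning for a connector in round $s$ reveals presence/absence of $E_p$-pairs inside the two trees of round $s$, so uniformity only survives if you enlarge the forbidden vertex set to include those trees (the paper's $V_t$, $Y_t$, $F_t$ bookkeeping and the bound $|Y_t\cap R_t|\le n^{0.25}$) and require future connectors to avoid it --- which feeds back into the first problem, since the forbidden set then far exceeds the tree size.

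The odd-$kn$ case is also not handled by your ``relocate the deficiency from $z$ to $w$'' step. Membership in $\cP_k$ requires a $k$-factor of $(V,E(G)\cup E_p)\setminus\{w\}$ for every $w$, i.e.\ a $k$-matching in which $w$ has degree $0$ and all other vertices have degree exactly $k$; moving a single unit of deficiency from $z$ to $w$ leaves $w$ with degree $k-1$, not $0$, and undoing all $k$ matching edges at $w$ creates $k$ new deficiencies among its neighbours that your sketch does not repair. The paper instead deletes all matching edges within distance $4$ of the excluded vertex, locally re-saturates its first three neighbourhoods using conditions (i) and (v), and then reruns the augmentation machinery with every alternating tree pruned at that vertex (Claim \ref{claim:ineq} with the special vertex $z$), so that the final matching saturates everything else while leaving the excluded vertex untouched. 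Your expansion heuristics for the trees and the final probability/time accounting are in the right spirit, but without the re-rooting/retry mechanism, the growing forbidden-set bookkeeping for $E_p$, and a correct treatment of the odd case, the proof does not go through.
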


\begin{proof}
First assume that $kn$ is even.
Let $M_0$ be a maximum $k$-matching of $G$. Condition (iv) of our Lemma implies that $|M|\geq kn/2-n^{0.401}.$ Set $Y_0=E_p$ and let $F_0$ be the set of edges in $\binom{V(G)}{2}$ that are disjoint from $V_0$ and do not lie in $E(G)$.

For $t=1,2,...,n^{0.401}$ with probability $1-o(n^{-10})$ we will construct, in $O(n^{0.59})$ time, a $k$-matching $M_t$ of size at least $\min\{  kn/2, |M_0|+t\}$, a vertex set $V_t \supseteq V_0$ of size at most $|V_0|+2tn^{0.58} \leq n^{0.99}$ and a subset $Y_t \subset Y_0$ of size at least $|Y_0|-tn^{0.25}\geq |Y_0|/2=\Theta(n^{0.85})$.
$Y_t$ will be distributed as a random subset of $F_t$ of size $|Y_t|$  where $F_t$ will be defined to be a superset of the set of edges that are disjoint from $V_t$ and not in $(Y_0\setminus Y_t) \cup E(G)$. Thus with probability $1-o(n^{-9})$, $M_{n^{0.401}}$ is a  $k$-factor of $(V(G),E(G)\cup E_p)$ that is constructed in $O(n)$ time. 

Let  $\ell_1=\log_{k}n^{0.41}$ and $\ell_2=\log_{k}n^{0.589}$. Let $0\leq t\leq n^{0.401}-1$ and suppose that we are given sets $Y_{t},V_{t},F_t$ and $M=M_{t}$ satisfying the above. If $M_t$ is a $k$-factor let $(M_{t+1},Y_{t+1},V_{t+1},F_{t+1})=(M_{t},Y_{t},V_{t},F_t)$. Otherwise let $G_t=G\cup (Y\setminus Y_t)$. W.l.o.g. we may assume that there exist distinct vertices $v,u\in V$ that are incident to at most $k-1$ edges in $M$. If no such vertices exists, then either $M$ is a $k$-factor of $G$ or there exists a vertex $v$ and an edge $\{v,u\}$ such that $v$ is incident to at most $k-2$ edges in $M$ and $\{v,u\}\notin M$. In such a case we can add $\{v,u\}$ to $M$ and remove from $M$ a different edge that is incident to $u$. 

Every vertex $w$ chooses a random set of edges $E_w'$ in $G\setminus M_t$ of size $(k+1)-d_{M_t}(w)$. For every alternating tree generated in this iteration, when needed, each vertex $w\in V$ will pick its random edges from $E_w=E_w'\cup M_t(w)$, where $M_t(w)$ is the set of edges in $M_t$ incident to $w$. We let $S_v$ be the set of vertices reachable by $u$ via a path $u=v_0,e_0,v_1,e_1,..v_s$ of length $s\leq  0.1\log_{k+1} n$ where $e_i\in E_{v_i}$ for $i=0,1,...,s-1$. Thus $|S_v| \leq \sum_{i=0}^{0.1\log_{k+1}n} (k+1)^i \leq 2n^{0.1}.$

We now implement the algorithm {\sc GenerateTree}, described below, twice. The first time with $w=v$, $S=S_v$, $S'=\emptyset$, $X=V_t$, $M=M_t$ and the second time with $w=u$, $S=S_u$, $S'=S_u'$, $X=V_t$, $M=M_v$ where the sets $S_u,S_u'$ and $M_v$ depend on the first implementation. If both of the implementations are successful then in both times the  algorithm outputs a vertex $w'$, a matching $M_{w'}$ and an alternating tree $T_{w'}$ w.r.t. $M_{w'}$ rooted at $w'$. The tree $T_{w'}$  either spans an $M_{w'}$-augmenting path or (i) its first $0.1\log_{k+1} n$ levels do not intersect $S$ (ii) there will be at least $n^{0.5889}$ many vertices in its even levels that do not belong to $V_t$ and (iii) the set of edges $M_{w'}\triangle M_w$ defines an alternating path w.r.t. $M_w$ from $w$ to $w'$ that does not intersect $S'$. We set $S_u=S_u'=\emptyset$ if $T_{v'}$ spans an $M_{v}$-augmenting path. Otherwise $S_u$ and $S_u'$  consists of the vertices in the first $0.1\log_{k} n$ and $0.02\log_{k}n$ respectively levels of $T_{v'}$.  If the algorithm does not output a tree spanning an augmenting path we will then search for an edge from  $L_{even}(T_{v'})$ to $L_{even}(T_{u'})$ in $Y_t$. Such an edge will create an augmenting path from $v'$ to $u'$.

\begin{algorithm}[H]
\caption{{\sc GenerateTree}}
\begin{algorithmic}[1]
\\Input: $w$, $M$,$X$, $S$, $S'$.
\\Set $i=1$
\While{  $i\leq n^{0.001}$ }
\\\hspace{5mm} Generate $T_w=T(w,M,\ell_1)$. Let $\bar{T_w}=ext(T_w,T(w,M,\ell_1+0.05\log_{k} n))$.
\\\hspace{5mm} Let $w'$ be a random leaf of $T_w$ and $\bar{T_{w'}}$ be the subtree of $\bar{T_w}$ rooted at $w'$.
\\\hspace{5mm} Let $P$ be the unique $w$ to $w'$ path in $T_w$.
\\\hspace{5mm} Set $M_w:=M\triangle E(P)$ 
and generate $T_{w'}=ext(\bar{T_{w'}},T(w',M_w,\ell_2))$.
\If{ for some triple $(x,M_x,T_x) \in \{(w,M,\bar{T_w}), (w',M_w,T_{w'})\}$ there exists $y\in L_{odd}(T_{x})$ that is incident to less than $k$ edges in $M_x$}{ return SUCCESS, $x$, $T_x$, $M_x$.}
\ElsIf{ (i) $|L_{even}(T_{w'})\setminus X| \geq n^{0.5889} $ and 
(ii) $\cup_{i=0}^{0.1 \log_{k}  n-1}  L(T_{w'}) \cap S= \emptyset$
\\\hspace{11mm} (iii)  $P_{w,w'}$ does not intersect $S'$}{ return SUCCESS, $v',T_{v'}$ and $M_v$.}
\EndIf 
\\\hspace{5mm} i=i+1. 
\EndWhile
\\ Return FAILURE.
\end{algorithmic}
\end{algorithm}

The proof of Lemma \ref{thm:matchings_b} (displayed after the proofs of the Claims) will follow from Claims \ref{success} and \ref{desiredsets} and the observation that {\sc{GenerateTree}} runs in $O(n^{0.59})$ time. Indeed in each of  the $O(n^{0.001})$ iterations each of the alternating trees generated consists of $O(k^{\log_k\ell_2})=O(n^{0.589})$ many vertices yielding an augmentation procedure that constructs a $k$-factor in $O(n^{0.001}\times n^{0.589} \times n^{0.401})=O(n)$ time.

Claim \ref{claim:ineq} describes the rate of growth of an alternating tree that does not span an augmenting path. We will use the corresponding rates in the proofs of Claims \ref{success} and \ref{desiredsets}. We will also make use of the special vertex $z$ appearing in the statement of Claim \ref{claim:ineq} later when proving Lemma \ref{thm:matchings_b}  for $kn$ being odd.

\begin{claim}\label{claim:ineq}
Let $M$ be a $k$-matching of $G$ and $T'=T(r,M,h)$ be an alternating tree rooted at some vertex $r$ satisfying $d_M(r)<k$, of height $2h\leq 2\log_k n^{0.999}$.  Let $z$ be any vertex of $V(G)\setminus (N(r) \cup \{r\})$ satisfying $d_M(z)=0$ and $T$ be the subtree of $T'$ that we get by pruning $T'$ at $z$. If $T$ does not span an $M$-augmenting path then, for $0\leq i \leq 390$
\begin{equation}\label{eq:smallodd}
 k^{i-1}  \leq |L_{2i+1}(T)| \leq 2k^{i}
\end{equation} 
and 
\begin{equation}\label{eq:smalleven}
 k^{i} \leq |L_{2i+2}(T)| \leq 2k^{i+1}. 
\end{equation}
In addition for $300\leq i \leq h-1$,
\begin{equation}\label{eq:even}
    (1-10^{-9}) |L_{2i}(T)|  \leq |L_{2i+1}(T)| \leq  |L_{2i}(T)|
\end{equation} 
and 
\begin{equation}\label{eq:odd}
 (1-10^{-9})k |L_{2i+1}(T)| \leq |L_{2i+2}(T)| \leq k |L_{2i+1}(T)|.
\end{equation}
\end{claim}

\begin{claim}\label{success}
If $|X|\leq n^{0.99}$, $|S|\leq 4n^{0.1}$, $|S'|\leq 4n^{0.01}$  and $S'$ does not intersect the first $0.1\log_{k} n$ levels of $T_w$ then,
$$\Pr(\text{ {\sc GenerateTree}  returns FAILURE})=o(n^{-10}).$$
\end{claim}
\begin{claim}\label{desiredsets}
Conditioned on {\sc GenerateTree} returning SUCCESS at both implementations, with probability $1-o(n^{-10})$ we can generate the desired sets $M_{t+1}$, $V_{t+1}$ and $Y_{t+1}$ in $O(n^{0.59})$ time.
\end{claim}

\noindent{\textbf{Proof of Claim \ref{claim:ineq}}}
$d_M(z)=0$, thus $T=T'\setminus\{z\}$ as every vertex in an even level is reached by an edge in $M$ and the children of a vertex $v\in L_{odd}(T')$ are connected to $v$ via an edge in $M$. Thereafter, the condition that $M$ does not span an augmenting path implies that every vertex in $L_{odd}(T')$ is incident to $k$ edges in $M$. Henceforward we write $L_{odd}(T)$ for $L_{odd}(T')\setminus\{z\}$, $L_{even}(T)$ for $L_{odd}(T')\setminus\{z\}$ and $L_{i}(T)$ for $L_{i}(T')\setminus\{z\}$.

The upper bounds  in all of the above inequalities follows from the construction of $T$. With the exception of the root which has 2 children  every vertex in $L_{2i}(T)$ has at most 1 child in $L_{2i+1}(T)$ hence $|L_{2i+1}(T)|\leq |L_{2i}(T)|$ for $i\geq 1$. Similarly every vertex in an odd level has at most $k$ children (defined by the edges in $M$)  hence $|L_{2i+2}(T)|\leq k|L_{2i+1}(T)|$ for $i\geq 0$.

Let $T_{390}$ be the subtree of $T$  spanned by its first $2\cdot 390 +1$ levels. Observe that since $d_M(z)=0$ and $r$ is not a neighbor of $z$ we have that $z$ does not appear in the first 3 levels of $T'$. Thereafter Condition (v) of Lemma \ref{thm:matchings_b} implies that exactly one of the following occurs (i) there are $2k$ vertices in $L_2(T)$, at most 2 neighbors of $z$ appear in $V(T_{390})$ and $V(T_{390})$ does not span a cycle in $G$,
(ii) there are $2k$ vertices in $L_2(T)$, at most 1 neighbor of $z$ appears in $V(T_{390})$ and $V(T_{390})$ spans at most 1 cycle in $G$,
(iii) there are at least $2k-2$ vertices in $L_2(T)$, at most 1 neighbors of $z$ appear in $V(T_{390})$ and $V(T_{390})$ spans exactly 1 cycle in $G$ which is spanned by the first 3 levels of $T_{390}$.

In all 3 cases, there exists a vertex $r'\in L_2(T)$ such that the subtree of $T_{390}$ rooted at $r'$, say $T_{r'}$, does not contain a neighbor of $z$, does not span a cycle and all the vertices in the odd (even respectively) levels  of $T_{r'}$ have $k$ (1 resp.) descendants . \eqref{eq:smallodd} and  \eqref{eq:smalleven} follows as both, the $(2i)th$ level and $(2i+1)th$ level of $T_{r'}$ consist of exactly $k^i$ vertices.

We proceed to prove that \eqref{eq:even} holds for $i\geq 390$ by induction. Assume that for $390\leq j \leq i-1$ \eqref{eq:odd} and \eqref{eq:even} hold. In addition, equations \eqref{eq:smallodd}, \eqref{eq:smalleven} hold for $i\leq 390$. 

Assume for the sake of contradiction that $(1-10^{-9}) |L_{2i}(T)| > |L_{2i+1}(T)|$ and let $S$ be the subgraph of $G$ spanned by  $ \bigcup_{j=0}^{2i+1} L_j(T) \cup\{z\}$. Every vertex in  $L_{2i}(T)$ is reached via an edge in $M$ and is incident to an edge in $E(G)\setminus M$ whose other endpoint belongs to $S$. Thus $S$ spans $N= \sum_{j=0}^{2i+1} |L_j(T)|+1$ vertices and at least $M=\sum_{j=1}^{2i+1}|L_j(T)|+(|L_{2i}(T)|-|L_{2i+1}(T))|)\geq \sum_{j=1}^{2i+1} |L_j(T)|+10^{-9}|L_{2i}(T)|+1.$ edges.

Thus,
\begin{align*}
    \frac{M}{N}&\geq 1+\frac{ 10^{-9}|L_{2i}(T)|}{4+\sum_{j=1}^{299}(|L_{2j}|+|L_{2j+1}|)+\sum_{j=300}^{i} (|L_{2j}|+|L_{2j+1}|)} 
    \\&\geq 1+\frac{ 10^{-9}|L_{2i}(T)|}{4+10k^{300}+\sum_{j=300}^{i-1} \frac{2}{[k(1-10^{-9})^2]^{i-j}}|L_{2i}|+2|L_{2i}|}
\\&\geq 1+\frac{10^{-9}|L_{2i}(T)|}{4+4k^{300}+4|L_{2i}|+2|L_{2i}|}
> 1+10^{-10}.
\end{align*}
At the second inequality we used \eqref{eq:smallodd}, \eqref{eq:smalleven} to upper bound 
$\sum_{j=1}^{299}(|L_{2j}|+|L_{2j+1}|)$ by $4k^{300}$
and the induction hypothesis to upper bound 
$(|L_{2j}|+|L_{2j+1}|)$ by $2/{[k(1-10^{-9})^2]^{i-j}}|L_{2i}|$ for $j<i$. In addition at the third inequality we used that \eqref{eq:smallodd}, \eqref{eq:smalleven} and the induction hypothesis imply that $|L_{2i}|\geq |L_{2\cdot 389}| \geq k^{388}.$

Thus $S$ is a subgraph of $G$ on $|V(S)|\leq 3+\sum_{i=1}^{h-1}4k^{i}+2k^h\leq 6k^h\leq 6n^{0.999}$ many vertices and with $|E(S)|>(1+10^{-10})|V(S)|$ edges contradicting  Condition (vi) of Lemma \ref{thm:matchings_b}. Hence \eqref{eq:even} holds. The proof of \eqref{eq:odd} follows in a similar manner as every vertex in an odd level in $T$ is incident to $k$ edges in $M$. \qed

{\textbf{Proof of Claim \ref{success}:}}
Assume that $w,M,X,S,S'$ satisfy the requirements of Claim \ref{success}.
Let $\mathcal{S}$ be the event that {\sc GenerateTree} returns success at line 8. Consider extending $\bar{T_w}$ into $Ext\bar{T}_w=ext(\bar{T_w},T(w,M,\ell_1+\ell_2))$ and let $T_{w'}'$ the subtree of $Ext\bar{T}_w$ rooted at $w'$. Then, $T_{w'}'$ is a subtree of $T_{w'}$.

For a leaf $q \in L_{2\ell_1}(T_w)$  let $T_{w,q}$ be the subtree of $Ext\bar{T}_w$ rooted at $q$. Call such a leaf \emph{good} if (I) there are more than $n^{0.5889}$ vertices in $L_{even}(T_{w,q})$ that do not lie in $X$, (II)
$(\cup_{i=0}^{0.1 \log_{k}  n-1}  L({T_{w,q}}) )\cap S= \emptyset$ and (III) $q$ is not a descendant of a vertex in $S'\cap V(T_w)$ w.r.t  the tree $T_w$.

If $\mathcal{S}$ does not occurs then Claim \ref{claim:ineq} implies that $T_w$ has at least $ [(1-10^{-9})k]^{\ell_1-1}$ and at most $2k^{\ell_1}$ leaves. For each such leaf $q$ the tree $T_{w,q}$ spans at most $4k^{\ell_2}$ vertices in its even levels while all  together span at least $[(1-10^{-9})k]^{\ell_1+\ell_2-1}$ vertices in the even levels of $Ext\bar{T}_w$ (as $\ell_1+\ell_2 \leq 0.999\log_k n$).
In addition at  most $n^{0.5889}2k^{\ell_1}$ of those vertices are spanned by trees whose root violates Condition (I). Finally  $|X| \leq n^{0.99}$. Hence in the event  $\mathcal{S}^c$ the number of leaves of $T_w$ that satisfy the Condition (I) is at least 
\begin{align*}
 \frac{ [(1-10^{-9})k]^{\ell_1+\ell_2-1}-n^{0.99}- n^{0.5889}2k^{\ell_1}  }{4k^{\ell_2}} 
&\geq  \frac{k^{\ell_1 + (\ell_1+\ell_2)\log_k(1-10^{-9})-1}}{4} -n^{0.401}-k^{0.5889\log_k n+ \ell_1-\ell_2}
\\& \geq 
\frac{k^{\ell_1-(\ell_1+\ell_2)\cdot \frac{10^{-9}}{\log k}  }}{5}-k^{ \ell_1-0.0001\log_k n} \geq \frac{k^{\ell_1-2\cdot 10^{-9}\log_k n }}{6}.
\end{align*}
Thereafter, each vertex in $S$ may belongs to the first $0.1\log_kn$ levels of a single subtree of $\bar{T_w}$ rooted at a vertex at the $(2\ell_1)th$ level of $\bar{T_w}$, hence to the first $0.1\log_kn$ levels of a single tree $T_{w,q}$. Thus at most $|S|\leq 4n^{0.1}$ leaves of $T_w$ do not satisfy Condition (II).

Finally $P_{w,w'}$ intersects  $S'$ only if $w'$ is a descendant of a vertex of $S'$ in $T_w$. $S'$ consists of at most $4n^{0.01}$ vertices none of which belongs to the first $0.1\log_{k} n$ levels of $T_w$. Hence, as $S'$ does not intersect the first $0.1\log n$ levels of $T_{w}$, $S'$ may has up to $4n^{0.01} \times2 k^{\ell_1-0.05\log_{k} n} = 2k^{\ell_1-0.04\log_{k} n}$ descendants in the $(2\ell)^{th}$ level of $T_w$.

Thus,  
\begin{align*}
    \Pr&( \mathcal{S} \text{ or }\{\text{Conditions (i),(ii),(iii) are satisfied}\})= \Pr( w' \text{ is good}) 
   \\& \geq  \frac{\frac{k^{\ell_1-2\cdot 10^{-9}\log_k n }}{6} -4n^{0.1}- k^{\ell_1-0.04\log_{k} n}}{2k^{\ell_1}} 
\\&\geq  \frac{k^{\ell_1-2\cdot 10^{-9}\log_k n }}{20k^{\ell_1}}
=  \frac{k^{-2\cdot 10^{-9}\log_k n }}{20}= \frac{n^{-2\cdot 10^{-9}}}{20}.
\end{align*}
Hence,
$$\Pr( GenerateTree \text{ returns FAILURE}) \leq  \bigg(1-\frac{n^{-2\cdot 10^{-9}}}{20} \bigg)^{n^{0.001}} \leq \exp  \bigg\{-n^{0.0001}\bigg\}=o(n^{-10}).$$
\qed

{\textbf{Proof of Claim \ref{desiredsets}:}}
If at any of the two iterations {\sc{GenerateTree}} returns a tree $T$ and a matching $M$ such that $T$ spans an $M$-augmenting path $P$ then we can let 
$M_{t+1}=M\triangle E(P)$ and $(Y_{t+1},V_{t+1},F_{t+1})=(Y_{t},V_{t},F_{t})$.

Assume that the above does not occurs and {\sc{GenerateTree}} returns SUCCESS at both implementations. Recall that at both implementations we set $X=V_t$. In addition at the  second implementation $S$ ($S'$ respectively) consist of the vertices in the first $0.1\log n$ ($0.02\log n$ resp.) levels of $T_{v'}$. Thus, since in both implementations the conditions in lines 9, 10 are satisfied, the corresponding outputs $(v',T_{v'},M_v)$ and $ (u',T_{u'},M_u)$ satisfy the following: 
\begin{itemize}
    \item [(i)] $|M_u|=|M_v|=|M|$ (by construction),
    \item[(ii)] the first $0.1\log_{k}n$ levels of $T_{v'},T_{u'}$ do not intersect (as at the second implementation, $S_u$ consists of the vertices that lie in the first $0.1\log_{k}n$ levels of $T_{v'})$),
    \item[(iii)] the $M_v$-alternating path $P_{u,u'}$ does not intersect the first $0.02\log_{k}n$ levels of $T_{v'}$ (as at the second implementation, $S_u'$ consists of the vertices that lie in the first $0.02\log_{k}n$ levels of $T_{v'}$) and 
    \item[(iv)] for $w\in\{v',u'\}$, $T_{w}$ spans in its even levels a set $B_w\subset V$ of size $n^{0.5889}$ that does not intersect $V_t$.
\end{itemize}

For $x \in \{u',v'\}$ and $z\in L_{even}(T_{x})$ denote by $P_{x,z}$ the $x-z$ alternating path defined by $T_{x}$.

Now let $D_1,D_2$ be the set of descendants of $A_1=V(P_{u,u'})$ and $A_2=\cup_{i=0}^{0.02\log_k n-1}L(T_{u'})$ in $T_{v'}$ respectively and let $B_{v'}'=B_{v'}\setminus (D_1 \cup D_2)$. Then, since $|A_1|\leq \log n$, $|A_2|\leq 4n^{0.01}$ and  $A_1$ ($A_2$ respectively) does not intersect the first $0.02\log_k n$ ($0.1\log_k n$ resp) levels of $T_{v'}$ we have that,
$$|B_{v'}'| \geq n^{0.5889}-\log_k n\times 4k^{\ell_2-0.01\log_k n}-4n^{0.01}\times 4k^{\ell_2-0.05\log_k n} \geq n^{0.588}.$$
Similarly for $v_1\in B_{v'}'$, as $v_1$ is not a descendant of $A_2$ and therefore $P_{v',v_1}$ does not intersect $A_2$ (i.e. the first $0.02\log_k n$ levels of $T_{u'}$), there exist a set $B_{u',v_1} \subset B_{u'}$ of at least $ n^{0.588}$ vertices in $B_{u'}$ that does not contain a descendant of $V(P_{v',v_1})$ in $T_{u'}$. 

Let $R_t=\{\{v_1,u_1\}:v_1\in B_{v'}',u_1\in B_{u',v_1}\}$. For $(v_1,u_1) \in R_t$ observe that $P_{v',v_1}$ is vertex disjoint from both  $P_{u,u'}$ and $P_{u',u_1}$. Thus the path $P_{v',v_1},\{v_1,u_1\},P_{u',u_1}$ is $M_u$-augmenting. Therefore, if $(Y_t\cup E(G)) \cap R_t \neq \emptyset$ and  $|Y_t\cap R_t|\leq n^{0.25}$  then we can set $V_{t+1}=V_t \cup B_{v'} \cup B_{u'}$, $Y_{t+1}=Y_t\setminus R_t$, $M_{t+1}=M_u \triangle E(P_1,\{v_1,u_1\}P_2)$ and $F_{t+1}=F_t \setminus R_t$ for some edge $\{v_1,u_1\}\in R_t$.
$|R_t|/\binom{n}{2} \geq n^{2\cdot 0.588-2}=n^{-0.824}$ and recall that $|Y_0| \geq |Y_t| \geq |Y_0|/2 = c_0n^{0.85}$ for some $c_0>0$.  Thus
\begin{align*}
    \Pr(\text{Claim} \ref{desiredsets})&\geq 
    \Pr(Bin(c_0n^{0.85}, n^{-0.824}) \in [1,n^{0.25}])
=1- o(n^{-10}).
\end{align*}
Where the last equality follows from the Chernoff Bound.
Finally, condition on $\Delta(G)\leq \log n$, the above process takes $O(n^{0.59})$ time as one has to check the edges incident to $B_{v'}$ and identify one that gives rise to an augmenting path. 
\qed

At the first application of {\sc GenerateTree} $S=S_v$ consists of at most $4n^{0.1}$ vertices and $S'=\emptyset$ hence Claim \ref{success} applies. If at its first application {\sc GenerateTree} exits at line 8 then at its second application $S=S'=\emptyset$. Else if at its first application {\sc GenerateTree} exits at line 10, then $\cup_{i=1}^{0.1\log_k n-1}L(T_{v'})\cap S_v=\emptyset$ and at the second application of {\sc GenerateTree} we set 
$S=S_u=\cup_{i=0}^{0.1\log n}L(T_{v'})$ and $S'=S_u'=\cup_{i=0}^{0.02\log n}L(T_{v'})$. Thus $|S|\leq 4n^{0.1}$, $|S'|\leq 4n^{0.01}$ and $S'$ does not intersect the first $0.1\log_kn$ levels of $T_u$. The later holds as those vertices belong to $S_v$ and $\cup_{i=1}^{0.1\log_k n-1}L(T_{v'})\cap S_v=\emptyset$. In both cases Claim \ref{success} applies. Thus the probability that either of the two implementations returns FAILURE is $o(n^{-10})$. This, together with Claim \ref{desiredsets} finish the proof of Lemma \ref{thm:matchings_b} in the case that $kn$ is even.

In the case that $kn$ is odd let $z\in V$ and $M$ be a maximum matching of $G$. Starting from $M$ we will construct a $k$-matching $M'$ of size at least $|M|-\log^4n$ such that every vertex $v$ at distance at most 3 from $z$ is incident to $k$ edges in $M'$ and $d_{M'}(z)=0$. To do so starting from $M$, remove all the edges incident to a vertex at distance at most 4 from $z$. Then, if there exists a cycle $C$ spanned by the vertices at distance at most 4 from $z$ add $E(C)$ to $M$. Condition (v) of Lemma \ref{thm:matchings_b} implies that at most one such cycle exists. The resulting edge set $M''$ is a $k$-matching satisfying $d_{M''}(z)=0$ and $|M''|\geq |M|-(\Delta(G)+1)^4 \geq |M|-\log^4 n$. Continuing from $M''$ for $i=1,2,3$, for $v$ in the $i^{th}$ neighborhood of $z$, while $d_{M''}(v)<k$ do the following: Pick an edge $e\in E(G)\setminus M''$  incident to $v$ whose other endpoint, say $v'$, is at distance $i+1$ from $z$ and does not lie on $C$. Conditions (i) and (v) of Lemma \ref{thm:matchings_b}  imply that either $v$ has at least $k$ neighbors at distance $i+1$ from $z$ or it lies on $C$ and has at least $k-1$ neighbors at distance $i+1$ from $z$. In both cases   such a vertex $v'$ exists. Finally set $M''= M''\cup e$.

Let $M'$ be the final edge set $M''$ resulted from the above procedure. Observe that $M'$ is a $k$-matching. Indeed,  every vertex $v$ at distance $i\leq 4$ was first matched to at most $m(u)\leq 2 \leq k$ vertices at distance $i-1$ from $z$ and  if $i\leq 3$ it was then further matched to $k-m(u)$ additional vertices at distance $i+1$ from $z$. Furthermore $d_{M'}(u)=d_{M}(u)\leq k$ for every vertex $u$ at distance at least 5 from $z$.

Now we can augment $M'$ to a $k$-factor of $G-\{z\}$ exactly as in the case where $kn$ is even. For that observe that for every vertex $v\in G$ we have that $d_{M_i}(v)\leq d_{M_{i+1}}(v)$ for $i\geq 0$. Thus at every iteration the initial vertices $u,v$ will be at distance at least $2$ from $z$ and Claim \ref{claim:ineq} applies.
\end{proof}
\subsection{Proofs of Theorems \ref{thm:matchings} and \ref{cor:matchings}}
{\em Proof of Theorem \ref{thm:matchings}:} Follows from Lemma \ref{thm:matchings_b} where conditions (ii)-(vi) are given by Theorem \ref{thm:kGreedy2} (with $p=n^{-0.15}$) and Lemma \ref{lem:density}.
\qed

For the proof of Theorem\ref{cor:matchings} in place of Lemma \ref{lem:density} we use Lemma \ref{lem:density2} stated below. Recall we denote by $F_0,F_1,...,F_{\binom{n}{2}}$ the random graph process, $V(F_0)=[n]$.
\begin{lemma}\label{lem:density2}
W.h.p, for $ 0\leq i \leq n\log n$  
\begin{itemize}
    \item[(i)] every set $S \subset [n]$ of size at most $k^{400}$ spans at most $|S|$ edges, 
    \item[(ii)] every set $S \subset [n]$ of size $k^{200}\leq |S| \leq 4n^{0.999}$ spans at most $(1+10^{-10})|S|$ edges,
    \item[(iii)] if $i\geq k^{99}n$ then they do not exists sets $S,T \subset [n]$,  such that $n^{0.99}\leq |S| \leq n/k^3$, $|T|\leq k|S|$, $S\cup T$ is connected and $T=N(S)$,
    \item[(iv)] if $i\geq k^{99}n$ then  the size of the $(k+1)$-core of $F_{i/10}$ is larger than $\big(1-e^{-\frac{i}{40n}}\big)n$.
\end{itemize}
\begin{proof}
If a graph $G$ has a $(k+1)$-core of size at most  
$(1-e^{-\frac{i}{40n}})n$ then there exists $S\subset [n]$ of size $|S|=e^{-i/40n}n$ such that there are less than $k$ edges from every $v\in S$ to $V(G)\setminus S$. Thus, with $r=r(s)=(1+10^{-10})s$ and $p=p(n)=\frac{n\log n}{\binom{n}{2}}$,
\begin{align*}
    \Pr&(\exists 0 \leq i\leq n\log n \text{ s.t. }F_i
    \text{ does not satisfy conditions (i)-(iv)} )
   \\& \leq \Pr( F_{n\log n}\text{ does not satisfy conditions (i),(ii)} )
   \\&+O(n^{0.5}) \sum_{i=k^{99}n}^{n\log n} \sum_{s=n^{0.99}}^{n/k^3}\sum_{t=0}^{ks} \binom{n}{s+t}\binom{s+t}{s} (s+t)^{s+t-2}\bfrac{2i}{n^2}^{s+t-1} \bigg(1-\frac{2i}{n^2}\bigg)^{(n-s-t)s}
    \\&+ 4\sum_{i=k^{99}n}^{n\log n} \binom{n}{e^{-i/40n}n}\bigg( \sum_{j=0}^k \binom{n-e^{-i/40n}n}{j}\bfrac{2(i/10)}{n^2}^j\bigg(1-\frac{2(i/10)}{n^2}\bigg)^{n-e^{-i/40n}n-j} \bigg)^{e^{- i/40n}n}
     \\ &\leq  4\sum_{s=4}^{k^{400}}n^s \binom {s^2}{s+1}p^{s+1}
    + 4\sum_{s=k^{200}}^{4n^{0.999}}\binom{n}{s}\binom{s^2}{r} p^r 
\\&  +  n^2\sum_{i=k^{99} n}^{n\log n} \sum_{s=n^{0.99}}^{n/k^3}(2en)^{(k+1)s}\bfrac{2i}{n^2}^{(k+1)s-1} e^{-is/n}  + 4\sum_{i=k^{99}n}^{n\log n} \bigg( e^{i/40n+1}  (i/n)^ke^{-0.15 i/n} \bigg)^{e^{-i/40n}n}
   \\&\leq  o(1)+ 4\sum_{s=k^{200}}^{4n^{0.999}} \bfrac{en}{s}^s\bfrac{es^2}{r}^rp^r =o(1)+ 4\sum_{s=k^{200}}^{4n^{0.999}} (enp)^s(eps)^{r-s}=o(1).    
    \end{align*}
\end{proof}

\end{lemma}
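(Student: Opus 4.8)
The plan is to reduce every item to a first-moment estimate on the binomial random graph, then take a union bound over $i$. Throughout I use the standard comparison (the mechanism behind \eqref{eq:models}): for any fixed family $\mathcal E$ of edge-configurations, $\Pr(F_j\in\mathcal E)=\Pr(G_{n,\pi_j}\in\mathcal E\mid e(G_{n,\pi_j})=j)\le O(n^{1/2})\,\Pr(G_{n,\pi_j}\in\mathcal E)$ with $\pi_j=j/\binom n2$, since conditioning on the typical edge count costs only a factor $\Theta(j^{-1/2})$; recall also $k=O(1)$, so $k^{400}=O(1)$. Items (i) and (ii) are monotone increasing in $i$, so it suffices to verify them for the densest graph in range, $F_{n\log n}$, equivalently for $G_{n,p}$ with $p=\Theta(\log n/n)$. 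For (i): $\sum_{s=4}^{k^{400}}\binom ns\binom{\binom s2}{s+1}p^{s+1}$ is a sum of $O(1)$ terms, each $\le n^s\cdot O(1)\cdot(\Theta(\log n)/n)^{s+1}=(\log n)^{O(1)}/n=o(n^{-1/2})$. For (ii): with $r=\lceil(1+10^{-10})s\rceil$, using $\binom ns\le(en/s)^s$, $\binom{\binom s2}{r}\le(es^2/2r)^r$, $r\ge(1+10^{-10})s$, the summand is $(en/s)^s(\tfrac{es^2p}{2r})^r\le\big(\tfrac{e^2np}{2}(\tfrac{esp}{2})^{10^{-10}}\big)^{s}$, whose base is $n^{-\Omega(1)}$ because $sp\le 4n^{0.999}\cdot\Theta(\log n/n)=o(1)$; hence $\sum_{s\ge k^{200}}(\cdot)^{s}=o(n^{-1/2})$. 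Multiplying by the $O(n^{1/2})$ comparison factor leaves both probabilities $o(1)$.

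Items (iii) and (iv) are not monotone, so here I union bound over all $i\in[k^{99}n,n\log n]$. For (iii), fix $i$ and set $u=|S|+|T|\le(k+1)|S|$: if such $S,T$ exist, then $S\cup T$ is connected, hence contains a labelled spanning tree (at most $u^{u-2}$ of them, all $u-1$ of whose edges are present), while $T=N(S)$ forces all $\ge(n-u)|S|$ pairs between $S$ and $V\setminus(S\cup T)$ to be non-edges; here $n-u\ge n/2$ since $|S|\le n/k^3$ and $|T|\le k|S|$. Treating edges as independent with probability $\pi_i\sim 2i/n^2$, summing over the $\le u$ admissible values $|S|\ge u/(k+1)$ and using $\binom ns\le(en/s)^s$, $\binom us\le 2^u$, $1-\pi_i\le e^{-\pi_i}$, together with $\binom nu u^{u-2}\pi_i^{\,u-1}\le\tfrac{n^2}{2i}(2en\pi_i)^{u}u^{-2}$ and $n\pi_i=\Theta(i/n)$, the contribution of this $i$ is at most
\[\frac{n^2}{2i}\sum_{u\ge n^{0.99}}\Big(C\,\tfrac in\,e^{-(i/n)/(k+1)}\Big)^{u}\]
for an absolute constant $C$. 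For $x:=i/n\ge k^{99}$ the base $Cxe^{-x/(k+1)}$ is decreasing in $x$ and at $x=k^{99}$ it is at most $Ck^{99}e^{-k^{98}/2}<1$ for every fixed $k\ge2$; since $u\ge n^{0.99}$ the sum is $e^{-\Omega(n^{0.99})}$, and summing the $O(n\log n)$ values of $i$ (times $O(n^{1/2})$) is $o(1)$. This item is the one needing real care: one must see that the factor $e^{-(i/n)|S|}$ forced by ``no edge leaves $S\cup T$'' beats $\binom nu u^{u-2}$, which works precisely because the $u-1$ tree edges each carry a $\pi_i$ (killing the $n^u$), and because the cap $|S|\le n/k^3$ and the hypothesis $i\ge k^{99}n$ make the resulting exponent of order $-k^{98}u$.

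For (iv) I use the deterministic fact that if the $(k+1)$-core of a graph spans at most $(1-\alpha)n$ vertices, then the first $\lceil\alpha n\rceil$ vertices deleted in the degeneracy-peeling process form a set $S$ of size $\lceil\alpha n\rceil$ in which every vertex has at most $k$ neighbours outside $S$ — a vertex deleted at step $j$ has degree $\le k$ in the graph remaining then, whose vertex set contains $V\setminus S$. With $\alpha_i:=e^{-i/40n}$, $\pi:=\pi_{i/10}\sim i/5n^2$, and $\Pr(\mathrm{Bin}(N,\pi)\le k)\le\tfrac{k+1}{k!}(N\pi)^ke^{-(N-k)\pi}\le(i/n)^ke^{-c\,i/n}$ for a constant $c>1/40$ (since the mean $N\pi=\Theta(i/n)\gg k$), the probability that some set of size $\lfloor\alpha_in\rfloor$ is bad is at most
\[\binom{n}{\lfloor\alpha_in\rfloor}\big[(i/n)^ke^{-c\,i/n}\big]^{\lfloor\alpha_in\rfloor}\le\big[O(1)\,(i/n)^ke^{-c'\,i/n}\big]^{\lfloor\alpha_in\rfloor},\qquad c':=c-\tfrac1{40}>0,\]
where the $\tfrac1{40}$ absorbs $\binom{n}{\lfloor\alpha_in\rfloor}\le\big(O(1)/\alpha_i\big)^{\lfloor\alpha_in\rfloor}=\big(O(1)e^{i/40n}\big)^{\lfloor\alpha_in\rfloor}$. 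For $i/n\ge k^{99}$ the bracket is $<1$, and since $\lfloor\alpha_in\rfloor\ge n^{39/40}/2$ each term is $e^{-\Omega(n^{39/40})}$; summing over $i$ (times $O(n^{1/2})$) gives $o(1)$.
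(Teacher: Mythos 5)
Your proposal is correct and follows essentially the same route as the paper: the $O(\sqrt{n})$ conditioning comparison to the binomial model, monotonicity reducing (i)--(ii) to a first-moment bound in $F_{n\log n}$, a spanning-tree union bound with the forced non-edges between $S$ and $V\setminus(S\cup T)$ for (iii), and the peeling reduction to a set whose vertices have at most $k$ neighbours outside, followed by a binomial tail bound, for (iv). Your write-up just spells out a few steps (the degeneracy-peeling reduction, why the bases of the geometric sums are below $1$) that the paper leaves implicit in its single chain of displays.
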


{\em Proof of Theorem\ref{cor:matchings}:} 
We will sketch the proof of Theorem \ref{cor:matchings} for the case $kn$ is even. The case $kn$ is odd can be dealt similarly to the corresponding case in Lemma \ref{thm:matchings_b}. We now consider 3 distinct intervals whose union is $\{0,1,...,n(n-1)/2\}$. In all 3 cases we conditioned on the events described by (i)-(iv) of Lemma \ref{lem:density2}.

\noindent \textbf{Case 1:} $0\leq i \leq k^{100}n$. The fact that $F_{i}^{(k+1)}$ is either empty or has order linear in $n$  and it is distributed as $G_{n,m}^{\delta \geq k+1}$ together with  Lemma \ref{thm:matchings_b} where we use Theorem \ref{thm:kGreedy2} (with $p=n^{-0.15}$) and Lemma \ref{lem:density2} to verify the underlying conditions imply that w.h.p. $F_i^{(k+1)}\in \cP_{k}$ for $0\leq i \leq k^{100}n$.

\noindent \textbf{Case 2:} $k^{100} n\leq i \leq n\log n$. We first reveal the edges of $F_{i/10}$ and then the edges of $F_i$ that are incident to vertices of $F_{i}^{(k+1)}$ that have degree less than $k+1$ in $F_{i/10}$. We let $V_1=V(F_{i/10}^{(k+1)})$ and $R$ be the set of edge in $F_i^{(k+1)}$ that have not been revealed yet. (iv) of Lemma \ref{lem:density2} implies that $|V_1| \geq (1-e^{i/40n})n$. Thereafter since the edges in $E(F_{i})\setminus E(F_{i/10})$ are uniformly distributed among all set of edges spanned by $V(F_i)$, of size $|E(F_{i})\setminus E(F_{i/10})|$, that do not intersect $E(F_{i/10})$ one can show that $R$ consists of at least $0.5i$ edges with probability $1-o(n^{-2})$.

Let $R'\subseteq R$, $v,w\in [n]$ and $M$ be a maximum $k$-matching of $F_{i}^{(k+1)}\setminus R'$ such that $d_M(v), d_M(w)\leq k-1$. In the case that $d_M(v)\leq k-2$ we may let $w=v$. Let $Q_v=Q(v,M,F_{i}^{(k+1)}\setminus R')$ be the set of vertices that are reachable from $v$ via an $M$-alternating path of even length whose first edge does not belong to $M$. Observe that if $z\in N(Q_v)$ then $z$ is incident to  some vertex in $Q_v$ via an edge in $M$ and hence $|N(Q_v)| \leq k|Q_v|$. Indeed, assume otherwise. Then there exist  $z\in N(Q_v)$ and $u\in Q_v$ such that $\{u,z\} \in E(G)\setminus M$ and $z$ does not have an $M$-neighbor in $Q_v$. The edge $\{u,z\}$ gives rise to an $M$-alternating path $P$ from $v$ to $u$ to $z$.
Now if $d_M(z)=0$ then $P$ is $M$-augmenting contradicting the maximality of $M$. Otherwise there exists some edge $\{z,z'\}\in M$. In such a case the path $P,\{z,z'\},z'$ witnesses the candidacy of $z'$ in $Q_v$ which gives a contradiction. 

By considering the $M$-alternating tree rooted at $v$, as done in the proof of Lemma \ref{thm:matchings_b}, we have that $|Q_v|\geq n^{0.99}$. Furthermore, as $|N(Q_v)| \leq k|Q_v|$, Lemma \ref{lem:density2} implies that $|Q_v\cap V_1|\geq n/k^3-e^{-i/40n}n \geq n/k^4.$ 

For every vertex $u\in Q_v$ the underlying $M$-alternating path $P_{v,u}$ from $v$ to $u$ defines a maximum $k$-matching $M_u=M\triangle E(P_{v,u})$ of $F_{i}^{(k+1)}\setminus R'$ such that $d_{M_u}(u),d_{M_u}(w)\leq k-1$. Now, by using $M_u$ in place of $M$ and $w$ in place of $v$ we can define in a similar manner the set $Q_{v,w}$ (in place of $Q_w$). This gives a set $\cT$ of at least $n^2/2k^8$ triples $(u',v',M_{u',v'})$ where $u'\in Q_v \cap V_1$, $v'\in Q_{v,w} \cap V_1$, $M_{u',v'}$ is a maximum $k$-matching of $F_i^{(k+1)}\setminus R'$ and $d_{M_{v',u'}}(v'),d_{M_{v',u'}}(u')\leq k-1$. Thus if $\{u',v'\} \in R'$ then we can augment $M_{u',v'}$ using the edge $\{u',v'\}$.

We then reveal the edges in $R'$ one by one until we reveal an edge $\{u',v'\}$ for which there exists a matching $M'$ with $(u',v',M')\in \cT$. We then use $\{u',v'\}$ to augment $M'$. We repeat this process until we either construct a $k$-factor or reveal all the edges in $R$.

The probability that the above process does not produce a perfect $k$-matching is bounded above by
\begin{align*}
 \Pr( Bin(0.5i,n^{2}/2k^{8}) \leq kn/2 )=o(n^{-2}).
\end{align*}
\noindent \textbf{Case 3:} $n\log n < i \leq \binom{n}{2}$.  Case 2 implies that w.h.p. $F_{n\log n}^{(k+1)} \in \cP_k$. Thus, since $F_i\subset F_{i+1}$ for $i\geq 0$ we have,
\begin{align*}
\Pr( \exists i\geq n\log n: F_i^{(k+1)}\notin \cP_{k}) \leq  \Pr( F_{n\log n} \notin \cP_{k})+\Pr( F_{n\log n} \neq F_{n\log n}^{(k+1)})=o(1).    
\end{align*}
\qed

\begin{appendix}
\section{Proof of Lemma \ref{lem:lambdaconvex}}\label{appendix:lambdaconvex}
Lemma \ref{lem:lambdaconvex} states,
\begin{lemma}
For $r \geq 2$ and $\lambda\geq 0$ we have that $g(r,\lambda)\geq 0$.
\end{lemma}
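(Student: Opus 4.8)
Recall that $g(r,\lambda)=a_r\lambda_{r+1}-(a_r+1)\lambda_r+\lambda_{r-1}$, where $\lambda_r=\lambda_r(\lambda)$ denotes the mean of a Poisson$(\lambda)$ truncated to be at least $r$. The plan is to rewrite $g(r,\lambda)$ as a convex combination plus a correction and exploit convexity of the map $r\mapsto\lambda_r$. Write $a_r\lambda_{r+1}+\lambda_{r-1}=(a_r+1)\cdot\frac{a_r\lambda_{r+1}+\lambda_{r-1}}{a_r+1}$, so that
\[
g(r,\lambda)=(a_r+1)\left(\frac{a_r}{a_r+1}\lambda_{r+1}+\frac{1}{a_r+1}\lambda_{r-1}-\lambda_r\right).
\]
Since $a_r+1>0$, it suffices to show $\frac{a_r}{a_r+1}\lambda_{r+1}+\frac{1}{a_r+1}\lambda_{r-1}\ge\lambda_r$. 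Now I would use Lemma \ref{lem:lambdabound}, which gives $\lambda_{r+1}\ge\lambda_r+1$ when combined with $\lambda_r\le\lambda+r$ and $\lambda_{r+1}\ge\max\{\lambda,r+1\}$, but more directly: Lemma \ref{lem:lambdamonotone} gives $\lambda_{r-1}\le\lambda_r\le\lambda_{r+1}$, and from the proof of Lemma \ref{lem:lambdabound} one extracts the sharper relation $\lambda_{r+1}-\lambda_r = \frac{\lambda^{r+1}/r!}{\sum_{i\ge r+1}\lambda^i/i!} - \frac{\lambda^r/(r-1)!}{\sum_{i\ge r}\lambda^i/i!}+1-1$; the cleanest route is to prove directly that $r\mapsto\lambda_r(\lambda)$ is \emph{concave} in $r$ (i.e. $\lambda_{r+1}-\lambda_r\le\lambda_r-\lambda_{r-1}$), which together with $a_r\ge 1$ would give the claim.

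**The key inequality.** Concavity of $r\mapsto\lambda_r$ is, however, the wrong direction for the naive combination; instead I expect the correct statement to be that the second difference $\lambda_{r+1}-2\lambda_r+\lambda_{r-1}$ is controlled so that $a_r(\lambda_{r+1}-\lambda_r)\ge\lambda_r-\lambda_{r-1}$. Using $a_r\ge 1.55$ (Lemmas \ref{lem:boundsalphasmall}, \ref{lem:boundsalphalarge}) it is enough to show $1.55(\lambda_{r+1}-\lambda_r)\ge\lambda_r-\lambda_{r-1}$, i.e. that consecutive increments of $\lambda_r$ do not grow too fast. Set $\delta_r:=\lambda_r-\lambda_{r-1}\ge 0$. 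From the identity in Lemma \ref{lem:lambdamonotone}, $\lambda_{r-1}=\lambda+\frac{\lambda^{r-1}/(r-2)!}{\sum_{i\ge r-1}\lambda^i/i!}$, one computes $\delta_r$ explicitly and reduces the desired bound to a rational inequality in $\lambda$ and $r$ involving partial sums of the exponential series. I would then verify $\delta_{r+1}\le 1.55\,\delta_r$ (in fact $\delta_{r+1}\le\delta_r$ would suffice if we could prove it, but it is likely false; a ratio bound is what is needed) by a direct manipulation: expressing $\delta_r$ as $1/\bigl(\sum_{i\ge r-1}\lambda^{i-r+1}(r-2)!/i!\bigr) - $ similar, or via the probabilistic interpretation $\delta_r = r\cdot\Pr(\mathrm{Po}_{\ge r}(\lambda)=r)/\E[\cdots]$-type formula, then bounding the resulting ratio uniformly in $\lambda\ge 0$ and $r\ge 2$.

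**The main obstacle.** The hard part will be establishing the uniform-in-$\lambda$ bound on the ratio $\delta_{r+1}/\delta_r$ (equivalently, the precise form of near-concavity of $\lambda_r$). Small $\lambda$ and large $\lambda$ are the two regimes: as $\lambda\to\infty$, $\delta_r\to 0$ and one needs the rate of decay to be uniform; as $\lambda\to 0$, $\lambda_r\to r$ and $\delta_r\to 1$, so the ratio tends to $1\le 1.55$, which is comfortable. The genuinely delicate case is moderate $\lambda$ comparable to $r$, where $\mathrm{Po}_{\ge r}(\lambda)$ concentrates near its threshold; here I expect to need a careful estimate of $\frac{\lambda^r/(r-1)!}{\sum_{i\ge r}\lambda^i/i!}$ against its shifted counterpart, possibly splitting the sum $\sum_{i\ge r}\lambda^i/i!$ and using log-concavity of the Poisson weights $\lambda^i/i!$. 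Once the ratio bound $\delta_{r+1}\le 1.55\,\delta_r$ (for $r\ge 2$, $\lambda\ge 0$) is in hand, the proof concludes in one line:
\[
g(r,\lambda)=a_r\delta_{r+1}-\delta_r\ge 1.55\,\delta_{r+1}-\delta_r\ge 0,
\]
using $a_r\ge 1.55$ from Lemmas \ref{lem:boundsalphasmall} and \ref{lem:boundsalphalarge} and $\delta_r=\lambda_r-\lambda_{r-1}\ge 0$ from Lemma \ref{lem:lambdamonotone}. I would also handle the boundary $\lambda=0$ separately (there $\lambda_r=r$, so $g(r,0)=a_r\cdot 1-1=a_r-1\ge 0$) as a sanity check.
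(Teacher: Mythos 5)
Your reduction is the same first step the paper takes: writing $g(r,\lambda)=a_r(\lambda_{r+1}-\lambda_r)-(\lambda_r-\lambda_{r-1})$ and using $a_r\geq 1.55$ (which, note, follows directly from the recursion \eqref{alphas}; citing Lemmas \ref{lem:boundsalphasmall} and \ref{lem:boundsalphalarge} here would be circular, since their proofs invoke the present lemma), so that everything hinges on the single inequality $1.55\lambda_{r+1}-2.55\lambda_r+\lambda_{r-1}\geq 0$, i.e.\ $\delta_r\leq 1.55\,\delta_{r+1}$ in your notation (the paper proves the slightly stronger $1.5\lambda_{r+1}-2.5\lambda_r+\lambda_{r-1}\geq 0$). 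But you never prove this inequality: you explicitly defer it (``the hard part will be establishing the uniform-in-$\lambda$ bound\dots'', ``I expect to need a careful estimate\dots''), and that inequality is the entire content of the paper's proof. There, after disposing of $\lambda\leq 0.2$ via $\ell\leq\lambda_\ell\leq\ell+\lambda$, the three truncated means are expressed through $q_i=\Pr(Po_{\geq r-1}(\lambda)=i)$, the key structural fact $q_{r-1}\leq q_r/(1-q_{r-1})$ is established, the problem is reduced to the lower bound \eqref{006} involving the tail sum $\sum_{i\geq r+2}(i-r-1)q_i$, and then a case analysis on the size of $q_{r-1}$ (which forces $\lambda/r$ to be bounded below) handles $r\geq 9$, while $2\leq r\leq 8$ with $0.2\leq\lambda\leq 5r$ is checked computationally. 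None of this, nor any substitute for it, appears in your proposal, so what you have is a correct restatement of the problem rather than a proof; exactly the ``moderate $\lambda$ comparable to $r$'' regime you flag as delicate is where the real work lies.

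There is also an internal inconsistency you should fix: in two places you state the needed ratio bound as $\delta_{r+1}\leq 1.55\,\delta_r$, which is the wrong direction and does not yield your concluding line $1.55\,\delta_{r+1}-\delta_r\geq 0$; what you need (and state correctly once) is $\delta_r\leq 1.55\,\delta_{r+1}$, i.e.\ that the increments $\delta_r$ do not \emph{shrink} by more than a factor $1.55$ as $r$ increases. Your boundary check $g(r,0)=a_r-1\geq 0$ is correct, and the large-$\lambda$ heuristic ($\delta_{r+1}/\delta_r\approx\lambda/r$) is consistent with the truth, but these observations do not close the gap.
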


The inequality $a_r\geq 1.55 \geq 1.5$ and
Lemma \ref{lem:lambdamonotone} imply that it suffices to show that for $r\geq 2$, 
\begin{equation}\label{001}
    1.5\lambda_{r+1}-2.5\lambda_{r}+\lambda_{r-1}\geq 0.
\end{equation}
$\ell\leq \lambda_\ell=\lambda_\ell(\lambda) \leq \ell+\lambda$ for $\ell \in \{r-1,r,r+1\}$. Hence \eqref{001} holds for $\lambda\leq 0.2$

Let $r\geq 2$ and $\lambda \geq 0.2$. 
For $i \geq r-1$ let $q_i=\mathbb{P}(Po_{\geq r-1}(\lambda)=i)$.  Then,
$$\lambda_{r-1}= \sum_{i \geq r-1} i q_i= (r-1)q_{r-1}+rq_r+\sum_{i \geq r+1} i q_i,$$
$$\lambda_{r}=  \frac{rq_{r}}{1-q_{r-1}}+\frac{1}{1-q_{r-1}} \sum_{i \geq r+1} i q_i$$ and
$$\lambda_{r+1}=  \frac{1}{1-q_{r-1}-q_r} \sum_{i \geq r+1} i q_i.$$
Thus, 
\begin{align}
&1.5\lambda_{r+1}-2.5\lambda_{r}+\lambda_{r-1}
\\&= (r-1)q_{r-1}+rq_r- \frac{2.5rq_{r}}{1-q_{r-1}}
+\bigg(1-\frac{2.5}{1-q_{r-1}} +\frac{1.5}{1-q_{r-1}-q_r}\bigg)\sum_{i \geq r+1} i q_i 
\nonumber\\&
=\frac{(r-1)q_{r-1}(1-q_{r-1})+rq_r(1-q_{r-1})- 2.5rq_{r}}{1-q_{r-1}} 
\nonumber\\&+\bigg(\frac{(1-q_{r-1})(1-q_{r-1}-q_r)-2.5(1-q_{r-1}-q_r) +1.5(1-q_{r-1})}{(1-q_{r-1})(1-q_{r-1}-q_r)}\bigg) \sum_{i \geq r+1} i q_i 
\nonumber\\& 
=\frac{1}{1-q_{r-1}} \bigg[(r-1)q_{r-1}-1.5rq_r - (r-1)q_{r-1}^2 -rq_{r-1}q_r 
\nonumber\\&+\bigg(\frac{1.5q_r-q_{r-1}+q_{r-1}^2 +q_rq_{r-1}}{1-q_{r-1}-q_r}\bigg) \sum_{i \geq r+1} i q_i \bigg] 
\label{002}
\end{align}
Observe that,  
\begin{equation}\label{003}
\frac{\sum_{i \geq r+1} i q_i}{1-q_{r-1}-q_r} =r+1+ \frac{\sum_{i \geq r+2} (i-r-1) q_i}{1-q_{r-1}-q_k}.
\end{equation}
In addition,
\begin{align*}
    q_{r-1}=\mathbb{P}(Po_{\geq r-1}(\lambda)=r-1)\frac{\lambda^{r-1}/(r-1)!}{\sum_{i\geq r-1 } \lambda^i/(i-1)!}=
    \frac{1}{1+\sum_{i\geq 1}\frac{ \lambda^{i}}{\prod_{j=1}^i(r+j-1)}}
\end{align*}
and 
\begin{align*}
    \frac{q_r}{1-q_{r-1}}=\mathbb{P}(Po_{\geq r}(\lambda)=r)=     \frac{1}{1+\sum_{i\geq 1}\frac{ \lambda^{i}}{\prod_{j=1}^i(r+j)}}.
\end{align*}
Hence,
\begin{equation}\label{004}
q_{r-1}\leq \frac{q_r}{1-q_{r-1}}.
\end{equation}
In particular \eqref{004} implies,
\begin{equation}\label{005}
q_r\geq q_{r-1}-q_{r-1}^2.    
\end{equation}
Starting from \eqref{002}, then using \eqref{003} and then \eqref{005} we get that
\begin{align}
1.5\lambda_{r+1}&-2.5\lambda_{r}+\lambda_{r-1}
\geq \frac{1}{1-q_{r-1}} \bigg[ (r-1)q_{r-1}-1.5rq_r-(r-1)^2q_{r-1}^2-rq_{r-1}q_r \nonumber
\\&+ (r+1)(1.5q_r-q_{r-1}+q_{r-1}^2 +q_rq_{r-1})
\\&+\bigg(\frac{1.5q_r-q_{r-1}+q_{r-1}^2 +q_rq_{r-1}}{1-q_{r-1}-q_r}\bigg) \sum_{i \geq r+2} (i-r-1) q_i \bigg] 
\nonumber
\\&
= \frac{1}{1-q_{r-1}} \bigg[ 1.5q_r-2q_{r-1}+2q_{r-1}^2 +q_{r-1}q_r \nonumber
\\&+\bigg(\frac{1.5q_r-q_{r-1}+q_{r-1}^2 +q_rq_{r-1}}{1-q_{r-1}-q_r}\bigg) \sum_{i \geq r+2} (i-r-1) q_i \bigg] 
\nonumber
\\& \geq \frac{1}{1-q_{r-1}} \bigg[ -0.5 q_r+q_{r-1}q_r 
+\bigg(\frac{0.5q_r+q_rq_{r-1}}{1-q_{r-1}-q_r}\bigg) \sum_{i \geq r+2} (i-r-1) q_i \bigg]. \label{006}
\end{align}
For $r\leq 8$ and $0.2\leq \lambda \leq 5r$ we verify  that the LHS of \eqref{001} is positive computationally using Mathematica\footnote{the worksheet may be accessed at https://page.mi.fu-berlin.de/manastos/.}. If $r\leq 8$ and $\frac{\lambda}{r}\geq 5$ then $\lambda\geq 5r$ and $2q_{r+4}\geq q_{r+1}.$ Therefore  $\sum_{i \geq r+2} (i-r-1) q_i\geq \sum_{i \geq r+1} q_i=1-q_{r-1}-q_r$ and \eqref{006} implies that \eqref{001} is satisfied.

To verify \eqref{001} for $r\geq 9$ we consider the following cases:
\\{\textbf{Case 1:}} $q_{r-1} \geq 0.5$. In this case \eqref{006} implies that \eqref{001} trivially holds. 
\\{\textbf{Case 2:}} $0.3 \leq q_{r-1} \leq 0.5$. Either $\frac{\lambda}{r} \geq 0.5$ or  
$\sum_{i\geq 0}\bfrac{\lambda}{r}^i$ is finite and
\begin{align*}
    1=\sum_{i\geq r-1}q_i= q_{r-1}\sum_{i\geq r-1}\frac{\lambda^{i-(r-1)}(r-1)!}{i!} \leq q_{r-1}\sum_{i\geq r-1}\bfrac{\lambda}{r}^{i-(r-1)}=\frac{q_{r-1}}{1-\frac{\lambda}{r}}. 
\end{align*}
In both cases $\frac{\lambda}{r}\geq 0.5$. Hence for $r\geq 9$,
\begin{align*}
    0.25q_{r+1}&\leq q_{r+1}\bigg[\bfrac{\lambda}{r}^2\frac{r^2}{(r+2)(r+3)}+2\bfrac{\lambda}{r}^3\frac{r^3}{(r+2)(r+3)(r+4)}
    \\ &+3\bfrac{\lambda}{r}^4\frac{r^4}{(r+2)(r+3)(r+4)(r+5)} \bigg]=q_{r+3}+2q_{r+4}+3q_{r+5},
\end{align*}
which implies that $\sum_{i \geq r+2} (i-k-1) q_i\geq 0.25 \sum_{i \geq r+1} q_i=0.25(1-q_r-q_{r-1})$.
\eqref{001} is then implied by \eqref{006} and 
\begin{align*}
& -0.5 q_r+q_{r-1}q_r 
+(0.5q_r+q_rq_{r-1})\frac{ \sum_{i \geq r+2} (i-r-1) q_i}{1-q_{r-1}-q_{r}}\geq 0
\\& \Leftarrow -0.5 q_r+0.3q_r 
+(0.5q_r+0.3q_r)0.25\geq 0.
\end{align*}
\\{\textbf{Case 3:}} $0.1 <  q_{r-1} \leq 0.3$. As before, either $\frac{\lambda}{r} \geq 0.7$ or  
$\sum_{i\geq 0}\bfrac{\lambda}{r}^i$ is finite and $1=\frac{q_{r-1}}{1-\frac{\lambda}{r}}.$
In both cases $\frac{\lambda}{r}\geq 0.7$. Then, 
\begin{align*}
    0.75q_{r+1}&\leq q_{r+3}+2q_{r+4}+3q_{r+5},
\end{align*}
which implies that $\sum_{i \geq r+2} (i-k-1) q_i\geq 0.75 \sum_{i \geq r+1} q_i=0.75(1-q_r-q_{r-1})$.
\eqref{001} is then implied by \eqref{006} and
\begin{align*}
-0.5 q_r+0.1q_r+(0.5 q_r +0.1q_r) \cdot 0.75 \geq 0.
\end{align*}
\\{\textbf{Case 4:}} $0\leq   q_{r-1} \leq 0.1$. As before, either $\frac{\lambda}{r} \geq 0.9$ or  
$\sum_{i\geq 0}\bfrac{\lambda}{r}^i$ is finite and $1=\frac{q_{r-1}}{1-\frac{\lambda}{r}}.$
In both cases  $\frac{\lambda}{r}\geq 0.9$. Then, 
\begin{align*}
    q_{r+1}&\leq q_{r+3}+2q_{r+4}+3q_{r+5},
\end{align*}
which implies that $\sum_{i \geq r+2} (i-k-1) q_i\geq \sum_{i \geq r+1} q_i= 1-q_r-q_{r-1}$.
\eqref{001} is then implied by \eqref{006} and 
\begin{align*}
-0.5 q_r+0.5q_r \cdot 1 \geq 0.
\end{align*}

\section{Proof of Lemma \ref{lem:boundsalphasmall}}\label{appendix:boundsalphasmall}
Lemma \ref{lem:boundsalphasmall} states,
\begin{lemma}
For $2\leq r\leq 24$ we have that $1.55\leq a_{r+1}\leq 1.551$.
\end{lemma}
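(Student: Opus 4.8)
The plan is to reduce the claim to a finite, explicitly checkable optimization problem and then verify it numerically, mirroring the treatment used for Lemma \ref{lem:lambdaconvex} in Appendix \ref{appendix:lambdaconvex}. Recall the recursion \eqref{alphas}: for $2 \le r \le 24$ we have $a_{r+1} = \max\{a_r, a_r'\} + 10^{-5}$, with $a_2 = 1.55$ and $a_r'$ defined as the supremum over $\lambda \ge 0$ of
$$a_r - \frac{g(r,\lambda)}{\lambda_{r+1}} + \frac{0.55\lambda_1}{(1.55^r-1)\lambda_{r+1}}\bigl[(r+1)q_{r+1,r+1} - rq_{r,r}\bigr]$$
(with the analogous expression for $r=2$). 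The lower bound $a_{r+1} \ge 1.55$ is immediate by induction: since $a_{r+1} = \max\{a_r, a_r'\} + 10^{-5} \ge a_r \ge 1.55$, there is nothing more to do there. So the entire content is the upper bound $a_{r+1} \le 1.551$.

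First I would establish, by induction on $r$, that it suffices to show $a_r' - a_r \le 10^{-5}/23$ for each $2 \le r \le 24$; then $a_{r+1} = \max\{a_r, a_r'\} + 10^{-5} \le a_r + 10^{-5}/23 + 10^{-5}$, and summing these $23$ increments starting from $a_2 = 1.55$ gives $a_{25} \le 1.55 + 23(10^{-5} + 10^{-5}/23) \le 1.551$, which also bounds every $a_{r+1}$ with $r \le 24$. Next I would reduce the supremum defining $a_r'$ to a compact $\lambda$-interval: by Lemma \ref{lem:lambdaconvex}, $g(r,\lambda) \ge 0$, so dropping that (negative) term only increases the bracketed quantity, and using $q_{r,r} \ge 0$ together with $q_{r+1,r+1} \le 1$ and Lemma \ref{lem:lambdamonotone} ($\lambda_1 \le \lambda_{r+1}$) yields $a_r' - a_r \le \sup_{\lambda \ge 0}\{0.55(r+1)/(1.55^r - 1)\} = 0.55(r+1)/(1.55^r-1)$ — but for small $r$ (say $r \le 7$ or so) this crude bound exceeds $10^{-5}/23$, so the negative term $g(r,\lambda)/\lambda_{r+1}$ cannot be discarded and one must keep it. For those small $r$ I would argue that for $\lambda$ large the combination $-g(r,\lambda)/\lambda_{r+1} + \text{(positive term)}$ is negative (using the asymptotics $\lambda_\ell \to \lambda$ and $q_{\ell,\ell} \to 0$ as $\lambda \to \infty$, so the $q$-term vanishes while $g(r,\lambda)/\lambda_{r+1}$ stays bounded below by a positive constant once $\lambda$ is large relative to $r$), reducing to a bounded interval $0 \le \lambda \le 5r$ say; on that interval I verify $a_r' - a_r \le 10^{-5}/23$ computationally (Mathematica), exactly as the footnoted worksheet does for \eqref{001}.

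The main obstacle is the small-$r$ regime, where the crude upper bound $0.55(r+1)/(1.55^r-1)$ is not small enough and one genuinely needs the cancellation between $g(r,\lambda)/\lambda_{r+1}$ and the $q$-dependent term; handling this rigorously requires (a) a clean tail estimate showing the supremum is attained on a bounded $\lambda$-interval and (b) a finite numerical check on that interval for each $r \in \{2,3,\dots,24\}$. The check is routine in principle — all quantities $\lambda_\ell(\lambda)$, $q_{\ell,\ell}(\lambda)$, $g(r,\lambda)$ are explicit finite expressions in $\lambda$ — but it is the step that cannot be done "by hand" and relies on the same computational verification (and the same published Mathematica worksheet) already invoked in Appendix \ref{appendix:lambdaconvex}. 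Once both pieces are in place, the bound $a_{r+1} \le 1.551$ follows by the telescoping argument above.
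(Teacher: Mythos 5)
Your overall strategy (telescope the recursion \eqref{alphas} so that everything reduces to a per-$r$ bound on $a_r'-a_r$, cut the supremum over $\lambda$ down to a compact interval, and verify the rest in Mathematica) is exactly the paper's, but two steps as written do not hold up. First, the tail reduction rests on a false claim: you assert that $g(r,\lambda)/\lambda_{r+1}$ stays bounded below by a positive constant for $\lambda$ large relative to $r$. In fact $\lambda_\ell(\lambda)=\lambda+\frac{\lambda^{\ell}/(\ell-1)!}{\sum_{i\geq \ell}\lambda^i/i!}$, so the linear parts cancel in $g(r,\lambda)=a_r\lambda_{r+1}-(a_r+1)\lambda_r+\lambda_{r-1}$ and $g(r,\lambda)\sim a_r\lambda^{r+1}e^{-\lambda}/r!\to 0$; hence $g(r,\lambda)/\lambda_{r+1}\to 0$ (this is even inconsistent with your own remark that $\lambda_\ell\to\lambda$). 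Negativity of $-g/\lambda_{r+1}$ plus the $q$-term for large $\lambda$ is in fact true, but only because of a cancellation between two exponentially small quantities ($-a_r+0.55(r+1)/(1.55^{\max\{r,3\}}-1)<0$ at the leading order $\lambda^r e^{-\lambda}/r!$), and your argument does not establish it. The paper avoids this delicacy altogether: on $(16+r,\infty)$ it simply drops $-g/\lambda_{r+1}\leq 0$ (Lemma \ref{lem:lambdaconvex}) and shows the remaining $q_{r+1,r+1}$-term is itself below $4\cdot 10^{-5}$, and it treats the small interval $[0,0.12r)$ analytically (showing the expression is nonpositive there) before handing $[0.12r,16+r]$ to Mathematica.

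Second, your bookkeeping is miscalibrated. The crude bound $0.55(r+1)/(1.55^r-1)$ is about $3.8\cdot 10^{-4}$ even at $r=24$, so it never meets your threshold $10^{-5}/23\approx 4.3\cdot 10^{-7}$ for any $r\leq 24$ (not just ``$r\leq 7$ or so''); and that threshold is itself far stronger than anything the paper proves or needs, since the paper only certifies $a_{r,\mathbb{R}_{\geq 0}}'-a_r\leq 4\cdot 10^{-5}$ per step. If the true supremum exceeds $10^{-5}/23$ for some $r$ (which is entirely possible given the certified bound), your numerical check would simply fail to verify the intermediate claim you set up. The fix is to aim for the weaker per-step bound of order $10^{-5}$--$10^{-5}\cdot 4$ as in \eqref{100}, carry out the three-interval splitting $I_{1,r},I_{2,r},I_{3,r}$ with the explicit tail and small-$\lambda$ estimates, and only then invoke the computational verification \eqref{150}.
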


We will show that $1.55\leq a_{r+1}\leq 1.551$ for $2\leq r\leq 24$.
For $I\subseteq \mathbb{R}_{\geq 0}$ let 
$$a_{r,I}'=\underset{\lambda \in I}{\sup} \bigg\{
a_r-\frac{g(r,\lambda)}{\lambda_{r+1}}
+\frac{0.55\lambda_1}{(1.55^{\max\{r,3\}}-1)\lambda_{r+1}}  \bigg[ (r+1) q_{r+1,r+1} - r q_{r,r}\bigg]
 \bigg\}.$$
Recall that, $a_2=1.55$ and for $2\leq r\leq 24$,
\begin{equation*}\label{alphas2}
a_{r+1}=\max\{a_{r},a_{r,\mathbb{R}_{\geq 0}}'\}+10^{-5}.
\end{equation*} 
Thus it suffices to show that for $2\leq r\leq 24$,
\begin{equation}\label{100}
a_{r,\mathbb{R}_{\geq 0}}' - a_r\leq 4\cdot 10^{-5}.
\end{equation}
Let $I_{1,r}=[0,0.12r),$ $I_{2,r}=[0.12r,16+r]$ and $I_{3,r}=(16+r,\infty).$

Lemmas \ref{lem:lambdamonotone} and \ref{lem:lambdaconvex} imply that for $2\leq r\leq 24$,
\begin{align}
    a_{r,I_3}'-a_r&\leq  \underset{\lambda \in I_3}{\sup} \bigg\{
\frac{0.55}{(1.55^{\max\{r,3\}}-1)}  (r+1) q_{r+1,r+1} 
 \bigg\} \nonumber
 \\&\leq   \underset{\lambda \geq 16+r,2\leq  r\leq 24}{\sup} \bigg\{
\frac{0.55(r+1) \frac{\lambda^{r+1}/(r+1)!}{\lambda^{16+r}/(16+r)!}}{1.55^{\max\{r,3\}}-1}  
 \bigg\} 
 \leq  \underset{2\leq  r\leq 24}{\max} \bigg\{
\frac{0.55 \frac{(16+r)^{r+1}/r!}{(16+r)^{16+r}/(16+r)!}}{1.55^{\max\{r,3\}}-1}  
 \bigg\} \nonumber
\\& \leq 4\cdot 10^{-5}. \label{citeMathematica}
\end{align}
Thereafter for $\lambda \leq 0.12r$ and $2\leq r\leq 24$
\begin{align*}
    q_{r+1,r+1} &= \frac{(r+1)\lambda^{r+1}/(r+1)!}{\sum_{i\geq r+1} i\lambda^{i}/i!}
    = \frac{1}{\sum_{i\geq r+1} r!\lambda^{i-(r+1)}/(i-1)!} \leq      \frac{1}{1+ \frac{\lambda}{r}}
\leq 1-\frac{0.89\lambda}{r}.
\end{align*}
On the other hand,
\begin{align*}
    q_{r,r} &    = \frac{1}{1+\sum_{i\geq r+1} (r-1)!\lambda^{i-r}/(i-1)!}
    \\&\geq 1-\sum_{i\geq r+1} (r-1)!\lambda^{i-r}/(i-1)! \geq 1-\sum_{i\geq 1} \bfrac{\lambda}{r}^i \geq  1-\frac{\lambda}{r}\cdot \frac{1}{1-\lambda/r} \geq 1-\frac{1.14\lambda}{r}.
\end{align*}
In addition for $\lambda\leq 0.12r$,
\begin{align*}
    \lambda_r&=\frac{\sum_{i\geq r} i\lambda^i/i!}{\sum_{i\geq r} \lambda^i/i!} = r+ \frac{\sum_{i\geq r+1} (i-r)\lambda^i/i!}{\sum_{i\geq r} \lambda^{i}/i!}\leq r +\sum_{i\geq 1}\bfrac{\lambda}{r+1}^i,
\end{align*}
\begin{align*}
    \lambda_{r-1}&={r-1}+ \frac{\sum_{i\geq r} (i-(r-1))\lambda^i/i!}{\sum_{i\geq r-1} \lambda^{i}/i!}
    \geq {r-1}+ \frac{ \lambda^r/r!}{\sum_{i\geq r-1} \lambda^{i}/i!}  \geq r-1 +\frac{\l/r}{\sum_{i\geq 0} \bfrac{\l}{r}^i}
\\&    \geq r-1 +(1-\lambda/r)(\lambda/r) 
\end{align*}
and
\begin{align*}
    \lambda_{r+1}
    \geq r+1 +(1-\lambda/(r+2))(\lambda/(r+2))\geq (r+1).
\end{align*}
Therefore for $r\geq 2$ and $\lambda\leq 0.12r$,
\begin{align*}
    -(1.5\lambda_{r+1}-2.5\l_r+\l_{r-1})&\leq-0.5 -1.5\frac{\l}{r+2}\bigg(1-\frac{\lambda}{r+2}\bigg) +2.5 \sum_{i\geq 1}\bfrac{\lambda}{r+1}^i  -\frac{\l}{r}\bigg(1-\frac{\lambda}{r} \bigg)
\\& \leq-0.5 -\frac{(2-0.5r)\l}{r(r+1)(r+2)} +2.5 \sum_{i\geq 2}\bfrac{\lambda}{r+1}^i  
+\bfrac{\lambda}{r+2}^2+\bfrac{\lambda}{r}^2
\\& \leq -0.5+\frac{\l}{7 r} +2.5 \bfrac{\l}{r} \sum_{i\geq 0}\bfrac{\lambda}{r}^i  
+2\bfrac{\lambda}{r}^2
\\& \leq -0.5+ \frac{\l}{7 r} +2.5 \bfrac{\l}{r} \sum_{i\geq 0}\bfrac{\lambda}{r}^i  
+2\bfrac{\lambda}{r}^2
\\& \leq -0.5+\frac{0.12}{7}+\frac{0.12^2}{0.88}+2\cdot 0.12^2\leq -0.43.
\end{align*}
Combining the above  we get that for $2\leq r\leq 24$ and $\l\leq 0.12r$,
\begin{align*}
    a_{r,I_{1,r}}'-a_r&\leq  \underset{\lambda \in I_{1,r}}{\sup} \bigg\{-\frac{1.5 \lambda_{r+1}-2.5\lambda_{r}+\lambda_{r-1}}{\lambda_{r+1}}+
\frac{\lambda_1\cdot [(r+1) q_{r+1,r+1} -rq_{r,r}] }{2(1.5^{\max\{r,3\}}-1)\lambda_{r+1}}  \bigg\}
\\ &\leq  \underset{\lambda \in I_{1,r}}{\sup} \bigg\{\frac{-0.43}{\lambda_{r+1}} + \frac{(1+0.12r)\cdot [(r+1)(1-0.89\lambda/r) -r(1-1.14\lambda/r)]}{2\lambda_{r+1}(1.5^{\max\{r,3\}}-1)}
 \bigg\} 
\\&\leq \frac{1}{\lambda_{r+1}}\bigg[-0.43+
\frac{(1+0.12r)(1+0.25\cdot r\cdot  \lambda/r)}{2(1.5^{\max\{r,3\}}-1) }\bigg]
 \leq   0.
\end{align*}
At the very last inequality we used that $\lambda/r \leq 0.12$.

Thus to verify \eqref{100} it suffices to show that for $2\leq r\leq 24$,
\begin{equation}\label{150}
a_{r,I_{2,r}}' -a_r\leq 4\cdot 10^{-5}.
\end{equation}
We verify \eqref{150} computationally, using Mathematica.

\section{Proof of Lemma \ref{02}}\label{appendix:02}
Lemma \ref{02} states,
\begin{lemma}
For $3\leq d \leq 10$ and $\tau_{d} \leq t \leq \tau_{d-1}-1$ if $p^t \in B_d$ then \eqref{eq:changeZ_t} holds.
\end{lemma}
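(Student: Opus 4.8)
The goal is to establish \eqref{eq:changeZ_t} for $\tau_d \le t \le \tau_{d-1}-1$ when $d$ is small ($3 \le d \le 10$) and $p^t \in B_d$, and the natural starting point is the identity \eqref{eq:echange_zt}, namely
$$\mathbb{E}(\z_{t+1}-\z_t|\cH_t)=-\mathbb{I}(\z_t>0)+p_1^t \lambda_1^t \sum_{\ell=1}^k \ell p_\ell^tq_{\ell+1,\ell+1}^t+o(1).$$
So everything reduces to showing that the positive term $p_1\lambda_1\sum_{\ell} \ell p_\ell q_{\ell+1,\ell+1}$ is bounded away from $1$ by a constant, uniformly over all admissible configurations. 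My plan is to mimic the proof of Lemma \ref{03}: use Lemma \ref{lem:calcipq} to get $p_1\lambda_1\sum_{i=1}^{d} ip_i q_{i+1,i+1} \le \frac{0.55 d(d+1)}{1.55^{d-1}-1}$, but for $d \le 10$ this bound is not good enough (for $d=3$ it gives roughly $\tfrac{0.55\cdot 12}{1.55^2-1} = \tfrac{6.6}{1.4025} \approx 4.7 > 1$), so a cruder estimate like Lemma \ref{lem:p1} combined with $q_{\ell,\ell}\le 1$ and $\lambda_1\le \lambda+1$ will not close the gap. Instead I would carry out, for each fixed $d \in \{3,\dots,10\}$, a tighter optimization over $\lambda \ge 0$ of the function $(\lambda+1)\sum_{i=1}^{d} i p_i \frac{\lambda^i/i!}{\sum_{j\ge i}\lambda^j/j!}$, now crucially using the constraint $p^t\in B_d$ (i.e.\ $p_i \ge \alpha_i p_{i-1} \ge 1.55^{i-1}p_1$ and $\sum_{i=1}^{k+1} p_i = 1-o(1)$), which forces the mass $p_1,\dots,p_{d-1}$ to be geometrically small and in particular $p_1$ tiny.

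The key refinement over Lemma \ref{lem:calcipq} is twofold. First, one should not just bound $q_{i+1,i+1} \le 1$ but keep the factor $\frac{\lambda^i/i!}{\sum_{j\ge i}\lambda^j/j!}$, which decays rapidly in $i$ for any fixed $\lambda$; combined with $p_1\lambda_1 \le p_1(\lambda+1)$ and the bound $p_1 \le \frac{0.55}{1.55^{d-1}-1}$ from Lemma \ref{lem:p1}, the whole product is a function of $\lambda$ alone (after using $\sum_i i p_i \le \sum_i i \cdot (\text{something summing to }1)$, bounded trivially by $d$, or more carefully using the geometric lower bounds on the $p_i$ to control the distribution of mass). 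Second, for the regime where $\lambda$ is large one uses $\frac{\lambda^i/i!}{\sum_{j\ge i}\lambda^j/j!} \le e^{-\lambda}\lambda^i/i! \cdot (\text{normalization})$ which is exponentially small in $\lambda$ once $\lambda \gg d$, killing the $(\lambda+1)$ prefactor; for $\lambda$ in a bounded range $[0, C_d]$ one does the optimization numerically (as the paper does elsewhere, e.g.\ via Mathematica, cf.\ Lemmas \ref{lem:boundsalphasmall}, \ref{lem:lambdaconvex}). The outcome I expect is that for each $d$ in this range the supremum is at most, say, $0.999$, so that \eqref{eq:echange_zt} gives $\mathbb{E}(D_t(\z)|\cH_t) \le -\mathbb{I}(\z_t>0) + 0.999 + o(1) \le -10^{-5}$ when $\z_t > 0$, which is exactly \eqref{eq:changeZ_t}.

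Concretely the steps I would execute are: (1) invoke \eqref{eq:echange_zt} and reduce to bounding $T(\lambda, p) := p_1\lambda_1\sum_{\ell=1}^d \ell p_\ell q_{\ell+1,\ell+1}$ (the tail $\ell > d$ contributes $o(1)$ since $p^t\in B_d$ controls only coordinates up to $d-1$, but $\sum_{\ell=1}^{k+1}p_\ell = 1-o(1)$ and the relevant $q$'s are handled as in Lemma \ref{03}'s proof — actually one should be a little careful here and perhaps keep the full sum $\sum_{\ell=1}^{k}$, bounding it exactly as in the chain of inequalities inside Lemma \ref{lem:calcipq}); (2) apply Lemma \ref{lem:p1} to replace $p_1$ by $\frac{0.55}{1.55^{d-1}-1}$ and Lemma \ref{lem:lambdamonotone} to replace $\lambda_1$ by $\lambda+1$; (3) bound $\sum_{i=1}^d i p_i q_{i+1,i+1}$ by $\sup_\lambda \sum_{i=1}^d i p_i \frac{\lambda^i/i! + \lambda^{i+1}/(i+1)!}{\sum_{j\ge i}\lambda^j/j!}$ using the same manipulation ($(\lambda+1)q_{i+1,i+1} \le$ that quantity) as in Lemma \ref{lem:calcipq}, then maximize over the simplex of $(p_i)$ constrained by $B_d$; (4) split the $\lambda$-axis into a bounded part, handled by a finite Mathematica computation, and an unbounded tail $\lambda \ge C_d$, handled by the exponential decay of $\frac{\lambda^i/i!}{\sum_{j\ge i}\lambda^j/j!}$; (5) conclude $T \le 1 - 10^{-4}$ and plug back. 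The main obstacle is step (3)–(4): for $d$ as small as $3$ or $4$ the naive bounds genuinely fail, so one must exploit the $B_d$ geometry (geometric growth of the $p_i$) to show the contributing mass sits on the higher coordinates where $q_{i+1,i+1}$ is small, and the bounded-$\lambda$ optimization must be done with enough care (interval arithmetic / fine grid) that the resulting constant is provably below $1$; I anticipate the bulk of the real work — and the reason this is relegated to the appendix — is precisely this case-by-case numerical verification for $d = 3,4,\dots,10$.
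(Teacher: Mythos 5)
Your proposal is correct and takes essentially the same route as the paper: the proof in Appendix \ref{appendix:02} likewise starts from \eqref{eq:echange_zt}, uses that $\sum_{i=1}^{d}p_i=1+o(1)$ for $\tau_d\leq t<\tau_{d-1}$ together with the $B_d$ constraints $p_i\geq 1.55^{i-1}p_1$, and reduces \eqref{eq:changeZ_t} to a constrained supremum over $\lambda$ (and $p_1$) that is verified computationally in Mathematica for each $3\leq d\leq 10$. The only minor difference is that the paper first invokes the monotonicity of $q_{i+1,i+1}$ in $i$ to collapse the sum to $q_{d+1,d+1}\big(dp_d^*+\sum_{i=1}^{d-1}i\,1.55^{i-1}p_1\big)$, giving a two-variable optimization, rather than your simplex-constrained formulation with a separate large-$\lambda$ tail argument.
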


We have to show that for $3\leq d \leq \min\{10,k+1\}$ and $\tau_{d} \leq t < \tau_{ d-1}$, if  $\z_t>0$ and $p^t\in B_d$ then     $\mathbb{E}(D_t(\z)|\cH_t)\leq -0.001$. For $\tau_d\leq t< \tau_{d-1}$ we have that $\sum_{i=1}^{d}p_i=1+o(1)$. Hence \eqref{eq:echange_zt} implies that if 
$\tau_{d} \leq t < \tau_{\leq d-1}$  holds $\z_t>0$ then (with $p_d^*=1-\sum_{i=1}^{d-1}1.55^{i-1}p_1$), 
\begin{align}
    \mathbb{E}(D_t(\z)|\cH_t)&\leq -1+o(1)+\sup_{x\geq 0} \bigg\{  p_1 \cdot \lambda_1(x) \cdot
    \sum_{i=1}^{d} i p_i q_{i+1,i+1} \bigg\}\nonumber\\
&\leq -1+o(1)+\sup_{x\geq 0} \bigg\{  p_1 \cdot \lambda_1(x) q_{d+1,d+1}\cdot\bigg(dp_d^* +    \sum_{i=1}^{d-1} i 1.55^{i-1}p_1 \bigg) \bigg\}    \leq 10^{-5}\label{qq1}
\end{align}
At the second inequality we used that $q_{i+1,i+1}=\frac{(i+1)x^{i+1}/(i+1)!}{\sum_{j\geq i+1}jx^{j}/j!}=\frac{1}{\sum_{j\geq i}(i!/j!)x^{j-i}}$ is increasing w.r.t. $i$ and  that $p^t\in B_d$ implies that $p_i\geq 1.55^{i-1}p_1$ for $i\leq d-1$.  
We verify \eqref{qq1} computationally using Mathematica.

\section{Proof of Lemma \ref{lem:initial}}\label{appendix:initial}
Recall, $t^*=\bfrac{1}{40ck}^4n$. Lemma \ref{lem:initial} states,
\begin{lemma}
Let $C_1=p_1^{t^*}$. W.s.h.p.\@,
\begin{itemize}
\item[(i)]  $p_1^t \lambda_1^t \sum_{\ell=1}^k \ell p_\ell^tq_{\ell+1,\ell+1}^t\leq 0.5$ for $t\leq t^*$,
\item[(ii)] $C_1=\Omega(1)$ and $p^{t^*}_j-a_j  p^{t^*}_{j-1}\geq C_1$ for $2\leq j\leq k$,
\item[(iii)] let $\sigma^*=\min\{\tau_3, \min\{t\geq t^*: p^t \notin B_3\}\}$ then $p_1^t\geq \min\{0.04,C_1/2\}$ for $t^*\leq t\leq \sigma^*$.
\end{itemize}
\end{lemma}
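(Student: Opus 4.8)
The plan is to analyze the first $t^*=(40ck)^{-4}n$ steps of \kG\ (call this $\epsilon_0 n$, $\epsilon_0=(40ck)^{-4}$) by coupling the number of times each vertex gets matched with independent per‑vertex counts, and then to control the interval $[t^*,\sigma^*]$ by a one‑step drift computation in the spirit of Lemma~\ref{lem:lowerconstrains}. First I would set up the ``early regime''. Since by step $s\le t^*$ at most $2t^*<n$ vertices have ever lain on a matched edge, $index(s)=k$ for all $s\le t^*$; together with $\Delta(G_0)\le\log n$ (Lemma~\ref{lem:degrees}, degrees only drop) this gives, for any vertex $v$ and any history, $\Pr(v\in e_s\mid\cH_s)\le C_0 d_0(v)/m_s$ for an absolute $C_0$. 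A short bootstrap then shows w.s.h.p.\ that $m_s\ge 3cn/4$ for all $s\le t^*$: if this first fails at $s_0$, the number of edges deleted before $s_0$ is at most $s_0$ plus $\sum_v d_0(v)\mathbb{I}[v\text{ reaches label }0\text{ by }s_0]$, whose expectation is at most $\sum_v d_0(v)\bigl(C_0'\epsilon_0 d_0(v)/c\bigr)^k/k!=O(\epsilon_0^k n)$ (using the per‑step bound and the bounded moments of the truncated‑Poisson degree sequence of $G_0$), which the choice of $40ck$ makes $<cn/8$, and Azuma on the exposure martingale (per‑step change $O(\log n)$) concentrates it. Consequently $\lambda^s\le 4c$ and $\lambda^s_1\le 4c+1$ throughout (from $2m_s-\zeta_s=\sum_i\lambda^s_i|Y^s_i|$ and $\sum_i|Y^s_i|\ge n/2$, as in Section~\ref{section:model}), and from the per‑step bound $\Pr(v\text{ matched}\ge r\text{ times by }s)\le\bigl(C_0'\epsilon_0 d_0(v)/c\bigr)^r/r!$, while a matching lower Poisson estimate gives $\Pr(v\text{ matched exactly }r\text{ times by }t^*)=\Theta\bigl((\epsilon_0 d_0(v)/c)^r\bigr)-o(1)$.

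For (i): $\sum_{v\in Y^t_1}d_t(v)\le\sum_v d_0(v)\mathbb{I}[v\text{ matched}\ge k-1\text{ times by }t]$ has expectation $O(\epsilon_0^{k-1}n)$ by the above and the bounded $k$‑th moment of $d_0$; Azuma (per‑step change $O(\log n)$) yields $p^t_1\le O(\epsilon_0^{k-1})+o(1)$ w.s.h.p.\ for all $t\le t^*$. Since $q^t_{\ell+1,\ell+1}\le1$, $\sum_\ell\ell p^t_\ell\le k$ and $\lambda^t_1\le 4c+1$, the left side of (i) is at most $(4c+1)k\cdot O(\epsilon_0^{k-1})+o(1)$, and one checks $\epsilon_0=(40ck)^{-4}$ makes this $\le1/2$. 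For (ii): taking $v$ over all of $V(G_0)$ (average degree $2c$, so $\mathbb{E}_v[d_0(v)^{k-\ell}]\ge(2c)^{k-\ell}$ by Jensen), the lower estimate gives w.s.h.p.\ $p^{t^*}_\ell=\Omega(\epsilon_0^{k-\ell})$ for each $\ell$, and the upper estimate gives $p^{t^*}_{\ell-1}=O(\epsilon_0^{k-\ell+1})$. In particular $C_1=p^{t^*}_1=\Omega(1)$; and since $a_j\le1.552$ (Lemmas~\ref{lem:boundsalphasmall},~\ref{lem:boundsalphalarge}) and $C_1=O(\epsilon_0^{k-1})$, the ratio $p^{t^*}_j/p^{t^*}_{j-1}=\Theta(\epsilon_0^{-1})=\Theta((40ck)^4)$ dwarfs $a_j$, so $p^{t^*}_j-a_jp^{t^*}_{j-1}\ge\epsilon_0^{k-j}(\Omega(1)-O(\epsilon_0))\ge C_1$ for $2\le j\le k$ — the numerical constant $40ck$ being chosen precisely to make the implicit constants line up.

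For (iii): first note $C_1=O(\epsilon_0^{k-1})\ll10^{-2}$, so $\min\{0.04,C_1/2\}=C_1/2$ and the drift estimate below is only needed while $p^t_1\le0.04$. For $t^*\le t<\sigma^*\le\tau_3$ we have $index(t)\ge3$, hence $v_t\notin Y^t_1\cup Y^t_2$; computing $\mathbb{E}[D_t(2m_tp^t_1)\,|\,\cH_t]$ exactly as in the derivation of \eqref{eq1} (Lemma~\ref{lem:lk} for the expected contribution of $w_t$ to the half‑edges incident to $Y^t_1$, and a bound on the loss caused when a neighbour of $w_t\in Y^t_1$ reaches label $0$) gives
\[
\mathbb{E}[D_t(2m_tp^t_1)\,|\,\cH_t]\ \ge\ \lambda^t_2p^t_2-\lambda^t_1p^t_1-O\!\bigl(\lambda^t_1(p^t_1)^2\bigr)-o(1)\ \ge\ 0.55\,\lambda^t_1p^t_1-O\!\bigl(\lambda^t_1(p^t_1)^2\bigr)-o(1),
\]
where the last step uses $p^t\in B_3$, i.e.\ $p^t_2\ge1.55\,p^t_1$, and $\lambda^t_2\ge\lambda^t_1$ (Lemma~\ref{lem:lambdamonotone}). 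Thus for $p^t_1\le0.04$ this drift is positive, and as the decrease of $m_t$ only helps $p^t_1$ we get $\mathbb{E}[D_t(p^t_1)\,|\,\cH_t]\ge0$ in that range up to a negligible additive error. I would then conclude with the bucketed martingale argument of Lemma~\ref{lem:confine}: in each dyadic window $\{t:m_t\in[M,2M]\}$, $M\ge n^{0.4+10^{-5}}$, there are $O(M)$ steps each changing $p^t_1$ by $O(\log^2 n/M)$, so the Azuma fluctuation over a window is $O(\log^2 n/\sqrt M)=o(1/\log n)$ while the accumulated drift is non‑negative; summing over the $O(\log n)$ windows gives w.s.h.p.\ $p^t_1\ge p^{t^*}_1-o(1)=C_1-o(1)\ge C_1/2$ for every $t\in[t^*,\sigma^*]$.

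The step I expect to be the main obstacle is making the drift/concentration argument for (iii) airtight: the positive drift of $2m_tp^t_1$ is only of order $p^t_1$ and accumulates over $O(n)$ steps, so one genuinely has to bucket by the order of magnitude of $m_t$ to beat the Azuma fluctuations, and one must track the $o(1)$ error terms in the one‑step drift — in the regime where $|Y^t_1|$ becomes small these require the sharper $\sqrt{|Y^t_1|}$‑scale Chernoff bound behind Lemma~\ref{lem:degDistributions} rather than the stated $n^{1/2}$‑scale one. A secondary but fiddly point running through (i) and (ii) is the constant‑chasing: every estimate there has the shape ``$\epsilon_0^{k-\ell}$ times factors growing with $k$ and $c$'', and one must verify that $\epsilon_0=(40ck)^{-4}$ is small enough to dominate those factors for all admissible $(c,k)$ — which is exactly what the constant $40ck$ in the definition of $t^*$ is engineered to do.
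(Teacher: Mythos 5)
Your high-level plan (early-phase per-vertex estimates for (i)--(ii), then a drift-plus-martingale argument for (iii)) parallels the paper, but two of your key steps do not hold as stated. First, the inequality ``$\Pr(v\in e_s\mid\cH_s)\le C_0 d_0(v)/m_s$ for any vertex and any history'' is false: \kG\ gives priority to $Z_s$, and a vertex can enter $Z_s$ without ever being matched (it only needs to lose edges when a neighbour saturates); once it is in $Z_s$ it is selected as $v_s$ with probability close to $1$ when $|Z_s|$ is small, independently of $d_0(v)/m_s$. Your Poisson-type upper tail on per-vertex match counts, on which both (i) and (ii) rest, therefore needs a separate control of entries into $Z_t$ during $[0,t^*]$ -- this is exactly the bookkeeping the paper does with the cumulative sets $X_j^t$ and the ``contaminated'' sets $Z_j^t$ (vertices in or adjacent to $X_0^t$), and it is absent from your argument. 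Second, the constant-chasing you defer to the choice of $t^*$ does not come out: your upper estimates for $p_{\ell-1}^{t^*}$ carry degree-moment factors of the form $\mathbb{E}[d_0^{\,r+1}](C_0'/c)^{r}$ with $r$ up to $k-1$, which are exponential in $k$ (and grow with $c/(k+1)$), whereas $\epsilon_0^{-1}=(40ck)^4$ is only polynomial in $ck$; for admissible constant pairs with $k$ moderately large or $c\gg k$ the required inequality $p_j^{t^*}\ge a_jp_{j-1}^{t^*}+p_1^{t^*}$ does not follow from your $\Theta(\epsilon_0^{k-\ell})$ estimates. The paper avoids this entirely by never taking high moments: it compares consecutive label classes through one-step drift inequalities such as $\mathbb{E}(D_t(|X_j|)\mid\cH_t)\le 2.01|X_{j+1}^t|/n$, where the degree bias cancels because $\lambda_j\le\lambda_{k+1}$ (Lemma \ref{lem:lambdamonotone}), so the resulting ratios ($|X_j^{t^*}|\le 0.01|X_{j+1}^{t^*}|$, hence $p_\ell^{t^*}\ge 2.6\,p_{\ell-1}^{t^*}$) have constants uniform in $k$ and $c$.

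For (iii) your route is essentially the paper's, but the one-step drift is written incorrectly: when $w_t\in Y_1^t$ the half-edge count of $Y_1$ loses the size-biased degree of $w_t$, i.e.\ the loss term is $(\lambda_1+1)p_1$, not $\lambda_1 p_1$ (Lemma \ref{lem:lk}), and with the correct term the bound $0.55\lambda_1 p_1-O(\lambda_1 p_1^2)$ no longer follows from $p_2\ge 1.55p_1$ and $\lambda_2\ge\lambda_1$ alone -- in the regime $\lambda$ small, $\lambda_1\approx 1$, the expression $(0.55\lambda_1-1)p_1$ is negative. The paper repairs exactly this point by using $\lambda_2\ge 2$ (Lemma \ref{lem:lambdabound}) via $1.55\lambda_2\ge\lambda_1+1+0.05\lambda_2$, which yields the positive drift $\ge p_1(0.05\lambda_2-p_1\lambda_1 q_{2,2})$; you need that (or an equivalent) to make the drift positive for $p_1\le 0.04$. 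With that fix your windowed martingale conclusion is fine in spirit, though it should be phrased, as in the paper, as ``the drift is positive whenever $p_1$ is below the threshold,'' rather than as a uniform lower bound $p_1^t\ge p_1^{t^*}-o(1)$.
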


\begin{proof} 
 For $j\in \{0,1,...,k\}$ and $0\leq t\leq t^*$ let $X_j^t=\cup_{0\leq t'\leq t}Y_{j}^{t'}$ and $Z_j^t$ be the subset of vertices of $X_j^{t}$ that either belong to $X_0^t$ or are incident to a vertex in $X_0^t$.  Then $X_0^t\subseteq X_1^t\subseteq X_2^t\subseteq...\subseteq X_{k}^t$ and $|X_{k}^t| \leq 2t^*$. Let $\lambda\leq 2c$ be defined by \eqref{eq:lambda}.

For $i \geq  20c \geq 10(k+1) $, let $\cE_i$ be the event that  there do not exist more than $ 2^{20c-i} n$ vertices of degree $i$ in $G$. Also let $\cE=\cap_{i\geq  20c } \cE_i$. Equation \eqref{eq:models} implies,
\begin{align*}
    \Pr(\neg \cE)&\leq \sum_{i \geq   20c  }O(n^{0.5}) \Pr(\neg \cE_i)
\leq O(n^{0.5})\sum_{i \geq   20c } \Pr\bigg(Bin\bigg(n, \frac{ \l^{i}/i!}{ \l^{k+1}/(k+1)!} \bigg) > 2^{  20c -i}n \bigg)
    \\ & \leq O(n^{0.5})\sum_{i \geq  20c }  \Pr\bigg(Bin\bigg(n, \bfrac{e\l}{i}^{i-k-1} \bigg)> 2^{  20c -i}n             \bigg) 
        \\ & \leq O(n^{0.5}) \sum_{i \geq  20 c}  \Pr\bigg(Bin\bigg(n, \bfrac{2ce}{20c}^{i-k-1} \bigg)> 2^{  20c -i}n             \bigg)=o(n^{-9}). 
\end{align*}
At the last equality we used the Chernoff Bound (see Theorem \ref{thm:chernoff}).

Thus, in the event $\cE$, the number of edges incident to a set of vertices of size $s$ is at most $ (20c+1+\log_2(n/s)) s.$

In particular for $t\leq t^*$, since every vertex that does not belong to $Y_{k+1}^{t}$ is incident to a vertex in $\{w_t,v_t:t\leq t^*\}$,
\begin{equation}\label{yk}
|Y_{k+1}^{t}| \geq n- (20c+2+\log_2n -\log_2(2t^*))2t^*
\geq n - \frac{[20c+2+4\log_2(40ck)]n}{(40ck)^4}  \geq (1-(101k^2)^{-1})n.
\end{equation}
Similarly,
\begin{equation}\label{mt}
2m_0\geq 2m_t\geq (1-(101k^2)^{-1})2m_0\geq 0.995(2m_0) \text{ for }t\leq t^*
\end{equation}
and
\begin{equation}\label{pl}
   p_1^t\leq \frac{|Y_1^t|\lambda_1^t}{|Y_{k+1}^t|\lambda_{k+1}^t}+o(1)\leq (100k^2)^{-1} \text{ for }t\leq t^*.
\end{equation}

For the rest of the proof we condition on the event $\cE$.

\emph{(i)} Similarly to the derivation of \eqref{eq:ipq}  (using \eqref{pl} in place of Lemma \ref{lem:p1}), we have,
$$p_1^t \lambda_1^t \sum_{\ell=1}^{k+1} \ell p_\ell^tq_{\ell+1,\ell+1}^t\leq (100k^2)^{-1} \times (k+1)(k+2)\leq 0.5.$$

\emph{(ii)} 
First observe that the size of $X^t_j$ is increasing w.r.t. $t$. Thereafter, for $0\leq t \leq t^*$, the size of $X^t_j$ increases by 1 only if $v_{t}$ or $w_{t}$ belongs to $Y_{j+1}^t\subseteq X_{j+1}^t$.
Thus, for $j\in[k]$,
\begin{align*}
    \mathbb{E}(D_t(|X_j|)|\cH_t) \leq \frac{2|X_j^t|\lambda_j}{|Y_{k+1}^t|\lambda_{k+1}} \leq \frac{2.01|X_j^t|}{n}.
\end{align*} 
At the last inequality we used \eqref{yk} and Lemma \ref{lem:lambdamonotone}.

As  $\Delta(G)\leq \log n$ (see Lemma \ref{lem:degrees}), a standard martingale argument implies that w.s.h.p.\@, 
\begin{align}\label{eq:78}
|X^{t^*}_j| \leq  \frac{2.02t^*}{n}|X_{j+1}^{t^*}| +o(n) \leq 0.01|X_{j+1}^{t^*}|+o(n) \text{ for }j\in[k]. 
\end{align}

Similarly, for $j\in [k]$,
\begin{align*}
    \mathbb{E}(D(|Z_j|))|\cH_t) &\leq p_1^t (\lambda_1^t+1) \leq \frac{\lambda_1^t|X_1^t|}{n} (\lambda_1^t+1)+o(1).
\end{align*} 
Now for $t\leq t^*$, \eqref{yk} and \eqref{mt} imply,
$$ \lambda_1\leq \l_{k+1}\leq \frac{2m_t}{|Y_{k+1}^t|} \leq 
\frac{2m_0}{|Y_{k+1}^{t^*}|}\leq 3c.$$
As  $\Delta(G)\leq \log n$ a standard martingale argument implies that for $t\leq t^*$, w.s.h.p.\@, 
\begin{align}\label{eq:79}
|Z^{t^*}_j| \leq  \frac{3c(3c+1)t^*}{n}   |X_1^{t^*}|+o(n)\leq  \frac{3c(3c+1)}{(40ck)^4}|X_1^{t^*}|   +o (n)\leq 0.01|X_1^t| +o(n) \text{ for }j\in[k]. 
\end{align}

On the other hand, a vertex $v$ belongs to $X_j^t\cup Z_j^t$ if there exists $t_1<t_2<...<t_{k-j}\leq t^*$ such that $v_{t_1}= v$ or $w_{t_1}= v$ and   $w_{t_i}=v$  for $i=2,...,k-j$ and $v$ is not incident to $Z_t^{t^*}$. Thus, equations \eqref{mt} and \eqref{yk} imply, 
\begin{align*}
    \mathbb{E}(|X_j^t\cup Z_j^t|)\geq 0.99n\binom{t^*}{k-j}  \sum_{d\geq k+1} \bfrac{1}{2m_{t^*}}^{k-j}=\Omega(n). 
\end{align*} 

A standard martingale argument implies that w.s.h.p.\@
\begin{align}\label{eq:80}
|X_j^t\cup Z_j^t| =\Omega(n) \text{ for }j\in [k] \text{ and } t\leq t^*.
\end{align}

Equations \eqref{eq:79} and \eqref{eq:80} imply that
\begin{align*}
    |Y_1^{t^*}|\geq |X_1^{t^*}|-|Z_1^{t^*}|\geq 0.9|X_1^{t^*}|=\Omega(n).
\end{align*}
Thus $p_1^{t^*}=\Omega(1).$

Similarly, equations \eqref{eq:78}, \eqref{eq:79} and \eqref{eq:80} imply that 
\begin{align*}
    |Y_\ell^{t^*}|- 2.6|Y_{\ell-1}^{t^*}| \geq |X_\ell^{t^*}|- 2.6|X_{\ell-1}^{t^*}|-|Z_\ell^{t^*}|\geq 0.5|X_\ell^{t^*}| =\Omega(n).
\end{align*}

Thus $p_\ell^{t^*}\geq 2.6p_{\ell-1}^{t^*}$ for $\ell\in\{2,3,...,k+1\}$.
Hence,
$p_\ell^{t^*}\geq 1.6p_{\ell-1}^{t^*}+p_1^{t^*}$ for $\ell\in\{2,3,...,k+1\}$.
\vspace{3mm}
\\{\emph(iii)} For $t\leq \tau_3$ if  $p^t\in B_3$ and $ p_1^t\leq 0.045$ then,
\begin{align*}
    \mathbb{E}(|Y_1^{t+1}|- |Y_1^{t}| \big| \cH_t)
    &=\lambda_2 p_2^t-(\lambda_1+1)p_1^t-p_1^t\lambda_1p_1^tq_{2,2} +o(1)
    \\ &\geq p_1^t\bigg[ 1.55 \lambda_2 -(\lambda_1+1)-p_1^t \lambda_1 q_{2,2} \bigg] +o(1)
    \\ &\geq p_1^t\bigg[ 0.05\lambda_2 -p_1^{t*} \lambda_1 \bigg] +o(1) \geq 0.02p_1^t \geq 0.01p_1^{t^*}.
\end{align*}
At the second inequality we used that $1.5\lambda_2\geq \lambda 1+0.5\lambda_2\geq \lambda_1+1$.
A standard martingale argument implies that $p_1^t\geq \min\{0.5p_1^{t*},0.04\}$
for $t^*\leq t\leq \sigma^*$.
\end{proof}

\section{Proof of Claim \ref{claim:stopsmall}}\label{appendix:stopsmall}
\begin{claim}\label{claim:stopsmall1}
Let $7\leq d \leq 20$. Let $t$ be such that $\tau_d\leq t  < \tau_{d-1}-\log^8 n$. If $p^{t+i} \in B_d$, $p_1^{t+i}\geq C_1/2$ and \eqref{eq:r-2} holds for $i=0,1,...,\log^8n$ then,
$$ \sum_{i=0}^{\log^{8}n-1}\E\big[D_{t+i}\big(2mp^{}_{d-1}-\alpha_{d-1}2m p^{}_{d-2} \big)  \big|\cH_{t+i}\big]>10^{-5}\log^8 n.$$
\end{claim}

Comparing to the proof of Claim \ref{claim:stoplarge}
it suffices to show that for $7\leq d \leq 20$ and $\tau_{d} \leq t <\tau_{d-1 }$   if $\alpha_{d-1}  p_{d-2} \leq p_{d-1}\leq \alpha_{d-1}  p_{d-2}+n^{-0.1}$ and $p^t \in B_d$ then the following inequality holds:
\begin{align}
&\bigg(1-p_1\lambda_1\sum_{i=1}^{d}ip_iq_{i+1,i+1}+p_{d}-a_{d}p_{d-1}\bigg)\lambda_{d}  \nonumber
\\&+ \bigg\{(a_{d}-a_{d-1}) \lambda_{d} +g(d-1,\lambda)  -  p_1 \lambda_1 \bigg[ d q_{d,d} - (d-1) q_{d-1,d-1}\bigg]\bigg\} p_{d-1}>10^{-5}.\label{200}
\end{align}

By the definition of $\alpha_{\ell}$ the second line of \eqref{200} is positive. In addition $q_{i,i}$ is increasing with respect to $i$. 
Moreover, using Mathematica we verify that, for $7\leq d\leq 20$ we have $p_1\lambda_1dq_{d+1,d+1}<1$.
Thus for $7\leq d \leq 20$ and $\tau_{d} \leq t <\tau_{d-1 }$, if $p_{d-1}\leq \alpha_{d-1} p_{d-2}+n^{-0.1}$ and $p^t \in B_d$ then using $1.55=a_2<a_i < 1.552$ and $p_1\leq \frac{0.55}{1.55^{d-1}-1}$ we have that the LHS of \eqref{200}, normalized by $\lambda_d$, is bounded below by 
\begin{align}
opt_{d} =\inf_{0\leq p_1 \leq\frac{0.55}{ 1.55^{d-1} -1}}
\inf_{\lambda \geq 0}\bigg\{ &1 -a_{d-1}  p_{d-1} -p_1\lambda_1\sum_{i=1}^{d-1}ip_i q_{i+1,i+1}-o(1)
:\nonumber
\\&p_i =1.55^{i-1}p_1 \text{ for } 1\leq i\leq d-3, \nonumber
\\&  1.55p_{d-3} \leq p_{d-2},  \nonumber
\\&   1.55 p_{d-2}\leq \a_{d-1}p_{d-2}=p_{d-1} \leq 1.552 p_{d-2} \text{ and } \sum_{i=1}^{d-1}p_i =1. \bigg\}  \nonumber
\\\geq \inf_{0\leq p_1 \leq\frac{0.55}{ 1.55^{d-1} -1}}
\inf_{\lambda \geq 0}\bigg\{ &1-1.552\frac{1.552 \beta}{2.552} -p_1\lambda_1\sum_{i=1}^{d-3}i1.55^{i-1}p_1 q_{i+1,i+1} \label{lp} \\&
-p_1\lambda_1\frac{(d-2) \beta}{2.552}q_{d-1,d-1} 
-p_1\lambda_1\frac{(d-1)1.552 \beta}{2.552}q_{d,d}
- o(1):\nonumber \\&
\beta= 1-\sum_{i=1}^{d-3}1.55^{i-1}p_1\bigg\}. \nonumber
\end{align}

 We verify that the objective of \eqref{lp} is strictly larger than $10^{-5}$ for $7\leq d\leq 20$
using Mathematica.

\section{Proof of Lemma \ref{01}}\label{appendix:01}
Lemma \ref{01} states,
\begin{lemma}\label{011}
Conditioned on $p^{\tau_6}\in B_7$ w.s.h.p.\@ for $\tau_6\leq t<\tau'$, 
\begin{align}\label{eq:app}
\text{  if $\zeta_t>0$ then, }    \mathbb{E}(\z_{t+1}-\z_t|\cH_t) \leq -10^{-5}.
\end{align}
\end{lemma}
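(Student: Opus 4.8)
\textbf{Proof plan for Lemma \ref{01}.}

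The plan is to continue the strategy used to establish Property III for the larger values of $d$, but now carrying out a more delicate confinement of the process $\{p^t\}$ in the regime $\tau_6\le t<\tau'$, where the geometric lower bound $p_i\ge \alpha_i p_{i-1}$ need no longer hold for all coordinates up to $d-1$ and where the number of nonzero coordinates is small. First I would invoke Lemma \ref{lem:confine} to record that w.s.h.p. $p^{\tau_6}\in B_7$, so the process enters this final window already confined inside $B_7$, and also record the a priori bounds $\zeta_t\le\log^6 n$, $m_t> n^{0.4+10^{-5}}$ and $\Delta(G)\le\log n$ that hold throughout (Lemmas \ref{lem:degrees}, \ref{lem:density}) — these give the bounded-difference estimates $|\zeta_{t+1}-\zeta_t|\le\log n$ needed for every martingale step. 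The heart of the argument is the identity \eqref{eq:echange_zt},
\[
\mathbb{E}(\z_{t+1}-\z_t|\cH_t)=-\mathbb{I}(\z_t>0)+p_1^t\lambda_1^t\sum_{\ell=1}^k\ell p_\ell^t q_{\ell+1,\ell+1}^t+o(1),
\]
so it suffices to show that while $\zeta_t>0$ the positive term $p_1^t\lambda_1^t\sum_\ell \ell p_\ell^t q_{\ell+1,\ell+1}^t$ stays below $1-10^{-5}$. By Lemma \ref{lem:calcipq} this term is at most $0.55d(d+1)/(1.55^{d-1}-1)$ whenever $p^t\in B_d$, which is already $<1-10^{-5}$ once $d\ge 7$; so the whole content of the lemma is to propagate the confinement $p^t\in B_d$ from $d=7$ down through $d=6,5,4,3$ as the process passes the stopping times $\tau_6,\tau_5,\tau_4,\tau_3$, i.e. to prove the analogues of Lemma \ref{lem:confine} and Lemmas \ref{03}--\ref{02} in this last stretch.

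Concretely, I would run the same two-part induction as in Lemma \ref{lem:confine}: (a) the stopping times $\sigma_i$ at which $p^t$ first touches the hyperplane $p_i=\alpha_i p_{i-1}$ are nested, $\sigma_i\le\sigma_{i-1}$, and (b) $\tau_d\le\sigma_d$ for $d\in\{3,4,5,6\}$. Part (a) for the small indices follows verbatim from Lemma \ref{lem:lowerconstrains} (valid for $2\le r\le d-2$ and $t\le\tau_{d-2}-1$) together with the displacement bound \eqref{eq:1001} and Azuma--Hoeffding, exactly as in the proof of Lemma \ref{lem:confine}; the drift $\mathbb{E}[D_t(2mp_r-\alpha_r 2m p_{r-1})|\cH_t]>10^{-5}2^{-r}C_1$ pushes the process off the boundary. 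Part (b), the statement $\tau_d\le\sigma_d$, needs the boundary-drift estimate at the \emph{top} coordinate $r=d-1$: this is exactly Lemma \ref{lem:upperconstrain}, which requires $7\le d\le k+1$, so it applies at $d=7$ and transfers $p^{\tau_6}\in B_7$ into the fact that the process never leaves $B_6$ before $\tau_5$, hence (by Lemmas \ref{03}, \ref{02}, which cover $3\le d\le k+1$) that \eqref{eq:changeZ_t} holds on $(\tau_6,\tau_5]$; iterating with $d=6,5,4$ via Lemma \ref{02} (the $3\le d\le 10$ case) and the small-$r$ version of the upper-constraint argument then yields \eqref{eq:changeZ_t} on $(\tau_5,\tau_4]$, $(\tau_4,\tau_3]$, and finally on $[\tau_3,\tau')$. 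In the very last window $[\tau_3,\tau')$ only coordinates $p_1,p_2$ survive with $p^t\in B_3$, i.e. $p_2\ge 1.55 p_1$ and $p_1+p_2=1+o(1)$, forcing $p_1\le 0.55/(1.55^2-1)\le 0.4$; plugging $d=2$ into Lemma \ref{lem:calcipq} (or bounding $p_1\lambda_1 q_{2,2}$ directly as in \eqref{qq1}) gives the positive term $\le 0.55\cdot 2\cdot 3/(1.55^2-1)<1-10^{-5}$, closing the estimate.

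The main obstacle I anticipate is not any single inequality but the bookkeeping of \emph{which} boundary hyperplanes remain relevant once $d$ drops below $7$: Lemma \ref{lem:upperconstrain} is only stated for $d\ge 7$, so for $d\in\{3,4,5,6\}$ I cannot quote it directly and must instead argue that the process, having entered $B_{d+1}$ with slack (a consequence of part (a) applied one level up, together with the $\Omega(1)$ slack $p^{t^*}_j-\alpha_j p^{t^*}_{j-1}\ge C_1$ from Lemma \ref{lem:initial}(ii) propagated forward), simply cannot reach the outermost face $p_{d-1}=\alpha_{d-1}p_{d-2}$ of $B_d$ before time $\tau_{d-1}$ — which is automatic once $p^t\in B_{d+1}$ since then $p_{d-1}\ge\alpha_{d-1}p_{d-2}$ holds as a \emph{defining} inequality of $B_{d+1}$. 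Thus the genuinely new work is only at the transition $d=7\to 6$, handled by Lemma \ref{lem:upperconstrain} with $d=7$, after which the inclusions $B_{d+1}\subset B_d$ and the already-established nesting $\tau_{d}\le\tau_{d-1}$ carry everything downward for free; the remaining care is to ensure the $o(1)$ error terms from Lemmas \ref{lem:degDistributions} and \ref{lem:degrees} are uniformly $o(1)$ on $[\tau_6,\tau')$, which holds because $m_t>n^{0.4+10^{-5}}$ there, and to union-bound the $O(n)$ applications of Azuma--Hoeffding so the failure probability stays $o(n^{-9})$.
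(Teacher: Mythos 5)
There is a genuine gap: your claim that once the process enters $B_7$ at $\tau_6$ the confinement for $d\in\{6,5,4,3\}$ "carries downward for free" from $B_{d+1}\subset B_d$ is circular. Membership in $B_{d+1}$ is only guaranteed by Lemma \ref{lem:confine} up to time $\tau_d$; on the interval $(\tau_d,\tau_{d-1}]$ the binding constraint is the top face $p_{d-1}\ge \alpha_{d-1}p_{d-2}$ of $B_d$, and the only tool for holding the process off a top face is Lemma \ref{lem:upperconstrain}, which is stated (and is only true) for $d\ge 7$: its drift gain comes from the term $\mathbb{I}(\zeta=0)\lambda_d$ estimated via Lemma \ref{eq:sumzetas} together with the bound $1-\tfrac{0.55d(d+1)}{1.55^{d-1}-1}-0.95$, which is negative for $d\le 6$ (for $d=6$ the middle term is already about $2.9$). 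Invoking Lemma \ref{lem:upperconstrain} with $d=7$ only controls the face $r=6$ of $B_7$ on $[\tau_7,\tau_6)$, which Lemma \ref{lem:confine} has already handled; it says nothing about $t>\tau_6$. This is exactly where the paper's actual proof does new work: it introduces auxiliary half-spaces (e.g.\ $p_5\ge \alpha_5 p_4+0.05p_6$ versus $p_5\le 1.56p_4+0.05p_6$), modified potentials such as $2m(p_5-a_5p_4+0.1p_6)$ and $2m(p_4-a_4p_3+0.2p_5)$, and for $[\tau_4,\tau')$ abandons confinement in $B_4$, $B_3$ altogether in favor of threshold arguments on $p_3$ and $p_2$ (the constants $0.36,0.35,0.3,0.29$), each drift inequality being verified computationally. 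Your proposal contains no substitute for any of this.

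A second, concrete error compounds the gap: in the last window you bound the positive term by $0.55\cdot 2\cdot 3/(1.55^{2}-1)$ and assert this is $<1-10^{-5}$, but it equals roughly $2.35$; more generally the crude bound of Lemma \ref{lem:calcipq}, $\tfrac{0.55d(d+1)}{1.55^{d-1}-1}$, exceeds $1$ for all $d\le 10$, which is precisely why the paper proves Lemma \ref{02} by a finer optimization (constraining the whole vector $p^t\in B_d$ with $\sum_i p_i=1+o(1)$, verified in Mathematica) rather than by Lemma \ref{lem:calcipq}, and why even Lemma \ref{02} cannot be quoted on $[\tau_4,\tau')$ without first establishing the nonstandard confinement described above. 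So the overall architecture you describe (drift off each hyperplane plus Azuma, then Lemmas \ref{03}--\ref{02}) matches the paper's strategy for $d\ge 7$, but the content of Lemma \ref{01} is exactly the regime where that strategy breaks, and your proposal does not supply the replacement arguments.
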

The proof of Lemma \ref{011} will follow from Lemmas \ref{0116}, \ref{0115}, \ref{0114} and \ref{0113} found in the following subsections. Each of these Lemmas studies the process during some interval of the form $[\tau_i,\tau_{i-1})$, $i\in \{6,5,4,3\}$. Lemmas \ref{0116}, \ref{0115} and \ref{02} imply that
 with probability $1-o(n^{-9})$ for $\tau_6\leq t\leq \tau_4$ \eqref{eq:app} holds.

In the following subsections we condition on Lemma \ref{lem:confine}. Hence $\sigma_6\leq \sigma_5 \leq \sigma_4\leq \sigma_3 \leq \sigma_2$. In addition recall, that as it has been shown at the proof of Lemma \ref{lem:changeZ_t}, if \eqref{eq:app} holds for $t<\tau_3$ then $\tau_3=\tau'$.

\subsection{The process during the interval $[\tau_6,\tau_5]$}

\begin{lemma}\label{0116}
Conditioned on $p^{\tau_6}\in B_7$ w.s.h.p.\@ $p^t\in B_6$ for $\tau_6\leq t \leq \tau_5$. \end{lemma}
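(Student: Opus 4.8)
The plan is to mimic the proof of Lemma~\ref{lem:confine}, treating $[\tau_6,\tau_5)$ as the ``final phase'' of \kG\ in which the labels above $5$ have essentially died out, so that an analogue of Lemma~\ref{lem:upperconstrain} can be proved with $d=6$. Since $B_7\subseteq B_6$, the conditioning gives $p^{\tau_6}\in B_6$, and by Lemma~\ref{lem:confine}(b) with $d=6$ together with Lemma~\ref{lem:confine}(a) we have w.s.h.p.\ $\tau_6\le\sigma_6\le\sigma_5\le\sigma_4\le\sigma_3\le\sigma_2$; hence, provided $\sigma_5\ge\tau_5$, all of the constraints $p_r^t\ge\alpha_r p_{r-1}^t$, $2\le r\le5$, hold for $\tau_6\le t<\tau_5$ (note $\tau_6>t^*$ by Lemma~\ref{lem:initial}) and $p^t\in B_6$. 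So it suffices to prove that w.s.h.p.\ the single constraint $p_5^t\ge\alpha_5 p_4^t$ is never violated on $[\tau_6,\tau_5)$. Throughout that interval one records the structural facts, from monotonicity of labels and degrees along \kG\ and from $\zeta_t\le\log^6 n$, that the high-index coordinates of $p^t$ have disappeared, that $\sum_{i\le5}p_i^t=1-o(1)$, and that while $p^t\in B_6\subseteq B_3$ one has $p_1^t\ge C_1/2$ (Lemma~\ref{lem:initial}) and $p_i^t\ge1.55^{i-1}p_1^t$.

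Next I would run the martingale argument of Lemma~\ref{lem:confine} for $2m_t(p_5^t-\alpha_5 p_4^t)$, whose one-step increments are at most $12\log n$ by~\eqref{eq:1001}. Suppose this quantity first became negative at some $t_1\in[\tau_6,\tau_5)$. By Lemma~\ref{lem:initial}(ii) it exceeds $C_1\cdot 2m_{t^*}=\Omega(n)$ at time $t^*$, so $t_1>t^*+\log^8 n$; over the window $[t_1-\log^8 n,t_1]$ the difference $p_5^t-\alpha_5 p_4^t$ stays below $n^{-0.1}$ while the total change of $2m_t(p_5^t-\alpha_5 p_4^t)$ is $\le0$. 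If one can lower bound the summed conditional drift over this window by a quantity of order $\log^8 n$, an Azuma--Hoeffding estimate (with increments $12\log n$) contradicts this with probability $1-o(n^{-10})$, and a union bound over $t_1=O(n)$ finishes the proof.

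Everything therefore reduces to a $d=6$ analogue of Lemma~\ref{lem:upperconstrain}: for $\tau_6\le t<\tau_5-\log^8 n$, if $p^{t+i}\in B_6$, $p_1^{t+i}\ge C_1/2$ and $p_5^{t+i}-\alpha_5 p_4^{t+i}\le n^{-0.1}$ for $0\le i\le\log^8 n$, then $\sum_i\mathbb{E}\big[D_{t+i}(2mp_5-\alpha_5 2mp_4)\mid\cH_{t+i}\big]\ge10^{-5}\log^8 n$. I would derive the one-step identity~\eqref{eq12} with $d=6$, use that the coordinates above $p_5$ have vanished, replace $\mathbb{I}(\zeta_i=0)$ by $1-p_1^i\lambda_1^i\sum_{j}jp_j^iq_{j+1,j+1}^i-o(1)$ on average via Lemma~\ref{eq:sumzetas} with $d=6$, and then, after clearing the $\lambda_\ell$'s with Lemmas~\ref{lem:lambdamonotone},~\ref{lem:lambdabound},~\ref{lem:lambdaconvex} and using $\sum_{i\le5}p_i=1-o(1)$ together with $p_i\ge1.55^{i-1}p_1$, be left with an optimization over $\lambda\ge0$ of exactly the shape of $opt_d$ in the proof of Claim~\ref{claim:stopsmall1}, but for $d=6$. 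This is the main obstacle: the crude estimate $\tfrac{0.55\,d(d+1)}{1.55^{d-1}-1}$ that trivializes the $d\ge7$ case in Claim~\ref{claim:stoplarge} exceeds $1$ at $d=6$, so the inequality genuinely relies on the finer constraints $p_i\ge1.55^{i-1}p_1$ and on the exact values of the $q_{\ell,\ell}$; I would verify it computationally, exactly as for the other small indices in the paper. The one remaining technicality, a window straddling $\tau_6$, is handled by splitting it at $\tau_6$ and applying Lemma~\ref{lem:lowerconstrains} with $d=7$ on the part below $\tau_6$, where the constraint $p_5\ge\alpha_5 p_4$ is one of those defining $B_7$ and $p^t\in B_7$ holds by Lemma~\ref{lem:confine}.
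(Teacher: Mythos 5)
Your overall scaffolding (reduce to the single facet $p_5^t\ge\alpha_5 p_4^t$ via Lemma \ref{lem:confine}(a), then run the windowed martingale/Azuma argument of Lemma \ref{lem:confine}) is the paper's general confinement scheme, but the step you defer to an unspecified computation --- a $d=6$ analogue of Lemma \ref{lem:upperconstrain}, i.e.\ that near the hyperplane $p_5=\alpha_5 p_4$ the windowed drift of $2m(p_5-\alpha_5 p_4)$ is at least $10^{-5}$ per step for \emph{every} admissible configuration in $B_6$ during $[\tau_6,\tau_5)$ --- is exactly the point at which the paper abandons this strategy, and you cannot simply assert it will be ``verified computationally''. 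During this phase the sixth coordinate is unconstrained from below and decays to $0$, while $p_1$ may be as large as $0.55/(1.55^{5}-1)\approx 0.069$ and $p_5\approx 0.39$ at the facet; in that regime the replenishment term $\big(1-p_1\lambda_1\sum_i i p_i q_{i+1,i+1}+p_6-a_6p_5\big)\lambda_6$ from \eqref{eq12} loses its sign for moderate $\lambda$, and the relaxation \eqref{lp} that the paper certifies for $7\le d\le 20$ in Claim \ref{claim:stopsmall} is demonstrably \emph{negative} at $d=6$ (take $p_6=0$, all ratios minimal, $\lambda\approx 3$: the positive part is about $0.38$ while the subtracted term is about $0.40$). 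So the inequality you need, if true at all, hangs on cancellations that the authors evidently could not or did not certify; this is precisely why Section 5 says that for $t\ge\tau_6$ the process is confined ``in a more careful and calculated way''.

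The paper's proof of Lemma \ref{0116} is structurally different and uses the hypothesis $p^{\tau_6}\in B_7$ quantitatively, not merely through $B_7\subseteq B_6$ (which is all your argument extracts from it): $B_7$-membership gives $p_6^{\tau_6}>0.38$, and the proof splits according to whether the process has left $C_6^-=\{x_5\le 1.56x_4+0.05x_6\}$. While $p_6^t>0.2$ one verifies positivity of the drift of the \emph{modified} potential $2m(p_5-a_5p_4+0.1p_6)$ (together with that of $2m(p_5-a_5p_4)$) on $B_6^-$, so that by the time $p_6$ has fallen from $0.38$ to $0.2$ the margin $p_5-\alpha_5p_4$ has accumulated a constant buffer $\ge 0.018\ge 0.01p_4+0.05p_6$; from then on the confinement is run against the shifted hyperplane $x_5=1.56x_4+0.05x_6$ via \eqref{eq:61}, not against the $B_6$ facet itself, which keeps the process away from exactly the bad regime ($p_6$ small, $p_5$ large, $p_1$ maximal) where your drift inequality is in doubt. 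Your proposal contains neither the modified potential nor any use of the initial size of $p_6$, so the central inequality is an unsupported assertion and the proof has a genuine gap there.
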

\begin{proof}(Sketch) 
Let $C^+_6:=\{x\in \mathbb{R}^{k+1}: x_5 \geq \a_5x_4^t+0.05x_6^t \}$ and $C^-_6:=\{x\in \mathbb{R}^{k+1}: x_5 \leq 1.56x_4^t+0.05x_6^t \}$.
Let $B_6^+=B_6\cap C_6^+$ and $B_6^-=B_6\cap C_6^-$. We consider 2 cases depending whether there exists $\tau_6 \leq t'<\sigma_5$ such that $p^{t'}\notin C^-_6$. If such $t'$ exists then $p^{t'}$ lies inside $B_6^+$ and away from its boundary (recall that $\a_5\leq 1.552$). Thereafter similar arguments as the ones used for the proof of Lemma \ref{lem:confine} imply that $p^t\in B_6^+\subset B_6$ for $t\in[t',\tau_5]$. Otherwise, we have that $p_5^t\leq 1.56p_4^t+0.05p^t_6$ for $t\leq \sigma_5$. In such a case we show that the quantity $p_5^t-a_5 p_4^t$, which at time $\tau_6$ is non-negative, increases, passing at some point the quantity $0.01p_4^t+0.05p_6^t$ hence giving a contradiction.

{\textbf{Case 1}:} There exists $\tau_6 \leq t'<\sigma_5$ such that $p^{t'}\notin C^-_6$.

In Case 1 we will prove that w.s.h.p.\@ $p^t\in B_6^+$ for $t'\leq t \leq \tau_5$. For this, as $B_6^+\subset B_6\setminus C_6^- \subset B_6$, comparing to the proof of Lemma \ref{lem:confine}  it suffices to show that if $p^t\in B_6$ and $p_5^t=1.56p_4^t +0.05p_6^t+o(1)$ then 
\begin{align}
    (1+p_6-p_1\lambda_1\sum_{i=1}^6ip_iq_{i+1,i+1})(1.05\lambda_6+0.05)-(2.56\lambda_5+1)p_5+1.56(\lambda_4+1)p_4 \nonumber
    \\ +p_1\lambda_1(-6p_5q_{5,5}+1.56\cdot 5p_4q_{5,5}+0.05\cdot7p_6q_{7,7})>0. \label{eq:61}
\end{align}
Similarly to \eqref{lp} we have that for $t\geq \tau_6$ if $p\in B_6$ then the RHS of \eqref{eq:61} is minimized when $p_i=a_ip_{i-1}$ for $i=2,3$. We verify \eqref{eq:61} using Mathematica.

{\textbf{Case 2}:} $p^t\in C^-_6$ for  $\tau_6 \leq t<\sigma_5$.

$p^{\tau_6}\in B_7$ implies that $p_6^{\tau_6}\geq \frac{0.55*1.55^5}{1.55^6-1}>0.38$. Let $t'=\min\{t>\tau_6:p_6^t\leq 0.2\}$. One can verify using Mathematica that while $p^t\in B_6^-$ and $\tau_6\leq t<t'$,  both quantities
$\mathbb{E}(D_t(2m(p_5-a_5p_4))$ and $\mathbb{E}(D_t(2m(p_5-a_5p_4+0.1p_6))$, with $\mathbb{I}(\z_t=0)$ substituted by $1-p_1^t\l_1^t\sum_{i=1}^6ip_1q_{i,i}$ (see Lemma \ref{eq:sumzetas}),  are strictly positive. 
Hence w.s.h.p.\@ we have that $p^t\in B_6$ for $t\in[\tau_6,t']$ and either $t'\geq \tau_5$ or $t'<\tau_5$ and $p^{t'} \in B_6$. In the later case, from a standard martingale argument, it follows that 
\begin{align*}
    2m_{t'}(p_5^{t'}-\a_5p_4^{t'}) &\geq 2m_{\tau_6}(p_5^{\tau_6}-\a_5p_4^{\tau_6})+
    0.1(2m_{\tau_6}p_6^{\tau_6}-2m_{t'}p_6^{t'})+o(m_{\tau_6}-m_{t'})
    \\&\geq 0+0.1(2m_{\tau_6}\cdot 0.38-2m_{t'} \cdot 0.2) +o(m_{\tau_6}-m_{t'}) \geq 0.18 \cdot 0.1 \cdot 2m_{t'}.
\end{align*}
Hence,
\begin{align*}
    p_5^{t'}-\a_5p_4^{t'} \geq 0.18 \cdot 0.1 
    \geq 0.01\cdot \frac{1}{2.55}+0.01
    \geq 0.01\cdot \frac{p_4^{t'}}{p_4^{t'}+p_5^{t'}}+0.01\geq  0.01p_4^{t'}+0.05p_6^{t'},
\end{align*}
which gives a contradiction. Thus in Case 2 we have that $t'=\tau_5$ w.s.h.p.\@.

In both, Case 1 and Case 2, we have that w.s.h.p.\@ $p^t\in B_6$ for $t\in [\tau_6,\tau_5]$.
\end{proof}

\subsection{The process during the interval $[\tau_5,\tau_4]$}

\begin{lemma}\label{0115}
Conditioned on $p^{\tau_5}\in B_6$ w.s.h.p.\@  $p^t\in B_5$ for $\tau_5\leq t \leq \tau_4$. \end{lemma}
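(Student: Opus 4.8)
## Proof Proposal for Lemma \ref{0115}

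The plan is to mirror almost exactly the structure of the proof of Lemma \ref{0116} (the interval $[\tau_6,\tau_5]$), since the situation is formally the same one step down in the dimension: we are handed $p^{\tau_5}\in B_6$ and we must confine the process inside $B_5$ until time $\tau_4$. The only ``new'' constraint to worry about is $p_4^t\ge \alpha_4 p_3^t$, exactly as in the previous lemma the new constraint was $p_5^t\ge\alpha_5 p_4^t$; all the lower constraints $p_i\ge\alpha_i p_{i-1}$ for $2\le i\le 3$ were already established to hold (with room to spare) on $[t^*,\tau_3]$ by Lemma \ref{lem:confine} and Lemma \ref{lem:initial}, and $\tau_4\le\tau_3$. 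Moreover, conditioning on Lemma \ref{lem:confine} as announced, we have $\sigma_5\le\sigma_4\le\sigma_3$, so up to time $\sigma_5$ the process is inside $B_6$, and it suffices to show the new face $p_4-\alpha_4p_3\ge 0$ is never violated before $\tau_4$.

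First I would introduce the two auxiliary cones
$$
C_5^{+}:=\{x:\,x_4\ge\alpha_4 x_3+0.05\,x_5\},\qquad
C_5^{-}:=\{x:\,x_4\le 1.56\,x_3+0.05\,x_5\},
$$
and set $B_5^{+}=B_5\cap C_5^{+}$, $B_5^{-}=B_5\cap C_5^{-}$. The argument then splits into the same two cases. In \textbf{Case 1}, there is some $\tau_5\le t'<\sigma_4$ with $p^{t'}\notin C_5^{-}$; since $\alpha_4\le 1.552<1.56$, this $p^{t'}$ lies in $B_5^{+}$ away from its boundary, and a martingale argument identical to the one in Lemma \ref{lem:confine} (using Lemma \ref{lem:lowerconstrains} / Lemma \ref{lem:upperconstrain} for the drift of $2m(p_4-\alpha_4p_3)$ together with the bounded-difference estimate \eqref{eq:1001} and Azuma--Hoeffding) confines the process in $B_5^{+}\subset B_5$ for $t'\le t\le\tau_4$. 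The only thing to check computationally is the analogue of \eqref{eq:61}: if $p^t\in B_5$ and $p_4^t=1.56\,p_3^t+0.05\,p_5^t+o(1)$ then the one-step drift of $2m(1.56\,p_3-p_4+0.05\,p_5)$, i.e.
$$
\bigl(1+p_5-p_1\lambda_1\textstyle\sum_{i=1}^{5}ip_iq_{i+1,i+1}\bigr)(1.05\lambda_5+0.05)
-(2.56\lambda_4+1)p_4+1.56(\lambda_3+1)p_3
$$
$$
{}+p_1\lambda_1\bigl(-5p_4q_{4,4}+1.56\cdot4\,p_3q_{4,4}+0.05\cdot6\,p_5q_{6,6}\bigr)>0,
$$
which, just as before, by $p^t\in B_5$ is minimized when $p_i=\alpha_ip_{i-1}$ for $i=2$ and is verified with Mathematica.

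In \textbf{Case 2}, $p^t\in C_5^{-}$ for all $\tau_5\le t<\sigma_4$. Here $p^{\tau_5}\in B_6$ forces $p_5^{\tau_5}\ge \tfrac{0.55\cdot 1.55^{4}}{1.55^{5}-1}>0.34$ (a constant bounded away from $0$); set $t'=\min\{t>\tau_5:\,p_5^t\le 0.18\}$. On $[\tau_5,t')$ with $p^t\in B_5^{-}$ one checks with Mathematica that the drifts of both $2m(p_4-\alpha_4p_3)$ and $2m(p_4-\alpha_4p_3+0.1\,p_5)$ are strictly positive (replacing $\mathbb{I}(\zeta_t=0)$ by $1-p_1^t\lambda_1^t\sum_{i=1}^{5}ip_i^tq_{i+1,i+1}^t$ via Lemma \ref{eq:sumzetas}); a standard martingale argument then gives, at $t'<\tau_4$,
$$
2m_{t'}(p_4^{t'}-\alpha_4p_3^{t'})\ge 0.1\bigl(2m_{\tau_5}p_5^{\tau_5}-2m_{t'}p_5^{t'}\bigr)+o(m_{\tau_5}-m_{t'})\ge 0.1(0.34-0.18)\cdot 2m_{t'},
$$
so $p_4^{t'}-\alpha_4p_3^{t'}\ge 0.016>0.01\tfrac{p_3^{t'}}{p_3^{t'}+p_4^{t'}}+0.05 p_5^{t'}\ge 0.01p_3^{t'}+0.05p_5^{t'}$, contradicting $p^{t'}\in C_5^{-}$; hence $t'\ge\tau_4$ w.s.h.p. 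In either case $p^t\in B_5$ throughout $[\tau_5,\tau_4]$, which is the claim.

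I expect the main obstacle to be purely bookkeeping rather than conceptual: verifying that the particular numerical slacks ($1.56$ versus $\alpha_4$, the $0.05$ and $0.1$ coefficients, the threshold $0.18$ for $p_5$) are consistent with the computational inequalities checked in Mathematica for this specific $d=5$ instance — in particular that the Case 1 inequality above is indeed strictly positive over the relevant compact region $\{\lambda\ge0,\ 0\le p_1\le 0.55/(1.55^{5}-1),\ p_i=1.55^{i-1}p_1\ (i\le 2)\}$. All the probabilistic machinery (the drift lemmas \ref{lem:lowerconstrains}, \ref{lem:upperconstrain}, \ref{eq:sumzetas}, the bounded-difference bound \eqref{eq:1001}, and Azuma--Hoeffding) transfers verbatim from the $d=6$ case.
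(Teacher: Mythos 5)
Your overall strategy is the paper's: the same two-case split along the cones $C_5^{\pm}$, the same Lyapunov quantities $2m(p_4-\alpha_4p_3+c\,p_5)$ with drift estimates verified computationally, Lemma \ref{eq:sumzetas} to replace $\mathbb{I}(\zeta_t=0)$, and the same Azuma/bounded-difference confinement machinery; your Case 1 boundary inequality is the correct index-shift of \eqref{eq:61}. Where you diverge is in the numerical design of Case 2, and this is exactly where the content of the lemma lives. The paper does \emph{not} reuse the $d=6$ constants: it takes $C_5^{+}=\{x_4\ge\alpha_4x_3+0.25x_5\}$ (not $0.05$), and runs a two-stage argument with stopping times $t_1=\min\{t:p_5^t\le0.23\}$ and $t_2=\min\{t>t_1:p_5^t\le0.1\}$, verifying positivity of the drift of $2m(p_4-\alpha_4p_3+0.2\,p_5)$ only while $p_5>0.23$ and of the bare drift $2m(p_4-\alpha_4p_3)$ while $p_5>0.1$, then landing at $p_4-\alpha_4p_3\ge 0.2(0.39-0.23)\ge 0.01p_3+0.25p_5$ at $t_2$. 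Your single-stage version instead needs positivity of the $0.1$-corrected drift all the way down to $p_5=0.18$; this is not implied by the paper's checks (the $0.2$-corrected drift is only certified for $p_5>0.23$, and adding $0.1\cdot D_t(2mp_5)$, which is negative of order $\lambda_5$, to a bare drift that is only ``strictly positive'' may well fail on $(0.18,0.23]$). The fact that the author abandoned the $d=6$ template for $d=5$ is circumstantial evidence that the straight copy does not verify. Two smaller points compound the loss of slack: your bound $p_5^{\tau_5}>0.34$ is weaker than the available $0.55\cdot1.55^4/(1.55^5-1)>0.39$, so your accumulated gain is $0.1\times0.16=0.016$ against a requirement of about $0.013$, a much thinner margin than the paper's $0.032$ versus $0.029$; and your contradiction $p_4\ge 1.562p_3+0.05p_5>1.56p_3+0.05p_5$ clears the $C_5^-$ face only by the $0.002p_3$ slack, whereas the paper's $0.25p_5$ coefficient leaves room for the $o(1)$ and $n^{-0.1}$ errors in the martingale step. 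So: right approach and correct probabilistic scaffolding, but the proposal is only conditionally correct — the specific Mathematica inequalities it relies on differ from those the paper verified, and you would need to check them (or adopt the paper's two-stage constants) before the proof stands.
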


\begin{proof}(Sketch)  
Let $C^+_5:=\{x\in \mathbb{R}^k: x_4 \geq \a_4x_3^t+0.25x_5^t \}$ and $C^-_5:=\{x\in \mathbb{R}^k: x_4 \leq 1.56p_3^t+0.05p_5^t \}$.
Let $B_5^+=B_5\cap C_5^+$ and $B_5^-=B_5\cap C_5^-$.

{\textbf{Case 1}:} There exists $\tau_5 \leq t'<\sigma_4$ such that $p^t\notin C^-_5$. The proof of this case is identical to the one for Case 1 in Lemma \ref{0116}.

{\textbf{Case 2}:} There does not exist $\tau_5 \leq t'<\sigma_4$ such that $p^t\in C^-_5$.

$p^{\tau_5}\in B_6$ implies that $p_5^{\tau_5}\geq \frac{0.55*1.55^4}{1.55^5-1}>0.39$. Let $$t_1:=\min\{t>\tau_5:p_5^t\leq 0.23 \}\text{ and }
t_2=\min\{t>t_1:p_5^t\leq 0.1\}.$$
We verify using Mathematica that while $p^t\in B_5^-$, if $t<t_1$ then $\mathbb{E}(D_t(2m(p_4-a_4p_3+0.2p_5))$ is strictly positive while if $t<t_2$ then $\mathbb{E}(D_t(2m(p_4-a_4p_3))$ is strictly positive
(in both calculations $\mathbb{I}(\z_t=0)$ is substituted by $1-p_1^t\l_1^t\sum_{i=1}^5ip_1q_{i,i}$ - see Lemma \ref{eq:sumzetas}). Hence either $t_2\geq \tau_4$ and w.s.h.p.\@ $p^t\in B_5$ for $t\in[\tau_5,\tau_4]$ or $t_2<\tau_4$. Now if $t_2<\tau_4$, from a standard martingale argument, we have that w.s.h.p.\@ 
\begin{align*}
    p_4^{t_2}-\a_4p_3^{t_2} \geq (0.39-0.23) \cdot 0.2+ 0.09\cdot 0 \geq 0.5\cdot 10^{-2}+0.025\geq  0.01p_3^{t_2}+0.25p_6^{t_1}
\end{align*}
which brings us to Case 1. Thus in Case 2, w.s.h.p.\@ $t_2\geq \tau_4'$ and $p^t\in B_5$ for $t\in[\tau_5,\tau_4]$.
\end{proof}

\subsection{The process during the interval $[\tau_4,\tau_3]$}
\begin{lemma}\label{0114}
Conditioned on $p^{\tau_4}\in B_5$ w.s.h.p.\@ \eqref{eq:app} holds for $t\in[\tau_4,\tau_3]$. In addition, w.s.h.p.\@ $p_2^{\tau_3}\geq a_2p_1^{\tau_2}$.
 \end{lemma}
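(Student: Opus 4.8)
Throughout $[\tau_4,\tau_3)$ the index $index(t)$ equals $3$, so $p^t_i=0$ for $i\ge 4$ and $p^t_1+p^t_2+p^t_3=1-o(1)$; hence $p^t$ effectively lives in the coordinates $(p_1,p_2,p_3)$ and the only facet of $B_4$ that can be crossed is $x_3=\alpha_3 x_2$, since the inequality $x_2=\alpha_2 x_1$ is preserved automatically once the $x_3=\alpha_3 x_2$ one is, by the ordering $\sigma_2\ge\sigma_3$ of Lemma~\ref{lem:confine}(a), and $B_4$ imposes no constraint on $p_4$. The plan has three steps: (1) conditioned on $p^{\tau_4}\in B_5\subseteq B_4$, show w.s.h.p.\@ that $p^t\in B_4$ for all $\tau_4\le t\le\tau_3$; (2) deduce \eqref{eq:app} on $[\tau_4,\tau_3)$ from Lemma~\ref{02} with $d=4$; (3) read off the second assertion, namely $p^{\tau_3}\in B_3$, equivalently $p_2^{\tau_3}\ge\alpha_2 p_1^{\tau_3}=a_2 p_1^{\tau_3}$, which is the hypothesis that the companion Lemma~\ref{0113} starts from.

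Step~(1) re-runs the two-case martingale argument of Lemmas~\ref{0116} and~\ref{0115}, now for the single facet $x_3=\alpha_3 x_2$. Fix a small constant $c>0$ and split according to whether the trajectory ever enters $B_4\cap\{x_3\ge(\alpha_3+c)x_2\}$. If it enters this region at some time $t'$, then from $t'$ onward one controls $2m(p_3-\alpha_3 p_2)$ exactly as in Lemma~\ref{lem:confine}: the drift $\E[D_t(2mp_3-\alpha_3\cdot 2mp_2)\mid\cH_t]$ is expanded as in~\eqref{eq12}; the value $\alpha_3$ produced by~\eqref{alphas} makes this drift strictly positive when $p^t$ is near the facet; the deterministic term $\mathbb{I}(\zeta=0)$ is replaced by $1-p_1^t\lambda_1^t\sum_i i p_i^t q_{i+1,i+1}^t$ via Lemma~\ref{eq:sumzetas}; per-step increments are $O(\log n)$ by Lemma~\ref{lem:degrees}; and Azuma--Hoeffding then shows $2m(p_3-\alpha_3 p_2)$ w.s.h.p.\@ never returns to $0$, so $p^t\in B_4$ on $[t',\tau_3]$. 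If instead the trajectory never enters that region, then $p_3^t\le(\alpha_3+c)p_2^t$ throughout $[\tau_4,\tau_3)$; a Mathematica verification of the resulting polynomial inequality in $\lambda$ and $(p_1,p_2,p_3)$, in the spirit of Lemma~\ref{02} and of the $d=5,6$ analyses, shows that $2m(p_3-\alpha_3 p_2)$, which is $\ge 0$ at $\tau_4$, has strictly positive drift, so it grows and w.s.h.p.\@ eventually leaves the region, a contradiction; in particular it stays $\ge 0$, so again $p^t\in B_4$. Steps~(2) and~(3) are then immediate.

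The main obstacle is Step~(1) in this smallest case $d=4$: the crude bound of Lemma~\ref{lem:calcipq} no longer falls below $1$, so one must pass to the refined inequality underlying Lemma~\ref{02}, and the facet-drift inequality for $x_3=\alpha_3 x_2$ is correspondingly tight, depending on the exact value of $\alpha_3$ and on the monotonicity of $q_{i,i}$ in $i$. A second delicate point is the behaviour at the end of the interval: if $Y_3^t$ is exhausted before $m_t$ drops to $n^{0.4+10^{-5}}$ then $p_3^t\to 0$, which forces $p^t$ off the facet $x_3=\alpha_3 x_2$; in that short terminal window the $B_4$-confinement must be handed over cleanly to the $B_3$-confinement of Lemma~\ref{0113}, in a window short enough that $\zeta_t$ cannot climb above $\log^6 n$ — this is why \eqref{eq:app} is only asserted on the half-open interval while the closed endpoint is used solely to record $p^{\tau_3}\in B_3$.
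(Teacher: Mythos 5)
There is a genuine gap, and it sits exactly at your Step (1). Your plan rests on proving full confinement $p^t\in B_4$ on all of $[\tau_4,\tau_3]$ by the same facet-drift argument used for large $d$, and then invoking Lemma \ref{02} and reading off $p^{\tau_3}\in B_3$ at the end. The paper does not prove (and cannot use) such a confinement: its drift inequality for $2m(p_3-\alpha_3 p_2)$ is verified only under the additional hypothesis $p_3^t\le 0.36$, and this restriction is essential. Throughout $[\tau_4,\tau_3)$ the algorithm is still draining $Y_4$ into $Y_3$ (note that the drift expression in the paper's own proof of this lemma contains $p_4$, $\lambda_4$ and $q_{4,4}$, and that the companion Lemma \ref{0116} uses $p_6^{\tau_6}>0.38$; so on this interval $p_4$ is not $0$ and $index(t)=3$ is not the right picture). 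The positive term $\lambda_4\big(\mathbb{I}(\zeta=0)+p_4\big)$ coming from that feeding is what keeps $2m(p_3-\alpha_3 p_2)$ growing; as the phase progresses $p_4$ shrinks, $p_3$ accumulates a constant share of the mass, and near the facet the drift can no longer be shown positive (this is precisely why the paper's Mathematica check is conditioned on $p_3\le 0.36$, unlike the analogous checks for $d\ge 7$ where Lemma \ref{lem:calcipq} gives room). So your "never leaves the $c$-neighbourhood of the facet, hence contradiction" branch is unsupported, and your second "delicate point" misdiagnoses the danger: the problem is not $p_3\to 0$ at the end of the interval but $p_3$ becoming large while the class feeding it is exhausted.

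The paper's actual proof handles this by a bootstrapped case split on $p_3$. If $p_3^t<0.36$ up to $\tau_3$, then the facet-drift argument does keep $p^t\in B_4$, Lemma \ref{02} gives \eqref{eq:app}, and $B_4\subseteq B_3$ gives the handoff $p_2^{\tau_3}\ge a_2p_1^{\tau_3}$ — this is your Case, and it is fine. But if $p_3$ reaches $0.36$ at some $t'<\tau_3$, the paper abandons $B_4$ altogether from $t'$ on and instead maintains the weaker pair of invariants $p_3^t\ge 0.35$ and $p_2^t\ge a_2p_1^t$ (each shown to have positive drift at its boundary, by separate Mathematica checks), together with a direct verification that \eqref{eq:app} holds whenever $p_3\ge 0.35$ and $p_2\ge 1.55\,p_1$ — i.e.\ \eqref{eq:app} in this regime is not obtained from Lemma \ref{02} at all, and the second assertion of the lemma comes from the maintained invariant $p_2\ge a_2p_1$, not from membership in $B_4$ at time $\tau_3$. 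Without this switch of invariant your argument does not go through: you would need either a proof that the $x_3\ge\alpha_3x_2$ facet drift stays positive even when $p_3$ is large and $p_4$ small (which is not available), or an independent route to \eqref{eq:app} and to $p_2^{\tau_3}\ge a_2p_1^{\tau_3}$ in that regime, which is exactly the missing idea.
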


\begin{proof}(Sketch)  
First we verify that while $p_3^t\leq 0.36$ and $p^t\in B_4$ we have that the following quantity (corresponding to $\mathbb{E}(D_t(2m(p_3-\a_3p_2))|\cH_t)$)  is strictly positive. 
$$(1+p_4-p_1\lambda_1\sum_{i=1}^4 ip_iq_{i+1,i+1})\l_4-[(\a_3+1)\l_3+1]p_3+\a_3(\l_2+1)p_2 -p_1\l_1(4p_3q_{4,4}+ 3p_2q_{3,3})$$
 Let $t'=\min\{t\geq \tau_4: p_3^t\geq 0.36\}.$
 Then either $t'\geq \tau_3$ or $t'< \tau_3$. In the first case, as $\sigma_3\leq \tau_3$ we have that w.s.h.p.\@ $p^t\in B_4$ for $t\in[\tau_4,\tau_3]$. As such Lemma \ref{02} implies that w.s.h.p.\@ \eqref{eq:app} holds for $t\in [\tau_4,\tau_3]$.
 
In the second case $p_3^{t'}\geq 0.36$ for some $t< \tau_3$. Using Mathematica we verify that the following hold for $t\geq \tau_4$.
\begin{itemize}
    \item[(i)] If $p_3^{t}\geq 0.35$ and $p_2^t\geq 1.55p_1^t$  then the expected change of $2m_t(p_3^t-0.36)$ is positive.
    \item[(ii)] If $p_3^{t}\geq 0.35$ and $p_2^t\leq 1.552p_1^t$ then  the expected change of $2m_t(p_2^t-\a_2p_1)$ is positive,
    \item[(iii)] If $p_3^{t}\geq 0.35$ and $p_2^t\geq 1.55p_1^t$  then \eqref{eq:app} holds.
\end{itemize}
A standard martingale argument can be used to show that the above imply that w.s.h.p.\@ for $t\in [t',\tau_3]$ we have that $p_3^t\geq 0.35$, $p_2^t>a_2p_1^t$ and \eqref{eq:app} holds. Finally \eqref{eq:app} holds while $p^t\in B_4$ and $t\geq \tau_4$ hence for $t\in [\tau_4,t']$ (see Lemma \ref{02}).
\end{proof}

\subsection{The process during the interval $[\tau_3,\tau_2]$}

\begin{lemma}\label{0113}
Conditioned on $p_2^{\tau_3}\geq a_2p_1^{\tau_2}$, w.s.h.p.\@ \eqref{eq:app} holds for $t\in [\tau_3,\tau_2]$.
\end{lemma}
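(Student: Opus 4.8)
The plan is to run the confinement-and-numerics template of Lemmas \ref{0114} and \ref{lem:confine} in the last interval, where only the labels $1$ and $2$ remain. First I would dispose of a trivial case: one has $\tau_2\ge\tau_3$ always, with equality whenever $\tau_3$ is caused by $m_t\le n^{0.4+10^{-5}}$ or $\zeta_t\ge\log^6 n$ (both conditions persist, forcing $\tau_2\le\tau_3$), and in that case $[\tau_3,\tau_2]=\{\tau_3\}$ and \eqref{eq:app} at $t=\tau_3$ is already part of Lemma \ref{0114}. So I may assume $\tau_3$ is caused by $Y_\ell^{\tau_3}=\emptyset$ for all $\ell\ge 3$, so that $m_{\tau_3}>n^{0.4+10^{-5}}$, $\zeta_{\tau_3}<\log^6 n$, and, since labels only decrease and $Y_{\ge 3}$ can be refilled neither from below nor from the empty levels above it, $Y_\ell^t=\emptyset$ (hence $p_\ell^t=0$) for all $\ell\ge 3$ and all $\tau_3\le t<\tau_2$; as $\sum_\ell p_\ell^t=1-o(1)$ on this interval, this forces $p_1^t+p_2^t=1-o(1)$.

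With these simplifications \eqref{eq:echange_zt} collapses, for $\zeta_t>0$, to
$$\mathbb{E}(D_t(\zeta)|\cH_t)=-1+p_1^t\lambda_1^t\big(p_1^t q_{2,2}^t+2p_2^t q_{3,3}^t\big)+o(1),$$
so \eqref{eq:app} on $[\tau_3,\tau_2)$ is equivalent to the w.s.h.p.\@ bound $p_1^t\lambda_1^t\big(p_1^t q_{2,2}^t+2p_2^t q_{3,3}^t\big)\le 1-2\cdot 10^{-5}$. Writing $x=p_1^t$, $p_2^t=1-x-o(1)$, the relevant quantity is $\Psi(x,\lambda):=x\lambda_1(\lambda)\big(xq_{2,2}(\lambda)+2(1-x)q_{3,3}(\lambda)\big)$, and since $\Psi(1,\lambda)=\lambda_1(\lambda)q_{2,2}(\lambda)=\lambda^2/\big(2(\cosh\lambda-1)\big)\to 1$ as $\lambda\to 0$, this bound cannot hold unconditionally: the real content of the lemma is to confine $(p_1^t,\lambda^t)$ on $[\tau_3,\tau_2)$ so that $\Psi(p_1^t,\lambda^t)$ stays below $1$ by a constant, e.g.\@ by keeping $p_1^t$ bounded away from $1$ (equivalently $p_2^t$ bounded below) throughout.

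The hypothesis gives the initial bound $p_1^{\tau_3}\le\tfrac1{1+\alpha_2}+o(1)=\tfrac1{2.55}+o(1)$ (i.e.\@ $p^{\tau_3}\in B_3$), and I would propagate it with a martingale/confinement argument as in Lemma \ref{lem:confine}: show that whenever $p_2^t-\alpha_2 p_1^t$ is within $n^{-0.1}$ of the boundary, the increment of $2m_t(p_2^t-\alpha_2 p_1^t)$, summed over windows of length $\log^8 n$, has constant positive expectation. The novelty is that the standard push-back off $p_{r+1}^t\ge\alpha_{r+1}p_r^t$ (Lemma \ref{lem:lowerconstrains}) is unavailable here since $p_3^t\equiv 0$; instead, as in Lemma \ref{lem:upperconstrain}, I would retain the $\lambda_3\,\mathbb{I}(\zeta_t=0)$ contribution coming from matching a deficient $Y_2^t$-vertex, lower-bounding $\sum\mathbb{I}(\zeta_i=0)$ over each window via the obvious $d=3$ analogue of Lemma \ref{eq:sumzetas} (whose proof uses only $\zeta_t<\log^6 n$ for $t<\tau_2$). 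At the boundary $p_1^t,p_2^t$ are pinned, so what remains is a one-variable inequality in $\lambda^t$, which I would verify by the Mathematica computation already used in Lemma \ref{02} and Claim \ref{claim:stopsmall}, the terms involving $g(2,\lambda)$ being nonnegative by Lemma \ref{lem:lambdaconvex} and the slack $a_3\ge a_2+10^{-5}$ of \eqref{alphas} providing the margin. Granted $p_1^t\le\tfrac1{2.55}+o(1)$ throughout, a final numerical check of
$$\sup_{\,0\le x\le 1/2.55\,}\ \sup_{\,\lambda\ge 0\,}\ \Psi(x,\lambda)<1-2\cdot 10^{-5}$$
(the inner supremum being attained for bounded $\lambda$, as both $\lambda_1q_{2,2}$ and $\lambda_1q_{3,3}$ vanish as $\lambda\to\infty$, and $\Psi(x,0)=x(2-x)\le 0.63$) then gives \eqref{eq:app} on $[\tau_3,\tau_2]$, the endpoint $t=\tau_2$ being absorbed in the $o(1)$, just as in the earlier intervals.

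The main obstacle is exactly this confinement of the remaining constraint $p_2\ge\alpha_2 p_1$. Outside the bulk there is no ``$p_1$ is tiny'' slack to exploit (here $p_1^t$ is genuinely of order $1$), and with $p_3^t\equiv 0$ the usual confinement mechanism breaks down, so the positivity of the boundary drift must be squeezed entirely out of the deficient-vertex term $\lambda_3\mathbb{I}(\zeta_t=0)$; making that work is a tight computation, and controlling the behaviour of $(p_1^t,\lambda^t)$ right up to $\tau_2$ (where, if $Y_2$ empties, both $p_1^t\to 1$ and $\lambda^t$ could in principle drift down) is where the delicate analysis lies. Everything else is bookkeeping plus the finite-dimensional numerical verifications used throughout the paper.
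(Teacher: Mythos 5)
Your reduction of the interval (only labels $1,2$ survive, $p_1^t+p_2^t=1-o(1)$, and \eqref{eq:app} becomes $p_1\lambda_1(p_1q_{2,2}+2p_2q_{3,3})\le 1-\Omega(1)$) is correct, and you rightly identify that the whole content is keeping $p_1^t$ away from $1$. But your confinement target and mechanism are not the paper's, and they do not work. On $[\tau_3,\tau_2)$ the constraint $p_2\ge\alpha_2p_1$ is the \emph{top} constraint: there is no level $3$ left to feed $Y_2$. When $\zeta_t=0$ the algorithm deliberately matches a degree-biased vertex \emph{of} $Y_2^t$, which moves its mass from $Y_2$ into $Y_1$ and contributes roughly $-(\lambda_2+1)-\alpha_2\lambda_2$ to $D_t\big(2m(p_2-\alpha_2p_1)\big)$ — the opposite of the $+\lambda_3\,\mathbb{I}(\zeta_t=0)$ push-back you invoke. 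That positive term is only available in Lemma \ref{lem:upperconstrain} because there the tracked constraint sits one level \emph{below} a nonempty top level, whose deficient vertices feed the upper of the two tracked classes; here the deficient vertices are the $Y_2$-vertices themselves. Since \eqref{eq:app} keeps $\zeta_t=0$ most of the time, this deliberate draining dominates at the boundary, so $p_2^t/p_1^t$ drops below $\alpha_2$ during the interval (with $p_1+p_2\approx1$ the boundary pins $p_2\approx0.61$, and the process decays well below that). Hence your plan to propagate $p^t\in B_3$ up to $\tau_2$ fails, and your final numerical check $\sup_{0\le x\le 1/2.55}\sup_{\lambda}\Psi(x,\lambda)<1-2\cdot10^{-5}$ does not cover the range of $p_1^t$ actually attained.

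The paper's proof is built around exactly this point: it does \emph{not} try to keep $p_2\ge\alpha_2p_1$ on the whole interval. It verifies \eqref{eq:app} (i) whenever $p_2\ge\alpha_2p_1$ (which covers the start, since the hypothesis gives $p_2^{\tau_3}\ge\alpha_2p_1^{\tau_3}$, hence $p_2^{\tau_3}\approx0.6$), and then switches to an \emph{absolute} floor: it shows by a Mathematica computation that whenever $p_2^t$ is near the threshold ($p_2^t\le0.31$, for $t\ge t'$ with $t'$ the first time $p_2\ge0.3$) the drift of $2m_t(p_2^t-0.3)$ is positive, so by the usual martingale argument $p_2^t\ge0.29$ w.s.h.p.\@ up to $\tau_2$, and separately checks numerically that $p_2\ge0.29$ (i.e.\ $p_1$ as large as $\approx0.71$) already implies \eqref{eq:app}. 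That last inequality, over $p_1\le0.71$, is strictly stronger than the one you propose to verify over $p_1\le1/2.55$, and it is the inequality that actually carries the lemma. So the missing ingredient in your proposal is the replacement of the $B_3$-confinement by a constant-threshold confinement of $p_2$ (with the accompanying drift computation for $2m_t(p_2^t-0.3)$, in which the deficient-vertex term enters with its true, negative, sign) together with the correspondingly stronger numerical verification of \eqref{eq:app}.
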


\begin{proof}(Sketch)  
First we verify that while $p_2\leq 0.3$  and $p_2\geq \a_2p_1$ then the quantity 
$$(1+p_3-p_1\lambda_1\sum_{i=1}^3 ip_iq_{i+1,i+1})\l_3-[(\a_2+1)\l_2+1]p_2+\a_2(\l_1+1)p_1 -p_1\l_1(3p_2q_{3,3}+ 2p_1q_{2,2})$$
 is strictly positive. In addition
we verify using Mathematica that if $p_2^t\geq \a_2p_1^t$ then \eqref{eq:app} holds for $\tau_3\leq t$.
 
We then let $t'=\min\{t\geq \tau_3: p_2^t\geq 0.3\}.$ The above imply that 
\eqref{eq:app} holds for $\tau_3\leq t\leq t'$. Thereafter we verify using Mathematica the following.
For $t'\leq t <\tau_2$,
\begin{itemize}
    \item[(i)] if $p_3^{t}\leq 0.31$ then the expected change of $2m_t(p_2^t-0.3)$ is positive,
\item[(ii)] if $p_2^{t}\geq 0.29$ then \eqref{eq:app} holds.
\end{itemize}
The above imply that w.s.h.p.\@ \eqref{eq:app} holds for $t\in [t',\tau_2)$.
\end{proof}
\end{appendix}
\end{document}